\numberwithin{equation}{section}   
\title{\bf Nonlinear Open-Loop Mean Field Stackelberg Stochastic Differential Game}
\author{\normalsize Jianhui Huang\thanks{Department of Applied Mathematics, The Hong Kong Polytechnic University, Hong Kong, China. Email: james.huang@polyu.edu.hk.},\quad Qi Huang\thanks{\it Corresponding author. School of Mathematics, Shandong University, Jinan 250100, P.R. China,
Email: huangqi\_email123@163.com. The present work constitutes a part of her work for her doctoral dissertation.}}
\newtheorem{mythm}{Theorem}[section]
\newtheorem{mydef}{Definition}[section]
\newtheorem{mylem}{Lemma}[section]
\newtheorem{Remark}{Remark}[section]
\begin{document}
\begin{CJK}{UTF8}{gbsn}
\maketitle

\noindent{\bf Abstract:}\quad This paper studies a nonlinear open-loop mean field Stackelberg stochastic differential game by using the probabilistic method through the
FBSDE system and the idea of taking control as the fixed point. We successively construct the decentralized optimal control problems for the followers
and the leader, among which the leader's decentralized optimal control problem is a partial information optimal control problem with the
fully coupled conditional mean-field {\it forward-backward stochastic differential equation} (FBSDE, in short) as the state equation.
We successively derive the maximum principles for the corresponding decentralized
optimal control problems of the followers and the leader. To obtain the existence, uniqueness and estimations of solutions of the state equation,
the variational equation and the adjoint equation for the leader's decentralized optimal control problem, we study the well-posedness of
a new form of conditional mean-field FBSDE. And the decentralized optimal controls of the leader and followers are proved to be the
approximate Stackelberg equilibrium of the nonlinear mean field Stackelberg game. Finally, we apply the theoretical results developed in this paper to solve a nonlinear mean field Stackelberg game problem between a robot control center and unicycle-type swarm robots.

\vspace{2mm}

\noindent{\bf Keywords:}\quad Mean field Stackelberg game, Nonlinear, Conditional mean-field FBSDE,
Maximum principle, Approximate Stackelberg equilibrium

\vspace{2mm}

\section{Introduction}

\hspace{5mm}In recent years, mean field games with a large number of players have been extensively studied. Each player interacts with the other
players through the mean field term in its cost functional and state equation, where the mean field term captures the average state of the players.
When the number of participants is large, the dimensionality of the state space tends to infinity, resulting in a ``curse of dimensionality" problem.
In this case, the direct calculation of Nash equilibrium in dynamic games like Ba\c{s}ar and Olsder \cite{BO1998} would be very tedious and complicated.

{\it Mean field game} (MFG, in short) was independently introduced by Huang, Malham\'e and Cines (\cite{HCM2003}, \cite{HMC2006}) and Lasry and
Lions (\cite{LL2006-1}, \cite{LL2006-2}, \cite{LL2007}). In the MFG, when the number of players approaches infinity, each player is very insignificant
and can only exert a negligible influence on others through his negligible contribution to community. In the framework of MFG, each individual interacts
with a common medium created in the community, i.e. the mean field term, without explicitly emphasizing the interaction between any two players.
Therefore, each participant will make his own decision based on his own information and the mean field term in the community.

The limiting problem identified in \cite{LL2006-1}, \cite{LL2006-2}, \cite{LL2007} was a forward-backward {\it partial differential equation} (PDE, in
short) system of a {\it Hamilton-Jacobi-Bellman} (HJB, in short) equation and a {\it Fokker-Planck} (FP, in short) equation. And they derived
approximate Nash equilibrium from the solutions of the limiting problem. Inspired by the analysis of large communication networks in engineering,
Huang, Malham\'e and Cines \cite{HMC2006} independently introduced {\it Nash Certainty Equivalence} (NCE, in short) approach to study the mean field game.
Huang \cite{H2010} considered an LQ mean field game with a major player and a large number of minor players where the major player's influence would not
diminish when the number of minor players tended to infinity. Nourian and Caines \cite{NC2013} extended this model to nonlinear case and approximate
Nash equilibrium was proved when minor players were coupled to the major player only through their cost functionals. Carmona and Delarue \cite{CD2013}
developed a probabilistic approach to deal with the mean field game where the limiting system was a fully coupled FBSDE. Carmona and Zhu used the
probabilistic approach to study the major-minor mean field game in \cite{CZ2016}.

The games studied in the literature described above are all Nash games where there is no hierarchy among the participants and decisions are given at
the same time. Heinrich von Stackelberg \cite{St1934} first proposed the concept of a hierarchical solution for markets, i.e. the Stackelberg solution,
in 1934. In a Stackelberg two-person game, the leader has more power than the follower and thus dominates. First, for any decision announced by the
leader, the follower solves his own optimization problem. The optimal response of the follower will be found. The optimal response of the follower is
a function of the leader's decision. Knowing the follower's optimal response, the leader then solves his optimization problem to obtain his optimal
strategy. By substituting the leader's optimal strategy into the follower's optimal response, the follower's optimal strategy can be obtained. The
leader's optimal strategy and the follower's optimal strategy together form the Stackelberg solution. The Stackelberg solution concept was later
extended to a multi-period setting, see Simaan and Cruz \cite{SC1973-a}, \cite{SC1973-b}. Due to the different types of dominance of the leader
over the follower and various information sets, there are various equilibrium concepts in the dynamic Stackelberg game. See \cite{BO1998} for
different Stackelberg equilibrium concepts for discrete-time and continuous-time deterministic cases.

Yong \cite{Y2002} studied the open-loop solution of the leader-follower stochastic {\it linear-quadratic} (LQ, in short) game, where all coefficients
were random and the weight matrices of controls in the cost functionals did not have to be positive definite. Bensoussan, Chen, and Sethi \cite{BCS2015}
obtained the maximum principle of the global solution of Stackelberg stochastic differential game under adapted open-loop and adapted closed-loop
memoryless information structures. Shi, Wang, and Xiong \cite{SWX2016} considered the stochastic maximum principle for leader-follower stochastic
differential games with asymmetric information. For more information on Stackelberg stochastic differential games, see Li, Marelli, Fu, Cai, and
Meng \cite{LMFCM2021}, Chen and Shen \cite{CS2018}, Li and Yu \cite{LY2018}, Feng, Hu, and Huang \cite{FHH2024} and the references therein.

In the real situation, there may be an infinite number of players involved in the game and there may be hierarchy among the players. In order to
study this situation, it is necessary to combine the Stackelberg game with the mean field game to study the mean field game under the Stackelberg
framework. Nourian, Caines, Malham\'e, and Huang \cite{NCMH2012} studied LQ large population leader-follower stochastic systems in which the agents
had linear state equations and coupled through quadratic cost functionals. In their model, the reference trajectory for the leader is unknown to the
followers, which required likelihood ratio estimator techniques. Bensoussan, Chau, and Yam \cite{BCY2017} consider an LQ mean field game between a
leader and a group of followers in a Stackelberg game setting, in which the evolution of each follower was influenced by the delay effects from his/her
state and control variables as well as those of the leader. Moon and Ba\c{s}ar \cite{MB2018} studied an LQ mean field Stackelberg differential game
with adapted open-loop information structure. They proved that the local optimal decentralized controls of the leader and followers are an
$(\varepsilon_1, \varepsilon_2)$-Stackelberg-Nash equilibrium of the original game. Huang, Si, and Wu \cite{HSW2021} discussed a controlled LQ
Gaussian large population system that included three types of interacting agents, namely the major leader, minor leaders, and minor followers.
By combining the major-minor mean field game with the Stackelberg game, they derived the Stackelberg-Nash-Cournot approximate equilibrium. Huang and Yang \cite{HY2023} investigated the feedback strategies of LQ mean field Stackelberg stochastic differential game and proved the corresponding asymptotic Stackelberg equilibrium. Wang \cite{W2024} studied an LQ mean field leader-follower game using the direct method. Feng and Wang \cite{FW2024} considered the social optimal problem of LQ mean field Stackelberg stochastic differential game. The above literatures differ in the specific problems
studied and the methods used, but they all study LQ mean field Stackelberg game.

Bensoussan, Chau, and Yam \cite{BCY2016} studied mean field games between a dominating player and a group of representative agents, where both the state
equation and the objective functionals are nonlinear. They derived necessary condition for the optimal controls and equilibrium condition by two
coupled {\it stochastic partial differential equations} (SPDEs, in short), the first one is {\it Stochastic Hamilton-Jacobi-Bellman} (SHJB, in short) equation
and the second is FP equation. However, they only prove that the optimal control of the representative agent satisfying
the consistency condition is the $\varepsilon$-Nash equilibrium of N representative agents, and do not consider the approximate equilibrium problem of
dominating player. The result of \cite{BCY2016} is generalized to the delayed case in Bensoussan, Chau, and Yam \cite{BCY2015}. Fu and Horst \cite{FH2020} investigated the solvability of linear McKean-Vlasov FBSDEs derived from Stackelberg games with mean-field type control and terminal state constraints. Aurell, Carmona, Dayanikli and Lauri{\`e}re \cite{ACDL2022} studied epidemic control in large populations using finite-state mean field Stackelberg games. Djete \cite{D2023} discussed Principal-agent problems using mean field Stackelberg games. Bergault, Cardaliaguet and Rainer \cite{BCR2024} studied the relaxed solutions of the mean field Stackelberg game where the leader has private information. In the game model they considered, the leader has no state equation, the followers' state equations are linear and relatively simplified, and the forms of the followers' cost functionals are also quite special. See \cite{SW2021}, \cite{WZ2014}, \cite{GHZ2022}, \cite{MIXZ2022}, \cite{SBB2025}, \cite{SXXCZ2018} for more research on mean field Stackelberg games in LQ, discrete time, incentive design, resource allocation and so on.

In this paper, we use the probabilistic method to investigate the nonlinear open-loop mean field Stackelberg stochastic differential game. Eventually,
the consistency condition system we obtain is composed of fully coupled conditional mean-field FBSDEs. The application of the probabilistic method
enables us to subsequently study the solvability of the consistency condition system we have obtained through some classic methods related to FBSDE.
We first heuristically derive the decentralized
equilibrium strategies and the limit state-average of followers. Due to the nonlinear
structure of the state equation, the derivation of the limit state-average of followers in the nonlinear mean field Stackelberg game is much more
complicated than the previous studies on the related derivation in the LQ mean field Stackelberg game. Based on the heavy limit is equal to the
repeated limit and the tailor-made propagation chaos analysis on the exchangeable sequence of random variables, we derive the limit state-average of
followers in an enlightening way. We also heuristically derive the asymptotic limit of the centralized optimal Nash response of followers. When the
followers adopt the optimal Nash response asymptotic limit, we derive the asymptotic limit of the centralized optimal strategy of the leader.
Therefore, for any given
leader's strategy, we first construct the decentralized optimal control problems for followers, and obtain the corresponding
decentralized optimal response functions for followers. Then, after knowing the decentralized optimal response of the representative
follower, the leader proceeds to solve his/her decentralized optimal control problem.

We employ the maximum principle of the classical stochastic optimal control and the fixed-point property to derive the maximum principles for the
decentralized optimal control problems of followers. After substituting in the representative follower's decentralized optimal
response, the leader's decentralized optimal control problem becomes a partial information
optimal control problem with the fully-coupled conditional mean-field FBSDE as the state equation. We obtain the corresponding maximum principle.
Moreover, we discover that the leader's problem in the Stackelberg stochastic differential game under partial information is similar in form to the
leader's decentralized optimal control problem in our nonlinear mean field Stackelberg stochastic differential game (See \cite{SWX2016}). We think
this is a interesting finding.

Inspired by \cite{PW1999}, \cite{W1998}, \cite{NWW2023}, we apply the method of continuation established by Hu and Peng \cite{HP1995} and Peng and Wu
\cite{PW1999} to investigate the existence, uniqueness and estimations of the solution of a new type of fully-coupled conditional
mean-field FBSDE. And the obtained corresponding well-posedness results are used to give the existence, uniqueness and estimations of solutions of the
state equation, variational equation and adjoint equation for the leader's decentralized optimal control problem. The form of the fully-coupled
conditional mean-field FBSDE that we consider is not included in previous related research literature
(e.g. \cite{HT2022}, \cite{NWW2023}, \cite{CDW2024}). It is proposed based on the specific forms of the state equation, variational equation and
adjoint equation of the leader's decentralized optimal control problem in this paper. Then, we prove that the optimal controls of the
decentralized control problems for the leader and followers that we construct are the $\varepsilon$-Stackelberg equilibrium of the nonlinear
mean field Stackelberg game.

Inspired by \cite{EB2019}, \cite{AM2014}, \cite{ST2015}, \cite{HS2016}, \cite{CCWXQF2021}, and the references therein, we apply the theoretical results developed in this paper to study a nonlinear mean field Stackelberg game problem between a robot control center and unicycle-type swarm robots. We first derive the necessary conditions for the decentralized optimal controls of the unicycle-type swarm robots acting as followers. And from these conditions, we obtain the explicit structure that the followers' decentralized optimal controls must satisfy. Under this structure, we then formulate and analyze the decentralized optimal control problem of the robot control center as the leader. Finally, in the deterministic setting, we derive the necessary conditions and explicit form of the leader's decentralized optimal control. In this practical example, the state dynamics of the unicycle-type swarm robots, as followers, are nonlinear, which necessitates the use of nonlinear mean field Stackelberg game theory for their analysis. This demonstrates that the study of nonlinear mean field Stackelberg games is of clear practical relevance.

Next, we compare with the previous related literature to illustrate and further summarize our innovations.

(i) Since the state equation in the nonlinear mean field Stackelberg game we study is nonlinear, the derivation of the limit state-average of the
followers in our problem is much more complicated than that in the LQ mean field Stackelberg game (See Subsection \ref{section 2.3} in this paper).
Moreover, the constructions of the decentralized optimal control problems for the leader and followers, the derivation of the maximum principle for
the decentralized optimal control problems, and the proof of the approximate Stackelberg equilibrium in the nonlinear mean field Stackelberg game are
all significantly different from those in the LQ mean field Stackelberg game.

(ii) We apply the probabilistic method and utilize the FBSDE system to study the open-loop strategy of the nonlinear mean field Stackelberg game.
For any given leader's strategy, unlike previous literature which all take the mean field measure term as the fixed point to solve the mean field game,
we take the control as the fixed point to analyze the mean field game among followers (See Steps 1-5 in Subsection \ref{section 2.4.1} in this paper).
We successively derive the maximum principles for the decentralized optimal control problems of the followers and leader respectively
(See Section \ref{section 5}).
After substituting the optimal response of the representative follower, the leader's decentralized optimal control problem is a partial information
optimal control problem with a fully coupled conditional mean-field FBSDE as the state equation. We derive its maximum principle and find that the
leader's problem in the Stackelberg game under partial information is similar in form to the decentralized optimal control problem of the leader in
our nonlinear mean field Stackelberg game (See \cite{SWX2016}). This is an interesting connection.

(iii) We obtain the final consistency condition system in our problem, which is a fully coupled conditional mean-field FBSDE (See equation
(\ref{the final consistency condition system})). Because the equation forms studied in previous literature on conditional mean-field FBSDEs
(See \cite{NWW2023}, \cite{CDW2024}, \cite{HT2022}) cannot
incorporate the consistency system (\ref{the final consistency condition system}) we ultimately obtain, the results from those previous studies
cannot be applied to equation (\ref{the final consistency condition system}) either. Therefore, based on our final consistency system, we propose
a new kind of fully coupled mean-field FBSDE. We apply the method of continuation to study its well-posedness, and then apply the obtained results to
provide the existence, uniqueness, estimations of the solution for the final consistency system
(See Subsection \ref{some well-posedness results of a new kind of CMF-FBSDEs}). By comparison, it can be found that Bensoussan, Chau, and Yam
\cite{BCY2016} employed the SHJB-FP system to study the nonlinear mean field Stackelberg game, and the consistency condition system they obtained was a
fully coupled forward-backward SPDEs. And they did not consider the solvability issues of the final consistency condition system. Therefore,
our research methods and the final results for the nonlinear mean field Stackelberg game are novel and have not been presented in previous literature.

(iv) Bensoussan, Chau, and Yam \cite{BCY2015},
\cite{BCY2016} only proved the approximate Nash equilibrium among followers. They did not consider the approximate equilibrium problem of the leader.
We finally prove that the optimal controls of the decentralized control problems of the leader and followers are the
$\varepsilon$-Stackelberg equilibrium of the nonlinear open-loop mean field Stackelberg game
(See Section \ref{the approximate Stackelberg equilibrium}).

(v) We apply the theoretical results developed in this paper to solve a nonlinear mean field Stackelberg game problem between a robot control center and unicycle-type swarm robots. First, we derive the necessary conditions for the decentralized optimal controls of the unicycle-type swarm robots acting as followers. Then, in the deterministic setting, we obtain the necessary conditions for the decentralized optimal control of the robot control center acting as the leader. In this practical example, the state dynamics of the unicycle-type swarm robots, as followers, are nonlinear, and thus the nonlinear mean field Stackelberg game framework is required for their analysis. This illustrates the practical significance of studying nonlinear mean field Stackelberg games.

The paper is organized as follows. A nonlinear open-loop mean field Stackelberg game is formulated in Section \ref{problem formulation}.
In Section \ref{section 3}, based on the centralized optimal equilibrium strategies of the leader and followers, the decentralized strategies of
the leader and followers as well as the limit state-average of the followers are heuristically derived. In Subsection \ref{section 2.4.1}, we construct
the followers' decentralized control problem, and after considering the asymptotic limit of the optimal Nash responses of followers,
we construct the leader's decentralized control problem in Subsection \ref{the iterative limit of the leader}. In Section \ref{section 5}, we
successively obtain the maximum principles for the decentralized optimal control problems of the followers and the leader. Moreover,
in Section \ref{some well-posedness results of a new kind of CMF-FBSDEs}, we investigate the well-posedness of a novel type of conditional
mean-field FBSDE. In Section
\ref{the approximate Stackelberg equilibrium}, we demonstrate that the decentralized optimal controls of the leader and followers are
the approximate Stackelberg equilibrium of the nonlinear open-loop mean field Stackelberg game. In Section \ref{application to robots}, we apply the theoretical results developed in this paper to solve a nonlinear mean field Stackelberg game problem between a robot control center and unicycle-type swarm robots.

\section{Problem Formulation}\label{problem formulation}

\subsection{Notaion}

\hspace{4mm} $(\Omega,\mathcal{F},\mathbb{P})$ is a complete probability space. Throughout this paper, we use $\mathbb R^n$ to denote $n$-dimensional
Euclidean space, with the usual norm and the usual inner product
given by $|\cdot|$ and $\langle \cdot, \cdot \rangle$. $W_0, W_1, \cdots, W_N$ are Brownian motions independent of each other and defined on
$(\Omega,\mathcal{F},\mathbb{P})$. $W_0$ takes values in $\mathbb{R}^{j_0}$. And $W_1, \cdots, W_N$ all take values in $\mathbb{R}^{j}$. We define $u:=\left\{u_1, u_2, \cdots, u_N\right\}, u_{-i}=\left\{u_1, \ldots,
u_{i-1}, u_{i+1}, \ldots, u_N\right\}$,
\begin{equation*}
\begin{aligned}
C(0, T; \mathbb{R}^n)&:=\Big\{\psi \Big|\psi: [0, T] \longrightarrow \mathbb{R}^n \textrm{ is a continuous function}\Big\}.\\
\end{aligned}
\end{equation*}
For any filtration $\mathbb{G}=\big\{\mathbb{G}_t\big\}_{0\le t\le T}$ defined on $(\Omega,\mathcal{F},\mathbb{P})$ and any $p\in [1, \infty)$, we
introduce the following notations:\\
\begin{equation*}
\begin{aligned}
L^p_{\mathbb{G}}\left(\Omega; C(0, T; \mathbb{R}^n)\right)&:=\Big\{\psi \Big|\psi: \Omega \times[0, T] \longrightarrow \mathbb{R}^n, \textrm{ is a
continuous } \mathbb{G}\textrm{-adapted process such that}\\
&\qquad\qquad\ \mathbb{E}\sup\limits_{0\le t\le T}|\psi(t)|^p<\infty \Big\},\\
L^p_{\mathbb{G}}\left(0, T; \mathbb{R}^n\right)&:=\Big\{\psi \Big|\psi: \Omega \times[0, T] \longrightarrow \mathbb{R}^n, \textrm{ is an } \mathbb{G}
\textrm{-adapted process such that}\\
&\qquad\qquad\ \mathbb{E}\int_0^T |\psi(t)|^p dt<\infty \Big\}.\\
\end{aligned}
\end{equation*}
And for any sub-$\sigma$-field $\mathcal{G}\subseteq\mathcal{F}$ and any $p\in [1, \infty)$, we define
\begin{equation*}
\begin{aligned}
L^p_{\mathcal{G}}\left(\Omega; \mathbb{R}^n\right)&:=\Big\{\psi \Big|\psi: \Omega \longrightarrow \mathbb{R}^n, \textrm{ is an } \mathcal{G}
\textrm{-measurable random variable such that }\mathbb{E}|\psi|^p<\infty \Big\}.\\
\end{aligned}
\end{equation*}

\subsection{$1+N$-type leader-follower game formulation}

\hspace{4mm} For given terminal time $T>0$ and $i=1,2, \cdots, N$, the state evolutions of the leader and the $i^{th}$ follower are denoted by $x_0$
and $x_i$. And they satisfied the following {\it stochastic differential equations} (SDEs, in short):
\begin{equation}\label{the leader's state euqation}
\left\{
             \begin{array}{lr}
             d x_0(t)=b_0\left(t, x_0(t), u_0(t), x^{(N)}(t)\right) dt+\sigma_0 dW_0(t), & 0\le t \le T, \\
             x_0(0)=\xi_0,
             \end{array}
\right.
\end{equation}

\begin{equation}\label{the ith follower's state euqation}
\left\{
             \begin{array}{lr}
             d x_i(t)=b_1\left(t, x_i(t), u_i(t), x_{0}(t), u_0(t), x^{(N)}(t)\right) dt+\widetilde{\sigma} dW_i(t), & 0\le t \le T, \\
             x_i(0)=\xi_i,
             \end{array}
\right.
\end{equation}
where $x^{(N)}(\cdot)=\frac{1}{N} \sum\limits_{j=1}^N x_j(\cdot)$ is the empirical state-average. And $x_0(\cdot)\in \mathbb{R}^k$, $x_i(\cdot) \in
\mathbb{R}^n, u_0(\cdot)\in U_0$, $u_i(\cdot) \in U_i$, where $U_0$ are non-empty convex subsets of $\mathbb{R}^{m_0}$ and $U_i$ are non-empty convex subsets of $\mathbb{R}^m$, for $i=1,2, \cdots, N$. $u_0$ and $u_i$ are control variables of the leader and the $i^{th}$ follower, respectively. $b_0:[0, T] \times \mathbb{R}^k \times \mathbb{R}^{m_0}
\times \mathbb{R}^n \longrightarrow \mathbb{R}^k, b_1:[0, T] \times \mathbb{R}^n \times \mathbb{R}^m \times \mathbb{R}^k \times \mathbb{R}^{m_0} \times
\mathbb{R}^n \longrightarrow \mathbb{R}^n,$ are measurable functions and $\sigma_0, \widetilde{\sigma}$ are constant matrices with compatible dimensions. $\{\xi_i\}_{i\in\{0, 1, 2,
\cdots, N\}}$ are the leader's and followers' initial states. $W_0$ and $W_i$ are respectively independent Brownian motions in $\mathbb{R}^{j_0}$ and $\mathbb{R}^j$, for $i=1, 2, \cdots, N$. They need to satisfy the following assumptions:

\noindent {\bf (H2.1)}\ {\it {(i) The initial random variables $\{\xi_i\}_{i\in\{1, 2, \cdots, N\}}$ are independent and identically distributed,
and independent of $\xi_0$. And $\mathbb{E}\left[\xi_0\right]=\bar{x}_0, \mathbb{E}\left[\xi_i\right]=\bar{x}, \sup\limits_{i \ge 0} \mathbb{E}
\left[\left|\xi_i\right|^2\right] \le c<\infty$.\\
(ii) $\left\{W_i(t), i=0, 1, 2, \cdots, N\right\}$ are independent $1$-dimensional Brownian motions, which are also independent of
$\{\xi_i, i=0, 1, 2, \cdots, N\}$.}}

The cost functionals for the leader and the $i^{th}$ follower to minimize are as follows:
\begin{equation}\label{the leader's cost functional}
\begin{aligned}
J_0^N\left(u_0, u\right)=\mathbb{E}\left\{\int_0^T g_0\left(t, x_0(t), u_0(t), x^{(N)}(t)\right) dt+G_0\left(x_0(T)\right)\right\}.\\
\end{aligned}
\end{equation}
\begin{equation}\label{the ith follower's cost functional}
\begin{aligned}
J_i^N\left(u_i, u_{-i}, u_0\right)=\mathbb{E}\left\{\int_0^T g_1\left(t, x_i(t), u_i(t), x_0(t), u_0(t), x^{(N)}(t)\right) dt+G_1\left(x_i(T)\right)
\right\},\\
\end{aligned}
\end{equation}
for $i=1, 2, \cdots, N$. And $g_0:[0,T] \times \mathbb{R}^k \times \mathbb{R}^{m_0} \times \mathbb{R}^n \longrightarrow \mathbb{R}, g_1:[0,T] \times
\mathbb{R}^n \times \mathbb{R}^m \times \mathbb{R}^k \times \mathbb{R}^{m_0} \times \mathbb{R}^n \longrightarrow \mathbb{R}, G_0: \mathbb{R}^k
\longrightarrow \mathbb{R}, G_1: \mathbb{R}^n \longrightarrow \mathbb{R}$ are measurable functions.

For $i=0, 1, 2, \cdots, N$, denote $\big(\xi_i, W_i(\cdot)\big)$ by $\zeta_i(\cdot)$ and define $\mathbb {F}_t=\sigma\big(\zeta_i(s), 0\le s
\le t, i=0, 1, 2, \cdots, N\big),$ $\mathbb {F}_t^i=\sigma\left(\zeta_i(s), 0\le s \leq t\right), t\in [0,T]$. And
$\mathbb {F}=\big\{\mathbb {F}_t\big\}_{0\le t\le T}, \mathbb {F}^i=\big\{\mathbb {F}_t^i\big\}_{0\le t\le T}, i=0,1,2,\cdots, N.$
For $i=1, 2, \cdots, N$, $\mathbb {F}^0$ and $\mathbb {F}^i$ are local (or decentralized) information of the leader and the $i^{th}$ follower,
respectively. And $\mathbb {F}$ denotes the global (or centralized) information.

The following assumptions will be in force throughout the paper, where $z$ denotes the position of the mean field term.

\noindent {\bf (H2.2)}\ {\it {(i) The function $b_0$ is linear growth and continuously differentiable with respect to $(x_0, u_0, z).$ All the first
partial derivatives of $b_0$ with respect to $(x_0, u_0, z)$ are bounded. The function $b_1$ is linear growth and twice continuously differentiable
with respect to $(x_i, u_i, x_0, u_0, z)$. All the first and second partial derivatives of $b_1$ with respect to $(x_i, u_i, x_0, u_0, z)$ are bounded.\\
(ii) The functions $g_0, G_0$ are continuously differentiable with respect to $(x_0, u_0, z)$ and $x_0$, respectively. The function $g_1, G_1$ are
twice continuously differentiable with respect to $(x_i, u_i, x_0, u_0, z)$ and $x_i$, respectively. All the second partial derivatives of $g_1$ and
$G_1$ are bounded. And there exists a constant $C>0$ such that for any $t\in[0,T], x_0\in \mathbb{R}^k, x_i\in \mathbb{R}^n, u_0\in \mathbb{R}^{m_0},
u_i\in \mathbb{R}^m, z\in \mathbb{R}^n,$
\begin{equation*}
\begin{aligned}
&\big(1+|x_0|^2+|u_0|^2+|z|^2\big)^{-1}\big|g_0(t, x_0, u_0, z)\big|\\
&+\big(1+|x_0|+|u_0|+|z|\big)^{-1}\Big(\big|g_{0 x_0}(t, x_0, u_0, z)\big|\\
& +\big|g_{0 u_0}(t, x_0, u_0, z)\big|+\big|g_{0 z}(t, x_0, u_0, z)\big|\Big)\le C,\\
&\big(1+|x_i|^2+|u_i|^2+|x_0|^2+|u_0|^2+|z|^2\big)^{-1}\big|g_1(t, x_i, u_i, x_0, u_0, z)\big|\\
& +\big(1+|x_i|+|u_i|+|x_0|+|u_0|+|z|\big)^{-1}\Big(\big|g_{1 x_i}(t, x_i, u_i, x_0, u_0, z)\big|\\
& +\big|g_{1 u_i}(t, x_i, u_i, x_0, u_0, z)\big|+\big|g_{1 x_0}(t, x_i, u_i, x_0, u_0, z)\big|\\
& +\big|g_{1 u_0}(t, x_i, u_i, x_0, u_0, z)\big|+\big|g_{1 z}(t, x_i, u_i, x_0, u_0, z)\big|\Big)\le C,\\
&\big(1+|x_0|^2\big)^{-1}\big|G_0(x_0)\big|+\big(1+|x_0|\big)^{-1}\big|G_{0 x_0}(x_0)\big|\le C,\\
&\big(1+|x_i|^2\big)^{-1}\big|G_1(x_i)\big|+\big(1+|x_i|\big)^{-1}\big|G_{1 x_i}(x_i)\big|\le C.\\
\end{aligned}
\end{equation*}
}}

\section{Centralized optimal equilibrium and decentralized strategies}\label{section 3}
\subsection{Centralized optimal equilibrium strategies}

In the $1+N$-type leader-follower game, the leader first announces all his strategies on $[0,T]$ at the initial moment and commits to it. According
to the known strategy of the leader, the followers play the Nash game with each other. Then, their respective optimal response strategies over the
entire time interval are given at the initial moment to minimize their respective cost functionals. Since the leader is in a higher position than
the followers, he will know the optimal responses of the followers. Finally, he will make his own optimal strategy based on the followers' optimal
responses to minimize his cost functional. Therefore, in the $1+N$-type leader-follower game, the Stackelberg game is played between the leader and
the followers, and the Nash game is played between the followers.

When $N$ is finite, the game is played as described above. Therefore, the leader and every follower need to give their equilibrium strategies based
on the global (or centralized) information $\mathbb{F}$. We call these strategies centralized equilibrium strategies. In the
{\it adapted open-loop} (AOL, in short) information structure, the centralized admissible strategy spaces for the leader and the $i^{th}$ follower are
\begin{equation*}
\begin{aligned}
\mathcal{U}^c_0&:=\Big\{u^c_0 \Big|u^c_0: \Omega \times[0, T] \longrightarrow U_0 \textrm{ is } \mathbb{F}\textrm{-adapted and } \mathbb{E}\int_0^T
|u^c_0(t)|^2 dt<\infty \Big\},\\
\mathcal{U}^c_i&:=\Big\{u^c_i \Big|u^c_i: \Omega \times[0, T]\times \mathcal{U}^c_0 \longrightarrow U_i, u^c_i(\cdot, u^c_0) \textrm{ is }
\mathbb{F}\textrm{-adapted, and }\\
&\qquad\qquad\ \mathbb{E}\int_0^T |u^c_i(t, u^c_0)|^2 dt <\infty \textrm{ for any } u^c_0\in \mathcal{U}^c_0\Big\},\\
\end{aligned}
\end{equation*}\\
where $U_0$ and $U_i$ are nonempty convex subsets of $\mathbb{R}^{m_0}$ and $\mathbb{R}^{m}$, respectively, for $i=1, 2, \cdots, N$.

Assumptions (H2.2) guarantee that for any $u^c_0\in \mathcal {U}^c_0, u^c_i\in \mathcal {U}^c_i, i=1,2,\cdots, N,$ the state equations
$(\ref{the leader's state euqation})$-$(\ref{the ith follower's state euqation})$ has unique solutions and the cost functional
$(\ref{the leader's cost functional})$ and $(\ref{the ith follower's cost functional})$ are well-defined.

Based on any pre-committed centralized strategy $u^c_{0}$, and the centralized strategies of the followers $u^c=\{{u}^c_{i}\}_{i=1}^{N}$,
we introduce mappings ${\mathcal{L}}_{0}: \left(u^c_{0}, u^c\right) \longrightarrow x_0;\ {\mathcal{L}}_{i}: \left(u^c_{0}, u^c\right)
\longrightarrow x_i, 1 \leq i \leq N;\ {\mathcal{L}}_{(N)}: \left(u^c_{0}, u^c\right) \longrightarrow x^{(N)}$ to represent the realized states
of the leader, the followers, and their state-average. Notice that, under mild conditions, the existence and uniqueness of the above coupled state
equations $(\ref{the leader's state euqation})$ and $(\ref{the ith follower's state euqation})$ can be ensured, so the mappings
${\mathcal{L}}_{0}, {\mathcal{L}}_{i}, {\mathcal{L}}_{(N)}$ are well defined and
${\mathcal{L}}_{(N)}=\frac{1}{N}\sum\limits_{i=1}^{N}{\mathcal{L}}_{i}.$

For $\forall u^c_0\in \mathcal{U}^c_0$, we introduce a mapping
\[\mathcal{I}(u^c_0)=\big\{\mathcal{I}_1(u^c_0), \mathcal{I}_2(u^c_0), \cdots, \mathcal{I}_N(u^c_0)\big\}: \mathcal{U}^c_0\longrightarrow
\mathcal{U}^c_1\times \mathcal{U}^c_2\times \cdots\times \mathcal{U}^c_N\]
that satisfies the following conditions:
\begin{equation}\label{centralized equilibrium condition 1}
J^N_i\big(\mathcal{I}_i(u^c_0), \mathcal{I}_{-i}(u^c_0), u^c_0\big)\le J^N_i\big(u^c_i, \mathcal{I}_{-i}(u^c_0), u^c_0\big),\ \forall u^c_i\in
\mathcal{U}^c_i,\ \forall i=1, 2, \cdots, N.
\end{equation}
And $\mathcal{I}_{-i}(u^c_0)=\big\{\mathcal{I}_1(u^c_0),\cdots, \mathcal{I}_{i-1}(u^c_0), \mathcal{I}_{i+1}(u^c_0), \cdots,
\mathcal{I}_N(u^c_0)\big\},$
\begin{equation*}
\begin{aligned}
J^N_i\big(u^c_i, \mathcal{I}_{-i}(u^c_0), u^c_0\big)=&\mathbb{E}\bigg\{\int_0^T g_1\Big(t, \mathcal{L}_{i}(u^c_{0}, {\mathcal{I}}_{-i}(u^c_{0}),
u^c_{i}), u^c_i, \mathcal{L}_{0}(u^c_{0}, {\mathcal{I}}_{-i}(u^c_{0}), u^c_{i}), u^c_{0},\\
&\quad{\mathcal{L}}_{(N)}(u^c_{0}, {\mathcal{I}}_{-i}(u^c_{0}), u^c_{i})\Big) dt+G_1\left(\mathcal{L}_{i}(u^c_{0},
{\mathcal{I}}_{-i}(u^c_{0}), u^c_{i})(T)\right)\bigg\}.
\end{aligned}
\end{equation*}
$\mathcal{I}(u^c_0)$ is the centralized optimal Nash responses given by all followers based on the pre-commitment strategy $u^c_0$ announced by
the leader. If there exists $u^{c,*}_0\in \mathcal{U}^c_0$ satisfying
\begin{equation}\label{centralized equilibrium condition 2}
J^N_0\big(\mathcal{I}(u^{c,*}_0), u^{c,*}_0\big)\le J^N_0\big(\mathcal{I}(u^{c}_0), u^{c}_0\big),\ \forall u^c_0\in \mathcal{U}^c_0.
\end{equation}

Then, we call the strategies $\big(u^{c,*}_0, \mathcal{I}(u^{c,*}_0)\big)$ that satisfied conditions $(\ref{centralized equilibrium condition 1})$,
$(\ref{centralized equilibrium condition 2})$ as the centralized optimal equilibrium strategies in the $1+N$-type leader-follower game.

\subsection{Decentralized strategies and limit state-average of followers}\label{section 2.3}

In the preceding section, for $i=0, 1, 2, \cdots, N$, we denote $\big(\xi_i, W_i(\cdot)\big)$ by $\zeta_i(\cdot)$ and define
$\mathbb {F}_t=\sigma\big(\zeta_i(s), 0\le s \le t, i=0, 1, 2, \cdots, N\big),$ $\mathbb {F}_t^i=\sigma\left(\zeta_i(s), 0\le s \leq t\right),
t\in [0,T]$. When $N$ is finite,The leader announces any centralized admissible strategy (but maybe not optimal) $u_0^c\in \mathcal{U}^c_0$
in {\it open-loop} (OL, in short) sense at beginning time that mainly depends on the filtration generated by $\zeta_0$, although also depends
on the filtrations generated by other Brownian motions $\{\zeta_{i}\}_{i=1}^{N}$. Based on the leader's strategy $u_0^c\in \mathcal{U}^c_0$,
followers play Nash games. For each $i=1, 2, \cdots, N$, the $i^{th}$ follower announces his centralized admissible strategy
(but maybe not optimal) ${u}^c_{i}\in \mathcal{U}^c_i$ in {\it open-loop} (OL, in short) sense at beginning time that mainly depends on the
filtration generated by $\zeta_0$ and $\zeta_i$, although also depends on the filtrations generated by other Brownian motions
$\{\zeta_{j}\}_{j=1, j\ne i}^N$.

Roughly, the open-loop means that $u^c_0, u^c_i$ are all $\mathbb{F}$-adapted for $i=1, 2, \cdots, N$. Therefore,
$u^c_0(t, \omega)=L_0(\zeta_0(\omega, \cdot\wedge t), \zeta_1(\omega, \cdot\wedge t), \cdots, \zeta_N(\omega, \cdot\wedge t)),
u^c_i(t, \omega)=L_i(\zeta_0(\omega, \cdot\wedge t), \zeta_1(\omega, \cdot\wedge t), \cdots, \zeta_N(\omega, \cdot\wedge t))$ for
some progressive measurable functionals $L_0, L_i, i=1, 2, \cdots, N$. We say $u^c_0$ mainly depends on the filtration generated by
$\zeta_0$ if the following rate estimates hold:

\begin{equation}\begin{aligned}
& \bigg{|}\frac{\partial{L_0(\zeta_0(\omega, \cdot\wedge t), \zeta_1(\omega, \cdot\wedge t), \cdots, \zeta_N(\omega, \cdot\wedge t))}}
{{\partial \zeta_0}}\bigg{|}=O(1);\\
& \bigg{|}\frac{\partial{L_0(\zeta_0(\omega, \cdot\wedge t), \zeta_1(\omega, \cdot\wedge t), \cdots, \zeta_N(\omega, \cdot\wedge t))}}
{{\partial \zeta_1}}\bigg{|}=o(1);\\& \cdots \\
& \bigg{|}\frac{\partial{L_0(\zeta_0(\omega, \cdot\wedge t), \zeta_1(\omega, \cdot\wedge t), \cdots, \zeta_N(\omega, \cdot\wedge t))}}
{{\partial \zeta_N}}\bigg{|}=o(1) \quad  \text{as} \quad N \longrightarrow +\infty\end{aligned}\end{equation}under an appropriate norm $|\cdot|.$
And we say $u^c_i$ mainly depends on the filtration generated by $\zeta_0$ and $\zeta_i$ if the following rate estimates hold:

\begin{equation}\begin{aligned}
& \bigg{|}\frac{\partial{L_i(\zeta_0(\omega, \cdot\wedge t), \zeta_1(\omega, \cdot\wedge t), \cdots, \zeta_N(\omega, \cdot\wedge t))}}
{{\partial \zeta_0}}\bigg{|}=O(1);\\
& \bigg{|}\frac{\partial{L_i(\zeta_0(\omega, \cdot\wedge t), \zeta_1(\omega, \cdot\wedge t), \cdots, \zeta_N(\omega, \cdot\wedge t))}}
{{\partial \zeta_i}}\bigg{|}=O(1) \quad  \text{as} \quad N \longrightarrow +\infty;
\end{aligned}\end{equation}
and for $j=1, 2, \cdots, N, j\ne i$,
\begin{equation}\begin{aligned}
& \bigg{|}\frac{\partial{L_i(\zeta_0(\omega, \cdot\wedge t), \zeta_1(\omega, \cdot\wedge t), \cdots, \zeta_N(\omega, \cdot\wedge t))}}
{{\partial \zeta_j}}\bigg{|}=o(1) \quad  \text{as} \quad N \longrightarrow +\infty.\end{aligned}\end{equation}

As $N \longrightarrow +\infty$, by the symmetric structures among all followers, it is anticipated that the leader's centralized (open-loop)
equilibrium strategy $u_0^c \text{ which is } \mathbb{F}\text{-adapted}$ converges to the leader's decentralized (open-loop) equilibrium
strategy $u_{0} \text{ which is } \mathbb{F}^0\text{-adapted}$, and the follower's centralized (open-loop) equilibrium strategy
$u_i^c \text{ which is } \mathbb{F}\text{-adapted}$ converges to the follower's decentralized (open-loop) equilibrium strategy
$u_{i} \text{ which is } \mathbb{F}^i\vee \mathbb{F}^0\text{-adapted } \big(\mathbb{F}^i\vee \mathbb{F}^0=\sigma\big(\mathbb{F}^i$\\
$\bigcup \mathbb{F}^0\big)\big)$, for $i=1, 2, \cdots$. Hereafter,
for $i=1, 2, \cdots$, $u_{0}$ is $\mathbb{F}^0$-adapted and $u_{i}$ is $\mathbb{F}^i\vee \mathbb{F}^0$-adapted
represents ``\emph{mainly depends}" or ``\emph{Asymptotically depends}" in above sense. For simplicity of notation, we
define $\overline{\mathbb{F}}^i:=\mathbb{F}^i\vee \mathbb{F}^0$ for $i=1, 2, \cdots$.

Define the decentralized admissible strategy spaces of the leader and the $i^{th}$ follower, respectively, as follows:
\begin{equation*}
\begin{aligned}
\mathcal{U}_0&:=\Big\{u_{0} \Big|u_{0}: \Omega \times[0, T] \longrightarrow U_0 \textrm{ is } \mathbb{F}^0\textrm{-adapted and }
\mathbb{E}\int_0^T |u_0(t)|^2 dt <\infty \Big\},\\
\mathcal{U}_i&:=\Big\{u_{i} \Big|u_{i}: \Omega \times[0, T]\times \mathcal{U}_0 \longrightarrow U_i, u_{i}(\cdot, u_{0}) \textrm{ is }
\overline{\mathbb{F}}^i\textrm{-adapted, and }\\
&\qquad\qquad\ \mathbb{E}\int_0^T |u_i(t, u_0)|^2 dt <\infty \textrm{ for any } u_{0}\in \mathcal{U}_0 \Big\},\\
\end{aligned}
\end{equation*}

As $N \longrightarrow +\infty$, the leader adopts decentralized (open-loop) strategy $u_{0}$ which is $\mathbb{F}^0$-adapted.
We will next consider the limit state-average of all followers. Followers are first assumed to adopt exchangeable centralized decisions
$u^c=\{{u}^c_{i}\}_{i=1}^{N}\in \mathcal{U}^c_1\times \mathcal{U}^c_2\times \cdots\times \mathcal{U}^c_N$, respectively. Then, according to
$(\ref{the ith follower's state euqation})$, we know
\begin{equation}
\left\{
             \begin{array}{lr}
             d x_i=b_1\left(t, x_i, u^c_i, x_{0}, u_{0}, x^{(N)}\right) dt+\widetilde{\sigma} dW_i, & 0\le t \le T, \\
             x_i(0)=\xi_i,
             \end{array}
\right.
\end{equation}
where  $x_i=\mathcal{L}_i(u_{0}, u^c), x_0=\mathcal{L}_0(u_{0}, u^c), x^{(N)}=\mathcal{L}_{(N)}(u_{0}, u^c)$. When $N\longrightarrow+\infty$, we get
\begin{equation}\label{a}
\left\{
             \begin{array}{lr}
             d \bigg(\lim\limits_{N \longrightarrow +\infty} \frac{1}{N}\sum\limits_{i=1}^N x_i\bigg)=\lim\limits_{N \longrightarrow +\infty}
             \bigg[\frac{1}{N}\sum\limits_{i=1}^N b_1\left(t, x_i, u^c_i, x_{0}, u_{0}, x^{(N)}\right)\bigg] dt+\lim\limits_{N \longrightarrow
             +\infty}\bigg(\frac{1}{N}\sum\limits_{i=1}^N \widetilde{\sigma} dW_i\bigg),\\
             \lim\limits_{N \longrightarrow +\infty}\bigg(\frac{1}{N}\sum\limits_{i=1}^N x_i(0)\bigg)=\lim\limits_{N \longrightarrow +\infty}
             \bigg(\frac{1}{N}\sum\limits_{i=1}^N \xi_i\bigg),
             \end{array}
\right.
\end{equation}
From the strong law of large numbers, $\lim\limits_{N \longrightarrow +\infty}\frac{1}{N}\sum\limits_{i=1}^N \xi_i=\bar{x}$ and
$\lim\limits_{N \longrightarrow +\infty}\frac{1}{N}\sum\limits_{i=1}^N \widetilde{\sigma} dW_i$ is negligible.

For $i=1, 2, \cdots$, we denote $\left(\zeta_0(\cdot), \zeta_i(\cdot)\right)$ by $\eta_i(\cdot)$. And define
$\mathcal {G}_t =\bigcap\limits_{n=1}^{\infty} \sigma\big(\eta_i(s), i\ge n, 0\le s\le t\big), t\in[0,T],$ which is the tail $\sigma$-algebra of
$\{\eta_i(s), 0\le s\le t, i=1, 2, \cdots\}$. So, $\mathbb{F}^0_t\subseteq \mathcal {G}_t$ for $t\in [0,T]$. Based on the fact that the double limit
is equal to the repeated limit and the tailor-made propagation chaos analysis on the exchangeable sequence of random variables (for example, Theorem 2.1
in Chapter 2 of \cite{CD2019-2}), we derive the following heuristically.
\begin{equation}\label{e}
\begin{aligned}
&\lim_{N \longrightarrow +\infty} \frac{1}{N}\sum\limits_{i=1}^{N}b_1\left(t, x_i, u^c_{i}, x_{0}, u_{0}, x^{(N)}\right)\\
&={\lim_{N \longrightarrow +\infty} \frac{1}{N}\sum\limits_{i=1}^{N}\left[\lim_{N \longrightarrow +\infty}b_1\left(t, a, x_{0}, u_{0}, x^{(N)}
\right)\right]_{a=\left(x_i, u^c_{i}\right)}},\\
&=\lim_{N \longrightarrow +\infty} \frac{1}{N}\sum\limits_{i=1}^{N}b_1\left(t, x_i, u^c_{i}, \breve{x}_{0}, u_{0}, \breve{z}\right),\\
&={\mathbb{E}\left[b_1\left(t, \breve{x}_1, u_{1}, \breve{x}_{0}, u_{0}, \breve{z}\right)|\mathcal {G}_t\right]},\\
\end{aligned}
\end{equation}
where
\begin{equation}\label{bbb}
\begin{aligned}
&\breve{x}_1=\lim\limits_{N \longrightarrow +\infty} x_1=\lim\limits_{N \longrightarrow +\infty} \mathcal{L}_1(u_{0}, u^c),\ u_{1}=\lim_{N
\longrightarrow +\infty} u^c_{1},\\
&\breve{x}_{0}= \lim_{N \longrightarrow +\infty}x_{0}=\lim\limits_{N \longrightarrow +\infty} \mathcal{L}_0(u_{0}, u^c),\\
&\breve{z}=\lim_{N \longrightarrow +\infty} x^{(N)}=\lim_{N \longrightarrow +\infty}\mathcal{L}_{(N)}(u_{0}, u^c).
\end{aligned}
\end{equation}
In the last equality of $(\ref{e})$, we choose follower 1 as the representative due to the symmetric position of the followers. Of course, we could
choose other followers to represent as well.

\begin{mylem}\label{Kolmogorov's zero-one law}
For any $D\in \bigcap\limits_{n=1}^{\infty} \sigma\big(\zeta_i(s), i\ge n, 0\le s\le t\big)$, we have $P(D)=0$ or $1$.
\end{mylem}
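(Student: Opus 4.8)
The plan is to recognize the displayed $\sigma$-field as the tail $\sigma$-field of an independent sequence and then invoke the classical Kolmogorov zero-one law. Fix $t\in[0,T]$ and set $\mathcal{A}_i:=\sigma\big(\zeta_i(s), 0\le s\le t\big)$ for $i=1,2,\cdots$. The first step is to verify that $\{\mathcal{A}_i\}_{i\ge 1}$ is an independent family of sub-$\sigma$-fields. Since $\zeta_i(\cdot)=\big(\xi_i, W_i(\cdot)\big)$, the $\sigma$-field $\mathcal{A}_i$ is generated by $\xi_i$ together with $\{W_i(s):0\le s\le t\}$. By (H2.1)(i) the initial states $\{\xi_i\}_{i\ge 1}$ are i.i.d. and hence independent; by (H2.1)(ii) the Brownian motions $\{W_i\}_{i\ge 1}$ are mutually independent; and the entire collection $\{\xi_i\}_{i\ge 1}\cup\{W_i\}_{i\ge 1}$ is mutually independent. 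Consequently the generated $\sigma$-fields $\{\mathcal{A}_i\}_{i\ge 1}$ are independent.

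Next I would identify the object appearing in the statement. For every $n\ge 1$ one has $\sigma\big(\zeta_i(s), i\ge n, 0\le s\le t\big)=\sigma\big(\bigcup_{i\ge n}\mathcal{A}_i\big)$, so that
\[
\bigcap_{n=1}^{\infty}\sigma\big(\zeta_i(s), i\ge n, 0\le s\le t\big)=\bigcap_{n=1}^{\infty}\sigma\Big(\bigcup_{i\ge n}\mathcal{A}_i\Big)=:\mathcal{T}
\]
is precisely the tail $\sigma$-field of the independent sequence $(\mathcal{A}_i)_{i\ge 1}$.

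Finally I would establish the triviality of $\mathcal{T}$. Let $D\in\mathcal{T}$. For each fixed $n$, $D\in\sigma\big(\bigcup_{i> n}\mathcal{A}_i\big)$, which by the independence established in the first step is independent of $\sigma(\mathcal{A}_1,\cdots,\mathcal{A}_n)$; hence $D$ is independent of $\sigma(\mathcal{A}_1,\cdots,\mathcal{A}_n)$ for every $n$. The $\sigma$-fields $\sigma(\mathcal{A}_1,\cdots,\mathcal{A}_n)$ increase with $n$, so their union is a $\pi$-system generating $\sigma\big(\bigcup_{i\ge 1}\mathcal{A}_i\big)$; a monotone-class (Dynkin $\pi$-$\lambda$) argument then upgrades this to the independence of $D$ from all of $\sigma\big(\bigcup_{i\ge 1}\mathcal{A}_i\big)$. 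Since $\mathcal{T}\subseteq\sigma\big(\bigcup_{i\ge 1}\mathcal{A}_i\big)$ and $D\in\mathcal{T}$, the event $D$ is independent of itself, whence $P(D)=P(D\cap D)=P(D)^2$ and therefore $P(D)\in\{0,1\}$.

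The only genuinely substantive point is the independence of $\{\mathcal{A}_i\}_{i\ge1}$ in the first step, and this is routine given (H2.1); the remaining two steps constitute the standard statement and proof of the Kolmogorov zero-one law, which one could alternatively invoke directly as a cited result.
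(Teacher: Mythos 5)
Your proof is correct: the paper states this lemma without proof, invoking the classical Kolmogorov zero-one law by name, and your argument is precisely the standard proof of that law, with the only paper-specific content being the verification via (H2.1) that the $\sigma$-fields $\mathcal{A}_i=\sigma\big(\xi_i, W_i(s): 0\le s\le t\big)$, $i\ge 1$, are independent — which you handle correctly. Nothing is missing; citing the classical theorem directly, as you note at the end, is exactly what the paper does.
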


According to Lemma $\ref{Kolmogorov's zero-one law}$, we have
\[{\mathbb{E}\left[b_1\left(t, \breve{x}_1, u_{1}, \breve{x}_{0}, u_{0}, \breve{z}\right)|\mathcal {G}_t\right]}={\mathbb{E}\left[b_1
\left(t, \breve{x}_1, u_{1}, \breve{x}_{0}, u_{0}, \breve{z}\right)|\mathbb {F}^0_t\right]}.\]

Take $N \longrightarrow +\infty$ for the leader's state equation $(\ref{the leader's state euqation})$ and the follower's state equation
$(\ref{the ith follower's state euqation})$ in the $1+N$-type leader-follower game. And according to $(\ref{bbb})$, we get the leader's limit
state process $\breve{x}_0$ and the follower's limit state process $\breve{x}_{i}$  satisfy the following equations, respectively.
\begin{equation}\label{the limit leader's state equation}
\left\{\begin{array}{lr}
d \breve{x}_0=b_0\left(t, \breve{x}_0, u_{0}, \breve{z}\right) dt+{\sigma}_0 dW_0,\\
\breve{x}_0(0)=\xi_0.
\end{array}
\right.
\end{equation}

\begin{equation}\label{the limit ith follower's state equation}
\left\{\begin{array}{lr}
d \breve{x}_i=b_1\left(t, \breve{x}_i, u_{i}, \breve{x}_{0}, u_{0}, \breve{z}\right) dt+\widetilde{\sigma} dW_1,\\
\breve{x}_i(0)=\xi_i.
\end{array}
\right.
\end{equation}
for $i=1, 2, \cdots.$ Finally, based on the previous analysis, the limit state-average of all followers $\breve{z}$ satisfies the following equation:
\begin{equation}\label{b}
\left\{\begin{array}{lr}
d \breve{x}_0(t)=b_0\left(t, \breve{x}_0(t), u_{0}(t), \breve{z}(t)\right) dt+\sigma_0 dW_0(t),\\
d \breve{x}_1(t)=b_1\left(t, \breve{x}_1(t), u_{1}(t), \breve{x}_{0}(t), u_{0}(t), \breve{z}(t)\right) dt+\widetilde{\sigma} dW_1(t),\\
d\breve{z}(t)=\mathbb{E}\left[b_1\left(t, \breve{x}_1(t), u_{1}(t), \breve{x}_{0}(t), u_{0}(t), \breve{z}(t)\right)|{\mathbb{F}^0_t}\right]dt,\\
\breve{x}_0(0)=\xi_0,\quad \breve{x}_1(0)=\xi_1,\quad \breve{z}(0)=\bar{x}.
\end{array}
\right.
\end{equation}
We observe that $\breve{z}(t)=\mathbb{E}\big[\breve{x}_1(t)|\mathbb{F}^0_t\big] \text{ for } t\in[0,T]$. Therefore, equation $(\ref{b})$ can be
reduced to
\begin{equation}\label{consistency condition}
\left\{\begin{array}{lr}
d \breve{x}_0(t)=b_0\left(t, \breve{x}_0(t), u_{0}(t), \mathbb{E}\big[\breve{x}_1(t)|\mathbb{F}^0_t\big]\right) dt+\sigma_0 dW_0(t),\\
d \breve{x}_1(t)=b_1\left(t, \breve{x}_1(t), u_{1}(t), \breve{x}_{0}(t), u_{0}(t), \mathbb{E}\big[\breve{x}_1(t)|\mathbb{F}^0_t\big]\right) dt
+\widetilde{\sigma} dW_1(t),\\
\breve{z}(t)=\mathbb{E}\big[\breve{x}_1(t)|\mathbb{F}^0_t\big],\quad 0\le t\le T,\\
\breve{x}_0(0)=\xi_0,\quad \breve{x}_1(0)=\xi_1.
\end{array}
\right.
\end{equation}

\section{Heuristic asymptotic limit of the centralized equilibrium tuple}

\subsection{Limit of the centralized optimal Nash responses for followers}\label{section 2.4.1}

When $N$ is infinite, given the pre-committed decentralized admissible control $u_0\in\mathcal{U}_0$ of the leader, recall that the centralized
optimal Nash response of the $i^{th}$ follower is:
\begin{equation}\label{centralized optimal nash response of i follower}
\begin{aligned}&{\mathcal{I}}_{i}(u_{0}) \in \arg\min\limits_{u^c_i\in\mathcal{U}^c_i} J_i^{N}\left(u^c_i, {\mathcal{I}}_{-i}(u_{0}), u_{0}\right),\\
\end{aligned}\end{equation}
where
\begin{equation}\begin{aligned}
&J_i^{N}\left(u^c_i, {\mathcal{I}}_{-i}(u_{0}), u_{0}\right)\\
&=\mathbb{E}\bigg\{\int_0^T g_1\left(t, \mathcal{L}_{i}(u_{0}, {\mathcal{I}}_{-i}(u_{0}), u^c_{i}), u^c_i, \mathcal{L}_{0}(u_{0},
{\mathcal{I}}_{-i}(u_{0}), u^c_{i}), u_{0}, {\mathcal{L}}_{(N)}(u_{0}, {\mathcal{I}}_{-i}(u_{0}), u^c_{i})\right) dt\\
&+G_1\left(\mathcal{L}_{i}(u_{0}, {\mathcal{I}}_{-i}(u_{0}), u^c_{i})(T)\right)\bigg\},\\
\end{aligned}\end{equation}
for $i=1, 2, \cdots.$

When each follower adopts the centralized optimal Nash response strategy, the state-average of all followers is
${\mathcal{L}}_{(N)}(u_{0}, {\mathcal{I}}(u_{0}))$. When the $i^{th}$ follower adopts an arbitrary centralized admissible strategy
$u^c_i\in \mathcal {U}^c_i$ and all the rest of the followers adopt the centralized optimal Nash response strategies, the state-average of the
followers is ${\mathcal{L}}_{(N)}(u_{0}, {\mathcal{I}}_{-i}(u_{0}), u^c_{i})$. Due to the symmetric status of the followers and the weak-coupling
between their state equations as well as the fact that the number of followers tends to be infinite, the effect of a single follower's control
perturbation on the state-average over the entire follower population can be almost negligible when $N\longrightarrow+\infty$. So we have
$$\lim_{N\longrightarrow+\infty}
{\mathcal{L}}_{(N)}(u_{0}, {\mathcal{I}}_{-i}(u_{0}), u^c_{i})=\lim_{N\longrightarrow+\infty}
{\mathcal{L}}_{(N)}(u_{0}, {\mathcal{I}}_{-i}(u_{0}), {\mathcal{I}}_{i}(u_{0})).$$
And we assume
$$\lim\limits_{N\longrightarrow+\infty}
{\mathcal{L}}_{(N)}(u_{0}, {\mathcal{I}}(u_{0}))=\overline{\mathcal{L}}(u_{0}, {{\overline{\mathcal{I}}}}(u_{0}))=\breve{z}(u_{0},
{{\overline{\mathcal{I}}}}(u_{0})),$$
where $\overline{\mathcal{I}}(u_0):=\lim\limits_{N \longrightarrow +\infty}{\mathcal{I}}(u_0)$ is the limit of the centralized optimal Nash
responses for all followers.

Based on the analysis of the followers' limit state-average in the previous Subsection $\ref{section 2.3}$, it follows that
$\breve{z}(u_{0}, {\overline{\mathcal{I}}}(u_{0}))$ is characterized by
\begin{equation}\label{c}
\left\{\begin{array}{lr}
d {\breve{x}}_0(t)=b_0\left(t, {\breve{x}}_0(t), u_{0}(t),  \breve{z}(t)\right) dt+\sigma_0 dW_0(t),\\
d \breve{x}_1(t)=b_1\left(t, \breve{x}_1(t), {{\overline{\mathcal{I}}}}_1(u_{0})(t), \breve{x}_{0}(t), u_{0}(t), \breve{z}(t)\right) dt
+\widetilde{\sigma} dW_1(t),\\
d\breve{z}(t)=\mathbb{E}\left[b_1\left(t, \breve{x}_1(t), {{\overline{\mathcal{I}}}}_1(u_{0})(t), \breve{x}_{0}(t), u_0(t),
\breve{z}(t)\right)|{\mathbb{F}^0_t}\right]dt,\\
\breve{x}_0(0)=\xi_0,\quad \breve{x}_1(0)=\xi_1,\quad \breve{z}(0)=\bar{x}.
\end{array}
\right.
\end{equation}
We define $\breve{x}_0:={\overline{\mathcal{L}}}_{0}(u_{0}, \breve{z}(u_{0}, {{\overline{\mathcal{I}}}}(u_{0})))$, where $\breve{x}_0$ is
the solution of equation $(\ref{c})$. Ane we also set $x_i:={\overline{\mathcal{L}}}_{i}(u_{0},
\breve{z}(u_{0}, {{\overline{\mathcal{I}}}}(u_{0})), u_i),$ where $x_i$ satisfies the following equation:
\begin{equation}
\left\{
             \begin{array}{lr}
             d x_i(t)=b_1\left(t, x_i(t), u_i(t),  {\overline{\mathcal{L}}}_{0}(u_{0}, \breve{z}(u_{0},
             {{\overline{\mathcal{I}}}}(u_{0})))(t), u_0(t), \breve{z}(u_{0}, {{\overline{\mathcal{I}}}}(u_{0}))(t)\right) dt\\
             \qquad\qquad+\widetilde{\sigma} dW_i(t),\ \ 0\le t \le T, \\
             x_i(0)=\xi_i,
             \end{array}
\right.
\end{equation}
for $\forall u_i\in \mathcal{U}_i,\ i=1, 2, \cdots.$

From the previous analysis of the centralized and decentralized strategies, we have $\lim\limits_{N\longrightarrow+\infty} u^c_i=u_i$.
Therefore, we derive heuristically
$$\lim_{N\longrightarrow+\infty}
{\mathcal{L}}_{i}(u_{0}, {\mathcal{I}}_{-i}(u_{0}), u^c_{i})=\lim_{N\longrightarrow+\infty}
{\mathcal{L}}_{i}(u_{0}, {\mathcal{I}}_{-i}(u_{0}), u_{i})=
{\overline{\mathcal{L}}}_{i}(u_{0}, \breve{z}(u_{0}, {{\overline{\mathcal{I}}}}(u_{0})), u_i),$$
$$\lim_{N\longrightarrow+\infty}
{\mathcal{L}}_{0}(u_{0}, {\mathcal{I}}_{-i}(u_{0}), u^c_{i})=\lim_{N\longrightarrow+\infty}
{\mathcal{L}}_{0}(u_{0}, {\mathcal{I}}_{-i}(u_{0}), u_{i})=
{\overline{\mathcal{L}}}_{0}(u_{0}, \breve{z}(u_{0}, {{\overline{\mathcal{I}}}}(u_{0}))).$$

So, by taking $N\longrightarrow+\infty$ for $(\ref{centralized optimal nash response of i follower})$, we get that the limit of the
centralized optimal Nash responses for followers $\overline{\mathcal{I}}(u_0)$ satisfies:
\begin{equation}
\begin{aligned}
\overline{\mathcal{I}}_{i}(u_{0}) &\in \arg\min\limits_{u_i\in \mathcal{U}_i} \mathbb{E}\bigg\{\int_0^T g_1\left(t,
\overline{\mathcal{L}}_{i}(u_{0}, \breve{z}(u_{0}, {{\overline{\mathcal{I}}}}(u_{0})), u_{i}), u_i,
\overline{\mathcal{L}}_{0}(u_{0}, \breve{z}(u_{0}, {{\overline{\mathcal{I}}}}(u_{0}))), u_{0}, \breve{z}(u_{0},
{{\overline{\mathcal{I}}}}(u_{0}))\right) dt\\
&+G_1\left(\overline{\mathcal{L}}_{i}(u_{0}, \breve{z}(u_{0}, {{\overline{\mathcal{I}}}}(u_{0})), u_{i})(T)\right)\bigg\}.
\end{aligned}\end{equation}

We point out the following two schemes to analyze consistency conditions:

Consistency scheme 1: Firstly, given the followers' state-average limit $m$ which is to be determined, the decentralized optimal control
problems for all followers are obtained by substituting $m$ for the $x^{(N)}$ in the state equations and the cost functionals. Then,
solving the decentralized optimal control problem for the followers, we obtain the corresponding optimal controls
$\big\{u^{*}_i(m)\big\}_{i=1}^{\infty}$ and optimal state trajectories $\big\{x^{*}_i(u_i^{*}(m))\big\}_{i=1}^{\infty}$. Ultimately, the
fixed-point property $m=\lim\limits_{N\longrightarrow+\infty} \frac{1}{N} \sum\limits_{i=1}^N x^{*}_i(u_i^{*}(m))$ needs to be satisfied to
determine $m$.

Consistency scheme 2: Firstly, given the follower's centralized optimal Nash response limit ${{\overline{\mathcal{I}}}}(u_{0})$ which is to be
determined, the corresponding follower's realized (centralized optimal Nash response limit of the followers are already applied) state
$\big\{\breve{x}_i\big\}_{i=1}^{\infty}$ and realized state-average limit $\breve{z}(u_{0}, {{\overline{\mathcal{I}}}}(u_{0}))$ can be obtained.
Then, by using the realized state-average limit of the followers, the related decentralized optimal control problems of the followers can
be constructed. By solving the decentralized optimal control problems, we obtain the optimal controls
$\big\{u^{*}_i(u_{0}, {{\overline{\mathcal{I}}}}(u_{0}))\big\}_{i=1}^{\infty}$. Finally, we require the decentralized optimal controls
$\big\{u^{*}_i(u_{0}, {{\overline{\mathcal{I}}}}(u_{0}))\big\}_{i=1}^{\infty}$ to satisfy the fixed-point property
$\big\{u^{*}_i(u_{0}, {{\overline{\mathcal{I}}}}(u_{0}))\big\}_{i=1}^{\infty}={{\overline{\mathcal{I}}}}(u_{0})$ to determine
${{\overline{\mathcal{I}}}}(u_{0})$.

The mean field Stackelberg game we study involves an infinite number of followers. Therefore, these two schemes are essentially equivalent for
studying the follower's centralized optimal Nash response limit, but may produce different analytical complexity. Usually, Scheme 1 is often
applied that starts from the limit of realized \emph{state-average}, then a fixed-point cycle argument returning to it. Here, we adopt Scheme 2,
starting from the realized \emph{control} of a generic follower, then a fixed-point argument recovering it.

More precisely, we may implement the following detailed steps.

\textbf{Step 1}: For a pre-committed $u_0,$ fix $u_{1}={{\overline{\mathcal{I}}}}_1(u_{0}),$ as implied in $(\ref{c})$. That is, the control
pair $(u_0, \overline{\mathcal{I}}(u_0))$ is fixed with the limit Nash mapping operator $\overline{\mathcal{I}}$ is to be determined.

\textbf{Step 2}: Construct the associated realized state for a generic follower, together with the state of the leader
\begin{equation}
\left\{\begin{array}{lr}
d {\breve{x}}_0(t)=b_0\left(t, {\breve{x}}_0(t), u_{0}(t),  \mathbb{E}\big[{\breve{x}}_1(t)|{\mathbb{F}^0_t}\big]\right) dt+\sigma_0 dW_0(t),\\
d \breve{x}_1(t)=b_1\left(t, \breve{x}_1(t), {\overline{\mathcal{I}}}_1(u_{0})(t), \breve{x}_{0}(t), u_{0}(t), \mathbb{E}\big[{\breve{x}}_1(t)|
{\mathbb{F}^0_t}\big]\right) dt+\widetilde{\sigma} dW_1(t),\\
\breve{x}_0(0)=\xi_0,\quad \breve{x}_1(0)=\xi_1.
\end{array}
\right.
\end{equation}

\textbf{Step 3}: Based on the analysis in Subsection $\ref{section 2.3}$, construct the related realized state-average limit of the followers
$$\breve{z}(t)=\mathbb{E}\big[\breve{x}_1(t)|\mathbb{F}^0_t\big],\ \ t\in[0,T].$$

\textbf{Step 4}: Construct the related decentralized optimal control problem for the generic $i^{th}$ follower:
\begin{equation}
\begin{aligned}
u^{*}_i(u_{0}, {{\overline{\mathcal{I}}}}(u_{0})) &\in \arg\min\limits_{u_i\in \mathcal{U}_i} \mathbb{E}\bigg\{\int_0^T g_1\big(t,
\overline{\mathcal{L}}_{i}(u_{0}, \breve{z}(u_{0}, {{\overline{\mathcal{I}}}}(u_{0})), u_{i}), u_i, \overline{\mathcal{L}}_{0}(u_{0},
\breve{z}(u_{0}, {{\overline{\mathcal{I}}}}(u_{0}))), u_{0},\\
&\quad \breve{z}(u_{0}, {{\overline{\mathcal{I}}}}(u_{0}))\big) dt+G_1\left(\overline{\mathcal{L}}_{i}(u_{0}, \breve{z}(u_{0},
{{\overline{\mathcal{I}}}}(u_{0})), u_{i})(T)\right)\bigg\}.
\end{aligned}\end{equation}
where the $i^{th}$ follower's decentralized controlled state $x_i={\overline{\mathcal{L}}}_{i}(u_{0}, \breve{z}(u_{0},
{{\overline{\mathcal{I}}}}(u_{0})), u_i)$ is given by the following equation
\begin{equation}
\left\{
             \begin{array}{lr}
             d x_i(t)=b_1\bigg(t, x_i(t), u_i(t),  {\overline{\mathcal{L}}}_{0}(u_{0}, \breve{z}(u_{0},
             {{\overline{\mathcal{I}}}}(u_{0})))(t), u_0(t),\\
             \qquad\qquad \breve{z}(u_{0}, {{\overline{\mathcal{I}}}}(u_{0}))(t)\bigg) dt+\widetilde{\sigma} dW_i(t),\ \ 0\le t \le T, \\
             x_i(0)=\xi_i,
             \end{array}
\right.
\end{equation}
for $i=1, 2, \cdots.$ Recall that $\breve{x}_0={\overline{\mathcal{L}}}_{0}(u_{0}, \breve{z}(u_{0}, {{\overline{\mathcal{I}}}}(u_{0})))$.

\textbf{Step 5}: Determine ${{\overline{\mathcal{I}}}}(u_{0})$ by the fixed-point property $\big\{u^{*}_i(u_{0},
{{\overline{\mathcal{I}}}}(u_{0}))\big\}_{i=1}^{\infty}={{\overline{\mathcal{I}}}}(u_{0})$.
$\Box$

Next, we illustrate the advantages of using Scheme 2 to analyze the mean field game between infinite followers:

(1) According to the Steps 1-4 of Scheme 2 above, in the mean field Stackelberg game with infinite followers, the state dynamic of the
decentralized optimal control problem of the generic $i^{th}$ follower $x_i(\cdot)$ is affected by the realized state-average limit of the
followers $\breve{z}(\cdot)$, which has the form of conditional expectations, i.e. $\breve{z}(t)=\mathbb{E}\big[\breve{x}_1(t)|\mathbb{F}^0_t\big]$,
for any
$t\in[0,T]$. This cannot be clearly reflected when using Scheme 1 to analyze the mean field game between followers (See \cite{BCY2016}).

(2) Scheme 2 provides an alternative to the fixed point step by formulating it in a space of controls instead of flows of empirical measures of the
state variables.

\subsection{Limit of the centralized optimal strategy of the leader}\label{the iterative limit of the leader}

\hspace{5mm}As $N \longrightarrow +\infty,$ by the symmetric structure among all minor followers, it is anticipated that the centralized (open-loop)
equilibrium strategy $u_0^c \text{ which is } \mathbb{F}\text{-adapted}$ converges to decentralized (open-loop) equilibrium strategy
$u_{0} \text{ which is } \mathbb{F}^0\text{-adapted}$. According to $(\ref{centralized equilibrium condition 2})$, the leader's centralized
optimal equilibrium strategy $u_{0}^{c, *}$ is
\begin{equation}
\begin{aligned}
&u_{0}^{c, *} \in \arg\min\limits_{u_{0}^{c}\in\mathcal{U}^c_0} J_0^{N}\left(u_0^c, \mathcal{I}(u_0^c)\right).\\
\end{aligned}
\end{equation}

As $N \longrightarrow +\infty$, based on the follower's centralized optimal Nash response limit $\overline{\mathcal{I}}(u_0)$ obtained in the
Subsection $\ref{section 2.4.1}$ and the continuity of mappings, we heuristically derive that the leader's decentralized optimal control
$u_0^{\dag}\in \mathcal{U}_0$ satisfies the following

\begin{equation}
\begin{aligned}
&\overline{J}_0\left(u_0^{\dag}, \overline{\mathcal{I}}(u_0^{\dag})\right) \leq \overline{J}_0\left(u_0,
\overline{\mathcal{I}}(u_0)\right)\ \ \ \forall u_{0} \in {\mathcal{U}}_{0}.
\end{aligned}
\end{equation}
And the cost functional of the leader's decentralized control problem $\overline{J}_0\left(u_0, \overline{\mathcal{I}}(u_0)\right)$ is defined
as follows:
\begin{equation}\begin{aligned}
\overline{J}_0\left(u_{0}, \overline{\mathcal{I}}(u_{0})\right)&=\mathbb{E}\left\{\int_0^T g_0\left(t, \breve{x}_0(t), u_{0}(t),
\breve{z}(t)\right) dt+G_0\left(\breve{x}_0(T)\right)\right\}\\
&=\mathbb{E}\bigg\{\int_0^T g_0\left(t, \overline{\mathcal{L}}_{0}(u_{0}, \breve{z}(u_{0}, \overline{\mathcal{I}}(u_{0})))(t), u_{0}(t),
\breve{z}(u_{0}, \overline{\mathcal{I}}(u_{0}))(t)\right) dt\\
&\quad +G_0\left(\overline{\mathcal{L}}_{0}(u_{0}, \breve{z}(u_{0}, \overline{\mathcal{I}}(u_{0})))(T)\right)\bigg\},
\end{aligned}\end{equation}
where $\breve{x}_0$ and $\breve{z}$ are the solutions of the following equation:
\begin{equation}
\left\{\begin{array}{lr}
d {\breve{x}}_0(t)=b_0\left(t, {\breve{x}}_0(t), u_{0}(t),  \breve{z}(t)\right) dt+\sigma_0 dW_0(t),\\
d \breve{x}_1(t)=b_1\left(t, \breve{x}_1(t), {{\overline{\mathcal{I}}}}_1(u_{0})(t), \breve{x}_{0}(t), u_{0}(t), \breve{z}(t)\right) dt
+\widetilde{\sigma} dW_1(t),\\
d\breve{z}(t)=\mathbb{E}\left[b_1\left(t, \breve{x}_1(t), {{\overline{\mathcal{I}}}}_1(u_{0})(t), \breve{x}_{0}(t), u_0(t),
\breve{z}(t)\right)|{\mathbb{F}^0_t}\right]dt,\\
\breve{x}_0(0)=\xi_0,\quad \breve{x}_1(0)=\xi_1,\quad \breve{z}(0)=\bar{x}.
\end{array}
\right.
\end{equation}

Therefore, the leader's decentralized control problem is to find $u_0^{\dag}\in \mathcal{U}_0$ such that
\begin{equation}
\overline{J}_0\left(u_0^{\dag}, \overline{\mathcal{I}}(u_0^{\dag})\right) \leq \overline{J}_0\left(u_0, \overline{\mathcal{I}}(u_0)\right),
\ \forall u_0\in \mathcal{U}_0.
\end{equation}
After obtaining $u_0^{\dag}\in \mathcal{U}_0$, we derive the consistency condition for the leader-follower-Nash equilibrium in the mean field
Stackelberg game is
\begin{equation}
\left\{\begin{array}{lr}
d {\breve{x}}_0(t)=b_0\left(t, {\breve{x}}_0(t), u_{0}^{\dag}(t),  \breve{z}(t)\right) dt+\sigma_0 dW_0(t),\\
d \breve{x}_1(t)=b_1\left(t, \breve{x}_1(t), {{\overline{\mathcal{I}}}}_1(u_{0}^{\dag})(t), \breve{x}_{0}(t), u_{0}^{\dag}(t), \breve{z}(t)\right) dt
+\widetilde{\sigma} dW_1(t),\\
d\breve{z}(t)=\mathbb{E}\left[b_1\left(t, \breve{x}_1(t), {{\overline{\mathcal{I}}}}_1(u_{0}^{\dag})(t), \breve{x}_{0}(t), u_0^{\dag}(t),
\breve{z}(t)\right)\Big|{\mathbb{F}^0_t}\right]dt,\\
\breve{x}_0(0)=\xi_0,\quad \breve{x}_1(0)=\xi_1,\quad \breve{z}(0)=\bar{x}.
\end{array}
\right.
\end{equation}

\section{The leader's and the followers' decentralized optimal control problems}\label{section 5}

\subsection{The maximum principles for the followers' decentralized optimal control problems}\label{subsection 5.1}

Arbitrarily given the leader's decentralized control $u_0\in\mathcal{U}_0$, and assuming that ${{\overline{\mathcal{I}}}}(u_{0})$ is the
decentralized optimal control for the followers, it follows that the related realized state-average limit of the followers is
\[\breve{z}(t)=\breve{z}(u_{0}, {{\overline{\mathcal{I}}}}(u_{0}))(t)=\mathbb{E}\big[\breve{x}_i(t)|\mathbb{F}^0_t\big],\ \ t\in[0,T],\]
where we choose the generic $i^{th}$ follower as the representative. And $\breve{x}_i$ is the realized state of the generic $i^{th}$ follower
and satisfy the following equation:
\begin{equation}\label{d}
\left\{\begin{array}{lr}
d {\breve{x}}_0(t)=b_0\left(t, {\breve{x}}_0(t), u_{0}(t), \mathbb{E}\big[{\breve{x}}_i(t)|{\mathbb{F}^0_t}\big]\right) dt+\sigma_0 dW_0(t),\\
d \breve{x}_i(t)=b_1\left(t, \breve{x}_i(t), {\overline{\mathcal{I}}}_i(u_{0})(t), \breve{x}_{0}(t), u_{0}(t), \mathbb{E}\big[{\breve{x}}_i(t)|
{\mathbb{F}^0_t}\big]\right) dt+\widetilde{\sigma} dW_i(t),\\
\breve{x}_0(0)=\xi_0,\quad \breve{x}_i(0)=\xi_i.
\end{array}
\right.
\end{equation}
In the previous section, we also denote the solution $\breve{x}_0$ of equation $(\ref{d})$ as ${\overline{\mathcal{L}}}_{0}(u_{0},
\breve{z}(u_{0}, {{\overline{\mathcal{I}}}}(u_{0})))$, denoting the relevant realized state of the leader. With the leader's realized
state $\breve{x}_0$ and the followers' realized state-average limit $\breve{z}$, we can give the state equation of the decentralized optimal
control problem for the generic $i^{th}$ follower as follows:
\begin{equation}
\left\{
             \begin{array}{lr}
             d x_i(t)=b_1\bigg(t, x_i(t), u_i(t), \breve{x}_0(t), u_0(t), \breve{z}(t)\bigg) dt+\widetilde{\sigma} dW_i(t),\ \ 0\le t \le T, \\
             x_i(0)=\xi_i,
             \end{array}
\right.
\end{equation}
for $i=1, 2, \cdots.$

Similarly, we can give the cost functional that needs to be minimized by the generic $i^{th}$ follower in his decentralized optimal control problem:
\begin{equation}
\begin{aligned}
\overline{J}_{i}(u_i, u_{0})=\mathbb{E}\bigg\{\int_0^T g_1\left(t, x_i(t), u_i(t), \breve{x}_0(t), u_{0}(t), \breve{z}(t)\right) dt
+G_1\left(x_i(T)\right)\bigg\},
\end{aligned}\end{equation}
where $u_i\in\mathcal{U}_i,$ for $i=1, 2, \cdots.$

Ultimately, we need ${{\overline{\mathcal{I}}}}(u_{0})$ to satisfy the fixed-point property, i.e.,
\[{{\overline{\mathcal{I}}}}_i(u_{0})\in \arg\min\limits_{u_i\in\mathcal{U}_i} \overline{J}_{i}(u_i, u_{0}),\]
for $\forall i=1, 2, \cdots.$

Define the Hamiltonian function for the generic $i^{th}$ follower as follows:
\begin{equation}
\begin{aligned}
H_1\left(t, x_i, u_i, x_0, u_0, z, p_i\right):= & \left\langle p_i, b_1\left(t, x_i, u_i, x_0, u_0, z\right)\right\rangle
+g_1\left(t, x_i, u_i, x_0, u_0, z\right),
\end{aligned}
\end{equation}
where $H_1:[0, T] \times \mathbb{R}^n \times \mathbb{R}^m \times \mathbb{R}^k \times \mathbb{R}^{m_0} \times \mathbb{R}^n \times \mathbb{R}^n
\rightarrow \mathbb{R}$, for $i=1, 2, \cdots.$

Then, it follows from the maximum principle of classical stochastic optimal control problems (See \cite{B2006}, \cite{W1998}) and the fixed-point
property that there exists a pair of adapted processes $(p_i, l_i, q_i)\in L^2_{\overline{\mathbb{F}}^i}\left(\Omega; C(0, T; \mathbb{R}^n)\right)
\times L^2_{\overline{\mathbb{F}}^i}\left(0, T; \mathbb{R}^{n\times j_0}\right)\times L^2_{\overline{\mathbb{F}}^i}\left(0, T; \mathbb{R}^{n\times j}\right)$ such that
\begin{equation}\label{ith follower Hamiltonian system}
\left\{
             \begin{array}{lr}
             d\breve{x}_i(t)=b_1\left(t, \breve{x}_i(t), {{\overline{\mathcal{I}}}}_i(u_{0})(t), \breve{x}_0(t), u_0(t),
             \mathbb{E}\big[\breve{x}_i(t)|\mathbb{F}^0_t\big]\right) dt+\widetilde{\sigma} dW_i(t), \\
             -dp_i(t)=\bigg\{\left(\frac{\partial b_1}{\partial x_i}\right)^{\top}\left(t, \breve{x}_i(t), {{\overline{\mathcal{I}}}}_i(u_{0})(t),
             \breve{x}_0(t), u_0(t), \mathbb{E}\big[\breve{x}_i(t)|\mathbb{F}^0_t\big]\right)p_i(t)\\
             \qquad\qquad +\frac{\partial g_1}{\partial x_i}\left(t, \breve{x}_i(t), {{\overline{\mathcal{I}}}}_i(u_{0})(t), \breve{x}_0(t), u_0(t),
             \mathbb{E}\big[\breve{x}_i(t)|\mathbb{F}^0_t\big]\right)\bigg\} dt-l_i(t) dW_0(t)-q_i(t) dW_i(t),\\
             \breve{x}_i(0)=\xi_i,\\
             p_i(T)=\frac{\partial G_1}{\partial x_i}\left(\breve{x}_i(T)\right),\\
             \Big\langle {\partial}_{u_i} H_1\left(t,\breve{x}_i(t),{{\overline{\mathcal{I}}}}_i(u_{0})(t), {\breve{x}}_0(t), u_{0}(t),
             \mathbb{E}\big[{\breve{x}}_i(t)|{\mathbb{F}^0_t}\big], p_i(t)\right), u_i-{{\overline{\mathcal{I}}}}_i(u_{0})(t)\Big\rangle\ge 0,\\
             \qquad\qquad\qquad\qquad\qquad\qquad\qquad\qquad\qquad \ \forall u_i\in U_i,\ a.e.\ t\in[0,T],\ a.s..
             \end{array}
\right.
\end{equation}

\subsection{The leader's decentralized optimal control problem formulation}

\hspace{5mm}For any given the leader's decentralized control $u_0\in \mathcal {U}_0$, after we obtained the corresponding maximum principle for the
followers' decentralized optimal control ${{\overline{\mathcal{I}}}}(u_{0})$, we will next consider the maximum principle for the leader's decentralized
optimal control problem. Since the symmetric structure of all followers, the equations satisfied by $\big\{(p_i, l_i, q_i)\big\}_{i=1}^{\infty}$ are
identical. From the last inequality of system $(\ref{ith follower Hamiltonian system})$, it follows that ${{\overline{\mathcal{I}}}}_i(u_{0})$ is
dependent on $p_i$ for $i=1, 2, \cdots$. According to $(\ref{ith follower Hamiltonian system})$ and the symmetric structure of all followers, we
assume
\begin{equation}\label{yy}
{\overline{\mathcal{I}}}_i(u_{0})(t):={\alpha}_1\left(t, \breve{x}_i(t), \breve{x}_0(t), u_0(t), \mathbb{E}\big[\breve{x}_i(t)|\mathbb{F}^0_t\big],
p_i(t)\right),\ t\in[0,T].
\end{equation}

If we choose the follower 1's realized state $\breve{x}_1$ and realized adjoint variable $p_1$ as representatives to consider, the state equation
of the leader's decentralized optimal control problem is
\begin{equation}\label{f}
\left\{
             \begin{array}{lr}
             d {\breve{x}}_0(t)=b_0\left(t, {\breve{x}}_0(t), u_{0}(t),  \mathbb{E}\big[{\breve{x}}_1(t)|{\mathbb{F}^0_t}\big]\right) dt
             +\sigma_0 dW_0(t),\\
             d \breve{x}_1(t)=b_1\left(t, \breve{x}_1(t), {\overline{\mathcal{I}}}_1(u_{0})(t), \breve{x}_{0}(t), u_{0}(t),
             \mathbb{E}\big[{\breve{x}}_1(t)|{\mathbb{F}^0_t}\big]\right) dt+\widetilde{\sigma} dW_1(t),\\
             -dp_1(t)=\bigg\{\left(\frac{\partial b_1}{\partial x_1}\right)^{\top}\left(t, \breve{x}_1(t),
             {{\overline{\mathcal{I}}}}_1(u_{0})(t), \breve{x}_0(t), u_0(t), \mathbb{E}\big[\breve{x}_1(t)|\mathbb{F}^0_t\big]\right)p_1(t)\\
             \qquad\qquad +\frac{\partial g_1}{\partial x_1}\left(t, \breve{x}_1(t), {{\overline{\mathcal{I}}}}_1(u_{0})(t),
             \breve{x}_0(t), u_0(t), \mathbb{E}\big[\breve{x}_1(t)|\mathbb{F}^0_t\big]\right)\bigg\} dt-l_1(t) dW_0(t)-q_1(t) dW_1(t),\\
             \qquad\qquad\qquad\qquad\qquad\qquad\qquad\qquad\qquad\qquad\qquad 0\le t\le T,\\
             \breve{x}_0(0)=\xi_0,\ \breve{x}_1(0)=\xi_1,\ p_1(T)=\frac{\partial G_1}{\partial x_1}\left(\breve{x}_1(T)\right).\\
             \end{array}
\right.
\end{equation}
According to the analysis in Subsection $\ref{the iterative limit of the leader}$, the cost functional of the leader's decentralized control problem is:
\begin{equation}\begin{aligned}
\overline{J}_0\left(u_{0}, \overline{\mathcal{I}}(u_{0})\right)&=\mathbb{E}\left\{\int_0^T g_0\left(t, \breve{x}_0(t), u_{0}(t),
\mathbb{E}\big[{\breve{x}}_1(t)|{\mathbb{F}^0_t}\big]\right) dt+G_0\left(\breve{x}_0(T)\right)\right\}.\\
\end{aligned}\end{equation}
Let
\begin{equation*}
\begin{aligned}
&B_1\left(t, \breve{x}_1(t), \breve{x}_{0}(t), u_{0}(t), \mathbb{E}\big[{\breve{x}}_1(t)|{\mathbb{F}^0_t}\big], p_1(t)\right)\\
&:=b_1\left(t, \breve{x}_1(t), {\alpha}_1\left(t, \breve{x}_1(t), \breve{x}_0(t), u_0(t), \mathbb{E}\big[\breve{x}_1(t)|
\mathbb{F}^0_t\big], p_1(t)\right), \breve{x}_{0}(t), u_{0}(t), \mathbb{E}\big[{\breve{x}}_1(t)|{\mathbb{F}^0_t}\big]\right);\\
&\Phi\left(t, \breve{x}_1(t), \breve{x}_{0}(t), u_{0}(t), \mathbb{E}\big[{\breve{x}}_1(t)|{\mathbb{F}^0_t}\big], p_1(t)\right)\\
&:=\left(\frac{\partial b_1}{\partial x_1}\right)^\top\left(t, \breve{x}_1(t), {\alpha}_1\left(t, \breve{x}_1(t), \breve{x}_0(t), u_0(t),
\mathbb{E}\big[\breve{x}_1(t)|\mathbb{F}^0_t\big], p_1(t)\right), \breve{x}_{0}(t), u_{0}(t), \mathbb{E}\big[{\breve{x}}_1(t)|{\mathbb{F}^0_t}\big]
\right)p_1(t)\\
&\qquad +\frac{\partial g_1}{\partial x_1}\left(t, \breve{x}_1(t), {\alpha}_1\left(t, \breve{x}_1(t), \breve{x}_0(t), u_0(t), \mathbb{E}\big[
\breve{x}_1(t)|\mathbb{F}^0_t\big], p_1(t)\right), \breve{x}_{0}(t), u_{0}(t), \mathbb{E}\big[{\breve{x}}_1(t)|{\mathbb{F}^0_t}\big]\right).
\end{aligned}
\end{equation*}
Then, the state equation of the leader's decentralized optimal control problem $(\ref{f})$ can be rewritten as
\begin{equation}\label{m}
\left\{
             \begin{array}{lr}
             d {\breve{x}}_0(t)=b_0\left(t, {\breve{x}}_0(t), u_{0}(t), \mathbb{E}\big[{\breve{x}}_1(t)|{\mathbb{F}^0_t}\big]\right) dt+\sigma_0
             dW_0(t),\\
             d \breve{x}_1(t)=B_1\left(t, \breve{x}_1(t), \breve{x}_{0}(t), u_{0}(t), \mathbb{E}\big[{\breve{x}}_1(t)|{\mathbb{F}^0_t}\big], p_1(t)\right) dt
             +\widetilde{\sigma} dW_1(t),\\
             -dp_1(t)=\Phi\left(t, \breve{x}_1(t), \breve{x}_0(t), u_0(t), \mathbb{E}\big[\breve{x}_1(t)|\mathbb{F}^0_t\big], p_1(t)\right)dt-l_1(t)
             dW_0(t)-q_1(t) dW_1(t),\\
             \breve{x}_0(0)=\xi_0,\ \breve{x}_1(0)=\xi_1,\ p_1(T)=\frac{\partial G_1}{\partial x_1}\left(\breve{x}_1(T)\right).\\
             \end{array}
\right.
\end{equation}

\subsection{Some well-posedness results of a new kind of conditional mean-field FBSDE}\label{some well-posedness results of a new kind of CMF-FBSDEs}

\hspace{5mm}It can be observed that the state equation of the leader's decentralized optimal control problem $(\ref{m})$ is a fully
coupled mean-field FBSDE containing the conditional expectation term $\mathbb{E}\big[{\breve{x}}_1(t)|{\mathbb{F}^0_t}\big]$. Therefore, in this
subsection, we will use the method of continuation to discuss the well-posedness of a new kind of fully coupled conditional mean-field FBSDE.
Some well-posedness results of this type of conditional mean-field FBSDE are obtained. They will be used to provide the existence, uniqueness and
estimations of the solution to the state equation $(\ref{m})$ in the leader's decentralized optimal control problem, as well as the existence,
uniqueness and  estimations of the solutions to the variational equation and the adjoint equation derived subsequently from the corresponding maximum
principle. The specific form of the conditional mean-field FBSDE we studied is proposed based on the state equation, the subsequent corresponding
variational equation and adjoint equation of the leader's decentralized optimal control problem in the nonlinear mean field Stackelberg game we
investigated. In the subsequent discussion of this article, for any stochastic process $X$, to simplify the notation, we use $\widehat{X}(t)$ to
represent $\mathbb{E}\big[X(t)|{\mathbb{F}^0_t}\big], t\in[0,T]$.

We consider the following new kind of conditional mean-field FBSDE:
\begin{equation}\label{aaa}
\left\{
             \begin{array}{lr}
             d X(t)=\Psi\big(t, X(t), Y(t), \widehat{X}(t), \widehat{Y}(t)\big) dt+\theta_0 dW_0(t)+\theta_1 dW_1(t),\\
             -dY(t)=\Big[D\big(t, X(t), Y(t), \widehat{X}(t), \widehat{Y}(t)\big)+\mathbb{E}\big[ M\big(t, X(t), Y(t), \widehat{X}(t), \widehat{Y}(t)
             \big)\big|{\mathbb{F}}^0_t\big]\Big] dt\\
             \qquad\qquad\ -L_0(t) dW_0(t)-L_1(t) dW_1(t),\\
             X(0)=\chi_0,\ Y(T)=h\big(X(T)\big),\\
             \end{array}
\right.
\end{equation}
where $\Psi: [0,T]\times \Omega\times \mathbb{R}^{n_1}\times \mathbb{R}^{m_1}\times \mathbb{R}^{n_1} \times \mathbb{R}^{m_1}
\longrightarrow\mathbb{R}^{n_1}, D: [0,T]\times \Omega\times \mathbb{R}^{n_1}\times \mathbb{R}^{m_1}\times \mathbb{R}^{n_1} \times \mathbb{R}^{m_1}
\longrightarrow\mathbb{R}^{m_1}, M: [0,T]\times \Omega\times \mathbb{R}^{n_1}\times \mathbb{R}^{m_1}\times \mathbb{R}^{n_1} \times \mathbb{R}^{m_1}
\longrightarrow\mathbb{R}^{m_1}, h:\Omega\times \mathbb{R}^{n_1}\longrightarrow\mathbb{R}^{m_1}$. $\chi_0$ is a random variable that takes value
in $\mathbb{R}^{n_1}$ and $\mathbb{E}\big[|\chi_0|^2\big]<\infty$. $\theta_0, \theta_1$ are constant matrices that respectively takes value in $\mathbb{R}^{n_1\times j_0}$ and $\mathbb{R}^{n_1\times j}$.

To the best of our knowledge, Nie, Wang, and Wang \cite{NWW2023} and Chen, Du, and Wu \cite{CDW2024} have respectively studied the existence and
uniqueness of solutions to different forms of conditional mean-field FBSDEs by different methods. We explain the differences and significance of
our discussion on the new form of conditional mean-field FBSDE $(\ref{aaa})$ in three points.

Firstly, since the conditional mean-field FBSDE
$(\ref{aaa})$ we study in this subsection contains terms like $\mathbb{E}\big[ M\big(t, X(t), Y(t), \widehat{X}(t), \widehat{Y}(t)\big)\big|
{\mathbb{F}}^0_t\big]$, this leads to the fact that the results of \cite{NWW2023} and \cite{CDW2024} cannot be used to provide the existence
and uniqueness of the solution to equation $(\ref{aaa})$. And the study of the fully-coupled conditional mean-field FBSDE $(\ref{aaa})$ containing
such a term as $\mathbb{E}\big[ M\big(t, X(t), Y(t), \widehat{X}(t), \widehat{Y}(t)\big)\big|{\mathbb{F}}^0_t\big]$ is inspired by the form of the
adjoint equation derived from the maximum principle of the leader's decentralized optimal control problem, aiming to obtain results that can guarantee
the existence and uniqueness of its solution. Secondly, unlike \cite{NWW2023} and \cite{CDW2024}, inspired by \cite{W1998}, when proving the existence
of the solution to equation $(\ref{aaa})$, for the two different cases of $n_1\ge m_1$ and $n_1<m_1$, we respectively introduced two families of
simplified  parameterized FBSDEs of distinct forms. These simplified parameterized FBSDEs decouple when the parameters take the initial value $0$,
ensuring the existence and uniqueness of the solutions at this time. This will be beneficial for simplifying the proof of the existence of the
solution of equation $(\ref{aaa})$. Thirdly, inspired by the special forms of the state equation, the variational equation and the adjoint equation
of the leader's decentralized optimal control problem, we attempt to study the fully coupled conditional mean-field FBSDE $(\ref{aaa})$, where
the drift term of the forward equation and the generator of the backward equation do not contain $L_0$ and $L_1$. We find that under the assumptions
(H5.1) and (H5.2), which only impose conditions on variables $X, Y, \widehat{X}$, and $\widehat{Y}$, the existence and uniqueness of the solution
to equation $(\ref{aaa})$ can be guaranteed.

\begin{mydef}
A quadruple of process $\big(X, Y, L_0, L_1\big): [0,T]\times \Omega \longrightarrow\mathbb{R}^{n_1}\times \mathbb{R}^{m_1}\times \mathbb{R}^{m_1\times j_0}
\times \mathbb{R}^{m_1\times j}$ is called an adapted solution of conditional mean-field FBSDE $(\ref{aaa})$ if $\big(X, Y, L_0, L_1\big)\in
L^2_{\overline{\mathbb{F}}^1}\left(0, T; \mathbb{R}^{n_1}\times \mathbb{R}^{m_1}\times \mathbb{R}^{m_1\times j_0}\times \mathbb{R}^{m_1\times j}\right)$ and it
satisfies the conditional mean-field FBSDE $(\ref{aaa})$.
\end{mydef}

Now, we introduce assumptions (H5.1) and (H5.2). Given an $m_1\times n_1$ full-rank matrix $G$ and we introduce the following assumption

\noindent {\bf (H5.1)}\ {\it {(i) For every $\big(x, y, z^x, z^y\big)\in \mathbb{R}^{n_1}\times \mathbb{R}^{m_1}\times \mathbb{R}^{n_1}\times
\mathbb{R}^{m_1}$, we have $\Psi\big(\cdot, x, y, z^x, z^y\big)\in L^2_{\overline{\mathbb{F}}^1}\left(0, T; \mathbb{R}^{n_1}\right),
D\big(\cdot, x, y, z^x, z^y\big)\in L^2_{\overline{\mathbb{F}}^1}\left(0, T; \mathbb{R}^{m_1}\right), M\big(\cdot, x, y, z^x, z^y\big)\in
L^2_{\overline{\mathbb{F}}^1}\left(0, T; \mathbb{R}^{m_1}\right)$\\
(ii) For every $x \in \mathbb{R}^{n_1}, h(x)\in L^2_{\overline{\mathbb{F}}^1_T}\left(\Omega; \mathbb{R}^{m_1}\right)$.\\
(iii) $\Psi\big(\cdot, x, y, z^x, z^y\big), D\big(\cdot, x, y, z^x, z^y\big), M\big(\cdot, x, y, z^x, z^y\big)$ is uniformly Lipschitz with
respect to $x, y, z^x$ and $z^y$, respectively.\\
(iv) $h(x)$ is uniformly Lipschitz with respect to $x$.}}

For any $\big(X^1, Y^1\big), \big(X^2, Y^2\big)\in L^2_{\overline{\mathbb{F}}^1}\left(0, T; \mathbb{R}^{n_1}\times \mathbb{R}^{m_1}\right)$,
we need the following monotone conditions.

\noindent {\bf (H5.2)}\ {\it {
\begin{equation*}
\begin{aligned}
&\left[\left\langle
\left(\begin{array}{ccc}
-G^\top \Delta D(t)-G^\top\mathbb{E}\big[\Delta M(t)\big|{\mathbb{F}}^0_t\big]\\
G \Delta \Psi(t)\\
\end{array}\right), \left(\begin{array}{ccc}
\Delta X(t)\\
\Delta Y(t)\\
\end{array}\right)\right\rangle\right]\\
&\le -\beta^1_1\mathbb{E}\left[\left|G\Delta X(t)\right|^2\right]-\beta^2_1\mathbb{E}\left[\left|G\Delta \widehat{X}(t)\right|^2\right]
-\beta^1_2\mathbb{E}\left[\left|G^\top \Delta Y(t)\right|^2\right]-\beta^2_2\mathbb{E}\left[\left|G^\top \Delta \widehat{Y}(t)\right|^2\right],\\
&\mathbb{E}\left[\left\langle h\left(X^1(T)\right)-h\left(X^2(T)\right), G\Delta X(T)\right\rangle\right]\ge
\alpha_1^1 \mathbb{E}\left[\left|G\Delta X(T)\right|^2\right],
\end{aligned}
\end{equation*}
}}
where $\Delta R(\cdot):=R\big(\cdot, X^1(\cdot), Y^1(\cdot), \widehat{X^1}(\cdot), \widehat{Y^1}(\cdot)\big)-R\big(\cdot, X^2(\cdot),
Y^2(\cdot), \widehat{X^2}(\cdot), \widehat{Y^2}(\cdot)\big), R=D, M, \Psi;$\\ $\Delta X(\cdot):=X^1(\cdot)-X^2(\cdot),
\Delta Y(\cdot):=Y^1(\cdot)-Y^2(\cdot), \Delta \widehat{X}(t):= \widehat{X^1}(\cdot)-\widehat{X^2}(\cdot),
\Delta \widehat{Y}(t):= \widehat{Y^1}(\cdot)-\widehat{Y^2}(\cdot)$; and $\beta^1_1, \beta^2_1, \beta^1_2, \beta^2_2, \alpha^1_1$ are non-negative
constants with $\beta^1_1+\beta^1_2>0, \alpha^1_1+\beta^1_2>0$ for almost all $(t, \omega) \in [0,T]\times\Omega$. Moreover we have
$\beta^1_1>0, \alpha^1_1>0$ (resp., $\beta^1_2>0$) when $m_1>n_1$ (resp., $n_1\ge m_1$).

\begin{Remark}
When $n_1=m_1$, we can assume either $\beta^1_1>0, \alpha^1_1>0$ or $\beta^1_2>0$. Without loss of generality, we assume $\beta^1_2>0$ here.
\end{Remark}

\begin{mylem}\label{uniqueness lemma of CMF-FBSDE}
Let (H5.1) and (H5.2) hold. Then there exists at most one adapted solution to conditional mean-field FBSDE $(\ref{aaa})$.
\end{mylem}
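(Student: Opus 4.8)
The plan is to run the monotonicity a priori estimate underlying the method of continuation. Suppose $\big(X^1, Y^1, L_0^1, L_1^1\big)$ and $\big(X^2, Y^2, L_0^2, L_1^2\big)$ are two adapted solutions of $(\ref{aaa})$, and set $\Delta X = X^1-X^2$, $\Delta Y = Y^1-Y^2$, $\Delta L_0 = L_0^1-L_0^2$, $\Delta L_1 = L_1^1-L_1^2$, together with $\Delta\Psi,\Delta D,\Delta M$ and $\Delta\widehat X,\Delta\widehat Y$ as in (H5.2). Because $\theta_0,\theta_1$ are constant matrices, they cancel in the forward difference, so $\Delta X$ solves $d\Delta X(t)=\Delta\Psi(t)\,dt$ with $\Delta X(0)=0$ and carries \emph{no} martingale part; this is precisely what will keep the Itô computation below clean.

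First I would apply It\^o's formula to the scalar process $\langle G\Delta X(t),\Delta Y(t)\rangle$, which is well defined since $G\Delta X$ and $\Delta Y$ both take values in $\mathbb{R}^{m_1}$. As $\Delta X$ has finite variation, no quadratic-covariation term appears, and after taking expectation the stochastic integrals against $W_0,W_1$ vanish. Using $\Delta X(0)=0$ and $\Delta Y(T)=h\big(X^1(T)\big)-h\big(X^2(T)\big)$, this yields
\[
\mathbb{E}\big[\langle G\Delta X(T),\Delta Y(T)\rangle\big]=\mathbb{E}\int_0^T\Big(\langle G\Delta\Psi(t),\Delta Y(t)\rangle-\big\langle\Delta X(t),G^\top\big(\Delta D(t)+\mathbb{E}[\Delta M(t)|\mathbb{F}^0_t]\big)\big\rangle\Big)\,dt,
\]
and the integrand is exactly the bilinear pairing on the left-hand side of (H5.2).

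Next I would carry out the sandwich argument. By the terminal monotonicity in (H5.2), $\mathbb{E}\big[\langle G\Delta X(T),\Delta Y(T)\rangle\big]=\mathbb{E}\big[\langle h(X^1(T))-h(X^2(T)),G\Delta X(T)\rangle\big]\ge\alpha^1_1\,\mathbb{E}|G\Delta X(T)|^2\ge 0$, while the drift monotonicity bounds the integrand above by $-\beta^1_1\mathbb{E}|G\Delta X|^2-\beta^2_1\mathbb{E}|G\Delta\widehat X|^2-\beta^1_2\mathbb{E}|G^\top\Delta Y|^2-\beta^2_2\mathbb{E}|G^\top\Delta\widehat Y|^2\le 0$. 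Since the left side is nonnegative and the right side nonpositive, both vanish; in particular $\alpha^1_1\mathbb{E}|G\Delta X(T)|^2=0$ and $\int_0^T\big(\beta^1_1\mathbb{E}|G\Delta X|^2+\beta^1_2\mathbb{E}|G^\top\Delta Y|^2\big)\,dt=0$. I then split into the two regimes of the last line of (H5.2). If $m_1>n_1$, then $\beta^1_1>0$ and $\alpha^1_1>0$ force $G\Delta X\equiv 0$; since $G^\top G$ is invertible, $\Delta X\equiv 0$, hence $\Delta\widehat X\equiv 0$ and $\Delta Y(T)=0$, and the backward equation becomes a BSDE with zero terminal value whose generator is Lipschitz in $(\Delta Y,\Delta\widehat Y)$ by (H5.1); an It\^o-on-$|\Delta Y|^2$ Gronwall estimate, using the conditional-Jensen bound $\mathbb{E}|\Delta\widehat Y|^2\le\mathbb{E}|\Delta Y|^2$, gives $\Delta Y\equiv 0$ and then $\Delta L_0=\Delta L_1=0$. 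If instead $n_1\ge m_1$ (including $n_1=m_1$ with $\beta^1_2>0$ by the Remark), then $\beta^1_2>0$ forces $G^\top\Delta Y\equiv 0$; since $G^\top$ has full column rank, $\Delta Y\equiv 0$, whence $\Delta L_0=\Delta L_1=0$ from the backward dynamics, and the forward equation $d\Delta X=\Delta\Psi\,dt$ with $\Delta X(0)=0$ closes by Gronwall using $\mathbb{E}|\Delta\widehat X|^2\le\mathbb{E}|\Delta X|^2$. In either case all differences vanish.

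I expect the main obstacle to be the bookkeeping of the conditional mean-field terms: verifying that the nonlocal contribution $G^\top\mathbb{E}[\Delta M(t)|\mathbb{F}^0_t]$ produced by It\^o matches \emph{exactly} the pairing postulated in (H5.2), and confirming that, once one component has been killed by the full rank of $G$, the residual Gronwall estimate for the remaining component still closes despite the couplings through $\widehat X,\widehat Y$ and through $\mathbb{E}[\Delta M|\mathbb{F}^0_t]$. The conditional-Jensen inequality $\mathbb{E}|\widehat V|^2\le\mathbb{E}|V|^2$ is the key device that tames these terms.
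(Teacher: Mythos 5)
Your proposal is correct and follows essentially the same route as the paper's proof: It\^o's formula applied to $\langle \Delta Y(t), G\Delta X(t)\rangle$ combined with the monotonicity assumption (H5.2) forces all terms in the resulting identity to vanish, and the same case split is then used ($n_1\ge m_1$ exploiting $\beta^1_2>0$ to kill $\Delta Y$ first, $m_1>n_1$ exploiting $\beta^1_1>0,\ \alpha^1_1>0$ to kill $\Delta X$ first), closed by the same Gronwall estimates with the conditional-Jensen bound taming the $\widehat{X},\widehat{Y}$ and $\mathbb{E}[\Delta M\,|\,\mathbb{F}^0_t]$ terms. The only cosmetic deviation is that in the $n_1\ge m_1$ case you deduce $\Delta L_0=\Delta L_1=0$ from the backward dynamics before running the forward Gronwall for $\Delta X$, whereas the paper does the reverse order; both are valid.
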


\begin{proof}
Let $\big(X^1, Y^1, L^1_0, L^1_1\big), \big(X^2, Y^2, L^2_0, L^2_1\big) \in L^2_{\overline{\mathbb{F}}^1}\left(0, T; \mathbb{R}^{n_1}\times
\mathbb{R}^{m_1}\times \mathbb{R}^{m_1\times j_0}\times \mathbb{R}^{m_1\times j}\right)$ be two solutions of the conditional mean-field FBSDE $(\ref{aaa})$.
Set $\Delta S:=S^1-S^2, S=X, Y, L_0, L_1; \Delta \widehat{X}(t):= \widehat{X^1}(\cdot)-\widehat{X^2}(\cdot),
\Delta \widehat{Y}(t):= \widehat{Y^1}(\cdot)-\widehat{Y^2}(\cdot).$ First, apply It\^{o}'s formula to
$\big\langle\Delta Y(t), G \Delta X(t)\big\rangle$. Then, integrate from $0$ to $T$ and take the expectation. According to assumption (H5.2), we can
obtain
\begin{equation}\label{ccc}
\begin{aligned}
&\alpha^1_1\mathbb{E}\big[|G\Delta X(T)|^2\big]+\beta^1_1 \mathbb{E}\int_0^T |G\Delta X(t)|^2 dt+\beta^2_1 \mathbb{E}\int_0^T |G\Delta
\widehat{X}(t)|^2 dt\\
&+\beta^1_2 \mathbb{E}\int_0^T |G^\top\Delta Y(t)|^2 dt+\beta^2_2 \mathbb{E}\int_0^T |G^\top\Delta \widehat{Y}(t)|^2 dt\equiv 0.
\end{aligned}
\end{equation}

(1) When $n_1\ge m_1, \beta^1_2>0$. Then, we have $\mathbb{E}\int_0^T |G^\top\Delta Y(t)|^2 dt\equiv 0$. So, $Y^1(\cdot)=Y^2(\cdot),
\ a.e.\ t\in[0,T],\ a.s..$ By using usual techniques and Gronwall's inequality, the estimations of the difference $\Delta X$ can be derived
\[\mathbb{E}\big[\sup\limits_{0\le t\le T}|\Delta X(t)|^2\big]\le K \mathbb{E}\int_0^T |\Delta Y(s)|^2 ds\equiv 0,\]
where $K$ denotes a generic constant. It is noted that its values in different lines may vary in the subsequent discussions. Thus
$\mathbb{P}\big(\big\{\omega|X^1(t,\omega)=X^2(t,\omega), \forall t\in[0,T]\big\}\big)=1$. Applying It\^{o}'s formula to
$|\Delta Y(t)|^2\equiv 0,\ a.e. t\in [0,T],\ a.s.,$ integrating from $0$ to $T$, and taking expectation, we get
$\mathbb{E}\int_0^T |\Delta L_0(t)|^2+|\Delta L_1(t)|^2 dt\equiv 0$. Therefore, we have $L^1_0(\cdot)=L^2_0(\cdot), L^1_1(\cdot)=L^2_1(\cdot),
\ a.e.\ t\in[0,T],\ a.s..$

(2) When $n_1\le m_1, \beta^1_1>0, \alpha^1_1>0$. From $(\ref{ccc})$, it follows that $\mathbb{E}\big[|G\Delta X(T)|^2\big]=0,
\mathbb{E}\int_0^T |G\Delta X(t)|^2 dt=0$. So, we can derive $X^1(\cdot)=X^2(\cdot),\ a.e.\ t\in[0,T],\ a.s.$ and $X^1(T)=X^2(T),\ a.s..$

Since the generator of BSDE in equation $(\ref{aaa})$ contains $\mathbb{E}\big[ M\big(t, X(t), Y(t), \widehat{X}(t), \widehat{Y}(t)\big)
\big|{\mathbb{F}}^0_t\big]$, this form of conditional mean-field FBSDE has not been studied in previous relevant literature
(e.g. \cite{HT2022}, \cite{NWW2023}, \cite{CDW2024}). Currently, there are no existing conclusions that can be used. Therefore, we need to estimate
the differences of solutions for the BSDE, namely $\Delta Y, \Delta L_0, \Delta L_1$. We apply It\^{o}'s formula to $|\Delta Y(s)|^2$, integrate
from $t$ to $T$, and get
\begin{equation}\label{ddd}
\begin{aligned}
&|\Delta Y(t)|^2+\int_t^T |\Delta L_0 (s)|^2 ds+\int_t^T |\Delta L_1 (s)|^2 ds\\
&=|\Delta Y(T)|^2+2\int_t^T \left\langle\Delta Y(s), \big[\Delta D(s)+\mathbb{E}\big[\Delta M(s)\big|{\mathbb{F}}^0_s\big]\big] \right\rangle ds\\
&\quad -2\int_t^T \left\langle\Delta Y(s), \Delta L_0 (s)\right\rangle dW_0(s)-2\int_t^T \left\langle\Delta Y(s), \Delta L_1 (s)\right\rangle dW_1(s),
\end{aligned}
\end{equation}
for $\forall t\in [0,T]$. Based on some classical techniques and Gronwall's inequality, we obtained that there exists a positive constant $K$ such that
\begin{equation}\label{eee}
\begin{aligned}
&\mathbb{E}\big[|\Delta Y(t)|^2\big]+\mathbb{E}\int_t^T |\Delta L_0 (s)|^2 ds+\mathbb{E}\int_t^T |\Delta L_1 (s)|^2 ds\\
&\le K\mathbb{E}\int_0^T |\Delta X (s)|^2 ds+K\mathbb{E}\big[|\Delta X(T)|^2\big]=0,
\end{aligned}
\end{equation}
for $\forall t\in [0,T]$. By further estimation based on $(\ref{ddd})$ and {\it Burkholder-Davis-Gundy's inequality} (BDG inequality, in short), it
can be concluded that there exists a positive constant $K$ such that
\begin{equation}\label{fff}
\begin{aligned}
&\mathbb{E}\big[\sup\limits_{0\le t\le T}|\Delta Y(t)|^2\big]\le K\mathbb{E}\big[|\Delta X(T)|^2\big]+K\mathbb{E}\int_0^T \left(|\Delta X (s)|^2
+|\Delta Y (s)|^2\right) ds\\
&+K\mathbb{E}\int_0^T \left(|\Delta L_0 (s)|^2|+\Delta L_1 (s)|^2\right) ds=0.
\end{aligned}
\end{equation}
There, according to $(\ref{eee})$ and $(\ref{fff})$, we know $L^1_0(\cdot)=L^2_0(\cdot), L^1_1(\cdot)=L^2_1(\cdot)\ a.e.\ t\in[0,T],\ a.s.$ and
$\mathbb{P}\big(\big\{\omega|Y^1(t,\omega)=Y^2(t,\omega), \forall t\in[0,T]\big\}\big)=1$.

\end{proof}

Next, we prove the existence of the solution to conditional mean-field FBSDE $(\ref{aaa})$. First, for the case of $n_1\ge m_1$, introduce the
following FBSDE parameterized by $\lambda\in [0,1]$:
\begin{equation}\label{n1>m1 parameterized FBSDE}
\left\{
             \begin{array}{lr}
             d X^{\lambda}(t)=\left[(1-\lambda)\beta^1_2(-G^{\top} Y^{\lambda}(t))+\lambda\Psi\big(t, X^{\lambda}(t), Y^{\lambda}(t),
             \widehat{X^{\lambda}}(t), \widehat{Y^{\lambda}}(t)\big)+\phi(t)\right] dt\\
             \qquad\qquad\quad +\theta_0 dW_0(t)+\theta_1 dW_1(t),\\
             -dY^{\lambda}(t)=\Big[\lambda D\big(t, X^{\lambda}(t), Y^{\lambda}(t), \widehat{X^{\lambda}}(t), \widehat{Y^{\lambda}}(t)\big)
             +\lambda\mathbb{E}\big[ M\big(t, X^{\lambda}(t), Y^{\lambda}(t), \widehat{X^{\lambda}}(t), \widehat{Y^{\lambda}}(t)\big)
             \big|{\mathbb{F}}^0_t\big]\\
             \qquad\qquad\quad +\Lambda(t)\Big] dt-L^{\lambda}_0(t) dW_0(t)-L^{\lambda}_1(t) dW_1(t),\\
             X^{\lambda}(0)=\chi_0,\ Y^{\lambda}(T)=\lambda h\big(X^{\lambda}(T)\big)+\xi,\\
             \end{array}
\right.
\end{equation}
where $\xi\in L^2_{\overline{\mathbb{F}}^1_T}\left(\Omega; \mathbb{R}^{m_1}\right)$, $\phi$ and $\Lambda$ are given process in
$L^2_{\overline{\mathbb{F}}^1}\left(0, T; \mathbb{R}^{n_1}\right)$ and $L^2_{\overline{\mathbb{F}}^1}\left(0, T; \mathbb{R}^{m_1}\right)$ respectively.
Then, we introduce the following lemma.

\begin{mylem}\label{The iterative lemma of n1>m1 parameterized FBSDE}
Let (H5.1), (H5.2) and $n_1\ge m_1$ hold. Then there exists a positive constant $\delta_0$, such that if, a prior, for $\lambda=\lambda_0\in[0,1)$
there exists a unique solution $\big(X^{\lambda_0}, Y^{\lambda_0}, L_0^{\lambda_0}, L_1^{\lambda_0}\big)\in L^2_{\overline{\mathbb{F}}^1}
\left(0, T; \mathbb{R}^{n_1}\times \mathbb{R}^{m_1}\times \mathbb{R}^{m_1\times j_0}\times \mathbb{R}^{m_1\times j}\right)$ of equation
$(\ref{n1>m1 parameterized FBSDE})$, then for each $\delta\in [0,\delta_0]$ there exists a unique solution
$\big(X^{\lambda_0+\delta}, Y^{\lambda_0+\delta}, L_0^{\lambda_0+\delta}, L_1^{\lambda_0+\delta}\big)\in L^2_{\overline{\mathbb{F}}^1}
\left(0, T; \mathbb{R}^{n_1}\times \mathbb{R}^{m_1}\times \mathbb{R}^{m_1\times j_0}\times \mathbb{R}^{m_1\times j}\right)$ of equation
$(\ref{n1>m1 parameterized FBSDE})$ for $\lambda=\lambda_0+\delta$.
\end{mylem}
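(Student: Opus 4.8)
The plan is to carry out one induction step of the method of continuation via the Banach fixed point theorem, in the spirit of Peng--Wu \cite{PW1999}. First I would rewrite the equation at $\lambda=\lambda_0+\delta$ as the equation at $\lambda=\lambda_0$ driven by an extra source frozen at a fixed input. Expanding the $\lambda$-dependent coefficients at $\lambda_0+\delta$ shows that the forward drift becomes $(1-\lambda_0)\beta^1_2(-G^{\top}Y)+\lambda_0\Psi+\delta[\beta^1_2 G^{\top}Y+\Psi]$, and similarly for the generator and the terminal value. Given an input $(x,y)\in L^2_{\overline{\mathbb F}^1}(0,T;\mathbb R^{n_1}\times\mathbb R^{m_1})$, I would define $\mathcal T_\delta(x,y):=(X,Y)$, where $(X,Y,L_0,L_1)$ is the unique solution (guaranteed by the a priori solvability at $\lambda_0$) of equation (\ref{n1>m1 parameterized FBSDE}) with $\lambda=\lambda_0$ and modified data
\[
\phi+\delta\big[\beta^1_2 G^{\top}y+\Psi(\cdot,x,y,\widehat x,\widehat y)\big],\quad
\Lambda+\delta\big[D(\cdot,x,y,\widehat x,\widehat y)+\mathbb E[M(\cdot,x,y,\widehat x,\widehat y)|\mathbb F^0_\cdot]\big],\quad
\xi+\delta\,h(x(T)).
\]
Assumption (H5.1), together with the fact that conditional expectation is an $L^2$-contraction, ensures these modified data lie in the required spaces, so $\mathcal T_\delta$ is well defined; since $\Psi,D,M,h$ do not involve $L_0,L_1$, the components $(L_0,L_1)$ are carried along without feeding back, and a fixed point of $\mathcal T_\delta$ is precisely a solution of (\ref{n1>m1 parameterized FBSDE}) at $\lambda_0+\delta$.

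Next I would show $\mathcal T_\delta$ is a contraction for $\delta$ small. Taking two inputs $(x^1,y^1),(x^2,y^2)$ with outputs $(X^1,Y^1,\dots),(X^2,Y^2,\dots)$ and writing $\Delta(\cdot)$ for the differences, the central computation is to apply It\^o's formula to $\langle G\Delta X(t),\Delta Y(t)\rangle$, integrate over $[0,T]$, and take expectation. Because $\theta_0,\theta_1$ are constant they cancel in $\Delta X$, so $\Delta X$ has finite variation and no quadratic-covariation correction appears; moreover $\Delta X(0)=0$ kills the boundary term at $0$. Grouping the terms carried by $\lambda_0$ reproduces exactly the bracket in (H5.2), while the purely linear part contributes $(1-\lambda_0)\beta^1_2\mathbb E\int_0^T|G^{\top}\Delta Y|^2\,dt$; using the monotonicity on these $\lambda_0$-terms and the terminal monotonicity of $h$, the two $\beta^1_2$-contributions add to the full coefficient, yielding
\[
\beta^1_2\,\mathbb E\!\int_0^T|G^{\top}\Delta Y(t)|^2\,dt+\lambda_0\alpha^1_1\,\mathbb E|G\Delta X(T)|^2
\le \delta\,\mathcal R,
\]
where $\mathcal R$ collects the cross terms coming from the frozen $\delta$-source, each controlled by (H5.1) through products of input and output norms. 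Since $n_1\ge m_1$ and $G$ is full rank, $GG^{\top}$ is invertible, so $|G^{\top}v|^2\ge c|v|^2$ with $c=\lambda_{\min}(GG^{\top})>0$; together with $\beta^1_2>0$ (the hypothesis active when $n_1\ge m_1$) this delivers genuine coercivity $\|\Delta Y\|^2_{L^2}\le C\delta\,\mathcal R$.

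Finally I would close the forward--backward coupling. A Gronwall estimate on the forward equation bounds $\mathbb E\sup_t|\Delta X|^2$ (hence $\mathbb E|\Delta X(T)|^2$) by $\|\Delta Y\|^2_{L^2}$ plus $\delta^2$ times the input norm, and the standard BSDE estimate with the BDG inequality bounds $\mathbb E\sup_t|\Delta Y|^2+\|\Delta L_0\|^2+\|\Delta L_1\|^2$ by $\|\Delta X\|^2_{L^2}+\mathbb E|\Delta X(T)|^2+\delta^2$ times the input norm. Substituting these into $\mathcal R$ and using Young's inequality to absorb the output contributions into the coercive left-hand side, I obtain $\|\mathcal T_\delta(x^1,y^1)-\mathcal T_\delta(x^2,y^2)\|\le\rho(\delta)\,\|(x^1,y^1)-(x^2,y^2)\|$ with $\rho(\delta)\to0$ as $\delta\to0$. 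Choosing $\delta_0$ so that $\rho(\delta)\le\tfrac12$ for $\delta\le\delta_0$ gives the contraction, and the Banach fixed point theorem yields the unique solution at $\lambda_0+\delta$, with $(L_0,L_1)$ recovered from the fixed point. The main obstacle is that $\delta_0$ must depend only on the Lipschitz and monotonicity constants, $T$, $c$ and $\beta^1_2$, and \emph{not} on $\lambda_0$: this uniformity is what lets the continuation advance in steps of fixed length from $\lambda=0$ to $\lambda=1$, and it hinges precisely on the two $\beta^1_2$-terms combining into a $\lambda_0$-independent coercive term, which is exactly why the case $n_1\ge m_1$ is paired with the standing assumption $\beta^1_2>0$.
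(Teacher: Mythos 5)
Your proposal follows the paper's proof in all essentials: the paper likewise performs one continuation step by freezing the $\delta$-terms at an input, invoking the assumed solvability at $\lambda_0$ to define a map, applying It\^{o}'s formula to $\big\langle \Delta Y, G\Delta X\big\rangle$ so that the $(1-\lambda_0)\beta^1_2$ linear part and the $\lambda_0$-weighted monotonicity from (H5.2) combine into the full, $\lambda_0$-independent coefficient $\beta^1_2$ (this is exactly the paper's estimate (\ref{hhh})), closing the loop with a Gronwall bound on the forward component and BSDE/BDG estimates (the paper's (\ref{iii})--(\ref{jjj})), and choosing $\delta_0$ (the paper takes $\delta_0=\frac{1}{3K}$ with $K$ independent of $\lambda_0$) so that Banach's fixed point theorem applies. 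Your remarks on why $\beta^1_2>0$ is the operative assumption when $n_1\ge m_1$, and on the uniformity of $\delta_0$ in $\lambda_0$, match the paper's reasoning.

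There is, however, one concrete flaw in your setup of the fixed-point space. You define $\mathcal T_\delta$ on inputs $(x,y)\in L^2_{\overline{\mathbb{F}}^1}\left(0,T;\mathbb{R}^{n_1}\times\mathbb{R}^{m_1}\right)$ and freeze the terminal datum $\xi+\delta\,h\big(x(T)\big)$. But elements of $L^2_{\overline{\mathbb{F}}^1}\left(0,T;\cdot\right)$ are equivalence classes under $dt\otimes d\mathbb{P}$-a.e.\ equality, so the evaluation $x(T)$ is not well defined on your space; and even granting a fixed representative, your contraction cannot close: the quantity $\mathcal{R}$ necessarily contains $\mathbb{E}\big|\Delta x(T)\big|^2$ (coming from $\delta\big(h(x^1(T))-h(x^2(T))\big)$ in the terminal condition), which is \emph{not} dominated by the $L^2(0,T)$-norm $\|\Delta x\|^2$ that you use on the input side, while on the output side you do control $\mathbb{E}\big|\Delta X(T)\big|^2$ via $\mathbb{E}\sup_t|\Delta X|^2$. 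The paper resolves precisely this by enlarging the space: its map $I_{\lambda_0+\delta}$ acts on pairs $\big(\widetilde U,\widetilde X(T)\big)\in L^2_{\overline{\mathbb{F}}^1}\left(0,T;\cdot\right)\times L^2_{\overline{\mathbb{F}}^1_T}\left(\Omega;\mathbb{R}^{n_1}\right)$, carrying the terminal value as an independent coordinate, and the contraction estimate (\ref{kkk}) includes $\mathbb{E}\big[|\Delta X(T)|^2\big]$ and $\mathbb{E}\big[|\Delta\widetilde X(T)|^2\big]$ on both sides. Your argument is repaired either by adopting this augmented space or by running the contraction in $L^2_{\overline{\mathbb{F}}^1}\left(\Omega;C(0,T;\cdot)\right)$-type norms (which your BDG estimates already furnish); as written, the metric space is wrong and the fixed-point step fails. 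A smaller cosmetic point: your reduction of the map to the $(X,Y)$ components alone (dropping $(L_0,L_1)$ from the input) is legitimate, since the frozen coefficients never involve $\widetilde L_0,\widetilde L_1$; the paper keeps the full quadruple, in which those coordinates enter the contraction trivially.
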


\begin{proof}
To ensure the rationality of the mapping definition when proving the existence of the solution later, we first briefly prove the uniqueness of the
solution of equation $(\ref{n1>m1 parameterized FBSDE})$. For any $t\in[0,T]$, we set
\begin{equation*}
\begin{aligned}
\Psi^{\lambda}\big(t, X^{\lambda}(t), Y^{\lambda}(t), \widehat{X^{\lambda}}(t), \widehat{Y^{\lambda}}(t)\big)&:=(1-\lambda)
\beta^1_2(-G^{\top} Y^{\lambda}(t))\\
&\quad\ +\lambda\Psi\big(t, X^{\lambda}(t), Y^{\lambda}(t), \widehat{X^{\lambda}}(t), \widehat{Y^{\lambda}}(t)\big)+\phi(t),\\
D^{\lambda}\big(t, X^{\lambda}(t), Y^{\lambda}(t), \widehat{X^{\lambda}}(t), \widehat{Y^{\lambda}}(t)\big)&:=\lambda
D\big(t, X^{\lambda}(t), Y^{\lambda}(t), \widehat{X^{\lambda}}(t), \widehat{Y^{\lambda}}(t)\big)+\Lambda(t),\\
M^{\lambda}\big(t, X^{\lambda}(t), Y^{\lambda}(t), \widehat{X^{\lambda}}(t), \widehat{Y^{\lambda}}(t)\big)&:=
\lambda M\big(t, X^{\lambda}(t), Y^{\lambda}(t), \widehat{X^{\lambda}}(t), \widehat{Y^{\lambda}}(t)\big),\\
h^{\lambda}\big(X^{\lambda}(T)\big)&:=\lambda h\big(X^{\lambda}(T)\big)+\xi.
\end{aligned}
\end{equation*}
It is easy to verify that $\Psi^{\lambda}, D^{\lambda}, M^{\lambda}, h^{\lambda}$ satisfy the corresponding assumptions (H5.1) and (H5.2) with
respect to the variables $\big(X^{\lambda}, Y^{\lambda}, \widehat{X^{\lambda}}, \widehat{X^{\lambda}}\big)$. From Lemma
$\ref{uniqueness lemma of CMF-FBSDE}$, we know that for any $\lambda\in[0,1]$, equation $(\ref{n1>m1 parameterized FBSDE})$ has at most one
corresponding solution $\big(X^{\lambda}, Y^{\lambda}, L_0^{\lambda}, L_1^{\lambda}\big)\in L^2_{\overline{\mathbb{F}}^1}
\left(0, T; \mathbb{R}^{n_1}\times \mathbb{R}^{m_1}\times \mathbb{R}^{m_1\times j_0}\times \mathbb{R}^{m_1\times j}\right)$.

Next, let's further discuss the existence of solutions. Let $\delta_0$ be a positive constant which will be determined later on, and let
$\delta\in [0,\delta_0]$. Since for each $\phi \in L^2_{\overline{\mathbb{F}}^1}\left(0, T; \mathbb{R}^{n_1}\right),
\Lambda\in L^2_{\overline{\mathbb{F}}^1}\left(0, T; \mathbb{R}^{m_1}\right), \xi\in L^2_{\overline{\mathbb{F}}^1_T}\left(\Omega; \mathbb{R}^{m_1}\right)$,
there exists a unique solution of equation $(\ref{n1>m1 parameterized FBSDE})$ for $\lambda=\lambda_0\in[0,1)$. Therefore, for any
$\widetilde{U}:=\big(\widetilde{X}, \widetilde{Y}, \widetilde{L}_0, \widetilde{L}_1\big)\in L^2_{\overline{\mathbb{F}}^1}\left(0, T; \mathbb{R}^{n_1}
\times \mathbb{R}^{m_1}\times \mathbb{R}^{m_1\times j_0}\times \mathbb{R}^{m_1\times j}\right)$, there exists a unique solution $U=\big(X, Y, L_0, L_1\big)
\in L^2_{\overline{\mathbb{F}}^1}\left(0, T; \mathbb{R}^{n_1}\times \mathbb{R}^{m_1}\times \mathbb{R}^{m_1\times j_0}\times \mathbb{R}^{m_1\times j}\right)$
satisfying the following conditional mean-field FBSDE:
\begin{equation}
\left\{
             \begin{array}{lr}
             d X(t)=\Big[(1-\lambda_0)\beta^1_2(-G^{\top} Y(t))+\lambda_0\Psi\big(t, X(t), Y(t), \widehat{X}(t), \widehat{Y}(t)\big)\\
             \qquad\qquad\ +\delta\big(\beta^1_2G^{\top} \widetilde{Y}(t)+\Psi\big(t, \widetilde{X}(t), \widetilde{Y}(t), \widehat{\widetilde{X}}(t),
             \widehat{\widetilde{Y}}(t)\big)\big)+\phi(t)\Big] dt\\
             \qquad\qquad\ +\theta_0 dW_0(t)+\theta_1 dW_1(t),\\
             -dY(t)=\Big[\lambda_0 D\big(t, X(t), Y(t), \widehat{X}(t), \widehat{Y}(t)\big)+\lambda_0\mathbb{E}\big[ M\big(t, X(t), Y(t),
             \widehat{X}(t), \widehat{Y}(t)\big)\big|{\mathbb{F}}^0_t\big]\\
             \qquad\qquad\ +\delta D\big(t, \widetilde{X}(t), \widetilde{Y}(t), \widehat{\widetilde{X}}(t), \widehat{\widetilde{Y}}(t)\big)+\delta
             \mathbb{E}\big[M\big(t, \widetilde{X}(t), \widetilde{Y}(t), \widehat{\widetilde{X}}(t), \widehat{\widetilde{Y}}(t)\big)\big|
             {\mathbb{F}}^0_t\big]\\
             \qquad\qquad\ +\Lambda(t)\Big] dt-L_0(t) dW_0(t)-L_1(t) dW_1(t),\\
             X(0)=\chi_0,\ Y(T)=\lambda_0 h\big(X(T)\big)+\delta h\big(\widetilde{X}(T)\big)+\xi.\\
             \end{array}
\right.
\end{equation}

We now proceed to prove that, if $\delta$ is sufficiently small, the mapping defined by
\begin{equation*}
\begin{aligned}
I_{\lambda_0+\delta}\big(\big(\widetilde{U}, \widetilde{X}(T)\big)\big)=\big(U, X(T)\big): &L^2_{\overline{\mathbb{F}}^1}\left(0, T;
\mathbb{R}^{n_1}\times \mathbb{R}^{m_1}\times \mathbb{R}^{m_1\times j_0}\times \mathbb{R}^{m_1\times j}\right)\times L^2_{\overline{\mathbb{F}}^1_T}\left(\Omega;
\mathbb{R}^{n_1}\right)\\
&\longrightarrow L^2_{\overline{\mathbb{F}}^1}\left(0, T; \mathbb{R}^{n_1}\times \mathbb{R}^{m_1}\times \mathbb{R}^{m_1\times j_0}\times \mathbb{R}^{m_1\times j}\right)
\times L^2_{\overline{\mathbb{F}}^1_T}\left(\Omega; \mathbb{R}^{n_1}\right)
\end{aligned}
\end{equation*}
is a contraction.

Let $\widetilde{U}':=\big(\widetilde{X}', \widetilde{Y}', \widetilde{L}'_0, \widetilde{L}'_1\big)\in L^2_{\overline{\mathbb{F}}^1}\left(0, T;
\mathbb{R}^{n_1}\times \mathbb{R}^{m_1}\times \mathbb{R}^{m_1\times j_0}\times \mathbb{R}^{m_1\times j}\right),$ $\big(U', X'(T)\big):=$\\
$I_{\lambda_0+\delta}\big(\big(\widetilde{U}', \widetilde{X}'(T)\big)\big),$ and $U':=\big(X', Y', L'_0, L'_1\big)$. Set
\[\Delta \widetilde{U}:=\big(\Delta \widetilde{X}, \Delta \widetilde{Y}, \Delta \widetilde{L}_0, \Delta \widetilde{L}_1 \big):=\big(\widetilde{X}-
\widetilde{X}', \widetilde{Y}-\widetilde{Y}', \widetilde{L}_0-\widetilde{L}'_0, \widetilde{L}_1-\widetilde{L}'_1 \big),\]
\[\Delta U:=\big(\Delta X, \Delta Y, \Delta L_0, \Delta L_1 \big):=\big(X-X', Y-Y', L_0-L'_0, L_1-L'_1 \big),\]
\[\Delta \widehat{X}:=\widehat{X}-\widehat{X'},\ \Delta \widehat{Y}:=\widehat{Y}-\widehat{Y'}, \Delta \widehat{\widetilde{X}}:=\widehat{\widetilde{X}}-
\widehat{\widetilde{X}'},\ \Delta \widehat{\widetilde{Y}}:=\widehat{\widetilde{Y}}-\widehat{\widetilde{Y}'}.\]

After applying It\^{o}'s formula to $\big\langle\Delta Y(s), G \Delta X(s)\big\rangle$ and taking the expectation from $0$ to $T$, based on assumptions
(H5.1) and (H5.2), along with some basic derivations, it can be concluded that there exists a positive constant $K$ such that
\begin{equation}\label{hhh}
\begin{aligned}
&\mathbb{E}\int_0^T |G^\top \Delta Y(s)|^2 ds\\
&\le K\delta \mathbb{E}\int_0^T \left(|\Delta X(s)|^2+|\Delta Y(s)|^2+|\Delta \widetilde{X}(s)|^2+|\Delta \widetilde{Y}(s)|^2\right) ds\\
&\quad +K\delta \mathbb{E}\big[|\Delta X(T)|^2+|\Delta \widetilde{X}(T)|^2\big].
\end{aligned}
\end{equation}
By carrying out some similar estimation ideas and techniques as those in the proof of the uniqueness lemma $\ref{uniqueness lemma of CMF-FBSDE}$, it can
be proved that there exists a positive constant $K$ such that
\begin{equation}\label{iii}
\begin{aligned}
&\mathbb{E}\big[\sup\limits_{0\le t\le T}|\Delta X(t)|^2 \big]\le K\mathbb{E}\int_0^T |\Delta Y(s)|^2ds+K\delta\mathbb{E}\int_0^T\left(|\Delta
\widetilde{X}(s)|^2+|\Delta \widetilde{Y}(s)|^2\right) ds,\\
&|\Delta Y(0)|^2+\mathbb{E}\int_0^T |\Delta L_0(s)|^2ds+\mathbb{E}\int_0^T |\Delta L_1(s)|^2ds\\
&\le K\mathbb{E}\int_0^T |\Delta Y(s)|^2ds+K\delta\mathbb{E}\int_0^T\left(|\Delta \widetilde{X}(s)|^2+|\Delta \widetilde{Y}(s)|^2\right) ds+K\delta
\mathbb{E}\big[|\Delta \widetilde{X}(T)|^2\big].
\end{aligned}
\end{equation}
Combining $(\ref{hhh})$ and $(\ref{iii})$, it can be derived that
\begin{equation}\label{jjj}
\begin{aligned}
&\mathbb{E}\int_0^T \left(|\Delta X(s)|^2+|\Delta Y(s)|^2+|\Delta L_0(s)|^2+|\Delta L_1(s)|^2\right)ds+\mathbb{E}\big[|\Delta X(T)|^2\big]\\
&\le K\delta\mathbb{E}\int_0^T \left(|\Delta X(s)|^2+|\Delta Y(s)|^2+|\Delta \widetilde{X}(s)|^2+|\Delta \widetilde{Y}(s)|^2\right)ds\\
&\quad +K\delta \mathbb{E}\big[|\Delta X(T)|^2+|\Delta \widetilde{X}(T)|^2\big],
\end{aligned}
\end{equation}
where the value of $K$ is independent of the selection of $\lambda_0\in[0,1)$. Then, we can take a sufficiently small $\delta$, for instance, let
$\delta_0=\frac{1}{3K}$. Then for any $\delta\in[0,\delta_0]$, it can be deduced that
\begin{equation}\label{kkk}
\begin{aligned}
&\mathbb{E}\int_0^T \left(|\Delta X(s)|^2+|\Delta Y(s)|^2+|\Delta L_0(s)|^2+|\Delta L_1(s)|^2\right)ds+\mathbb{E}\big[|\Delta X(T)|^2\big]\\
&\le \frac{1}{2}\mathbb{E}\int_0^T \left(|\Delta \widetilde{X}(s)|^2+|\Delta \widetilde{Y}(s)|^2+|\Delta \widetilde{L}_0(s)|^2+|\Delta
\widetilde{L}_1(s)|^2\right)ds+\frac{1}{2} \mathbb{E}\big[|\Delta \widetilde{X}(T)|^2\big].
\end{aligned}
\end{equation}
Therefore, for any $\delta\in[0,\delta_0]$, it can be obtained that $I_{\lambda_0+\delta}$ is a contraction mapping, and there exists a unique fixed
point such that $I_{\lambda_0+\delta}\big(\big(U, X(T)\big)\big)=\big(U, X(T)\big)$, where $\big(U, X(T)\big)\in L^2_{\overline{\mathbb{F}}^1}
\left(0, T; \mathbb{R}^{n_1}\times \mathbb{R}^{m_1}\times \mathbb{R}^{m_1\times j_0}\times \mathbb{R}^{m_1\times j}\right)\times L^2_{\overline{\mathbb{F}}^1_T}
\left(\Omega; \mathbb{R}^{n_1}\right)$. So, for $\lambda=\lambda_0+\delta$, equation $(\ref{n1>m1 parameterized FBSDE})$ has a unique solution
$\big(X^{\lambda_0+\delta}, Y^{\lambda_0+\delta}, L_0^{\lambda_0+\delta}, L_1^{\lambda_0+\delta}\big)\in L^2_{\overline{\mathbb{F}}^1}
\left(0, T; \mathbb{R}^{n_1}\times \mathbb{R}^{m_1}\times \mathbb{R}^{m_1\times j_0}\times \mathbb{R}^{m_1\times j}\right)$.

\end{proof}

When $n_1<m_1$, introduce the following FBSDE parameterized by $\tau\in [0,1]$:
\begin{equation}\label{n1<m1 parameterized FBSDE}
\left\{
             \begin{array}{lr}
             d X^{\tau}(t)=\left[\tau\Psi\big(t, X^{\tau}(t), Y^{\tau}(t),
             \widehat{X^{\tau}}(t), \widehat{Y^{\tau}}(t)\big)+\psi(t)\right] dt\\
             \qquad\qquad\quad +\theta_0 dW_0(t)+\theta_1 dW_1(t),\\
             -dY^{\tau}(t)=\Big[(1-\tau)\beta^1_1G X^{\tau}(t)+\tau D\big(t, X^{\tau}(t), Y^{\tau}(t), \widehat{X^{\tau}}(t),
             \widehat{Y^{\tau}}(t)\big)\\
             \qquad\qquad\quad +\tau\mathbb{E}\big[ M\big(t, X^{\tau}(t), Y^{\tau}(t), \widehat{X^{\tau}}(t), \widehat{Y^{\tau}}(t)\big)
             \big|{\mathbb{F}}^0_t\big]+\Gamma(t)\Big] dt\\
             \qquad\qquad\quad -L^{\tau}_0(t) dW_0(t)-L^{\tau}_1(t) dW_1(t),\\
             X^{\tau}(0)=\chi_0,\ Y^{\tau}(T)=\tau h\big(X^{\tau}(T)\big)+(1-\tau)G X^{\tau}(T)+\chi,\\
             \end{array}
\right.
\end{equation}
where $\chi\in L^2_{\overline{\mathbb{F}}^1_T}\left(\Omega; \mathbb{R}^{m_1}\right)$, $\psi$ and $\Gamma$ are given process in
$L^2_{\overline{\mathbb{F}}^1}\left(0, T; \mathbb{R}^{n_1}\right)$ and $L^2_{\overline{\mathbb{F}}^1}\left(0, T; \mathbb{R}^{m_1}\right)$ respectively.
 And we introduce the following lemma.

\begin{mylem}\label{The iterative lemma of n1<m1 parameterized FBSDE}
Let (H5.1), (H5.2) and $n_1< m_1$ hold. Then there exists a positive constant $\kappa_0$, such that if, a prior, for $\tau=\tau_0\in[0,1)$
there exists a unique solution $\big(X^{\tau_0}, Y^{\tau_0}, L_0^{\tau_0}, L_1^{\tau_0}\big)\in L^2_{\overline{\mathbb{F}}^1}
\left(0, T; \mathbb{R}^{n_1}\times \mathbb{R}^{m_1}\times \mathbb{R}^{m_1\times j_0}\times \mathbb{R}^{m_1\times j}\right)$ of equation
$(\ref{n1<m1 parameterized FBSDE})$,
then for each $\kappa\in [0,\kappa_0]$ there exists a unique solution $\big(X^{\tau_0+\kappa}, Y^{\tau_0+\kappa}, L_0^{\tau_0+\kappa},
L_1^{\tau_0+\kappa}\big)\in L^2_{\overline{\mathbb{F}}^1}\left(0, T; \mathbb{R}^{n_1}\times \mathbb{R}^{m_1}\times \mathbb{R}^{m_1\times j_0}\times
\mathbb{R}^{m_1\times j}\right)$ of equation $(\ref{n1<m1 parameterized FBSDE})$ for $\tau=\tau_0+\kappa$.
\end{mylem}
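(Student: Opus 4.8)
The plan is to prove this lemma as the $n_1<m_1$ counterpart of Lemma \ref{The iterative lemma of n1>m1 parameterized FBSDE}, following exactly the same two-step architecture: a short uniqueness argument that reduces to Lemma \ref{uniqueness lemma of CMF-FBSDE}, and an existence argument through a contraction mapping. The essential structural difference is a reversal of roles. In the present regime the monotonicity quantities that survive in (H5.2) are $\beta^1_1\mathbb{E}[|G\Delta X|^2]$ and the terminal coefficient $\alpha^1_1$, both strictly positive (this is exactly what (H5.2) guarantees when $m_1>n_1$); and since $G$ is an $m_1\times n_1$ full-rank matrix with $n_1<m_1$, it has full column rank and is therefore injective, so $|Gx|^2\ge c|x|^2$ for all $x\in\mathbb{R}^{n_1}$. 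Consequently the forward variable $\Delta X$ is the one controlled first, and the backward triple $(\Delta Y,\Delta L_0,\Delta L_1)$ is estimated afterwards --- the mirror image of Lemma \ref{The iterative lemma of n1>m1 parameterized FBSDE}, and precisely the situation of case (2) in the proof of Lemma \ref{uniqueness lemma of CMF-FBSDE}.

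For uniqueness I would recast equation $(\ref{n1<m1 parameterized FBSDE})$ in the form $(\ref{aaa})$ by setting $\Psi^\tau:=\tau\Psi+\psi$, $D^\tau:=(1-\tau)\beta^1_1 GX+\tau D+\Gamma$, $M^\tau:=\tau M$, and $h^\tau(x):=\tau h(x)+(1-\tau)Gx+\chi$. These inherit (H5.1) immediately. For (H5.2), in the drift pairing the homotopy contributes the extra term $-(1-\tau)\beta^1_1\langle G^\top G\Delta X,\Delta X\rangle=-(1-\tau)\beta^1_1|G\Delta X|^2$, which combines with the $\tau$-scaled original bound to leave the full coefficient $\beta^1_1$ in front of $\mathbb{E}[|G\Delta X|^2]$; in the terminal pairing the extra term $(1-\tau)\mathbb{E}[|G\Delta X(T)|^2]$ yields terminal coefficient $\tau\alpha^1_1+(1-\tau)\ge\min\{\alpha^1_1,1\}>0$. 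Hence the interpolated coefficients satisfy (H5.2) uniformly in $\tau\in[0,1]$, and Lemma \ref{uniqueness lemma of CMF-FBSDE} gives at most one solution for each $\tau$.

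For existence, fix $\kappa_0>0$ (chosen later) and $\kappa\in[0,\kappa_0]$. Assuming solvability at $\tau_0$, for any input $\big(\widetilde U,\widetilde X(T)\big)$ I split each coefficient of the $(\tau_0+\kappa)$-equation into its $\tau_0$-part plus $\kappa$ times a remainder, and freeze the remainder at $\widetilde U$; what is left is a $\tau_0$-equation driven by the new sources $\kappa\Psi\big(t,\widetilde X,\widetilde Y,\widehat{\widetilde X},\widehat{\widetilde Y}\big)+\psi$, $\kappa\big(-\beta^1_1 G\widetilde X+D(t,\widetilde X,\dots)+\mathbb{E}[M(t,\widetilde X,\dots)\,|\,{\mathbb{F}}^0_t]\big)+\Gamma$, and terminal value $\kappa\big(h(\widetilde X(T))-G\widetilde X(T)\big)+\chi$, all of which lie in the required $L^2$-spaces. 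The a priori solvability at $\tau_0$ makes the map $I_{\tau_0+\kappa}(\widetilde U,\widetilde X(T))=(U,X(T))$ well defined (note that $\Psi,D,M,h$ do not involve $L_0,L_1$, so the output depends on the input only through $\widetilde X,\widetilde Y,\widetilde X(T)$). Writing $\Delta U,\Delta\widetilde U$ for the differences generated by two inputs, I would first apply It\^{o}'s formula to $\langle\Delta Y(s),G\Delta X(s)\rangle$ on $[0,T]$; since $\theta_0,\theta_1$ cancel in $\Delta X$, this difference has no martingale part, there is no quadratic-covariation correction, and $\Delta X(0)=0$. Using the (H5.2)-verification above together with the injectivity of $G$ then yields, for small $\kappa$, an estimate $\mathbb{E}[|\Delta X(T)|^2]+\mathbb{E}\int_0^T|\Delta X|^2\,ds\le K\kappa\big(\mathbb{E}\int_0^T(|\Delta Y|^2+|\Delta\widetilde X|^2+|\Delta\widetilde Y|^2)\,ds+\mathbb{E}[|\Delta\widetilde X(T)|^2]\big)$, after absorbing the output term $K\kappa\,\mathbb{E}\int_0^T|\Delta X|^2\,ds$. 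I would then estimate the backward triple exactly as in case (2) of Lemma \ref{uniqueness lemma of CMF-FBSDE}: It\^{o} on $|\Delta Y(s)|^2$, Gronwall and the BDG inequality bound $\mathbb{E}[\sup_{t}|\Delta Y(t)|^2]+\mathbb{E}\int_0^T(|\Delta L_0|^2+|\Delta L_1|^2)\,ds$ by $K\big(\mathbb{E}[|\Delta X(T)|^2]+\mathbb{E}\int_0^T|\Delta X|^2\,ds\big)$ plus $K\kappa$ times the input terms (here $\mathbb{E}[|\Delta\widehat X|^2]\le\mathbb{E}[|\Delta X|^2]$ by conditional Jensen).

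Substituting the forward estimate into the backward one and absorbing the remaining $O(\kappa)$ output terms gives, for $\kappa\le\kappa_0:=\tfrac{1}{3K}$,
\[
\mathbb{E}\int_0^T\!\big(|\Delta X|^2+|\Delta Y|^2+|\Delta L_0|^2+|\Delta L_1|^2\big)\,ds+\mathbb{E}[|\Delta X(T)|^2]\le\tfrac12\Big(\mathbb{E}\int_0^T\!\big(|\Delta\widetilde X|^2+|\Delta\widetilde Y|^2+|\Delta\widetilde L_0|^2+|\Delta\widetilde L_1|^2\big)\,ds+\mathbb{E}[|\Delta\widetilde X(T)|^2]\Big),
\]
so $I_{\tau_0+\kappa}$ is a contraction on $L^2_{\overline{\mathbb{F}}^1}\big(0,T;\mathbb{R}^{n_1}\times\mathbb{R}^{m_1}\times\mathbb{R}^{m_1\times j_0}\times\mathbb{R}^{m_1\times j}\big)\times L^2_{\overline{\mathbb{F}}^1_T}(\Omega;\mathbb{R}^{n_1})$, and its unique fixed point is the solution of $(\ref{n1<m1 parameterized FBSDE})$ at $\tau=\tau_0+\kappa$. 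I expect the main obstacle to be bookkeeping rather than conceptual: I must confirm that the constant $K$ arising from both families of estimates is independent of $\tau_0\in[0,1)$ (so that a single $\kappa_0$ works along the whole homotopy), that the monotonicity signs remain aligned after interpolation so that $\beta^1_1$ and $\tau_0\alpha^1_1+(1-\tau_0)$ stay the controlling positive constants, and that the estimates are carried out in the correct order (forward $\Delta X$ first, backward $(\Delta Y,\Delta L_0,\Delta L_1)$ second), which is exactly what makes the cancellations close in the $n_1<m_1$ regime.
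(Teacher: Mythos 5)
Your proposal is correct and follows essentially the same route the paper prescribes: the paper's own proof of this lemma is a one-line reduction to the techniques of Lemma \ref{uniqueness lemma of CMF-FBSDE}, Lemma 2.3 of \cite{W1998}, and the steps of Lemma \ref{The iterative lemma of n1>m1 parameterized FBSDE}, and your writeup supplies exactly those details — the uniform-in-$\tau$ verification of (H5.2) for the interpolated coefficients (with $\beta^1_1$ and $\tau\alpha^1_1+(1-\tau)$ as the surviving positive constants), the role reversal in which $\Delta X$ is controlled first via monotonicity and injectivity of $G$ and the backward triple second, and the contraction $I_{\tau_0+\kappa}$ with $\kappa_0=\tfrac{1}{3K}$ and $K$ independent of $\tau_0$.
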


\begin{proof}
By applying the techniques used in the proof of Lemma $\ref{uniqueness lemma of CMF-FBSDE}$, combining them with the proof of Lemma 2.3 in
\cite{W1998},
and following similar steps as those in
the proof of Lemma $\ref{The iterative lemma of n1>m1 parameterized FBSDE}$, this lemma can be proved. For the sake of brevity,
the details are not elaborated here.
\end{proof}

\begin{mythm}\label{the existence and uniqueness of new kind of CMF-FBSDE}
Let (H5.1) and (H5.2) hold. Then there exists a unique adapted solution $\left(X, Y, L_0, L_1\right)\in L^2_{\overline{\mathbb{F}}^1}
\left(\Omega; C(0, T; \mathbb{R}^{n_1})\right)\times L^2_{\overline{\mathbb{F}}^1}\left(\Omega; C(0, T; \mathbb{R}^{m_1})\right)
\times L^2_{\overline{\mathbb{F}}^1}\left(0, T; \mathbb{R}^{m_1\times j_0}\right)\times L^2_{\overline{\mathbb{F}}^1}\big(0, T;$\\
$\mathbb{R}^{m_1\times j}\big)$
to conditional mean-field FBSDE $(\ref{aaa})$.
\end{mythm}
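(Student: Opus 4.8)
The plan is to read off uniqueness directly from Lemma \ref{uniqueness lemma of CMF-FBSDE} and to establish existence by the method of continuation, treating the two cases $n_1\ge m_1$ and $n_1<m_1$ separately through the two families of parameterized FBSDEs already introduced. Uniqueness is immediate: since (H5.1) and (H5.2) hold, Lemma \ref{uniqueness lemma of CMF-FBSDE} guarantees that equation $(\ref{aaa})$ admits at most one adapted solution, so only existence and the path-regularity assertion remain.

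For the case $n_1\ge m_1$, I would run the continuation along $\lambda\in[0,1]$ for the family $(\ref{n1>m1 parameterized FBSDE})$ with the auxiliary data $\phi, \Lambda, \xi$ held arbitrary but fixed. First I would check the base case $\lambda=0$: there the backward generator reduces to $\Lambda(t)$, which does not depend on $(X,Y,L_0,L_1)$, and the terminal value is $\xi$, so the backward part is a plain BSDE with unique solution $(Y^0, L_0^0, L_1^0)$ obtained from $Y^0(t)=\mathbb{E}\big[\xi+\int_t^T\Lambda(s)\,ds\,\big|\,\overline{\mathbb{F}}^1_t\big]$ and martingale representation. Substituting $Y^0$ into the forward drift $-\beta^1_2 G^{\top}Y^0+\phi$ renders the forward part an explicitly integrable SDE with known adapted drift, so $X^0$ is uniquely determined; hence $(\ref{n1>m1 parameterized FBSDE})$ is solvable at $\lambda=0$ for every choice of $\phi, \Lambda, \xi$. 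Lemma \ref{The iterative lemma of n1>m1 parameterized FBSDE} then supplies a step length $\delta_0>0$ independent of $\lambda_0$ such that solvability at $\lambda_0$ propagates to $\lambda_0+\delta$ for all $\delta\in[0,\delta_0]$; starting at $\lambda=0$ and advancing by increments of $\delta_0$, after at most $\lceil 1/\delta_0\rceil$ steps I reach $\lambda=1$. Finally, specializing $\phi\equiv0$, $\Lambda\equiv0$, $\xi=0$ makes the $\lambda=1$ system coincide exactly with $(\ref{aaa})$, which yields the desired solution.

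The case $n_1<m_1$ is handled symmetrically along $\tau\in[0,1]$ using $(\ref{n1<m1 parameterized FBSDE})$ and Lemma \ref{The iterative lemma of n1<m1 parameterized FBSDE}, with the decoupling reversed: at $\tau=0$ the forward equation $dX^0=\psi\,dt+\theta_0\,dW_0+\theta_1\,dW_1$ carries no $Y$, so $X^0$ is solved first as a plain SDE, after which the backward equation, with generator $\beta^1_1 G X^0+\Gamma$ and terminal value $GX^0(T)+\chi$, becomes a plain BSDE with known data and is uniquely solvable. Invoking the uniform step length $\kappa_0$ from Lemma \ref{The iterative lemma of n1<m1 parameterized FBSDE} to advance from $\tau=0$ to $\tau=1$ in finitely many steps, then setting $\psi\equiv0$, $\Gamma\equiv0$, $\chi=0$, again recovers $(\ref{aaa})$.

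It remains to upgrade the regularity from $L^2_{\overline{\mathbb{F}}^1}(0,T;\cdot)$ to the continuous-path spaces in the statement. For the solution $(X,Y,L_0,L_1)$ just constructed, the forward drift $\Psi(t,X,Y,\widehat{X},\widehat{Y})$ lies in $L^2_{\overline{\mathbb{F}}^1}(0,T;\mathbb{R}^{n_1})$ by (H5.1), so standard SDE estimates with constant diffusion coefficients give $X\in L^2_{\overline{\mathbb{F}}^1}(\Omega;C(0,T;\mathbb{R}^{n_1}))$; treating the backward part as a BSDE with $L^2$ generator and terminal value $h(X(T))\in L^2$, the BDG inequality furnishes a continuous modification of $Y$ with $\mathbb{E}[\sup_{0\le t\le T}|Y(t)|^2]<\infty$, so $Y\in L^2_{\overline{\mathbb{F}}^1}(\Omega;C(0,T;\mathbb{R}^{m_1}))$. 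The main obstacle I anticipate is organizational rather than computational: the hard analytic work, namely the uniform-in-$\lambda_0$ contraction estimates producing $\delta_0$ and $\kappa_0$, is already discharged in Lemmas \ref{The iterative lemma of n1>m1 parameterized FBSDE} and \ref{The iterative lemma of n1<m1 parameterized FBSDE}, so the theorem's proof reduces to verifying the two decoupled base cases and confirming that the $\lambda=1$ (resp.\ $\tau=1$) endpoints, with auxiliary data switched off, reproduce $(\ref{aaa})$ precisely.
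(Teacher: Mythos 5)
Your proposal is correct and follows essentially the same route as the paper's proof: uniqueness is read off from Lemma \ref{uniqueness lemma of CMF-FBSDE}, existence is obtained by continuation through the parameterized families $(\ref{n1>m1 parameterized FBSDE})$ and $(\ref{n1<m1 parameterized FBSDE})$ via Lemmas \ref{The iterative lemma of n1>m1 parameterized FBSDE} and \ref{The iterative lemma of n1<m1 parameterized FBSDE} starting from the decoupled base case and then setting the auxiliary data $\phi,\Lambda,\xi$ (resp.\ $\psi,\Gamma,\chi$) to zero, and the path regularity is upgraded by Gronwall and BDG estimates exactly as in the paper. Your only addition is to spell out the triangular solvability of the base cases (solve the plain BSDE then the SDE, or vice versa), which the paper states without detail but which matches its intent.
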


\begin{proof}
From Lemma $\ref{uniqueness lemma of CMF-FBSDE}$, the uniqueness of the solution to equation $(\ref{aaa})$ can be obtained.
Next, we consider the existence of the solution to equation $(\ref{aaa})$.
When $n_1\ge m_1$, the parameterized FBSDE $(\ref{n1>m1 parameterized FBSDE})$ decouples at $\lambda=0$ and has a unique solution.
By Lemma $\ref{The iterative lemma of n1>m1 parameterized FBSDE}$, there exists a positive constant $\delta_0$ such that for any
$\delta\in[0, \delta_0]$, equation $(\ref{n1>m1 parameterized FBSDE})$ has a unique solution for $\lambda=\delta$. We can repeat the step N
times until $1\le N\delta_0< 1+\delta_0$. Then, for $\lambda=1$, equation $(\ref{n1>m1 parameterized FBSDE})$ has a unique solution. When taking
$\phi(\cdot)\equiv 0, \Lambda(\cdot)\equiv 0, \xi\equiv 0$, it can be known that the conditional mean-field FBSDE $(\ref{aaa})$ has a unique solution.
When $n_1<m_1$, by combining parameterized FBSDE $(\ref{n1<m1 parameterized FBSDE})$ and Lemma $\ref{The iterative lemma of n1<m1 parameterized FBSDE}$,
and following similar steps as when $n_1\ge m_1$, it can be proved that the conditional mean-field FBSDE $(\ref{aaa})$ has a unique solution.

When $\left(X, Y, L_0, L_1\right)\in L^2_{\overline{\mathbb{F}}^1}\left(0, T; \mathbb{R}^{n_1}\times \mathbb{R}^{m_1}\times \mathbb{R}^{m_1\times j_0}
\times \mathbb{R}^{m_1\times j}\right)$ is the unique solution of the conditional mean-field FBSDE $(\ref{aaa})$, by Jensen's inequality,
Gronwall's inequality,
BDG inequality, and some usual techniques, there exists a positive constant $K$ such that
\begin{equation*}
\begin{aligned}
\mathbb{E}\left[\sup\limits_{0\le t\le T} |X(t)|^2\right]&\le K\left[1+\mathbb{E}\int_0^T |Y(s)|^2 ds\right]<\infty,\\
\mathbb{E}\left[\sup\limits_{0\le t\le T} |Y(t)|^2\right]&\le K\bigg\{1+\mathbb{E}[|X(T)|^2]+\mathbb{E}\int_0^T |X(s)|^2 ds\\
&\qquad+\mathbb{E}\int_0^T |L_0(s)|^2 ds+\mathbb{E}\int_0^T |L_1(s)|^2 ds\bigg\}<\infty.\\
\end{aligned}
\end{equation*}
Then, $X(\cdot)\in L^2_{\overline{\mathbb{F}}^1}\left(\Omega; C(0, T; \mathbb{R}^{n_1})\right), Y(\cdot)\in L^2_{\overline{\mathbb{F}}^1}
\left(\Omega; C(0, T; \mathbb{R}^{m_1})\right)$
\end{proof}

\subsection{The maximum principle for the leader's decentralized optimal control problem}\label{zz}

We set
\[X_1(t):=\left(\begin{array}{ccc}
{\breve{x}}_0(t)\\
\breve{x}_1(t)\\
\end{array}\right),
\Phi_1\big(t, X_1(t), p_1(t), \widehat{X}_1(t), u_0(t)\big):=\Phi\left(t, \breve{x}_1(t), \breve{x}_0(t), u_0(t),
\widehat{\breve{x}}_1(t), p_1(t)\right)\\,
\]
\[\Psi_1\big(t, X_1(t), p_1(t), \widehat{X}_1(t), u_0(t)\big):=\left(\begin{array}{ccc}
b_0\left(t, {\breve{x}}_0(t), u_{0}(t), \widehat{\breve{x}}_1(t)\right)\\
B_1\left(t, \breve{x}_1(t), \breve{x}_{0}(t), u_{0}(t), \widehat{\breve{x}}_1(t), p_1(t)\right)\end{array}\right),\]
for any $(t, \omega)\in [0,T]\times \Omega$. And let
\[X_0:=\left(\begin{array}{ccc}
\xi_0\\
\xi_1\\
\end{array}\right),\ h_1\big(X_1(T)\big):=\frac{\partial G_1}{\partial x_1}\left(\breve{x}_1(T)\right),\ \sigma_{10}:=\left(\begin{array}{ccc}
\sigma_0\\
\mathbf{0}_{n\times j_0}\\
\end{array}\right),\ \sigma_{11}:=\left(\begin{array}{ccc}
\mathbf{0}_{n\times j}\\
\widetilde{\sigma}\\
\end{array}\right).\]
Then, the state equation of the leader's decentralized optimal control problem $(\ref{m})$ is equivalent to
\begin{equation}\label{Integrated leader's decentralized state equation}
\left\{
             \begin{array}{lr}
             d X_1(t)=\Psi_1\big(t, X_1(t), p_1(t), \widehat{X}_1(t), u_0(t)\big) dt+\sigma_{10} dW_0(t)+\sigma_{11} dW_1(t),\\
             -dp_1(t)=\Phi_1\big(t, X_1(t), p_1(t), \widehat{X}_1(t), u_0(t)\big) dt-l_1(t) dW_0(t)-q_1(t) dW_1(t),\\
             X_1(0)=X_0,\ p_1(T)=h_1\big(X_1(T)\big).\\
             \end{array}
\right.
\end{equation}
It can be observed that the form of equation $(\ref{Integrated leader's decentralized state equation})$ is encompassed by the form of the conditional
mean-field FBSDE $(\ref{aaa})$ in the Subsection $\ref{some well-posedness results of a new kind of CMF-FBSDEs}$. Therefore, by Theorem
$\ref{the existence and uniqueness of new kind of CMF-FBSDE}$, if $\Psi_1, \Phi_1, h_1$ satisfy assumptions (H5.1) and (H5.2), then equation
$(\ref{Integrated leader's decentralized state equation})$ has a unique solution $\left(X_1, p_1, l_1, q_1\right)\in L^2_{\overline{\mathbb{F}}^1}
\left(\Omega; C(0, T; \mathbb{R}^{k+n})\right)\times L^2_{\overline{\mathbb{F}}^1}\left(\Omega; C(0, T; \mathbb{R}^{n})\right)
\times L^2_{\overline{\mathbb{F}}^1}\left(0, T; \mathbb{R}^{n\times j_0}\right)\times L^2_{\overline{\mathbb{F}}^1}\left(0, T; \mathbb{R}^{n\times j}\right)$.

In the subsequent discussion of this paper, all our analyses are conducted under the premise that $\Psi_1, \Phi_1, h_1$ satisfy assumptions (H5.1) and
(H5.2). Therefore, for any $u_0\in \mathcal {U}_0$, the state equation of the leader's decentralized optimal control problem $(\ref{m})$ has a unique
solution $\left(\breve{x}_0(\cdot), \breve{x}_1(\cdot), p_1(\cdot), l_1(\cdot), q_1(\cdot)\right)\in L^2_{\overline{\mathbb{F}}^1}
\left(\Omega; C(0, T; \mathbb{R}^k)\right)\times L^2_{\overline{\mathbb{F}}^1}\left(\Omega; C(0, T; \mathbb{R}^n)\right)
\times L^2_{\overline{\mathbb{F}}^1}\left(\Omega; C(0, T; \mathbb{R}^n)\right)$
$\times L^2_{\overline{\mathbb{F}}^1}\left(0, T; \mathbb{R}^{n\times j_0}\right)\times L^2_{\overline{\mathbb{F}}^1}\left(0, T; \mathbb{R}^{n\times j}\right)$.

Define the Hamiltonian function for the leader as follows:
\begin{equation}
\begin{aligned}
&H_0\left(t, x_1, x_0, u_0, z, p_1, K_0, K_1, \varphi_1\right)\\
&:= \left\langle K_0, b_0\left(t, x_0, u_0, z\right)\right\rangle+\left\langle K_1, B_1\left(t, x_1, x_0, u_0, z, p_1\right)\right\rangle \\
&\quad-\left\langle \varphi_1, \Phi\left(t, x_1, x_0, u_0, z, p_1\right)\right\rangle+g_0\left(t, x_0, u_0, z\right),
\end{aligned}
\end{equation}
where $H_0:[0, T] \times \mathbb{R}^n \times \mathbb{R}^k \times \mathbb{R}^{m_0} \times \mathbb{R}^n \times \mathbb{R}^n \times \mathbb{R}^k
\times \mathbb{R}^n \times \mathbb{R}^n \rightarrow \mathbb{R}.$

To prove the maximum principle for the leader's decentralized optimal control problem, we need the following assumptions.

\noindent {\bf (H5.3)}\ {\it ${\alpha}_1$ in $(\ref{yy})$ is continuously differentiable with respect to $(x_i, x_0, u_0, z, p_i)$ and all the first
partial derivatives of ${\alpha}_1$ are bounded. And $\Phi$ in $(\ref{m})$ is also continuously differentiable with respect to $(x_1, x_0, u_0, z, p_1)$
and all the first partial derivatives of $\Phi$ are bounded.}

Let $u_{0}^{\dag}\in \mathcal {U}_0$ be the optimal control of the leader's decentralized optimal control problem. According to Theorem
$\ref{the existence and uniqueness of new kind of CMF-FBSDE}$ and the previous analysis, there exists a unique optimal state trajectory
$\left(\breve{x}_0^{\dag}(\cdot), \breve{x}_1^{\dag}(\cdot), p_1^{\dag}(\cdot), l_1^{\dag}(\cdot), q_1^{\dag}(\cdot)\right)
\in L^2_{\overline{\mathbb{F}}^1}\left(\Omega; C(0, T; \mathbb{R}^k)\right)\times L^2_{\overline{\mathbb{F}}^1}\left(\Omega; C(0, T; \mathbb{R}^n)\right)
\times L^2_{\overline{\mathbb{F}}^1}\left(\Omega; C(0, T; \mathbb{R}^n)\right)$
$\times L^2_{\overline{\mathbb{F}}^1}\left(0, T; \mathbb{R}^{n\times j_0}\right)\times L^2_{\overline{\mathbb{F}}^1}\left(0, T; \mathbb{R}^{n\times j}\right)$ that satisfies $(\ref{m})$. Let $v_0(\cdot)$ be such that $u_{0}^{\dag}(\cdot)+v_0(\cdot)\in \mathcal {U}_0$. Since $U_0$ is convex, then for any
$0\le \rho \le 1$, $u^{\rho}_0(\cdot)=u_{0}^{\dag}(\cdot)+\rho v_0(\cdot)\in \mathcal {U}_0$. When the leader's decentralized control is
$u^{\rho}_0$, it is also known from Theorem \ref{the existence and uniqueness of new kind of CMF-FBSDE} that there exists a unique corresponding
trajectory $\left(\breve{x}^{\rho}_0(\cdot), \breve{x}^{\rho}_1(\cdot), p^{\rho}_1(\cdot), l^{\rho}_1(\cdot), q^{\rho}_1(\cdot)\right)
\in L^2_{\overline{\mathbb{F}}^1}\left(\Omega; C(0, T; \mathbb{R}^k)\right)\times L^2_{\overline{\mathbb{F}}^1}
\left(\Omega; C(0, T; \mathbb{R}^n)\right)\times L^2_{\overline{\mathbb{F}}^1}\left(\Omega; C(0, T; \mathbb{R}^n)\right)
\times L^2_{\overline{\mathbb{F}}^1}\left(0, T; \mathbb{R}^{n\times j_0}\right)\times L^2_{\overline{\mathbb{F}}^1}\left(0, T; \mathbb{R}^{n\times j}\right)$ that satisfies
$(\ref{m})$.

Based on some classic techniques and similar proof steps to Theorem \ref{the existence and uniqueness of new kind of CMF-FBSDE},
we can derive the following lemma. And the specific proof procedure is omitted here.

\begin{mylem}\label{state estimation}
Let (H2.1), (H2.2), (H5.3) and the assumptions made earlier in this subsection all hold, then we have
$\big(\breve{x}^{\rho}_0(\cdot), \breve{x}^{\rho}_1(\cdot), p^{\rho}_1(\cdot),$ $l^{\rho}_1(\cdot), q^{\rho}_1(\cdot)\big)$ converges to
$\left(\breve{x}_0^{\dag}(\cdot), \breve{x}_1^{\dag}(\cdot), p_1^{\dag}(\cdot), l_1^{\dag}(\cdot), q_1^{\dag}(\cdot)\right)$ in
$L^2_{\overline{\mathbb{F}}^1}\big(0,T; \mathbb{R}^k\times \mathbb{R}^n\times \mathbb{R}^n\times \mathbb{R}^{n\times j_0}\times \mathbb{R}^{n\times j}\big)$ as
$\rho$ tends to 0.

\end{mylem}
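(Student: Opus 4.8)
The plan is to estimate the difference processes directly by subtracting the two copies of the fully-coupled conditional mean-field FBSDE $(\ref{m})$ corresponding to the controls $u^{\rho}_0$ and $u^{\dag}_0$, and then to run the same monotonicity-based energy estimate used in Lemma $\ref{uniqueness lemma of CMF-FBSDE}$ and Theorem $\ref{the existence and uniqueness of new kind of CMF-FBSDE}$. First I would set $\Delta \breve{x}_0:=\breve{x}^{\rho}_0-\breve{x}^{\dag}_0$, $\Delta \breve{x}_1:=\breve{x}^{\rho}_1-\breve{x}^{\dag}_1$, $\Delta p_1:=p^{\rho}_1-p^{\dag}_1$, and analogously $\Delta l_1,\Delta q_1$, collecting the forward components into $\Delta X_1:=(\Delta \breve{x}_0,\Delta \breve{x}_1)^{\top}$. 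These differences solve a conditional mean-field FBSDE of the type $(\ref{aaa})$ whose coefficients are the differences of $\Psi_1$ and $\Phi_1$ evaluated along the two trajectories.

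The crucial structural step is to decompose each coefficient difference into a \emph{diagonal} part and a \emph{control-perturbation} part. For instance, I would write $\Psi_1(t,X_1^{\rho},p_1^{\rho},\widehat{X}_1^{\rho},u^{\rho}_0)-\Psi_1(t,X_1^{\dag},p_1^{\dag},\widehat{X}_1^{\dag},u^{\dag}_0)$ as the difference evaluated at the common control $u^{\rho}_0$, which is uniformly Lipschitz in $(X_1,p_1,\widehat{X}_1)$ by (H5.1) and for which the monotonicity condition (H5.2) is available, plus the frozen term $\Psi_1(t,X_1^{\dag},p_1^{\dag},\widehat{X}_1^{\dag},u^{\rho}_0)-\Psi_1(t,X_1^{\dag},p_1^{\dag},\widehat{X}_1^{\dag},u^{\dag}_0)$, which depends only on the optimal trajectory and the control increment $u^{\rho}_0-u^{\dag}_0=\rho v_0$. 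Treated together with the analogously split terminal increment $h_1(X_1^{\rho}(T))-h_1(X_1^{\dag}(T))$, the frozen parts play exactly the role of the inhomogeneous data $\phi,\Lambda,\xi$ in the parameterized equation $(\ref{n1>m1 parameterized FBSDE})$.

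Next I would apply It\^o's formula to $\langle\Delta p_1(t),G\,\Delta X_1(t)\rangle$ with $G$ the full-rank matrix of (H5.2), integrate over $[0,T]$ and take expectations; the monotonicity of (H5.2) controls the diagonal parts from below, while the frozen parts are absorbed by Cauchy--Schwarz and Young's inequality into a forcing term. Combined with the usual forward/backward a priori estimates (Gronwall's inequality for $\Delta X_1$, the BSDE energy estimate and the BDG inequality for $(\Delta p_1,\Delta l_1,\Delta q_1)$), exactly as in the proofs of Lemma $\ref{uniqueness lemma of CMF-FBSDE}$ and Theorem $\ref{the existence and uniqueness of new kind of CMF-FBSDE}$, this yields a bound of the form
\begin{equation*}
\begin{aligned}
&\mathbb{E}\int_0^T\big(|\Delta \breve{x}_0(t)|^2+|\Delta \breve{x}_1(t)|^2+|\Delta p_1(t)|^2+|\Delta l_1(t)|^2+|\Delta q_1(t)|^2\big)\,dt\\
&\le K\,\mathcal{E}(\rho),
\end{aligned}
\end{equation*}
where $\mathcal{E}(\rho)$ collects the $L^2$-norms of all frozen control-perturbation terms and $K$ is independent of $\rho$.

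Finally I would show $\mathcal{E}(\rho)\to 0$ as $\rho\to 0$. Since $b_0,B_1,\Phi$ (hence $\Psi_1,\Phi_1,h_1$) are continuously differentiable in $u_0$ with bounded first derivatives under (H2.2) and (H5.3), and since $u^{\rho}_0-u^{\dag}_0=\rho v_0\to 0$ in $L^2_{\mathbb{F}^0}(0,T;\mathbb{R}^{m_0})$, each frozen term is dominated along the (fixed) optimal trajectory by $K\rho|v_0(t)|$ and tends to $0$; dominated convergence then gives $\mathcal{E}(\rho)\to 0$, and the stated $L^2$ convergence follows. The main obstacle is the fully-coupled conditional mean-field structure: the conditional-expectation terms $\widehat{X}_1=\mathbb{E}[\,\cdot\,|\mathbb{F}^0_t]$ and $\mathbb{E}[M|\mathbb{F}^0_t]$ couple the forward and backward components, so these cannot be estimated separately, and the entire estimate must be carried through the monotonicity functional $\langle\Delta p_1,G\,\Delta X_1\rangle$ --- precisely where (H5.2) and the full-rank matrix $G$ are essential --- with the separate forward/backward Gronwall and BDG steps closing up only because the coupling is tamed by this monotonicity, exactly as in the well-posedness theorem.
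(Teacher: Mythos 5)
Your proposal is correct and follows essentially the route the paper itself indicates: the paper omits the detailed argument, stating only that the lemma follows by classic techniques and proof steps similar to Theorem \ref{the existence and uniqueness of new kind of CMF-FBSDE}, and your estimate is precisely that stability argument. Freezing the control so that the coefficient differences split into a monotone diagonal part (handled by It\^{o}'s formula on $\langle\Delta p_1, G\,\Delta X_1\rangle$ under (H5.2)) plus a $\rho v_0$-perturbation playing the role of the inhomogeneous data $\phi,\Lambda,\xi$ in $(\ref{n1>m1 parameterized FBSDE})$, and then closing with Gronwall, the BSDE energy estimate and the BDG inequality, matches the paper's intended proof, including the observation that here $n_1=k+n\ge m_1=n$ so the $\beta^1_2>0$ case of the monotonicity condition applies.
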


We set $b_0(t)=b_0\big(t, \breve{x}_0^{\dag}(t), u^{\dag}_{0}(t), \widehat{\breve{x}_1^{\dag}}(t)\big), B_1(t)=B_1\big(t, \breve{x}_1^{\dag}(t),
\breve{x}_0^{\dag}(t), u^{\dag}_{0}(t), \widehat{\breve{x}_1^{\dag}}(t), p_1^{\dag}(t)\big), \Phi(t)=\Phi\big(t, \breve{x}_1^{\dag}(t),
\breve{x}_0^{\dag}(t), u^{\dag}_{0}(t), \widehat{\breve{x}_1^{\dag}}(t), p_1^{\dag}(t)\big).$ We introduce the following variational equation.
\begin{equation}\label{the leader's decentralized control problem variational equation}
\left\{
             \begin{array}{lr}
             d x^1_0(t)=\left(b_{0 x_0}(t) x^1_0(t)+b_{0 u_0}(t) v_0(t)+b_{0 z}(t) \widehat{x^1_1}(t)\right)dt,\\
             d x^1_1(t)=\left(B_{1 x_1}(t) x^1_1(t)+B_{1 x_0}(t) x^1_0(t)+B_{1 u_0}(t)v_0(t)+B_{1 z}(t) \widehat{x^1_1}(t)+B_{1 p_1}(t) p^1_1(t)\right)dt,\\
             -dp^1_1(t)=\left(\Phi_{x_1}(t) x^1_1(t)+\Phi_{x_0}(t) x^1_0(t)+\Phi_{u_0}(t)v_0(t)+\Phi_{z}(t) \widehat{x^1_1}(t)+\Phi_{p_1}(t) p^1_1(t)\right)dt\\
             \qquad\qquad\quad-l^1_1(t) dW_0(t)-q^1_1(t) dW_1(t),\\
             x^1_0(0)=0,\ x^1_1(0)=0,\ p^1_1(T)=\frac{{\partial}^2 G_1}{\partial x_1^2}\left(\breve{x}_1^{\dag}(T)\right)x^1_1(T).\\
             \end{array}
\right.
\end{equation}

It is observed that the variational equation (\ref{the leader's decentralized control problem variational equation}) can be integrated into a
form encompassed in (\ref{aaa}), just as the discussion about equation (\ref{m}) was done before. Therefore, by Theorem
\ref{the existence and uniqueness of new kind of CMF-FBSDE}, when the first-order
partial derivatives of $b_0(\cdot), B_1(\cdot), \Phi(\cdot)$ with respect to each component and the second-order partial derivative of $G_1$ with
respect to $x_1$ make the integrated equation of the variational equation (\ref{the leader's decentralized control problem variational equation})
satisfy assumptions (H5.1) and (H5.2), then the variational equation (\ref{the leader's decentralized control problem variational equation}) has a
unique solution $\left(x^1_0(\cdot), x^1_1(\cdot), p^1_1(\cdot), l^1_1(\cdot), q^1_1(\cdot)\right)\in L^2_{\overline{\mathbb{F}}^1}
\left(\Omega; C(0, T; \mathbb{R}^k)\right)\times L^2_{\overline{\mathbb{F}}^1}\left(\Omega; C(0, T; \mathbb{R}^n)\right)
\times L^2_{\overline{\mathbb{F}}^1}\left(\Omega; C(0, T; \mathbb{R}^n)\right)\times L^2_{\overline{\mathbb{F}}^1}\left(0, T; \mathbb{R}^{n\times j_0}\right)
\times L^2_{\overline{\mathbb{F}}^1}\left(0, T; \mathbb{R}^{n\times j}\right)$.

And by some classical techniques, we have the following lemma.

\begin{mylem}\label{state estimation}
Let (H2.1), (H2.2), (H5.3) and the assumptions made earlier in this subsection all hold, then we have
\begin{equation}
\begin{aligned}
&\lim\limits_{\rho\longrightarrow 0}\sup\limits_{0\le t\le T} \mathbb{E}\bigg|\frac{\breve{x}^{\rho}_0(t)-\breve{x}_0^{\dag}(t)}{\rho}-x^1_0(t)
\bigg|^2=0,\quad \lim\limits_{\rho\longrightarrow 0}\sup\limits_{0\le t\le T} \mathbb{E}\bigg|\frac{\breve{x}^{\rho}_1(t)-\breve{x}_1^{\dag}(t)}
{\rho}-x^1_1(t)\bigg|^2=0,\\
&\lim\limits_{\rho\longrightarrow 0}\sup\limits_{0\le t\le T} \mathbb{E}\bigg|\frac{p^{\rho}_1(t)-p_1^{\dag}(t)}{\rho}-p^1_1(t)\bigg|^2=0,\quad
\lim\limits_{\rho\longrightarrow 0} \mathbb{E}\int_0^T\bigg|\frac{l^{\rho}_1(t)-l_1^{\dag}(t)}{\rho}-l^1_1(t)\bigg|^2 dt=0,\\
&\lim\limits_{\rho\longrightarrow 0} \mathbb{E}\int_0^T\bigg|\frac{q^{\rho}_1(t)-q_1^{\dag}(t)}{\rho}-q^1_1(t)\bigg|^2 dt=0.
\end{aligned}
\end{equation}
\end{mylem}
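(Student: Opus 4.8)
The plan is to show that the normalized increments of the state–adjoint tuple along the perturbation $u_0^{\rho}=u_0^{\dag}+\rho v_0$ converge to the solution of the variational equation $(\ref{the leader's decentralized control problem variational equation})$ by exhibiting the error as the solution of a conditional mean-field FBSDE of the form $(\ref{aaa})$ whose inhomogeneous data vanish in $L^2$, and then invoking the a priori estimates behind Theorem $\ref{the existence and uniqueness of new kind of CMF-FBSDE}$. For brevity write
$$\mathcal{D}^{\rho}x_0:=\frac{\breve{x}^{\rho}_0-\breve{x}_0^{\dag}}{\rho},\quad \mathcal{D}^{\rho}x_1:=\frac{\breve{x}^{\rho}_1-\breve{x}_1^{\dag}}{\rho},\quad \mathcal{D}^{\rho}p_1:=\frac{p^{\rho}_1-p_1^{\dag}}{\rho},$$
and likewise $\mathcal{D}^{\rho}l_1,\mathcal{D}^{\rho}q_1$ for the martingale parts; denote the corresponding errors by $e^{\rho}_0:=\mathcal{D}^{\rho}x_0-x^1_0$, $e^{\rho}_1:=\mathcal{D}^{\rho}x_1-x^1_1$, $e^{\rho}_p:=\mathcal{D}^{\rho}p_1-p^1_1$, $e^{\rho}_{l}:=\mathcal{D}^{\rho}l_1-l^1_1$, $e^{\rho}_{q}:=\mathcal{D}^{\rho}q_1-q^1_1$. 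The goal is to prove that all five errors tend to $0$ in the stated norms.

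First I would subtract the state equation $(\ref{m})$ at $u_0^{\rho}$ from the one at $u_0^{\dag}$, divide by $\rho$, and use the fundamental theorem of calculus to write each increment of $b_0$, $B_1$, $\Phi$ and $\frac{\partial G_1}{\partial x_1}$ as an integral of its gradient along the segment joining the optimal and perturbed arguments; the constant diffusion coefficients cancel, so the forward lines carry no martingale part. Writing $\overline{b_{0x_0}^{\rho}}$ for the average over $\lambda\in[0,1]$ of $b_{0x_0}$ evaluated along that segment (and analogously for the remaining first-order derivatives of $b_0,B_1,\Phi$ and for the averaged Hessian $\overline{G_{1x_1x_1}^{\rho}}$ of $G_1$), the triple $(\mathcal{D}^{\rho}x_0,\mathcal{D}^{\rho}x_1,\mathcal{D}^{\rho}p_1)$ solves a linear conditional mean-field FBSDE with these averaged coefficients. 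Subtracting $(\ref{the leader's decentralized control problem variational equation})$ and organizing the result so that the averaged coefficients multiply the \emph{errors} while the coefficient increments multiply the \emph{fixed} variational processes, I obtain that $(e^{\rho}_0,e^{\rho}_1,e^{\rho}_p,e^{\rho}_l,e^{\rho}_q)$ solves a conditional mean-field FBSDE of exactly the form $(\ref{aaa})$, driven by inhomogeneous terms such as $\big(\overline{b_{0x_0}^{\rho}}-b_{0x_0}(t)\big)x^1_0$, $\big(\overline{b_{0u_0}^{\rho}}-b_{0u_0}(t)\big)v_0$, $\big(\overline{b_{0z}^{\rho}}-b_{0z}(t)\big)\widehat{x^1_1}$ in the $x_0$-line, the analogous terms in the $x_1$- and $p_1$-lines, and the terminal datum $\big(\overline{G_{1x_1x_1}^{\rho}}-\frac{\partial^2 G_1}{\partial x_1^2}(\breve{x}_1^{\dag}(T))\big)x^1_1(T)$.

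The next step is to show these inhomogeneous terms vanish in $L^2$ as $\rho\to0$. By the convergence of the perturbed tuple $(\breve{x}^{\rho}_0,\breve{x}^{\rho}_1,p^{\rho}_1,l^{\rho}_1,q^{\rho}_1)$ to the optimal one established earlier in this subsection (Lemma $\ref{state estimation}$), the interpolating arguments converge to the optimal ones in $L^2$, hence in probability; since by (H2.2) and (H5.3) the first-order derivatives of $b_0,B_1,\Phi$ and the second-order derivative of $G_1$ are continuous and bounded, the averaged derivatives converge in probability to the derivatives along the optimal trajectory while staying uniformly bounded. Each coefficient increment is therefore a uniformly bounded sequence converging to $0$ in probability, and multiplying it by the \emph{fixed} square-integrable variational processes $x^1_0,x^1_1,\widehat{x^1_1},v_0,p^1_1$ and applying the dominated convergence theorem yields convergence of every inhomogeneous term to $0$ in $L^2\big(0,T\big)$; the same argument at $t=T$ handles the terminal datum.

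Finally I would invoke the a priori stability estimate underlying Theorem $\ref{the existence and uniqueness of new kind of CMF-FBSDE}$: as the error system is a conditional mean-field FBSDE of the form $(\ref{aaa})$, its solution norm is controlled by the $L^2$-norm of its inhomogeneous data, which has just been shown to vanish; hence $e^{\rho}_0,e^{\rho}_1,e^{\rho}_p\to0$ in $L^2_{\overline{\mathbb{F}}^1}(0,T;\cdot)$ and $e^{\rho}_l,e^{\rho}_q\to0$ in the corresponding spaces. A further application of It\^{o}'s formula together with the BDG and Gronwall inequalities, exactly as in the estimates $(\ref{eee})$–$(\ref{fff})$, upgrades the forward and adjoint convergences from $L^2(0,T)$ to $\sup_{0\le t\le T}\mathbb{E}|\cdot|^2$, giving all five claimed limits. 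I expect the main obstacle to be precisely this a priori estimate: because the system is fully coupled and carries the conditional-expectation terms $\widehat{X},\widehat{Y}$ and $\mathbb{E}[M|\mathbb{F}^0_t]$, the error bound cannot be obtained for the forward and backward parts separately but must come from the monotonicity pairing $\langle \Delta Y, G\Delta X\rangle$ as in Lemma $\ref{uniqueness lemma of CMF-FBSDE}$; in particular one must verify that the averaged-coefficient error system inherits the monotonicity of (H5.2) along the converging family of trajectories \emph{uniformly} in $\rho$, which is ensured for small $\rho$ by the strictness of the constants in (H5.2) together with the convergence of the averaged coefficients established above.
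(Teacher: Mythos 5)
The paper gives no proof of this lemma at all — it is dispatched with ``by some classical techniques'' — so there is no argument of the authors' to compare against line by line. Your proposal supplies what is evidently the intended standard argument, and its architecture is sound: represent the error tuple $(e^{\rho}_0,e^{\rho}_1,e^{\rho}_p,e^{\rho}_l,e^{\rho}_q)$ as the solution of a conditional mean-field FBSDE of the form $(\ref{aaa})$ in which the $\lambda$-averaged derivatives act on the \emph{errors} and the coefficient increments act on the \emph{fixed} processes $x^1_0, x^1_1, \widehat{x^1_1}, v_0, p^1_1$; kill the inhomogeneous data by the preceding convergence lemma, boundedness of the derivatives from (H2.2)/(H5.3), and dominated convergence; then close via the monotonicity pairing $\langle \Delta Y, G\Delta X\rangle$ and the It\^o--BDG--Gronwall estimates as in $(\ref{eee})$--$(\ref{fff})$. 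That decomposition (increments times fixed processes, not times $\rho$-dependent ones) is exactly the right choice, since it avoids needing uniform integrability of the difference quotients.

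The one step you justify incorrectly is the uniform-in-$\rho$ monotonicity of the averaged-coefficient error system, which you yourself flag as the main obstacle. Convergence in probability of the bounded averaged derivatives to the derivatives along the optimal trajectory, together with strictness of the constants in (H5.2), does \emph{not} give (H5.2) for small $\rho$: the monotonicity inequality must hold for all pairs of processes, and a pair can concentrate on the (non-null, for each fixed $\rho$) event where the averaged coefficients are still far from their limits, so the degradation of the constants is governed by the $L^{\infty}$ distance of the coefficients, not the in-probability distance, and the former need not vanish. The clean repair stays entirely within the paper's standing hypotheses: (H5.2) is assumed for the \emph{nonlinear} coefficients $(\Psi_1,\Phi_1,h_1)$ of $(\ref{m})$ for arbitrary pairs, so apply it to the pair $\big(\Theta+\epsilon(\eta,\zeta),\,\Theta\big)$ at an arbitrary base process $\Theta$, divide by $\epsilon^{2}$, and let $\epsilon\to 0$ (bounded continuous derivatives and dominated convergence pass the limit through the expectation); this yields the same inequality, with the \emph{same} constants, for the linearized coefficients at every base point and in every direction. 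Since the left side of (H5.2) is linear in the coefficient matrices for a fixed direction pair, averaging over $\lambda\in[0,1]$ along the interpolated trajectories preserves the inequality, so the error system inherits (H5.2) with the original constants uniformly in $\rho$ — no smallness of $\rho$ and no convergence of coefficients is needed for this step at all. Two smaller repairs: the terminal datum requires $\breve{x}^{\rho}_1(T)\to\breve{x}^{\dag}_1(T)$ in $L^{2}(\Omega)$, which the preceding convergence lemma as stated (convergence in $L^2_{\overline{\mathbb{F}}^1}(0,T;\cdot)$) does not literally provide — you should first upgrade that lemma to a $\sup$-in-time estimate, which the same Gronwall/BDG machinery gives; and note that what you call ``the a priori stability estimate underlying Theorem $\ref{the existence and uniqueness of new kind of CMF-FBSDE}$'' is really the continuous-dependence inequality of type $(\ref{jjj})$ extracted from the continuation lemmas, which should be stated for the error system rather than cited from the existence theorem itself.
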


Because $u_0^{\dag}$ is the optimal control of the leader's decentralized control problem, then
\begin{equation}\label{o}
\rho^{-1} \Big[\overline{J}_0\left(u^{\rho}_{0}, \overline{\mathcal{I}}(u^{\rho}_{0})\right)-\overline{J}_0\left(u^{\dag}_{0},
\overline{\mathcal{I}}(u^{\dag}_{0})\right)\Big]\ge 0.
\end{equation}

From $(\ref{o})$ and Lemma $\ref{state estimation}$, we have the following lemma.
\begin{mylem}
Let (H2.1), (H2.2), (H5.3) and the assumptions made earlier in this subsection all hold, then the following variational inequality holds:
\begin{equation}\label{p}
\begin{aligned}
&\mathbb{E}\bigg\{\int_0^T g_{0 x_0} (t)^\top x^1_0(t)+g_{0 u_0} (t)^\top v_0(t)+g_{0 z} (t)^\top \mathbb{E}\big[x^1_1(t)|{\mathbb{F}^0_t}\big] dt\\
& \qquad +G_{0 x_0}\big(\breve{x}_0^{\dag}(T)\big)^\top x^1_0(T)\bigg\}\ge 0,
\end{aligned}
\end{equation}
where $g_0 (t)=g_0\big(t, \breve{x}_0^{\dag}(t), u^{\dag}_{0}(t), \widehat{\breve{x}_1^{\dag}}(t)\big),\ g_{0 x_0} (t)
=\frac{\partial g_0}{\partial x_0}\big(t, \breve{x}_0^{\dag}(t), u^{\dag}_{0}(t), \widehat{\breve{x}_1^{\dag}}(t)\big),
\ g_{0 u_0} (t)=\frac{\partial g_0}{\partial u_0}\big(t,$\\
$\breve{x}_0^{\dag}(t), u^{\dag}_{0}(t), \widehat{\breve{x}_1^{\dag}}(t)\big),\ g_{0 z} (t)=\frac{\partial g_0}{\partial z}
\big(t, \breve{x}_0^{\dag}(t), u^{\dag}_{0}(t), \widehat{\breve{x}_1^{\dag}}(t)\big).$
\end{mylem}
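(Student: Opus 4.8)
The plan is to start from the optimality inequality (\ref{o}) and pass to the limit as $\rho\longrightarrow 0$, identifying the resulting directional derivative of the leader's cost with the left-hand side of (\ref{p}). First I would write the cost difference explicitly. Recalling that $u^{\rho}_0=u^{\dag}_0+\rho v_0$ and that
\begin{equation*}
\overline{J}_0\left(u_{0}, \overline{\mathcal{I}}(u_{0})\right)=\mathbb{E}\left\{\int_0^T g_0\left(t, \breve{x}_0(t), u_{0}(t), \widehat{\breve{x}}_1(t)\right) dt+G_0\left(\breve{x}_0(T)\right)\right\},
\end{equation*}
I would express $\rho^{-1}\big[\overline{J}_0(u^{\rho}_0,\overline{\mathcal{I}}(u^{\rho}_0))-\overline{J}_0(u^{\dag}_0,\overline{\mathcal{I}}(u^{\dag}_0))\big]$ as the expectation of a time integral of the normalized increment of $g_0$ plus the normalized increment of $G_0(\breve{x}_0(T))$, where $\widehat{\breve{x}}_1(t)=\mathbb{E}\big[\breve{x}_1(t)|\mathbb{F}^0_t\big]$.

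Second, the key step is a first-order Taylor expansion of $g_0$ and $G_0$ along the segment joining the perturbed and optimal trajectories. Writing the increment of $g_0$ as the integral of its gradient over this segment and using (H2.2) (continuous differentiability with derivatives of at most linear growth), I would split the normalized increment into the principal linear part
\begin{equation*}
g_{0x_0}(t)^\top \frac{\breve{x}^{\rho}_0(t)-\breve{x}^{\dag}_0(t)}{\rho}+g_{0u_0}(t)^\top v_0(t)+g_{0z}(t)^\top \frac{\widehat{\breve{x}_1^{\rho}}(t)-\widehat{\breve{x}_1^{\dag}}(t)}{\rho}
\end{equation*}
plus a remainder in which the gradient of $g_0$ is evaluated at an intermediate point rather than at the optimal trajectory; note that the $u_0$-increment equals $v_0$ exactly, so no limit is needed there. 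The analogous decomposition of $G_0(\breve{x}_0(T))$ yields the terminal principal term $G_{0x_0}(\breve{x}_0^{\dag}(T))^\top \rho^{-1}(\breve{x}^{\rho}_0(T)-\breve{x}^{\dag}_0(T))$ plus a remainder.

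Third, I would pass to the limit. By Lemma \ref{state estimation} (the convergence of the normalized increments) the quotients $\rho^{-1}(\breve{x}^{\rho}_0-\breve{x}^{\dag}_0)$ and $\rho^{-1}(\breve{x}^{\rho}_1-\breve{x}^{\dag}_1)$ converge to $x^1_0$ and $x^1_1$ in $L^2_{\overline{\mathbb{F}}^1}(0,T;\cdot)$, and $\rho^{-1}(\breve{x}^{\rho}_0(T)-\breve{x}^{\dag}_0(T))\to x^1_0(T)$ in $L^2$. For the conditional-expectation term, since $\widehat{\breve{x}_1^{\rho}}-\widehat{\breve{x}_1^{\dag}}=\mathbb{E}\big[\breve{x}^{\rho}_1-\breve{x}^{\dag}_1\,\big|\,\mathbb{F}^0_t\big]$ and conditional expectation is an $L^2$-contraction (by conditional Jensen and the tower property), its normalized version converges to $\widehat{x^1_1}=\mathbb{E}\big[x^1_1|\mathbb{F}^0_t\big]$ in $L^2(\Omega\times[0,T])$; this is exactly how the term $g_{0z}(t)^\top\mathbb{E}\big[x^1_1(t)|\mathbb{F}^0_t\big]$ arises in (\ref{p}). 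Because the coefficients $g_{0x_0}(t),g_{0u_0}(t),g_{0z}(t)$ and $G_{0x_0}(\breve{x}_0^{\dag}(T))$ lie in $L^2$ by the linear-growth bounds of (H2.2) together with the a-priori $L^2$-bounds on $(\breve{x}_0^{\dag},\breve{x}_1^{\dag})$, the Cauchy--Schwarz inequality lets me pass each product to the limit, after which taking $\rho\longrightarrow 0$ in (\ref{o}) yields (\ref{p}).

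The main obstacle will be controlling the Taylor remainders and showing they vanish. The difficulty is that the gradients of $g_0$ are evaluated at intermediate states $\breve{x}_0^{\dag}+\vartheta(\breve{x}_0^{\rho}-\breve{x}_0^{\dag})$ rather than at the optimal trajectory, so I must show that the difference between these gradients and $g_{0x_0}(t),g_{0z}(t)$, paired with the $O(1)$ normalized increments, tends to zero. I would handle this by continuity of the derivatives together with the $L^2$-convergence $\breve{x}_0^{\rho}\to\breve{x}_0^{\dag}$, $\breve{x}_1^{\rho}\to\breve{x}_1^{\dag}$ from Lemma \ref{state estimation}: extracting an a.e.-convergent subsequence and invoking dominated convergence, with the linear-growth bounds of (H2.2) supplying the integrable dominating functions. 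Once each remainder is shown to be $o(1)$ in $L^1(\Omega\times[0,T])$ and the terminal remainder $o(1)$ in $L^1(\Omega)$, the limit identification is complete and (\ref{p}) follows directly from (\ref{o}).
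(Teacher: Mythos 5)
Your proposal is correct and follows essentially the same route as the paper: start from the optimality inequality (\ref{o}), express the normalized cost increment, and use Lemma \ref{state estimation} together with (H2.2) to identify the limit with the left-hand side of (\ref{p}). The paper states this limit identification in a single display, while you supply the standard details it leaves implicit (the Taylor remainders, the $L^2$-contraction property of conditional expectation for the $\widehat{x^1_1}$ term, and the uniform-integrability/dominated-convergence argument), all of which are sound.
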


\begin{proof}
From Lemma $\ref{state estimation}$ and (H2.2), we know
\begin{equation*}
\begin{aligned}
&\lim\limits_{\rho\longrightarrow 0} \rho^{-1} \Big[\overline{J}_0\left(u^{\rho}_{0}, \overline{\mathcal{I}}(u^{\rho}_{0})\right)
-\overline{J}_0\left(u^{\dag}_{0}, \overline{\mathcal{I}}(u^{\dag}_{0})\right)\Big]\\
&= \lim\limits_{\rho\longrightarrow 0} \rho^{-1} \mathbb{E}\bigg\{\int_0^T \Big[g_0\big(t, \breve{x}^{\rho}_0(t), u^{\rho}_0(t),
\widehat{\breve{x}^{\rho}_1}(t)\big)-g_0\big(t, \breve{x}_0^{\dag}(t), u^{\dag}_0(t), \widehat{\breve{x}_1^{\dag}}(t)\big)\Big]dt\\
&\quad\qquad +\Big[G_0\big(\breve{x}^{\rho}_0(T)\big)-G_0\big(\breve{x}_0^{\dag}(T)\big)\Big]\bigg\}\\
&=\mathbb{E}\bigg\{\int_0^T \Big[g_{0 x_0}(t)^\top x^1_0(t)+g_{0 u_0}(t)^\top v_0(t)+g_{0 z}(t)^\top
\mathbb{E}\big[x^1_1(t)|{\mathbb{F}^0_t}\big]\Big]dt\\
&\quad\qquad +G_{0 x_0}\big(\breve{x}_0^{\dag}(T)\big)^\top x^1_0(T)\bigg\}.\\
\end{aligned}
\end{equation*}
\end{proof}

Next, we introduce the following system of adjoint equations:
\begin{equation}\label{n}
\left\{
             \begin{array}{lr}
             -d K_0(t)={\partial}_{x_0} H_0\left(t,\breve{x}_1^{\dag}(t), \breve{x}_0^{\dag}(t), u_{0}^{\dag}(t),
             \mathbb{E}\big[\breve{x}_1^{\dag}(t)|{\mathbb{F}^0_t}\big], p_1^{\dag}(t), K_0(t), K_1(t), \varphi_1(t) \right) dt\\
             \qquad\qquad\quad-Q_{00}(t) dW_0(t)-Q_{01}(t) dW_1(t),\\
             -d K_1(t)={\partial}_{x_1} H_0\left(t,\breve{x}_1^{\dag}(t), \breve{x}_0^{\dag}(t), u_{0}^{\dag}(t),
             \mathbb{E}\big[\breve{x}_1^{\dag}(t)|{\mathbb{F}^0_t}\big], p_1^{\dag}(t), K_0(t), K_1(t), \varphi_1(t) \right) dt\\
             \qquad\qquad\quad+\mathbb{E}\left[{\partial}_z H_0\left(t,\breve{x}_1^{\dag}(t), \breve{x}_0^{\dag}(t), u_{0}^{\dag}(t),
             \mathbb{E}\big[\breve{x}_1^{\dag}(t)|{\mathbb{F}^0_t}\big], p_1^{\dag}(t), K_0(t), K_1(t), \varphi_1(t) \right)
             \Big|{\mathbb{F}^0_t}\right]dt\\
             \qquad\qquad\quad-Q_{10}(t) dW_0(t)-Q_{11}(t) dW_1(t),\\
             d \varphi_1(t)=-{\partial}_{p_1} H_0\left(t,\breve{x}_1^{\dag}(t), \breve{x}_0^{\dag}(t), u_{0}^{\dag}(t),
             \mathbb{E}\big[\breve{x}_1^{\dag}(t)|{\mathbb{F}^0_t}\big], p_1^{\dag}(t), K_0(t), K_1(t), \varphi_1(t) \right) dt,\\
             K_0(T)=\frac{\partial G_0}{\partial x_0}\left(\breve{x}_0^{\dag}(T)\right),\ K_1(T)=-\frac{{\partial}^2 G_1}{\partial x_1^2}
             \left(\breve{x}_1^{\dag}(T)\right)\varphi_1(T),\ \varphi_1(0)=0,\\
             \end{array}
\right.
\end{equation}
where $\left(\breve{x}_0^{\dag}(\cdot), \breve{x}_1^{\dag}(\cdot), p_1^{\dag}(\cdot)\right)$ satisfies $(\ref{m})$ with respect to $u_{0}^{\dag}$
and is the known unique optimal state of the leader's decentralized control problem. Similarly, the adjoint equation $(\ref{n})$ can be integrated
into the form contained by $(\ref{aaa})$, as was done in the previous discussion about
equation $(\ref{m})$. Therefore, by Theorem $\ref{the existence and uniqueness of new kind of CMF-FBSDE}$, if the integrated equation of $(\ref{n})$
satisfies assumptions (H5.1) and (H5.2), then the adjoint equation $(\ref{n})$ has a unique solution $\big(K_0(\cdot), Q_{00}(\cdot), Q_{01}(\cdot),
K_1(\cdot), Q_{10}(\cdot),$
$Q_{11}(\cdot), \varphi_1(\cdot)\big)\in L^2_{\overline{\mathbb{F}}^1}\left(\Omega; C(0, T; \mathbb{R}^k)\right)\times L^2_{\overline{\mathbb{F}}^1}
\left(0, T; \mathbb{R}^{k\times j_0}\right)\times L^2_{\overline{\mathbb{F}}^1}\left(0, T; \mathbb{R}^{k\times j}\right)\times L^2_{\overline{\mathbb{F}}^1}\left(\Omega;
C(0, T; \mathbb{R}^n)\right)\times L^2_{\overline{\mathbb{F}}^1}\left(0, T; \mathbb{R}^{n\times j_0}\right)\times L^2_{\overline{\mathbb{F}}^1}\big(0, T;$
$\mathbb{R}^{n\times j}\big)\times L^2_{\overline{\mathbb{F}}^1}\left(\Omega; C(0, T; \mathbb{R}^n)\right)$.

Then we have the following maximum principle.

\begin{mythm}\label{the maximum principle for the leader}
Let (H2.1), (H2.2), (H5.3) and the assumptions made earlier in this subsection all hold. Let $u_{0}^{\dag}$ be the optimal control for the leader's
decentralized control problem and
$\left(\breve{x}_0^{\dag}(\cdot), \breve{x}_1^{\dag}(\cdot), p_1^{\dag}(\cdot), l_1^{\dag}(\cdot), q_1^{\dag}(\cdot)\right)$ be the corresponding
optimal state. Let $\big(K_0(\cdot), Q_{00}(\cdot), Q_{01}(\cdot), K_1(\cdot),$ $Q_{10}(\cdot), Q_{11}(\cdot), \varphi_1(\cdot)\big)$ be the solution
of the adjoint equation $(\ref{n})$. Then, we have
\begin{equation}\label{the stable condition for the maximum principle for the leader}
\begin{aligned}
&\mathbb{E}\bigg[\Big\langle {\partial}_{u_0} H_0\left(t,\breve{x}_1^{\dag}(t), \breve{x}_0^{\dag}(t), u_{0}^{\dag}(t),
\mathbb{E}\big[\breve{x}_1^{\dag}(t)|{\mathbb{F}^0_t}\big], p_1^{\dag}(t), K_0(t), K_1(t), \varphi_1(t) \right),\\
&\qquad u_0-u_{0}^{\dag}(t)\Big\rangle \Big|{\mathbb{F}^0_t}\bigg]\ge 0,\quad a.e.\ t\in[0,T],\ a.s.,
\end{aligned}
\end{equation}
holds for any $u_0\in U_0.$
\end{mythm}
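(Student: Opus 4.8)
The plan is to connect the variational inequality $(\ref{p})$ to the Hamiltonian $H_0$ through a duality argument driven by the adjoint system $(\ref{n})$, following the classical convex-perturbation maximum principle but keeping careful track of the conditional-expectation terms. First I would apply It\^o's formula to the three pairings $\langle K_0(t), x^1_0(t)\rangle$, $\langle K_1(t), x^1_1(t)\rangle$ and $\langle \varphi_1(t), p^1_1(t)\rangle$, where $(x^1_0, x^1_1, p^1_1, l^1_1, q^1_1)$ solves the variational equation $(\ref{the leader's decentralized control problem variational equation})$ and $(K_0, K_1, \varphi_1)$ solves $(\ref{n})$. Since $x^1_0$, $x^1_1$ and $\varphi_1$ carry no martingale part, the quadratic-covariation contributions vanish, so that after integrating over $[0,T]$ and taking expectations only drift and boundary terms survive.

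The key bookkeeping step is to read the drifts of $(\ref{n})$ as the partial gradients of $H_0$, namely $\partial_{x_0}H_0 = b_{0x_0}^\top K_0 + B_{1x_0}^\top K_1 - \Phi_{x_0}^\top\varphi_1 + g_{0x_0}$ and the analogous expressions for $\partial_{x_1}H_0$, $\partial_{z}H_0$, $\partial_{p_1}H_0$, $\partial_{u_0}H_0$, and to substitute them into the three expectation identities. For the $K_1$-pairing the generator contains $\mathbb{E}[\partial_z H_0\mid\mathbb{F}^0_t]$; pairing it against $x^1_1$ and using the tower property together with the $\mathbb{F}^0_t$-measurability of $\widehat{x^1_1}$ converts $\mathbb{E}[\langle \mathbb{E}[\partial_z H_0\mid\mathbb{F}^0_t], x^1_1\rangle]$ into $\mathbb{E}[\langle \partial_z H_0, \widehat{x^1_1}\rangle]$, which is exactly what is needed to match the mean-field term $B_{1z}\widehat{x^1_1}$ in the forward dynamics of $x^1_1$. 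The boundary terms are tied together by the terminal conditions $p^1_1(T) = \frac{\partial^2 G_1}{\partial x_1^2}(\breve{x}_1^{\dag}(T))x^1_1(T)$ and $K_1(T) = -\frac{\partial^2 G_1}{\partial x_1^2}(\breve{x}_1^{\dag}(T))\varphi_1(T)$, which, by symmetry of the Hessian, give $\mathbb{E}[\langle \varphi_1(T), p^1_1(T)\rangle] = -\mathbb{E}[\langle K_1(T), x^1_1(T)\rangle]$; these two terminal contributions cancel, leaving only $\mathbb{E}[\langle K_0(T), x^1_0(T)\rangle] = \mathbb{E}[G_{0x_0}(\breve{x}_0^{\dag}(T))^\top x^1_0(T)]$.

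Summing the three identities, every term carrying $x^1_0$, $x^1_1$, $\widehat{x^1_1}$ or $p^1_1$ cancels pairwise (the $B_{1x_0}^\top K_1$, $\Phi_{x_0}^\top\varphi_1$, $b_{0z}^\top K_0$, $\Phi_{x_1}^\top\varphi_1$, $\Phi_z^\top\varphi_1$ and $B_{1p_1}^\top K_1$ contributions each occur with opposite signs), and what remains is
\begin{equation*}
\mathbb{E}\big[G_{0x_0}(\breve{x}_0^{\dag}(T))^\top x^1_0(T)\big] + \mathbb{E}\int_0^T \big[\langle g_{0x_0}, x^1_0\rangle + \langle g_{0z}, \widehat{x^1_1}\rangle + \langle g_{0u_0}, v_0\rangle\big]\,dt = \mathbb{E}\int_0^T \langle \partial_{u_0}H_0, v_0\rangle\,dt,
\end{equation*}
where I used $b_{0u_0}^\top K_0 + B_{1u_0}^\top K_1 - \Phi_{u_0}^\top\varphi_1 = \partial_{u_0}H_0 - g_{0u_0}$. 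The left-hand side is precisely the functional appearing in $(\ref{p})$, so that $\mathbb{E}\int_0^T \langle \partial_{u_0}H_0, v_0\rangle\,dt \ge 0$ for every admissible direction $v_0 = u_0 - u_0^{\dag}$.

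Finally I would localize this integrated inequality. Since $u_0^{\dag}$ and all admissible controls are $\mathbb{F}^0$-adapted while $\partial_{u_0}H_0$ is $\overline{\mathbb{F}}^1$-adapted, the $\mathbb{F}^0_t$-measurability of $v_0$ lets me replace $\partial_{u_0}H_0$ by $\mathbb{E}[\partial_{u_0}H_0\mid\mathbb{F}^0_t]$ inside the pairing. Testing against variations $v_0(t,\omega) = (u_0 - u_0^{\dag}(t,\omega))\mathbf{1}_E(t,\omega)$ for arbitrary $u_0\in U_0$ and $\mathbb{F}^0$-predictable sets $E$, which are admissible by the convexity of $U_0$, the fundamental lemma of the calculus of variations yields the pointwise conditional inequality $(\ref{the stable condition for the maximum principle for the leader})$. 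The main obstacle is the second step: the conditional-expectation drift $\mathbb{E}[\partial_z H_0\mid\mathbb{F}^0_t]$ in the $K_1$-equation and the mean-field term $\widehat{x^1_1}$ in the variational equation mean that the cancellations close only after repeated use of the tower property, and one must verify that the adjoint system $(\ref{n})$ has been set up with exactly the right conditional-expectation structure and the right Hessian coupling between $K_1(T)$ and $\varphi_1(T)$ for this to happen; confirming this consistency is the crux of the argument.
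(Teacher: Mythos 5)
Your proposal is correct and follows essentially the same route as the paper's proof: It\^o's formula applied to $\langle x^1_0, K_0\rangle+\langle x^1_1, K_1\rangle+\langle p^1_1, \varphi_1\rangle$, the tower property to handle the conditional-expectation drift paired against $x^1_1$, cancellation of the terminal terms via the (symmetric) Hessian coupling so that only $\mathbb{E}\big[G_{0x_0}(\breve{x}_0^{\dag}(T))^\top x^1_0(T)\big]$ survives, substitution into the variational inequality $(\ref{p})$ to get $\mathbb{E}\int_0^T\langle \partial_{u_0}H_0, v_0\rangle\,dt\ge 0$, and finally localization by testing with $\bar w_0 = u_0 I_A + u_0^{\dag} I_{\Omega\setminus A}$ for $A\in\mathbb{F}^0_t$ to obtain the pointwise conditional inequality. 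The consistency check you flag as the crux is exactly what the paper's computation $(\ref{q})$ verifies, so no gap remains.
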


\begin{proof}
Applying It\^{o}'s formula to $\langle x^1_0(t), K_0(t)\rangle+\langle x^1_1(t), K_1(t)\rangle+\langle p^1_1(t), \varphi_1(t)\rangle$, integrating
from $t$ to $T$, and taking expectation, we obtain
\begin{equation}\label{q}
\begin{aligned}
&\mathbb{E}\Big[G_{0 x_0}\big(\breve{x}_0^{\dag}(T)\big)^\top x^1_0(T)\Big]\\
&=\mathbb{E}\int_0^T \Big[ \big\langle b_{0 u_0} (t)v_0(t)+b_{0 z} (t)\widehat{x^1_1}(t), K_0(t)\big\rangle -\big\langle x^1_0(t), g_{0 x_0}(t)
\big\rangle\\
&\quad +\big\langle B_{1 u_0} (t)v_0(t)+B_{1 z} (t)\widehat{x^1_1}(t), K_1(t) \big\rangle\\
&\quad -\Big\langle x^1_1(t), \mathbb{E}\Big[b_{0 z} (t)^\top K_0(t)+B_{1 z} (t)^\top K_1(t)-\Phi_{z} (t)^\top \varphi_1(t)+g_{0 z} (t)\Big|
\mathbb{F}_t^0 \Big]\Big\rangle\\
&\quad -\big\langle \Phi_{u_0} (t)v_0(t)+\Phi_{z} (t)\widehat{x^1_1}(t), \varphi_1(t)\big\rangle\Big] dt\\
&=\int_0^T \mathbb{E}\bigg[\mathbb{E}\Big[ \big\langle b_{0 u_0} (t)v_0(t)+b_{0 z} (t)\widehat{x^1_1}(t), K_0(t)\big\rangle -\big\langle x^1_0(t),
g_{0 x_0}(t) \big\rangle\\
&\quad +\big\langle B_{1 u_0} (t)v_0(t)+B_{1 z} (t)\widehat{x^1_1}(t), K_1(t) \big\rangle\\
&\quad -\Big\langle x^1_1(t), \mathbb{E}\Big[b_{0 z} (t)^\top K_0(t)+B_{1 z} (t)^\top K_1(t)-\Phi_{z} (t)^\top \varphi_1(t)+g_{0 z} (t)\Big|
\mathbb{F}_t^0 \Big]\Big\rangle\\
&\quad -\big\langle \Phi_{u_0} (t)v_0(t)+\Phi_{z} (t)\widehat{x^1_1}(t), \varphi_1(t)\big\rangle\Big| \mathbb{F}^0_t\Big]\bigg] dt\\
&=\int_0^T \mathbb{E}\Big[\langle b_{0 u_0} (t)v_0(t), K_0(t)\rangle-\langle x^1_0(t), g_{0 x_0}(t)\rangle+\langle B_{1 u_0} (t)v_0(t), K_1(t)\rangle\\
&\quad -\mathbb{E}\big[x^1_1(t)|{\mathbb{F}^0_t}\big]^\top \mathbb{E}\big[g_{0 z}(t)|{\mathbb{F}^0_t}\big]-\langle \Phi_{u_0} (t)v_0(t),
\varphi_1(t)\rangle \Big] dt.
\end{aligned}
\end{equation}
We substitute $(\ref{q})$ into the variational inequality $(\ref{p})$ to get
\begin{equation}
\begin{aligned}
&\mathbb{E}\int_0^T \Big\langle {\partial}_{u_0} H_0\Big(t,\breve{x}_1^{\dag}(t), \breve{x}_0^{\dag}(t), u_{0}^{\dag}(t),
\mathbb{E}\big[\breve{x}_1^{\dag}(t)|{\mathbb{F}^0_t}\big], p_1^{\dag}(t),\\
&\qquad\quad K_0(t), K_1(t), \varphi_1(t) \Big), v_0(t)\Big\rangle dt \ge 0,
\end{aligned}
\end{equation}
for any $v_0(\cdot)$ such that $u^{\dag}_0(\cdot)+v_0(\cdot)\in \mathcal {U}_0$. If we let $w_0(\cdot)=u^{\dag}_0(\cdot)+v_0(\cdot)$, then the above
inequality implies that for any $w_0\in \mathcal {U}_0$, we have
\begin{equation}\label{s}
\begin{aligned}
&\mathbb{E}\Big\langle {\partial}_{u_0} H_0\Big(t,\breve{x}_1^{\dag}(t), \breve{x}_0^{\dag}(t), u_{0}^{\dag}(t),
\mathbb{E}\big[\breve{x}_1^{\dag}(t)|{\mathbb{F}^0_t}\big], p_1^{\dag}(t),\\
&\qquad K_0(t), K_1(t), \varphi_1(t) \Big), w_0(t)-u^{\dag}_0(t)\Big\rangle \ge 0,\ a.e.\ t\in [0,T].
\end{aligned}
\end{equation}
And for $\forall u_0\in U_0,\ \forall A\in \mathbb{F}^0_t$, we set
\[\bar{w}_0(t)=u_0 I_A+u_{0}^{\dag}(t) I_{\Omega-A}.\]
Because $\bar{w}_0(\cdot)\in \mathcal {U}_0$, inequality $(\ref{s})$ still holds when replacing $w_0(\cdot)$ with $\bar{w}_0(\cdot)$. Then we get
\begin{equation}
\begin{aligned}
&\mathbb{E}\Big[I_A \Big\langle {\partial}_{u_0} H_0\Big(t,\breve{x}_1^{\dag}(t), \breve{x}_0^{\dag}(t), u_{0}^{\dag}(t),
\mathbb{E}\big[\breve{x}_1^{\dag}(t)|{\mathbb{F}^0_t}\big], p_1^{\dag}(t),\\
&\qquad K_0(t), K_1(t), \varphi_1(t) \Big), u_0-u^{\dag}_0(t)\Big\rangle\Big] \ge 0,\ a.e.\ t\in [0,T],
\end{aligned}
\end{equation}
for any $\forall u_0\in U_0,\ \forall A\in \mathbb{F}^0_t$. Therefore, we obtain
\begin{equation}
\begin{aligned}
&\mathbb{E}\Big[\Big\langle {\partial}_{u_0} H_0\Big(t,\breve{x}_1^{\dag}(t), \breve{x}_0^{\dag}(t), u_{0}^{\dag}(t),
\mathbb{E}\big[\breve{x}_1^{\dag}(t)|{\mathbb{F}^0_t}\big], p_1^{\dag}(t),\\
&\qquad K_0(t), K_1(t), \varphi_1(t) \Big), u_0-u^{\dag}_0(t)\Big\rangle\Big| \mathbb{F}^0_t\Big] \ge 0,\ a.e.\ t\in [0,T],\ a.s.,
\end{aligned}
\end{equation}
for any $u_0\in U_0$.
\end{proof}

So, the final consistency condition system is
\begin{equation}\label{the final consistency condition system}
\left\{
             \begin{array}{lr}
             d \breve{x}_0^{\dag}(t)=b_0\left(t, \breve{x}_0^{\dag}(t), u_{0}^{\dag}(t),
             \mathbb{E}\big[\breve{x}_1^{\dag}(t)|{\mathbb{F}^0_t}\big]\right) dt+\sigma_0 dW_0(t),\\
             d \breve{x}_1^{\dag}(t)=B_1\left(t, \breve{x}_1^{\dag}(t), \breve{x}_0^{\dag}(t), u_{0}^{\dag}(t),
             \mathbb{E}\big[\breve{x}_1^{\dag}(t)|{\mathbb{F}^0_t}\big], p_1^{\dag}(t)\right) dt+\widetilde{\sigma} dW_1(t),\\
             -dp_1^{\dag}(t)=\Phi\left(t, \breve{x}_1^{\dag}(t), \breve{x}_0^{\dag}(t), u_0^{\dag}(t), \mathbb{E}\big[\breve{x}_1^{\dag}(t)
             |\mathbb{F}^0_t\big], p_1^{\dag}(t)\right)dt-l_1^{\dag}(t) dW_0(t)-q_1^{\dag}(t) dW_1(t),\\
             \breve{x}_0^{\dag}(0)=\xi_0,\ \breve{x}_1^{\dag}(0)=\xi_1,\ p_1^{\dag}(T)=\frac{\partial G_1}{\partial x_1}
             \left(\breve{x}_1^{\dag}(T)\right).\\
             \\
             -d K_0(t)={\partial}_{x_0} H_0\left(t,\breve{x}_1^{\dag}(t), \breve{x}_0^{\dag}(t), u_{0}^{\dag}(t),
             \mathbb{E}\big[\breve{x}_1^{\dag}(t)|{\mathbb{F}^0_t}\big], p_1^{\dag}(t), K_0(t), K_1(t), \varphi_1(t) \right) dt\\
             \qquad\qquad\quad-Q_{00}(t) dW_0(t)-Q_{01}(t) dW_1(t),\\
             -d K_1(t)={\partial}_{x_1} H_0\left(t,\breve{x}_1^{\dag}(t), \breve{x}_0^{\dag}(t), u_{0}^{\dag}(t),
             \mathbb{E}\big[\breve{x}_1^{\dag}(t)|{\mathbb{F}^0_t}\big], p_1^{\dag}(t), K_0(t), K_1(t), \varphi_1(t) \right) dt\\
             \qquad\qquad\quad+\mathbb{E}\left[{\partial}_z H_0\left(t,\breve{x}_1^{\dag}(t), \breve{x}_0^{\dag}(t),
             u_{0}^{\dag}(t),  \mathbb{E}\big[\breve{x}_1^{\dag}(t)|{\mathbb{F}^0_t}\big], p_1^{\dag}(t), K_0(t), K_1(t), \varphi_1(t) \right)
             \Big|{\mathbb{F}^0_t}\right]dt\\
             \qquad\qquad\quad-Q_{10}(t) dW_0(t)-Q_{11}(t) dW_1(t),\\
             d \varphi_1(t)=-{\partial}_{p_1} H_0\left(t,\breve{x}_1^{\dag}(t), \breve{x}_0^{\dag}(t), u_{0}^{\dag}(t),
             \mathbb{E}\big[\breve{x}_1^{\dag}(t)|{\mathbb{F}^0_t}\big], p_1^{\dag}(t), K_0(t), K_1(t), \varphi_1(t) \right) dt\\
             K_0(T)=\frac{\partial G_0}{\partial x_0}\left(\breve{x}_0^{\dag}(T)\right),\ K_1(T)=-\frac{{\partial}^2 G_1}
             {\partial x_1^2}\left(\breve{x}_1^{\dag}(T)\right)\varphi_1(T),\ \varphi_1(0)=0,\\
             \end{array}
\right.
\end{equation}

\section{The approximate Stackelberg equilibrium}\label{the approximate Stackelberg equilibrium}

\hspace{5mm}First, we give the following definition of approximate Stackelberg equilibrium

\begin{mydef}\label{the definition of approximate Stackelberg equilibrium}
A set of strategies $\big(u^*_0, u^*_1, \cdots, u^*_N\big)$ is an $\varepsilon$-Stackelberg equilibrium if the following hold:\\
\noindent (i) For a given strategy of the leader $u_0\in \mathcal {U}_0$, $u^*=\big(u^*_1, \cdots, u^*_N\big)$ is an $\varepsilon$-optimal
response if it constitutes an $\varepsilon$-Nash equilibrium, that is, there exists a constant $\varepsilon \ge 0$ such that for any $1\le i\le N$
and any $u_i\in \mathcal {U}_i$,
\[J^N_i\big(u^*_i, u^*_{-i},u_0\big)\le J^N_i\big(u_i, u^*_{-i}, u_0\big)+\varepsilon,\]
where $u^*_{-i}=\big(u^*_1, \cdots, u^*_{i-1}, u^*_{i+1}, \cdots, u^*_N\big);$\\
\noindent (ii) For any $u_0\in \mathcal {U}_0$, $J^N_0\big(u^*_0, u^*\big)\le J^N_0\big(u_0, u\big)+\varepsilon,$ where $u^*, u$ are
$\varepsilon$-optimal responses to strategies $u_0^*, u_0$, respectively.
\end{mydef}

\begin{Remark}
Moon, Ba\c{s}ar \cite{MB2018} and Wang \cite{W2024} used this definition to study the LQ mean field Stackelberg game. Bensoussan, Chau, and Yam
\cite{BCY2015}, \cite{BCY2016} only discussed the approximate Nash equilibrium among followers. In this subsection, we will study the approximate
Stackelberg equilibrium of nonlinear open-loop mean field Stackelberg game, which has not been considered in previous literature.
\end{Remark}

\subsection{The proof of the followers' approximate equilibrium}

\hspace{5mm}For an arbitrarily given the leader's decentralized control $u_0\in \mathcal {U}_0$, it follows from the definition of
$\mathcal {U}_0$ that there exists a constant $C_{u_0}>0$ such that $\mathbb{E}\int_0^T |u_0(t)|^2 dt<C_{u_0}$, where $C_{u_0}$ is dependent on the
choice of $u_0$. Next, we will use $C$ to denote a positive constant and $C_{u_0}>0$ to denote a positive constant dependent on $u_0$, which may
take different values on different lines.

In order to distinguish the different states more clearly, we use $\big(\bar{x}_0, (\bar{x}_i)_{i\ge 1}\big)$ to represent the
leader's and followers' state process in their decentralized optimal control problem when the leader takes the control $u_0$ and the followers take
the decentralized optimal controls
$\big\{{\overline{\mathcal{I}}}_i(u_{0})\big\}_{i\ge 1}$, which is determined by Steps 1-5 in Subsection $\ref{section 2.4.1}$.
From $(\ref{d})$ and $(\ref{ith follower Hamiltonian system})$, we obtain
\begin{equation}\label{t}
\left\{
             \begin{array}{lr}
             d {\bar{x}}_0(t)=b_0\left(t, {\bar{x}}_0(t), u_{0}(t), \mathbb{E}\big[{\bar{x}}_i(t)|{\mathbb{F}^0_t}\big]\right) dt+\sigma_0 dW_0(t),\\
             d\bar{x}_i(t)=b_1\left(t, \bar{x}_i(t), {{\overline{\mathcal{I}}}}_i(u_{0})(t), \bar{x}_0(t), u_0(t), \mathbb{E}\big[\bar{x}_i(t)|
             \mathbb{F}^0_t\big]\right) dt+\widetilde{\sigma} dW_i(t), \\
             -dp_i(t)=\bigg\{\left(\frac{\partial b_1}{\partial x_i}\right)^{\top}\left(t, \bar{x}_i(t), {{\overline{\mathcal{I}}}}_i(u_{0})(t),
             \bar{x}_0(t), u_0(t), \mathbb{E}\big[\bar{x}_i(t)|\mathbb{F}^0_t\big]\right)p_i(t)\\
             \qquad\qquad +\frac{\partial g_1}{\partial x_i}\left(t, \bar{x}_i(t), {{\overline{\mathcal{I}}}}_i(u_{0})(t), \bar{x}_0(t), u_0(t),
             \mathbb{E}\big[\bar{x}_i(t)|\mathbb{F}^0_t\big]\right)\bigg\} dt-l_i(t) dW_0(t)-q_i(t) dW_i(t),\\
             \bar{x}_0(0)=\xi_0,\ \bar{x}_i(0)=\xi_i,\ p_i(T)=\frac{\partial G_1}{\partial x_i}\left(\bar{x}_i(T)\right),\ i\ge 1,\\
             \end{array}
\right.
\end{equation}
and
\begin{equation}\label{u}
\begin{aligned}
&\Big\langle {\partial}_{u_i} H_1\left(t,\bar{x}_i(t),{{\overline{\mathcal{I}}}}_i(u_{0})(t), {\bar{x}}_0(t), u_{0}(t),
\mathbb{E}\big[{\bar{x}}_i(t)|{\mathbb{F}^0_t}\big], p_i(t)\right), u_i-{{\overline{\mathcal{I}}}}_i(u_{0})(t)\Big\rangle\ge 0,\\
&\qquad\qquad\qquad\qquad\qquad\qquad\qquad\qquad\qquad\ \forall u_i\in U_i,\ a.e.\ t\in[0,T],\ a.s.,\ i\ge 1.
\end{aligned}
\end{equation}
According to $(\ref{t})$ and $(\ref{u})$, we assume that the follower i's decentralized optimal control ${\overline{\mathcal{I}}}_i(u_{0})$ is
uniquely determined by the following form:
\begin{equation}\label{y}
{\overline{\mathcal{I}}}_i(u_{0}):={\alpha}_1\left(t, \bar{x}_i(t), \bar{x}_0(t), u_0(t), \mathbb{E}\big[\bar{x}_i(t)|\mathbb{F}^0_t\big], p_i(t)\right),
\ i\ge 1.
\end{equation}
By substituting $(\ref{y})$ into $(\ref{t})$, we obtains the following conditional mean-field FBSDE:
\begin{equation}\label{z}
\left\{
             \begin{array}{lr}
             d {\bar{x}}_0(t)=b_0\left(t, {\bar{x}}_0(t), u_{0}(t), \mathbb{E}\big[{\bar{x}}_i(t)|{\mathbb{F}^0_t}\big]\right) dt+\sigma_0 dW_0(t),\\
             d\bar{x}_i(t)=b_1\big(t, \bar{x}_i(t), {\alpha}_1\left(t, \bar{x}_i(t), \bar{x}_0(t), u_0(t),
             \mathbb{E}\big[\bar{x}_i(t)|\mathbb{F}^0_t\big], p_i(t)\right),\\
             \qquad\qquad\quad \bar{x}_0(t), u_0(t), \mathbb{E}\big[\bar{x}_i(t)|\mathbb{F}^0_t\big]\big) dt+\widetilde{\sigma} dW_i(t), \\
             -dp_i(t)=\bigg\{\left(\frac{\partial b_1}{\partial x_i}\right)^{\top}\big(t, \bar{x}_i(t),
             {\alpha}_1\left(t, \bar{x}_i(t), \bar{x}_0(t), u_0(t), \mathbb{E}\big[\bar{x}_i(t)|\mathbb{F}^0_t\big], p_i(t)\right),\\
             \qquad\qquad\quad \bar{x}_0(t), u_0(t), \mathbb{E}\big[\bar{x}_i(t)|\mathbb{F}^0_t\big]\big)p_i(t)+\frac{\partial g_1}
             {\partial x_i}\big(t, \bar{x}_i(t), {\alpha}_1\big(t, \bar{x}_i(t), \bar{x}_0(t), u_0(t),\\
             \qquad\qquad\quad \mathbb{E}\big[\bar{x}_i(t)|\mathbb{F}^0_t\big], p_i(t)\big), \bar{x}_0(t), u_0(t),
             \mathbb{E}\big[\bar{x}_i(t)|\mathbb{F}^0_t\big]\big)\bigg\} dt-l_i(t) dW_0(t)-q_i(t) dW_i(t),\\
             \bar{x}_0(0)=\xi_0,\ \bar{x}_i(0)=\xi_i,\ p_i(T)=\frac{\partial G_1}{\partial x_i}\left(\bar{x}_i(T)\right),\ i\ge 1,\\
             \end{array}
\right.
\end{equation}
We introduce the following additional assumptions.

\noindent {\bf (H6.1)}\ {\it {There exists a constant $C>0$ such that for any $t\in [0,T], x_0, x'_0\in \mathbb{R}^k, x_i, x'_i\in
\mathbb{R}^n, u_0, u'_0\in \mathbb{R}^{m_0}, u_i, u'_i\in \mathbb{R}^m, z, z'\in \mathbb{R}^n,$ we have
\begin{equation*}
\begin{aligned}
&\big|g_0(t, x_0, u_0, z)-g_0(t, x'_0, u'_0, z')\big|\\
& \le C\big(1+|x_0|+|x'_0|+|u_0|+|u'_0|+|z|+|z'|\big)\cdot \big(|x_0-x'_0|+|u_0-u'_0|+|z-z'|\big),\\
&\big|g_1(t, x_i, u_i, x_0, u_0, z)-g_1(t, x'_i, u'_i, x'_0, u'_0, z')\big|\\
& \le C\big(1+|x_i|+|x'_i|+|u_i|+|u'_i|+|x_0|+|x'_0|+|u_0|+|u'_0|+|z|+|z'|\big)\\
&\quad \cdot \big(|x_i-x'_i|+|u_i-u'_i|+|x_0-x'_0|+|u_0-u'_0|+|z-z'|\big),\\
&\big|G_0(x_0)-G_0(x'_0)\big|\le C\big(1+|x_0|+|x'_0|\big)\cdot |x_0-x'_0|.\\
&\big|G_1(x_i)-G_1(x'_i)\big|\le C\big(1+|x_i|+|x'_i|\big)\cdot |x_i-x'_i|.\\
\end{aligned}
\end{equation*}
}}

\noindent {\bf (H6.2)}\ {\it {According to assumption (H5.3) about $\alpha_1$, we know that $\alpha_1$ in $(\ref{y})$ is
continuously differential with respect $(x_i, x_0, u_0, z, p_i)$ and all the first partial derivatives of $\alpha_1$ are bounded.
We further assume that $\alpha_1(\cdot, 0,0,0,0,0)$ is a deterministic function and $\int_0^T \big|\alpha_1(\cdot, 0,0,0,0,0)\big|^2 dt<\infty.$
}}

\noindent {\bf (H6.3)}\ {\it {There exists a constant $C>0$ such that for $\forall i\ge 1$ and $\forall u_i\in \mathcal {U}_i$,
we have $$\mathbb{E}\int_0^T \big|u_i(t)\big|^2 dt\le C$$.
}}

Based on the analysis in Subsection \ref{some well-posedness results of a new kind of CMF-FBSDEs} and \ref{zz}, it can be known that for
an arbitrarily given strategy of the leader $u_0\in \mathcal {U}_0$ and any $i\ge 1$, we can integrate the conditional
mean-field FBSDE $(\ref{z})$ in the same way as equation $(\ref{m})$. So, when the integrated equation of the conditional mean-field FBSDE $(\ref{z})$
satisfies assumptions (H5.1) and (H5.2), equation $(\ref{z})$ has a unique solution $\big({\bar{x}}_0(\cdot), \bar{x}_i(\cdot), p_i(\cdot), l_i(\cdot),
q_i(\cdot)\big)\in L^2_{\overline{\mathbb{F}}^i}
\left(\Omega; C(0, T; \mathbb{R}^k)\right)\times L^2_{\overline{\mathbb{F}}^i}\left(\Omega; C(0, T; \mathbb{R}^n)\right)
\times L^2_{\overline{\mathbb{F}}^i}\left(\Omega; C(0, T; \mathbb{R}^n)\right)$
$\times L^2_{\overline{\mathbb{F}}^i}\left(0, T; \mathbb{R}^{n\times j_0}\right)\times L^2_{\overline{\mathbb{F}}^i}\left(0, T; \mathbb{R}^{n\times j}\right)$.

And in the subsequent discussions, we all assume that the integrated equation of the conditional mean-field FBSDE $(\ref{z})$ satisfies
assumptions (H5.1)-(H5.3). For an arbitrarily given strategy of the leader $u_0\in \mathcal {U}_0$, it can be proved by the symmetry of the followers
and applying
some classical techniques that there exists a constant $C_{u_0}>0$ such that for any $i\ge 1$, the corresponding solution of the mean-field FBSDE
$(\ref{z})$ $\big({\bar{x}}_0(\cdot),
\bar{x}_i(\cdot), p_i(\cdot)\big)$ satisfies
\begin{equation}\label{rr}
\begin{aligned}
\mathbb{E}\Big[\sup\limits_{0\le t\le T} \big|\bar{x}_0(t)\big|^2\Big]+\mathbb{E}\Big[\sup\limits_{0\le t\le T} \big|\bar{x}_i(t)\big|^2\Big]
+\mathbb{E}\Big[\sup\limits_{0\le t\le T} \big|p_i(t)\big|^2 \Big] \le C_{u_0},
\end{aligned}
\end{equation}
where $C_{u_0}$ is depend on the selection of $u_0$. For simplicity, the specific derivation details are omitted here.

Then, it follows from $(\ref{y})$ that there exists a constant $C>0$ such that for $\forall i\ge 1, \forall u_0\in \mathcal {U}_0$, we have
\begin{equation}
\begin{aligned}
\mathbb{E}\int_0^T \Big|{\overline{\mathcal{I}}}_i(u_{0})(t)\Big|^2 dt& =\mathbb{E}\int_0^T\big|{\alpha}_1\left(t, \bar{x}_i(t),
\bar{x}_0(t), u_0(t), \mathbb{E}\big[\bar{x}_i(t)|\mathbb{F}^0_t\big], p_i(t)\right)\big|^2 dt\\
& \le 2\mathbb{E}\int_0^T\big|{\alpha}_1\left(t, \bar{x}_i(t), \bar{x}_0(t), u_0(t), \mathbb{E}\big[\bar{x}_i(t)|\mathbb{F}^0_t\big], p_i(t)\right)\\
&\qquad -{\alpha}_1\left(t, 0,0,0,0,0\right)\big|^2 dt+2\mathbb{E}\int_0^T \big|{\alpha}_1\left(t, 0,0,0,0,0\right)\big|^2 dt\\
&\le 2C\cdot\mathbb{E}\int_0^T\big(\big|\bar{x}_i(t)\big|^2+\big|\bar{x}_0(t)\big|^2+\big|u_0(t)\big|^2+\big|p_i(t)\big|^2 \big) dt\\
&\quad+2\mathbb{E}\int_0^T\big| {\alpha}_1\left(t, 0,0,0,0,0\right)\big|^2 dt
\end{aligned}
\end{equation}
So, for an arbitrarily given $u_0\in \mathcal {U}_0$, there exists a constant $C_{u_0}>0$ such that for $\forall i\ge 1$, we have
\begin{equation}\label{ss}
\mathbb{E}\int_0^T \Big|{\overline{\mathcal{I}}}_i(u_{0})(t)\Big|^2 dt \le C_{u_0},
\end{equation}
where the constant $C_{u_0}$ is dependent on the choice of $u_0$.

For an arbitrarily fixed $N$ and an arbitrarily given leader's decentralized control $u_0\in \mathcal {U}_0$, when $N$ followers all apply the controls
identified by $(\ref{y})$ in the $1+N$-type leader-follower
game, the resulting controlled state process will be denoted by $\big(\bar{x}_0^N,
(\bar{x}_i^N)_{1\le i\le N}\big)$ and solve
\begin{equation}\label{aa}
\left\{
             \begin{array}{lr}
             d \bar{x}_0^N(t)=b_0\left(t,\bar{x}_0^N(t), u_0(t), \bar{x}^{(N)}(t)\right) dt+\sigma_0 dW_0(t),\\
             d \bar{x}^N_i(t)=b_1\big(t, \bar{x}^N_i(t), {\overline{\mathcal{I}}}_i(u_{0})(t), \bar{x}_0^N(t), u_0(t), \bar{x}^{(N)}(t)\big) dt
             +\widetilde{\sigma} dW_i(t),\quad 0\le t \le T, \\
             \bar{x}_0^N(0)=\xi_0,\ \bar{x}^N_i(0)=\xi_i,\ 1\le i\le N,
             \end{array}
\right.
\end{equation}
where $\bar{x}^{(N)}(\cdot)=\frac{1}{N} \sum\limits_{j=1}^N \bar{x}^N_j(\cdot)$. And the cost functional for the $i^{th}$ follower is
\begin{equation}
\begin{aligned}
&J^N_i\big(\big({\overline{\mathcal{I}}}_j(u_{0})\big)_{1\le j\le N},u_0\big)\\
&=\mathbb{E}\bigg\{\int_0^T g_1\Big(t, \bar{x}^N_i(t), {\overline{\mathcal{I}}}_i(u_{0})(t), \bar{x}^N_0(t), u_0(t), \bar{x}^{(N)}(t)\Big) dt
+G_1\left(\bar{x}^N_i(T)\right)\bigg\},
\end{aligned}
\end{equation}
for $i=1,2,\cdots,N$.

For an arbitrarily given strategy of the leader $u_0\in \mathcal {U}_0$, when all followers all apply the controls identified by $(\ref{y})$ in
their decentralized optimal control problems, the resulting controlled state processes will be denoted by $\big(\bar{x}_0, (\bar{x}_i)_{i\ge 1}\big)$
and solve
\begin{equation}\label{bb}
\left\{
             \begin{array}{lr}
             d {\bar{x}}_0(t)=b_0\left(t, {\bar{x}}_0(t), u_{0}(t), \mathbb{E}\big[{\bar{x}}_i(t)|{\mathbb{F}^0_t}\big]\right) dt+\sigma_0 dW_0(t),\\
             d\bar{x}_i(t)=b_1\big(t, \bar{x}_i(t), {\overline{\mathcal{I}}}_i(u_{0})(t), \bar{x}_0(t), u_0(t),
             \mathbb{E}\big[\bar{x}_i(t)|\mathbb{F}^0_t\big]\big) dt+\widetilde{\sigma} dW_i(t),\ 0\le t\le T, \\
             \bar{x}_0(0)=\xi_0,\ \bar{x}_i(0)=\xi_i,\ i\ge 1.
             \end{array}
\right.
\end{equation}
And the cost functional for the $i^{th}$ follower in his decentralized optimal control problem is
\begin{equation}
\begin{aligned}
&\overline{J}_{i}({\overline{\mathcal{I}}}_i(u_{0}), u_{0})\\
&=\mathbb{E}\bigg\{\int_0^T g_1\Big(t, \bar{x}_i(t), {\overline{\mathcal{I}}}_i(u_{0})(t), \bar{x}_0(t), u_{0}(t),
\mathbb{E}\big[\bar{x}_i(t)|\mathbb{F}^0_t\big]\Big) dt+G_1\left(\bar{x}_i(T)\right)\bigg\}.
\end{aligned}
\end{equation}

\begin{mylem}\label{lemma 6.1}
Let (H2.1)-(H2.2), (H6.1)-(H6.3), and the assumptions made earlier in this subsection all hold. Then, for an arbitrarily given $u_0\in \mathcal {U}_0$,
there exists a constant
$C_{u_0}>0$ such that $\big(\bar{x}_0^N, (\bar{x}_i^N)_{1\le i\le N}\big)$ which is the corresponding solution of $(\ref{aa})$ satisfies
\begin{equation}\label{qq}
\begin{aligned}
\bigg[\sup\limits_{N\ge 1} \mathbb{E}\Big[\sup\limits_{0\le t\le T} \big|\bar{x}^N_0(t)\big|^2\Big]\bigg]
+\bigg[\sup\limits_{N\ge 1} \max\limits_{1\le i\le N} \mathbb{E}\Big[\sup\limits_{0\le t\le T} \big|\bar{x}^N_i(t)\big|^2\Big]\bigg]\le
C_{u_0}.
\end{aligned}
\end{equation}
\end{mylem}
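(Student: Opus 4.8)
The plan is to combine the integral form of the state equations with the linear growth of $b_0,b_1$ from (H2.2), Doob's maximal inequality for the constant-coefficient Brownian parts, and two successive applications of Gronwall's inequality — the first on the \emph{empirical average} to close the mean-field coupling, the second on each \emph{individual} index to upgrade the averaged bound to a per-follower bound uniform in both $i$ and $N$. Throughout, $C$ denotes a generic constant depending only on the growth constant and $T$, while $C_{u_0}$ denotes a constant depending on $u_0$ but \emph{not} on $i$ or $N$; the index-uniformity of $C_{u_0}$ comes from $\sup_{i\ge 0}\mathbb{E}|\xi_i|^2\le c$ in (H2.1), from $\mathbb{E}\int_0^T|u_0(t)|^2dt<C_{u_0}$, and from the control estimate $(\ref{ss})$.

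First I would set $\Phi_0^N(t):=\mathbb{E}[\sup_{0\le s\le t}|\bar{x}_0^N(s)|^2]$ and $\Phi_i^N(t):=\mathbb{E}[\sup_{0\le s\le t}|\bar{x}_i^N(s)|^2]$. Writing $(\ref{aa})$ in integral form, squaring, taking supremum and expectation, bounding the drift integral by Cauchy--Schwarz together with $|b_0|^2\le C(1+|x_0|^2+|u_0|^2+|z|^2)$ and $|b_1|^2\le C(1+|x_i|^2+|u_i|^2+|x_0|^2+|u_0|^2+|z|^2)$, and estimating $\mathbb{E}[\sup_{0\le s\le t}|\sigma_0W_0(s)|^2]$ and $\mathbb{E}[\sup_{0\le s\le t}|\widetilde{\sigma}W_i(s)|^2]$ by $CT$ via Doob's inequality, I obtain
\[
\Phi_0^N(t)\le C_{u_0}+C\int_0^t\Big(\Phi_0^N(s)+\mathbb{E}\big[\sup_{0\le r\le s}|\bar{x}^{(N)}(r)|^2\big]\Big)\,ds,
\]
\[
\Phi_i^N(t)\le C_{u_0}+C\int_0^t\Big(\Phi_i^N(s)+\Phi_0^N(s)+\mathbb{E}\big[\sup_{0\le r\le s}|\bar{x}^{(N)}(r)|^2\big]\Big)\,ds,
\]
where the control contributions $\mathbb{E}\int_0^T|u_0|^2ds$ and $\mathbb{E}\int_0^T|{\overline{\mathcal{I}}}_i(u_0)|^2ds$ have been absorbed into $C_{u_0}$ using $(\ref{ss})$.

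The crucial step is to dominate the empirical average by the average of the individual suprema: Jensen's inequality gives $|\bar{x}^{(N)}(r)|^2\le\frac1N\sum_{j=1}^N|\bar{x}_j^N(r)|^2$, hence $\mathbb{E}[\sup_{0\le r\le s}|\bar{x}^{(N)}(r)|^2]\le\bar\Phi^N(s)$ with $\bar\Phi^N(s):=\frac1N\sum_{j=1}^N\Phi_j^N(s)$. Averaging the follower inequality over $i=1,\dots,N$ (the leader and empirical-average terms are index-free, so they survive the average unchanged) then yields a \emph{closed} system in $(\Phi_0^N,\bar\Phi^N)$,
\[
\bar\Phi^N(t)\le C_{u_0}+C\int_0^t\big(\bar\Phi^N(s)+\Phi_0^N(s)\big)\,ds,\qquad
\Phi_0^N(t)\le C_{u_0}+C\int_0^t\big(\Phi_0^N(s)+\bar\Phi^N(s)\big)\,ds.
\]
Adding the two and applying Gronwall's inequality to $\Phi_0^N+\bar\Phi^N$ gives $\Phi_0^N(T)+\bar\Phi^N(T)\le C_{u_0}e^{CT}$, a bound independent of $N$.

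Finally I would substitute these bounds back into the individual follower inequality. Since $\Phi_0^N(s)\le C_{u_0}e^{CT}$ and $\mathbb{E}[\sup_{0\le r\le s}|\bar{x}^{(N)}(r)|^2]\le\bar\Phi^N(s)\le C_{u_0}e^{CT}$ for all $s\in[0,T]$ uniformly in $N$, the inequality collapses to $\Phi_i^N(t)\le C_{u_0}+C\int_0^t\Phi_i^N(s)\,ds$ with $C_{u_0}$ uniform in $i$ and $N$; a second application of Gronwall's inequality gives $\max_{1\le i\le N}\Phi_i^N(T)\le C_{u_0}e^{CT}$ uniformly in $N$, which together with the leader bound is exactly $(\ref{qq})$. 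I expect the main obstacle to be precisely this mean-field coupling: a single follower's equation cannot be closed in isolation because its drift sees $\bar{x}^{(N)}$, the average over \emph{all} followers. The resolution is structural rather than computational — estimate the self-consistent averaged quantity $\bar\Phi^N$ first, then return to the per-index estimate — and the uniformity in $N$ hinges on the Gronwall constants involving only the growth constant, $T$, and the index-uniform data bounds from (H2.1) and $(\ref{ss})$.
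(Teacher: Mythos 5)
Your proof is correct and follows essentially the route the paper itself indicates: the paper omits the details of Lemmas 6.1--6.5, stating only that they follow from Gronwall's inequality and a few basic inequalities, and your argument (linear growth plus Doob for the martingale parts, Jensen to dominate $|\bar{x}^{(N)}|^2$ by the average of individual suprema, a first Gronwall on the closed pair $(\Phi_0^N,\bar\Phi^N)$, then a second per-index Gronwall) is the natural instantiation of exactly that sketch. The uniformity in $i$ and $N$ is handled properly via $\sup_{i\ge 0}\mathbb{E}|\xi_i|^2\le c$ from (H2.1) and the index-uniform control bound $(\ref{ss})$, so nothing is missing.
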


\begin{mylem}\label{lemma 6.2}
Let (H2.1)-(H2.2), (H6.1)-(H6.3), and the assumptions made earlier in this subsection all hold. Then,
for an arbitrarily given $u_0\in \mathcal {U}_0$, there exists a
constant $C_{u_0}>0$ such that the
corresponding $\bar{x}^{(N)}$ which is derived from the solution of $(\ref{aa})$ satisfies
\begin{equation}
\begin{aligned}
\sup\limits_{N\ge 1} \mathbb{E}\Big[\sup\limits_{0\le t\le T} \big|\bar{x}^{(N)}(t)\big|^2\Big]\le C_{u_0}.
\end{aligned}
\end{equation}
\end{mylem}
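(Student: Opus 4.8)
The plan is to deduce the bound on the empirical state-average directly from the uniform individual-state estimates already furnished by Lemma \ref{lemma 6.1}, using only the convexity of the squared Euclidean norm. Since $\bar{x}^{(N)}(t)=\frac{1}{N}\sum_{j=1}^N \bar{x}^N_j(t)$ is an average of the $N$ follower states, no new stochastic analysis is required: the substantive work (the well-posedness of $(\ref{aa})$ together with the $N$-uniform moment bounds) has already been carried out in Lemma \ref{lemma 6.1}, so this statement is essentially a corollary of it.

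First I would invoke the elementary convexity inequality $\big|\frac{1}{N}\sum_{j=1}^N a_j\big|^2\le \frac{1}{N}\sum_{j=1}^N |a_j|^2$ (equivalently, Cauchy--Schwarz) pointwise in $(t,\omega)$, to obtain
\begin{equation*}
\big|\bar{x}^{(N)}(t)\big|^2\le \frac{1}{N}\sum_{j=1}^N \big|\bar{x}^N_j(t)\big|^2,\qquad t\in[0,T].
\end{equation*}
Next I would take the supremum over $t\in[0,T]$ and use that, for a finite sum, the supremum of the sum is dominated by the sum of the suprema, giving $\sup_{0\le t\le T}\big|\bar{x}^{(N)}(t)\big|^2\le \frac{1}{N}\sum_{j=1}^N \sup_{0\le t\le T}\big|\bar{x}^N_j(t)\big|^2$. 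Taking expectations and applying the uniform estimate from Lemma \ref{lemma 6.1}, namely $\mathbb{E}\big[\sup_{0\le t\le T}|\bar{x}^N_j(t)|^2\big]\le C_{u_0}$ for every $1\le j\le N$ and every $N\ge 1$, yields
\begin{equation*}
\mathbb{E}\Big[\sup_{0\le t\le T}\big|\bar{x}^{(N)}(t)\big|^2\Big]\le \frac{1}{N}\sum_{j=1}^N \mathbb{E}\Big[\sup_{0\le t\le T}\big|\bar{x}^N_j(t)\big|^2\Big]\le \frac{1}{N}\cdot N\cdot C_{u_0}=C_{u_0}.
\end{equation*}
Since the right-hand side is independent of $N$, taking $\sup_{N\ge 1}$ on the left preserves the bound, which is precisely the claim.

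There is no genuine obstacle in this argument; the only point requiring a little care is the order in which the supremum, the finite average, and the expectation are taken, and convexity is exactly what absorbs the nonlinearity of $|\cdot|^2$ so that the average can be pulled outside. The constant $C_{u_0}$ may be taken to coincide with the one in Lemma \ref{lemma 6.1}, so the dependence on the choice of $u_0$ (through its $L^2$-norm bound) is inherited without any change.
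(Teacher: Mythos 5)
Your proof is correct and is essentially the argument the paper intends: the paper omits the proofs of Lemmas \ref{lemma 6.1}--\ref{lemma 6.5}, attributing them to ``Gronwall's inequality and a few basic inequalities,'' and your deduction of Lemma \ref{lemma 6.2} from Lemma \ref{lemma 6.1} via the convexity bound $\big|\frac{1}{N}\sum_{j=1}^N a_j\big|^2\le \frac{1}{N}\sum_{j=1}^N |a_j|^2$, the domination of the supremum of a finite average by the average of the suprema, and the $N$-uniform moment bound $\max_{1\le i\le N}\mathbb{E}\big[\sup_{0\le t\le T}|\bar{x}^N_i(t)|^2\big]\le C_{u_0}$ is exactly the basic-inequality step in question (Gronwall being needed only for Lemma \ref{lemma 6.1} itself). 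Your handling of the order of supremum, average, and expectation, and the observation that the constant $C_{u_0}$ is inherited unchanged from Lemma \ref{lemma 6.1}, are both sound.
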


For any given $u_0\in \mathcal {U}_0$, the corresponding $\big\{\bar{x}_j\big\}_{1\le j\le N}$ is always a exchangeable sequence.
Based on the tailor-made propagation chaos analysis on the exchangeable sequence of random variables in \cite{CD2019-2}, \cite{CZ2016}, \cite{A1985},
\cite{RR1998}, it is
justifiable
to assume that when $N$ tends to infinity, for any $u_0\in \mathcal {U}_0$, we have the corresponding $\frac{1}{N}\sum\limits_{j=1}^N
\bar{x}_j$ converges to $\widehat{\bar{x}}_i(\cdot)$, that is,
\begin{equation}\label{jj}
\begin{aligned}
\lim\limits_{N\longrightarrow\infty} \bigg[\sup\limits_{u_0\in \mathcal {U}_0}\mathbb{E}\int_0^T \bigg|\Big[\frac{1}{N}\sum\limits_{j=1}^N
\bar{x}_j(t)\Big]
-\mathbb{E}\big[\bar{x}_i(t)|\mathbb{F}^0_t\big]\bigg|^2 dt\bigg]=0.
\end{aligned}
\end{equation}
for any $i\ge 1$.

\begin{mylem}\label{lemma 6.4}
Let (H2.1)-(H2.2), (H6.1)-(H6.3), and the assumptions made earlier in this subsection all hold.
Then, there exists a constant $C>0$ such that for any fixed $N\ge 1, \forall 1\le i\le N$ and
$\forall u_0\in \mathcal {U}_0$, we have the
corresponding  solution of $(\ref{aa})$ and $(\ref{bb})$ satisfies
\begin{equation}
\begin{aligned}
&\mathbb{E}\Big[\sup\limits_{0\le t\le T} \big|\bar{x}^N_i(t)-\bar{x}_i(t)\big|^2\Big]+\mathbb{E}\Big[\sup\limits_{0\le t\le T} \big|\bar{x}^N_0(t)
-\bar{x}_0(t)\big|^2\Big]\\
&\le C\cdot \mathbb{E}\int_0^T \bigg|\Big[\frac{1}{N}\sum\limits_{j=1}^N \bar{x}_j(t)\Big]
-\mathbb{E}\big[\bar{x}_i(t)|\mathbb{F}^0_t\big]\bigg|^2 dt.
\end{aligned}
\end{equation}
\end{mylem}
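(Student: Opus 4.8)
The plan is to estimate the two coupled follower systems $(\ref{aa})$ and $(\ref{bb})$ by a direct $L^2$ comparison, exploiting three structural coincidences: both systems are driven by the same Brownian motions $(W_0,W_i)$, start from the same initial data $(\xi_0,\xi_i)$, and use the same pre-computed controls $\overline{\mathcal{I}}_i(u_{0})$. Writing $\Delta^N_0:=\bar{x}^N_0-\bar{x}_0$ and $\Delta^N_i:=\bar{x}^N_i-\bar{x}_i$, the constant diffusion coefficients $\sigma_0,\widetilde{\sigma}$ cancel in the differences, so $\Delta^N_0$ and $\Delta^N_i$ satisfy random integral equations with no stochastic integral and zero initial value (in particular no Burkholder–Davis–Gundy estimate is needed). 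Since all first partial derivatives of $b_0,b_1$ are bounded by (H2.2), hence $b_0,b_1$ are globally Lipschitz, combining this with the Cauchy–Schwarz inequality in time I would obtain, for a constant $C$ depending only on the Lipschitz bound and $T$,
\[
\mathbb{E}\Big[\sup_{s\le t}|\Delta^N_0(s)|^2\Big]\le C\int_0^t\Big(\mathbb{E}|\Delta^N_0(s)|^2+\mathbb{E}\big|\bar{x}^{(N)}(s)-m_i(s)\big|^2\Big)ds,
\]
together with an analogous bound for $\Delta^N_i$ carrying the extra term $\mathbb{E}|\Delta^N_i(s)|^2$, where $m_i(s):=\mathbb{E}[\bar{x}_i(s)|\mathbb{F}^0_s]$.

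The decisive point is the mean-field discrepancy $\bar{x}^{(N)}-m_i$, which feeds the difference dynamics while itself depending on those same differences. I would break this dependence by the splitting
\[
\bar{x}^{(N)}(s)-m_i(s)=\frac1N\sum_{j=1}^N\Delta^N_j(s)+\Big(\frac1N\sum_{j=1}^N\bar{x}_j(s)-m_i(s)\Big),
\]
in which the first term is internal and the second is exactly the quantity on the right-hand side of the claim. By convexity (Jensen) and the exchangeability of $(\bar{x}^N_j,\bar{x}_j)_{1\le j\le N}$—which holds because the followers enter $(\ref{aa})$ and $(\ref{bb})$ symmetrically with i.i.d.\ noises and initial states, so that $m_j\equiv m_i$ for all $j$—one has $\mathbb{E}\big|\frac1N\sum_j\Delta^N_j(s)\big|^2\le\frac1N\sum_j\mathbb{E}|\Delta^N_j(s)|^2=\mathbb{E}|\Delta^N_i(s)|^2$. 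Thus the internal part is absorbed into the representative follower's own error, while the external part is the target right-hand side.

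Collecting these, and setting $\gamma(t):=\mathbb{E}[\sup_{s\le t}|\Delta^N_0(s)|^2]+\mathbb{E}[\sup_{s\le t}|\Delta^N_i(s)|^2]$ together with $R_N(s):=\mathbb{E}\big|\frac1N\sum_j\bar{x}_j(s)-m_i(s)\big|^2$, the two integral inequalities combine into $\gamma(t)\le C\int_0^t\gamma(s)\,ds+C\int_0^T R_N(s)\,ds$. Gronwall's inequality then yields $\gamma(T)\le Ce^{CT}\int_0^T R_N(s)\,ds$, which is precisely the asserted estimate after renaming the constant. Because every constant traces back to the Lipschitz bounds furnished by (H2.2) and to $T$, none depends on $N$, on the index $i$, or on the leader's control $u_0$, so the resulting $C$ is uniform as required.

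I expect the main obstacle to be exactly the circular coupling just described: the mean-field error $\bar{x}^{(N)}-m_i$ is both an input to and a function of the differences being bounded. The decomposition above is what severs the loop, and exchangeability is indispensable, as it converts $\frac1N\sum_j\mathbb{E}|\Delta^N_j|^2$ into the single representative quantity $\mathbb{E}|\Delta^N_i|^2$ rather than a sum that might otherwise grow with $N$. A secondary technical care is keeping the Gronwall constant uniform over $u_0\in\mathcal{U}_0$; this is guaranteed by deriving every bound solely from the structural Lipschitz constants of $b_0,b_1$, which (H2.2) provides independently of the particular admissible control.
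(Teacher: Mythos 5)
Your proof is correct and follows essentially the route the paper intends: the paper omits the proof of this lemma, stating only that it follows from Gronwall's inequality and a few basic inequalities, and your argument (constant diffusions cancelling in the differences, the splitting $\bar{x}^{(N)}-\mathbb{E}[\bar{x}_i|\mathbb{F}^0_t]=\frac{1}{N}\sum_j(\bar{x}^N_j-\bar{x}_j)+(\frac{1}{N}\sum_j\bar{x}_j-\mathbb{E}[\bar{x}_i|\mathbb{F}^0_t])$, symmetry/exchangeability to reduce the averaged internal term to the representative follower, then Gronwall) is exactly the decomposition-plus-symmetry scheme the paper carries out explicitly in its proofs of the analogous Lemmas \ref{lemma 6.10}--\ref{lemma 6.11}. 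One minor remark: exchangeability is not strictly indispensable as you claim, since one could alternatively sum the $j$-indexed estimates and apply Gronwall to the averaged quantity $\frac{1}{N}\sum_j\mathbb{E}\big[\sup_{s\le t}|\Delta^N_j(s)|^2\big]$, but your use of it is valid and consistent with the paper's symmetry assumptions.
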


\begin{mylem}\label{lemma 6.5}
Let (H2.1)-(H2.2), (H6.1)-(H6.3), and the assumptions made earlier in this subsection all hold. Then,
there exists a constant $C>0$ such that for any fixed $N\ge 1, \forall 1\le i\le N$ and
$\forall u_0\in \mathcal {U}_0$, we have the
corresponding $\bar{x}^{(N)}$ which is derived from the solution of $(\ref{aa})$ satisfies
\begin{equation}
\begin{aligned}
&\mathbb{E}\int_0^T \Big|\bar{x}^{(N)}(t)
-\mathbb{E}\big[\bar{x}_i(t)|\mathbb{F}^0_t\big]\Big|^2 dt\le C\cdot \mathbb{E}\int_0^T \bigg|\Big[\frac{1}{N}\sum\limits_{j=1}^N \bar{x}_j(t)\Big]
-\mathbb{E}\big[\bar{x}_i(t)|\mathbb{F}^0_t\big]\bigg|^2 dt.
\end{aligned}
\end{equation}
\end{mylem}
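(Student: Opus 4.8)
The plan is to prove Lemma~\ref{lemma 6.5} by a triangle-inequality decomposition that reduces the estimate to Lemma~\ref{lemma 6.4} together with the exchangeability of the limiting system. Writing $\bar{x}^{(N)}(t)=\frac{1}{N}\sum_{j=1}^N \bar{x}^N_j(t)$ and inserting the auxiliary quantity $\frac{1}{N}\sum_{j=1}^N \bar{x}_j(t)$, built from the decentralized limit states solving $(\ref{bb})$, I would split
\begin{equation*}
\bar{x}^{(N)}(t)-\mathbb{E}\big[\bar{x}_i(t)|\mathbb{F}^0_t\big]
=\frac{1}{N}\sum_{j=1}^N\big(\bar{x}^N_j(t)-\bar{x}_j(t)\big)
+\Big(\frac{1}{N}\sum_{j=1}^N\bar{x}_j(t)-\mathbb{E}\big[\bar{x}_i(t)|\mathbb{F}^0_t\big]\Big),
\end{equation*}
and then use $|a+b|^2\le 2|a|^2+2|b|^2$, so that $\mathbb{E}\int_0^T\big|\bar{x}^{(N)}(t)-\mathbb{E}[\bar{x}_i(t)|\mathbb{F}^0_t]\big|^2\,dt$ is bounded by twice the integrated square of each term. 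The second term is exactly the right-hand side of the claimed inequality and needs no further treatment.

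For the first term I would apply Jensen's (equivalently Cauchy--Schwarz) inequality to the finite average, giving $\big|\frac{1}{N}\sum_{j=1}^N(\bar{x}^N_j(t)-\bar{x}_j(t))\big|^2\le \frac{1}{N}\sum_{j=1}^N|\bar{x}^N_j(t)-\bar{x}_j(t)|^2$, and then dominate the time integral by the pathwise supremum via $\mathbb{E}\int_0^T|\bar{x}^N_j(t)-\bar{x}_j(t)|^2\,dt\le T\,\mathbb{E}\big[\sup_{0\le t\le T}|\bar{x}^N_j(t)-\bar{x}_j(t)|^2\big]$. Lemma~\ref{lemma 6.4} then controls each such supremum by
\begin{equation*}
C\,\mathbb{E}\int_0^T\Big|\tfrac{1}{N}\textstyle\sum_{k=1}^N\bar{x}_k(t)-\mathbb{E}\big[\bar{x}_j(t)|\mathbb{F}^0_t\big]\Big|^2\,dt.
\end{equation*}

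The key point that makes this collapse into the \emph{single} constant claimed in the statement is exchangeability: since the $\{\bar{x}_j\}$ solving $(\ref{bb})$ are built from i.i.d.\ initial data and independent Brownian motions, coupled only through the leader's state $\bar{x}_0$ and the common conditional mean, the conditional law of $\bar{x}_j$ given $\mathbb{F}^0_t$ is independent of $j$; hence $\mathbb{E}[\bar{x}_j(t)|\mathbb{F}^0_t]=\mathbb{E}[\bar{x}_i(t)|\mathbb{F}^0_t]$ for all $i,j$, so the Lemma~\ref{lemma 6.4} bound for index $j$ carries the \emph{same} right-hand side as for index $i$. Averaging the $N$ applications over $j$ then yields $\mathbb{E}\int_0^T\big|\frac{1}{N}\sum_j(\bar{x}^N_j-\bar{x}_j)\big|^2\,dt\le TC\,\cdot(\text{RHS})$, and combining with the second term gives the assertion with the constant $2TC+2$, independent of $N$ and $i$. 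I expect the only delicate step to be verifying this index-independence of the conditional mean, since it is precisely what prevents the sum of $N$ copies of the Lemma~\ref{lemma 6.4} bound from inflating the constant with $N$; the remaining manipulations are routine.
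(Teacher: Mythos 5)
Your proof is correct and is essentially the argument the paper intends: the paper omits the proofs of Lemmas \ref{lemma 6.1}--\ref{lemma 6.5} as ``Gronwall's inequality and a few basic inequalities,'' and its explicit proof of the parallel perturbed-case Lemma \ref{lemma 6.11} (via inequality (\ref{vv})) uses exactly your decomposition --- insert the empirical mean $\frac{1}{N}\sum_{j}\bar{x}_j$ of the limit states, bound the difference term through the companion stability lemma (there Lemma \ref{lemma 6.10}, here Lemma \ref{lemma 6.4}), and keep the remaining term as the right-hand side. The index-independence of $\mathbb{E}\big[\bar{x}_j(t)|\mathbb{F}^0_t\big]$ that you correctly identify as the delicate point is consistent with the paper, which asserts the exchangeability of $\{\bar{x}_j\}$ immediately before (\ref{jj}) and writes the common conditional mean with a generic index throughout.
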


By applying Gronwall's inequality and a few basic inequalities, we can prove Lemmas $\ref{lemma 6.1}$-$\ref{lemma 6.5}$. The proof processes are
relatively simple and are omitted here.

\begin{mylem}\label{ee}
Let (H2.1)-(H2.2), (H6.1)-(H6.3), and the assumptions made earlier in this subsection all hold. Then,
for an arbitrarily given $u_0\in \mathcal {U}_0$, there exists a
constant $C_{u_0}>0$ such that for any fixed $N\ge 1$ and $\forall 1\le i\le N$, we have
\begin{equation}
\begin{aligned}
&\Big|J^N_i\big(\big({\overline{\mathcal{I}}}_j(u_{0})\big)_{1\le j\le N},u_0\big)-\overline{J}_{i}\big({\overline{\mathcal{I}}}_i(u_{0}),
u_{0}\big)\Big|\\
&\le C_{u_0}\cdot \bigg[\mathbb{E}\int_0^T \bigg|\Big[\frac{1}{N}\sum\limits_{j=1}^N \bar{x}_j(t)\Big]
-\mathbb{E}\big[\bar{x}_i(t)|\mathbb{F}^0_t\big]\bigg|^2 dt\bigg]^{\frac{1}{2}}.
\end{aligned}
\end{equation}
\end{mylem}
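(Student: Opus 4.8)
The plan is to form the scalar difference $J^N_i\big(\big(\overline{\mathcal{I}}_j(u_0)\big)_{1\le j\le N},u_0\big)-\overline{J}_i\big(\overline{\mathcal{I}}_i(u_0),u_0\big)$ directly and exploit the fact that the follower control $\overline{\mathcal{I}}_i(u_0)$ and the leader control $u_0$ appear identically in both functionals, so that the entire discrepancy comes from three state mismatches: the follower state $\bar{x}^N_i$ versus $\bar{x}_i$, the leader state $\bar{x}^N_0$ versus $\bar{x}_0$, and the mean-field argument $\bar{x}^{(N)}$ versus $\widehat{\bar{x}}_i=\mathbb{E}\big[\bar{x}_i(t)|\mathbb{F}^0_t\big]$. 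Concretely, I would write
\begin{equation*}
\begin{aligned}
J^N_i - \overline{J}_i &= \mathbb{E}\int_0^T \big[g_1(t, \bar{x}^N_i, \overline{\mathcal{I}}_i(u_0), \bar{x}^N_0, u_0, \bar{x}^{(N)})\\
&\qquad - g_1(t, \bar{x}_i, \overline{\mathcal{I}}_i(u_0), \bar{x}_0, u_0, \widehat{\bar{x}}_i)\big]\,dt + \mathbb{E}\big[G_1(\bar{x}^N_i(T)) - G_1(\bar{x}_i(T))\big],
\end{aligned}
\end{equation*}
and then invoke the locally Lipschitz bounds of (H6.1) to dominate the integrand by $C\big(1+|\bar{x}^N_i|+|\bar{x}_i|+|\overline{\mathcal{I}}_i(u_0)|+|\bar{x}^N_0|+|\bar{x}_0|+|u_0|+|\bar{x}^{(N)}|+|\widehat{\bar{x}}_i|\big)$ multiplied by $\big(|\bar{x}^N_i-\bar{x}_i|+|\bar{x}^N_0-\bar{x}_0|+|\bar{x}^{(N)}-\widehat{\bar{x}}_i|\big)$, and the terminal term by the analogous expression in $\bar{x}^N_i(T)$ and $\bar{x}_i(T)$.

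Next I would apply the Cauchy--Schwarz inequality to each product arising this way, splitting off the linear-growth prefactor from the state differences. The prefactor is controlled in $L^2$ uniformly in $N$ and $i$ by the a priori moment estimates (\ref{rr}) and (\ref{ss}) together with Lemma \ref{lemma 6.1} and Lemma \ref{lemma 6.2}, which bound $\mathbb{E}\big[\sup_t|\bar{x}^N_i|^2\big]$, $\mathbb{E}\big[\sup_t|\bar{x}^N_0|^2\big]$, $\mathbb{E}\big[\sup_t|\bar{x}^{(N)}|^2\big]$, $\mathbb{E}\int_0^T|\overline{\mathcal{I}}_i(u_0)|^2\,dt$, and (through (\ref{rr}) and conditional Jensen) $\mathbb{E}\int_0^T|\widehat{\bar{x}}_i|^2\,dt$, all by a constant $C_{u_0}$. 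The second factor in each product is exactly the $L^2$-norm of the state differences, which Lemmas \ref{lemma 6.4} and \ref{lemma 6.5} control: both $\mathbb{E}\big[\sup_t|\bar{x}^N_i-\bar{x}_i|^2\big]+\mathbb{E}\big[\sup_t|\bar{x}^N_0-\bar{x}_0|^2\big]$ and $\mathbb{E}\int_0^T|\bar{x}^{(N)}-\widehat{\bar{x}}_i|^2\,dt$ are dominated by $C\cdot\mathbb{E}\int_0^T\big|\frac1N\sum_{j=1}^N\bar{x}_j-\widehat{\bar{x}}_i\big|^2\,dt$. The square root in the target inequality is thus an automatic byproduct of passing $L^2$-norms through Cauchy--Schwarz: each product collapses to $C_{u_0}$ times the square root of the desired $L^2$ difference.

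Assembling the estimates for the running-cost integral and the terminal term yields the claimed bound with a constant $C_{u_0}$ depending only on $u_0$. I do not expect a serious obstacle here, since the machinery is entirely supplied by Lemmas \ref{lemma 6.1}--\ref{lemma 6.5}; the one point requiring care is the bookkeeping of which constants are universal and which depend on $u_0$ (the growth prefactors inherit $C_{u_0}$ from (\ref{rr}) and (\ref{ss}), whereas the comparison estimates of Lemmas \ref{lemma 6.4} and \ref{lemma 6.5} carry a universal $C$, so the final constant is $u_0$-dependent). The terminal contribution from $G_1$ is handled by the same locally Lipschitz argument combined with Lemma \ref{lemma 6.4} specialized to $t=T$, which is legitimate because that lemma controls the supremum over $[0,T]$.
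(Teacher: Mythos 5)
Your proposal matches the paper's own proof essentially step for step: the same direct decomposition of the cost difference, the same application of the growth-Lipschitz bounds in (H6.1) followed by Cauchy--Schwarz to split the linear-growth prefactor (bounded via $(\ref{rr})$, $(\ref{ss})$ and Lemmas \ref{lemma 6.1}--\ref{lemma 6.2}, with conditional Jensen for $\widehat{\bar{x}}_i$) from the state differences (bounded via Lemmas \ref{lemma 6.4}--\ref{lemma 6.5}), yielding the square-root bound with a $u_0$-dependent constant. Your bookkeeping of which constants are universal and which inherit the $u_0$-dependence is also consistent with the paper, so the argument is correct as proposed.
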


\begin{proof}
By assumption (H6.1), it follows that for $\forall u_0\in \mathcal {U}_0, \forall N\ge 1$ and $\forall 1\le i\le N$ we have
\begin{equation}
\begin{aligned}
&\Big|J^N_i\big(\big({\overline{\mathcal{I}}}_j(u_{0})\big)_{1\le j\le N}, u_0\big)-\overline{J}_i\big({\overline{\mathcal{I}}}_i(u_{0}), u_0\big)\Big|\\
&=\bigg|\mathbb{E}\bigg\{\int_0^T g_1\big(t, \bar{x}^N_i(t), {\overline{\mathcal{I}}}_i(u_{0})(t), \bar{x}^N_0(t), u_{0}(t), \bar{x}^{(N)}(t)\big)
-g_1\big(t, \bar{x}_i(t), {\overline{\mathcal{I}}}_i(u_{0})(t), \bar{x}_0(t),\\
&\qquad\quad u_{0}(t),\mathbb{E}\big[\bar{x}_i(t)|\mathbb{F}^0_t\big]\big) dt+G_1\left(\bar{x}^N_i(T)\right)-G_1\left(\bar{x}_i(T)\right)\bigg\}\bigg|\\
&\le C\cdot \mathbb{E}\int_0^T\Big[1+\big|\bar{x}^N_i(t)\big|+\big|\bar{x}_i(t)\big|+\big|{\overline{\mathcal{I}}}_i(u_{0})(t)\big|
+\big|\bar{x}^N_0(t)\big|+\big|\bar{x}_0(t)\big|+\big|u_0(t)\big|+\big|\bar{x}^{(N)}(t)\big|\\
&\qquad\quad +\big|\mathbb{E}\big[\bar{x}_i(t)|\mathbb{F}^0_t\big]\big|\Big]\cdot \Big[\big|\bar{x}^N_i(t)-\bar{x}_i(t)\big|+\big|\bar{x}^N_0(t)
-\bar{x}_0(t)\big|+\big|\bar{x}^{(N)}(t)-\mathbb{E}\big[\bar{x}_i(t)|\mathbb{F}^0_t\big]
\big|\Big] dt\\
&\quad +C\cdot \mathbb{E}\Big[\big(1+\big|\bar{x}^N_i(T)\big|+\big|\bar{x}_i(T)\big|\big)\cdot \big|\bar{x}^N_i(T)-\bar{x}_i(T)\big|\Big]\\
&\le C\cdot \bigg[\mathbb{E}\int_0^T\Big[1+\big|\bar{x}^N_i(t)\big|^2+\big|\bar{x}_i(t)\big|^2+\big|{\overline{\mathcal{I}}}_i(u_{0})(t)\big|^2
+\big|\bar{x}^N_0(t)\big|^2+\big|\bar{x}_0(t)\big|^2+\big|u_0(t)\big|^2\\
&\qquad\quad +\big|\bar{x}^{(N)}(t)\big|^2+\mathbb{E}\big[\big|\bar{x}_i(t)\big|^2|\mathbb{F}^0_t\big]\Big] dt\bigg]^{\frac{1}{2}}
\cdot\bigg[\mathbb{E}\int_0^T\Big[\big|\bar{x}^N_i(t)-\bar{x}_i(t)\big|^2+\big|\bar{x}^N_0(t)-\bar{x}_0(t)\big|^2\\
&\qquad\quad +\big|\bar{x}^{(N)}(t)
-\mathbb{E}\big[\bar{x}_i(t)|\mathbb{F}^0_t\big]\big|^2\Big]
dt\bigg]^{\frac{1}{2}}+C\cdot \Big[\mathbb{E}\Big[1+\big|\bar{x}^N_i(T)\big|^2+\big|\bar{x}_i(T)\big|^2\Big]\Big]^{\frac{1}{2}}\\
&\qquad \times \Big[\mathbb{E}\big|\bar{x}^N_i(T)-\bar{x}_i(T)\big|^2\Big]^{\frac{1}{2}}.
\end{aligned}
\end{equation}

From $(\ref{rr})$, $(\ref{ss})$, Lemmas $\ref{lemma 6.1}$-$\ref{lemma 6.5}$, it follows that for an
arbitrarily given $u_0\in \mathcal {U}_0$, there exists a constant $C_{u_0}>0$ such that for any fixed $N\ge 1$ and $\forall 1\le i\le N$, we have
\begin{equation}
\begin{aligned}
&\Big|J^N_i\big(\big({\overline{\mathcal{I}}}_j(u_{0})\big)_{1\le j\le N},u_0\big)-\overline{J}_{i}
\big({\overline{\mathcal{I}}}_i(u_{0}), u_{0}\big)\Big|\\
&\le C_{u_0}\cdot \bigg[\mathbb{E}\int_0^T \bigg|\Big[\frac{1}{N}\sum\limits_{j=1}^N \bar{x}_j(t)\Big]
-\mathbb{E}\big[\bar{x}_i(t)|\mathbb{F}^0_t\big]\bigg|^2 dt\bigg]^{\frac{1}{2}}.
\end{aligned}
\end{equation}

\end{proof}

For an arbitrarily given leader's decentralized control $u_0\in \mathcal {U}_0$, in order to show that
$\big({\overline{\mathcal{I}}}_i(u_{0})\big)_{1\le i\le N}$ indeed constitute an $\varepsilon$-Nash equilibrium for the N followers' Nash game,
without loss of generality, we assume that the first follower takes arbitrary control $u_1\in \mathcal {U}_1$ instead of
${\overline{\mathcal{I}}}_1(u_{0})$. And the other $j^{th}$ follower still apply control ${\overline{\mathcal{I}}}_j(u_{0})$ for $j=2,\cdots, N$.
The resulting perturbed controlled state processes in the $1+N$-type leader-follower
game will be denoted by $\big(\widetilde{x}^N_0, (\widetilde{x}^N_i)_{1\le i\le N}\big)$ and solve
\begin{equation}\label{cc}
\left\{
             \begin{array}{lr}
             d \widetilde{x}^N_0(t)=b_0\left(t,\widetilde{x}_0^N(t), u_0(t), \widetilde{x}^{(N)}(t)\right) dt+\sigma_0 dW_0(t),\\
             d \widetilde{x}^N_1(t)=b_1\left(t,\widetilde{x}_1^N(t), u_1(t),\widetilde{x}_0^N(t), u_0(t), \widetilde{x}^{(N)}(t)\right) dt
             +\widetilde{\sigma} dW_1(t),\\
             d \widetilde{x}^N_i(t)=b_1\big(t, \widetilde{x}^N_i(t), {\overline{\mathcal{I}}}_i(u_{0})(t), \widetilde{x}_0^N(t), u_0(t),
             \widetilde{x}^{(N)}(t)\big) dt+\widetilde{\sigma} dW_i(t),\quad 0\le t \le T, \\
             \widetilde{x}_0^N(0)=\xi_0,\ \widetilde{x}^N_1(0)=\xi_1,\ \widetilde{x}^N_i(0)=\xi_i,\ 2\le i\le N,
             \end{array}
\right.
\end{equation}
where $\widetilde{x}^{(N)}(\cdot)=\frac{1}{N} \sum\limits_{j=1}^N \widetilde{x}^N_j(\cdot)$. In this case, the cost functional for the first
follower is
\begin{equation}
\begin{aligned}
&J^N_1\big(u_1, \big({\overline{\mathcal{I}}}_j(u_{0})\big)_{2\le j\le N},u_0\big)\\
&=\mathbb{E}\bigg\{\int_0^T g_1\Big(t, \widetilde{x}^N_1(t), u_1(t), \widetilde{x}^N_0(t), u_0(t), \widetilde{x}^{(N)}(t)\Big) dt
+G_1\left(\widetilde{x}^N_1(T)\right)\bigg\},
\end{aligned}
\end{equation}

For a given strategy of the leader $u_0\in \mathcal {U}_0$, when the first follower takes arbitrary control $u_1\in \mathcal {U}_1$ and the
other $j^{th}$ follower apply the controls identified by $(\ref{y})$ in their decentralized optimal control problems, the resulting controlled state
processes will be denoted by $\big(\bar{x}_0, \widetilde{x}_1, (\bar{x}_i)_{i\ge 2}\big)$ and solve
\begin{equation}\label{dd}
\left\{
             \begin{array}{lr}
             d \bar{x}_0(t)=b_0\left(t, \bar{x}_0(t), u_{0}(t), \mathbb{E}\big[\bar{x}_i(t)|{\mathbb{F}^0_t}\big]\right) dt+\sigma_0 dW_0(t),\\
             d \widetilde{x}_1(t)=b_1\left(t,\widetilde{x}_1(t), u_1(t), \bar{x}_0(t), u_{0}(t), \mathbb{E}\big[\bar{x}_i(t)
             |{\mathbb{F}^0_t}\big]\right) dt+\widetilde{\sigma} dW_1(t),\\
             d\bar{x}_i(t)=b_1\big(t, \bar{x}_i(t), {\overline{\mathcal{I}}}_i(u_{0})(t), \bar{x}_0(t), u_0(t), \mathbb{E}\big[\bar{x}_i(t)
             |\mathbb{F}^0_t\big]\big) dt+\widetilde{\sigma} dW_i(t),\ 0\le t\le T, \\
             \bar{x}_0(0)=\xi_0,\ \widetilde{x}_1(0)=\xi_1,\ \bar{x}_i(0)=\xi_i,\ i\ge 2.
             \end{array}
\right.
\end{equation}
And the cost functional for the first follower in his decentralized optimal control problem is
\begin{equation}
\begin{aligned}
&\overline{J}_1(u_1, u_{0})\\
&=\mathbb{E}\bigg\{\int_0^T g_1\Big(t, \widetilde{x}_1(t), u_1(t), \bar{x}_0(t), u_{0}(t), \mathbb{E}\big[\bar{x}_i(t)|\mathbb{F}^0_t\big]\Big) dt
+G_1\left(\widetilde{x}_1(T)\right)\bigg\},
\end{aligned}
\end{equation}

\begin{mylem}\label{lemma 6.7}
Let (H2.1)-(H2.2), (H6.1)-(H6.3), and the assumptions made earlier in this subsection all hold. Then,
for an arbitrarily given $u_0\in \mathcal {U}_0$, there exists a constant
$C_{u_0}>0$ such that for
$\forall u_1\in \mathcal {U}_1$, we have $\big(\widetilde{x}^N_0, (\widetilde{x}^N_i)_{1\le i\le N}\big)$ which is the corresponding solution of
$(\ref{cc})$ satisfies
\begin{equation}
\begin{aligned}
\bigg[\sup\limits_{N\ge 1} \mathbb{E}\Big[\sup\limits_{0\le t\le T} \big|\widetilde{x}^N_0(t)\big|^2\Big]\bigg]
+\bigg[\sup\limits_{N\ge 1} \max\limits_{1\le i\le N} \mathbb{E}\Big[\sup\limits_{0\le t\le T} \big|\widetilde{x}^N_i(t)\big|^2\Big]\bigg]\le
C_{u_0}.
\end{aligned}
\end{equation}
\end{mylem}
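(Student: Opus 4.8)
The plan is to run the classical energy-estimate plus Gronwall argument for weakly coupled mean-field SDEs, organized in two stages so that the coupling through the empirical average $\widetilde{x}^{(N)}$ is dealt with before the individual states are bounded. The only genuinely new feature relative to Lemma $\ref{lemma 6.1}$ is that follower $1$ now employs an arbitrary $u_1\in\mathcal{U}_1$; this is absorbed uniformly by the a priori bound (H6.3), so that perturbing a single follower changes neither the uniformity in $N$ nor the final constant.

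First I would rewrite each equation in $(\ref{cc})$ in integral form and apply the linear-growth estimates guaranteed by (H2.2), i.e.\ $|b_0|^2\le C\big(1+|\widetilde{x}_0^N|^2+|u_0|^2+|\widetilde{x}^{(N)}|^2\big)$ and the analogous inequality for $b_1$. Taking $\sup_{0\le s\le t}$, using Cauchy--Schwarz on the drift integral together with $(a+b+c)^2\le 3(a^2+b^2+c^2)$, and invoking Doob's (equivalently the BDG) inequality for the constant-diffusion term $\mathbb{E}[\sup_{s\le t}|\sigma_0 W_0(s)|^2]\le Ct$, I obtain integral inequalities for
\[
A_N(t):=\mathbb{E}\Big[\sup_{0\le s\le t}|\widetilde{x}_0^N(s)|^2\Big],\qquad \phi_i^N(t):=\mathbb{E}\Big[\sup_{0\le s\le t}|\widetilde{x}_i^N(s)|^2\Big].
\]
The controls and data enter only through the uniformly finite quantities $\mathbb{E}|\xi_i|^2\le c$ (H2.1), $\mathbb{E}\int_0^T|u_0|^2\,dt\le C_{u_0}$ (as $u_0\in\mathcal{U}_0$), $\mathbb{E}\int_0^T|\overline{\mathcal{I}}_i(u_0)|^2\,dt\le C_{u_0}$ (by $(\ref{ss})$), and $\mathbb{E}\int_0^T|u_1|^2\,dt\le C$ (by (H6.3)), the last being uniform over the choice of $u_1$.

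The crucial point is the mean-field term. By Jensen's inequality $|\widetilde{x}^{(N)}(s)|^2\le \frac1N\sum_{j=1}^N|\widetilde{x}_j^N(s)|^2$, so the empirical-average contribution is controlled by $B_N(t):=\frac1N\sum_{i=1}^N\phi_i^N(t)$. Averaging the follower inequalities over $i$---whereby $\frac1N\sum_i\mathbb{E}\int_0^T|\overline{\mathcal{I}}_i(u_0)|^2$ remains $\le C_{u_0}$ and the single perturbed term involving $u_1$ contributes only $O(1/N)$---and adding the leader's inequality produces the closed estimate
\[
A_N(t)+B_N(t)\le C_{u_0}+C\int_0^t\big(A_N(s)+B_N(s)\big)\,ds,
\]
with $C$ independent of both $N$ and $u_1$. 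Since for each fixed $N$ the coefficients of $(\ref{cc})$ are Lipschitz, the solution has finite second moments, so Gronwall's inequality applies and yields $A_N(t)+B_N(t)\le C_{u_0}$ uniformly in $N$. I then bootstrap: reinserting the now-uniform bounds $\mathbb{E}\int_0^t|\widetilde{x}_0^N|^2\,ds\le C_{u_0}$ and $\mathbb{E}\int_0^t|\widetilde{x}^{(N)}|^2\,ds\le\int_0^t B_N\le C_{u_0}$ into the individual follower inequality leaves only the self-term $C\int_0^t\phi_i^N(s)\,ds$, and a second application of Gronwall gives $\phi_i^N(t)\le C_{u_0}$ with a constant uniform in $i$, $N$ and $u_1$; together with the leader bound this is exactly the asserted estimate.

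The main obstacle is the quadratic dependence on the empirical average: one cannot Gronwall directly on $\max_i\phi_i^N$, because the mean-field term couples all followers simultaneously. The two-stage scheme circumvents this---first closing the estimate on the averaged quantity $B_N$ jointly with the leader, then reinserting that uniform bound to decouple each follower's inequality. Checking that the $u_1$-perturbation enters $B_N$ only at order $1/N$ and contributes only a uniformly bounded additive constant to $\phi_1^N$ (through (H6.3)) is precisely what makes $C_{u_0}$ independent of the choice of $u_1$, as the statement demands.
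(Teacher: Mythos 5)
Your proof is correct and follows exactly the route the paper indicates for this lemma: the paper omits the details, stating only that Lemmas \ref{lemma 6.7}--\ref{lemma 6.9} follow ``by applying Gronwall's inequality and a few basic inequalities,'' and your energy estimate --- linear growth from (H2.2), Jensen on the empirical average, the uniform control bounds from (H2.1), (\ref{ss}) and (H6.3), then Gronwall --- is precisely that argument, with the single perturbed control $u_1$ correctly absorbed uniformly via (H6.3). (Your two-stage scheme is sound, though the caution is slightly overstated: since $\frac{1}{N}\sum_j\phi_j^N\le\max_j\phi_j^N$ and $\max_i\int_0^t\phi_i^N\,ds\le\int_0^t\max_i\phi_i^N\,ds$, one could also close a single Gronwall estimate directly on $A_N+\max_i\phi_i^N$.)
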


\begin{mylem}\label{lemma 6.8}
Let (H2.1)-(H2.2), (H6.1)-(H6.3), and the assumptions made earlier in this subsection all hold. Then,
for an arbitrarily given $u_0\in \mathcal {U}_0$, there exists a constant
$C_{u_0}>0$ such that for
 $\forall u_1\in \mathcal {U}_1$, we have the corresponding $\widetilde{x}^{(N)}$ which is derived from the solution of $(\ref{cc})$ satisfies
\begin{equation}
\begin{aligned}
\sup\limits_{N\ge 1} \mathbb{E}\Big[\sup\limits_{0\le t\le T} \big|\widetilde{x}^{(N)}(t)\big|^2\Big]\le C_{u_0}.
\end{aligned}
\end{equation}
\end{mylem}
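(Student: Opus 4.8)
The plan is to obtain Lemma \ref{lemma 6.8} as an immediate consequence of the individual uniform estimate already recorded in Lemma \ref{lemma 6.7}. For the perturbed system $(\ref{cc})$ the state-average is $\widetilde{x}^{(N)}(\cdot)=\frac{1}{N}\sum_{j=1}^N \widetilde{x}^N_j(\cdot)$, and Lemma \ref{lemma 6.7} already supplies, for an arbitrarily given $u_0\in\mathcal{U}_0$ and every $u_1\in\mathcal{U}_1$, a constant $C_{u_0}>0$ with $\sup_{N\ge 1}\max_{1\le i\le N}\mathbb{E}\big[\sup_{0\le t\le T}|\widetilde{x}^N_i(t)|^2\big]\le C_{u_0}$. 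The only thing left is to transfer this bound from the individual trajectories to their empirical mean, for which convexity of $|\cdot|^2$ suffices.

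Concretely, the first step is the pointwise convexity (Jensen / Cauchy--Schwarz) inequality
\[
\Big|\tfrac{1}{N}\sum_{j=1}^N \widetilde{x}^N_j(t)\Big|^2\le \frac{1}{N}\sum_{j=1}^N\big|\widetilde{x}^N_j(t)\big|^2,\qquad t\in[0,T].
\]
Taking the supremum over $t\in[0,T]$ and using that the supremum of a finite average is dominated by the average of the suprema gives $\sup_{0\le t\le T}|\widetilde{x}^{(N)}(t)|^2\le \frac{1}{N}\sum_{j=1}^N\sup_{0\le t\le T}|\widetilde{x}^N_j(t)|^2$. The second step is to take expectations, interchange $\mathbb{E}$ with the finite sum, and dominate each term by the uniform bound from Lemma \ref{lemma 6.7}:
\[
\mathbb{E}\Big[\sup_{0\le t\le T}|\widetilde{x}^{(N)}(t)|^2\Big]\le \frac{1}{N}\sum_{j=1}^N\mathbb{E}\Big[\sup_{0\le t\le T}|\widetilde{x}^N_j(t)|^2\Big]\le \max_{1\le j\le N}\mathbb{E}\Big[\sup_{0\le t\le T}|\widetilde{x}^N_j(t)|^2\Big]\le C_{u_0}.
\]
Since the right-hand side is independent of $N$ and of the choice of $u_1\in\mathcal{U}_1$ (the $u_1$-uniformity being inherited from Lemma \ref{lemma 6.7}, which rests on (H6.3)), one may take $\sup_{N\ge 1}$ and conclude.

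For completeness I would note the alternative self-contained route, which mirrors the omitted proofs of Lemmas \ref{lemma 6.1}--\ref{lemma 6.2}: writing the SDE satisfied by $\widetilde{x}^{(N)}$ by averaging the follower equations in $(\ref{cc})$, applying It\^{o}'s formula to $|\widetilde{x}^{(N)}|^2$, using the linear growth of $b_1$ from (H2.2), the $L^2$-bounds on the controls $\{\overline{\mathcal{I}}_j(u_0)\}$ from $(\ref{ss})$ and on $u_1$ from (H6.3), the BDG inequality to handle the martingale part $\frac{1}{N}\sum_j\widetilde{\sigma}\,dW_j$ (whose quadratic variation is $O(1/N)$), and finally Gronwall's inequality. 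The main obstacle on this route is precisely the full coupling: the drift of $\widetilde{x}^{(N)}$ contains $\frac{1}{N}\sum_j|\widetilde{x}^N_j|^2$ as well as $|\widetilde{x}^N_0|^2$, so one cannot close a Gronwall loop on $\widetilde{x}^{(N)}$ alone without already controlling the individual states and the leader's state uniformly in $N$. This is exactly why I prefer to invoke Lemma \ref{lemma 6.7} first and then reduce to the one-line convexity argument above, thereby sidestepping the coupling entirely.
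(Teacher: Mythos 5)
Your proof is correct: the bound on $\widetilde{x}^{(N)}$ follows immediately from Lemma \ref{lemma 6.7} via the convexity inequality $\big|\frac{1}{N}\sum_{j=1}^N \widetilde{x}^N_j(t)\big|^2\le \frac{1}{N}\sum_{j=1}^N\big|\widetilde{x}^N_j(t)\big|^2$, exchanging $\sup_t$ with the finite average, and the $u_1$- and $N$-uniformity of the constant is indeed inherited from Lemma \ref{lemma 6.7}. This is exactly the route the paper intends (it omits the proof, describing it as following from the preceding lemma by ``a few basic inequalities''), and your closing remark correctly identifies why a self-contained Gronwall argument on $\widetilde{x}^{(N)}$ alone would not close.
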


\begin{mylem}\label{lemma 6.9}
Let (H2.1)-(H2.2), (H6.1)-(H6.3), and the assumptions made earlier in this subsection all hold. Then,
for an arbitrarily given $u_0\in \mathcal {U}_0$, there exists a constant
$C_{u_0}>0$ such that for $\forall u_1\in \mathcal {U}_1$, we have $\widetilde{x}_1$ which is the corresponding solution of $(\ref{dd})$ satisfies
\begin{equation}
\begin{aligned}
\mathbb{E}\Big[\sup\limits_{0\le t\le T} \big|\widetilde{x}_1(t)\big|^2\Big]\le C_{u_0}.
\end{aligned}
\end{equation}
\end{mylem}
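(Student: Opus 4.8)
The plan is to treat $(\ref{dd})$ as a decoupled forward SDE for $\widetilde{x}_1$ in which $\bar{x}_0$, $u_0$, and $\widehat{\bar{x}}_i(t)=\mathbb{E}\big[\bar{x}_i(t)|\mathbb{F}^0_t\big]$ enter only as exogenous input processes. The first observation is that the equations for $\bar{x}_0$ and $\bar{x}_i$ in $(\ref{dd})$ coincide with those in $(\ref{bb})$ --- the deviation of follower $1$ is a single perturbation among infinitely many followers and does not feed back into the leader's state or into the conditional mean --- so $\bar{x}_0$ and $\bar{x}_i$ are exactly the processes for which the a priori bound $(\ref{rr})$ has already been established. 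In particular $\mathbb{E}\int_0^T|\bar{x}_0(s)|^2\,ds\le C_{u_0}$, and by the conditional Jensen inequality $\mathbb{E}\int_0^T|\widehat{\bar{x}}_i(s)|^2\,ds\le \mathbb{E}\int_0^T|\bar{x}_i(s)|^2\,ds\le C_{u_0}$. Together with $\mathbb{E}\int_0^T|u_0(s)|^2\,ds<C_{u_0}$ (from $u_0\in\mathcal{U}_0$), the uniform bound $\mathbb{E}\int_0^T|u_1(s)|^2\,ds\le C$ supplied by (H6.3), and $\mathbb{E}|\xi_1|^2\le c$ from (H2.1), all the data entering the $\widetilde{x}_1$-equation are controlled in $L^2$, uniformly over $u_1\in\mathcal{U}_1$.

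Then I would run the standard energy estimate. Applying It\^o's formula to $|\widetilde{x}_1(t)|^2$, using the linear growth of $b_1$ from (H2.2)(i) to bound
\[
\big|b_1(t,\widetilde{x}_1,u_1,\bar{x}_0,u_0,\widehat{\bar{x}}_i)\big|^2\le C\big(1+|\widetilde{x}_1|^2+|u_1|^2+|\bar{x}_0|^2+|u_0|^2+|\widehat{\bar{x}}_i|^2\big),
\]
and estimating the cross term $2\langle\widetilde{x}_1,b_1\rangle$ by Young's inequality, I obtain after taking the supremum over $[0,t]$ and expectation
\[
\mathbb{E}\Big[\sup_{0\le s\le t}|\widetilde{x}_1(s)|^2\Big]\le C\,\mathbb{E}\int_0^t|\widetilde{x}_1(s)|^2\,ds+I_{u_0}+\mathbb{E}\Big[\sup_{0\le s\le t}\Big|2\int_0^s\langle\widetilde{x}_1,\widetilde{\sigma}\,dW_1\rangle\Big|\Big],
\]
where $I_{u_0}$ collects $\mathbb{E}|\xi_1|^2$ together with all the $L^2$-bounded inputs listed above and is finite and independent of $u_1$. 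The martingale term is handled by the Burkholder--Davis--Gundy inequality followed by Young's inequality, which produces a term $\tfrac12\mathbb{E}\big[\sup_{s\le t}|\widetilde{x}_1(s)|^2\big]$ that is absorbed into the left-hand side, plus another multiple of $\mathbb{E}\int_0^t|\widetilde{x}_1(s)|^2\,ds$. A final application of Gronwall's inequality then yields $\mathbb{E}\big[\sup_{0\le t\le T}|\widetilde{x}_1(t)|^2\big]\le C_{u_0}$ with a constant depending only on $u_0$ (through $C_{u_0}$ and the bounds in $(\ref{rr})$), the fixed structural constants, and $T$.

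The only point requiring care --- and the single item I would flag as the crux rather than a genuine obstacle --- is the uniformity in $u_1$. The constant must not degrade as the deviation $u_1\in\mathcal{U}_1$ varies, and this is exactly what (H6.3) guarantees through the bound $\mathbb{E}\int_0^T|u_1(s)|^2\,ds\le C$ that is uniform over all admissible deviations. Since every other input ($\xi_1$, $u_0$, $\bar{x}_0$, $\widehat{\bar{x}}_i$) is independent of the choice of $u_1$, the resulting $C_{u_0}$ depends on $u_0$ but not on $u_1$, which is precisely the assertion. This mirrors the argument already used for Lemmas $\ref{lemma 6.1}$--$\ref{lemma 6.5}$, so no new machinery beyond It\^o's formula, the BDG inequality, and Gronwall's inequality is needed.
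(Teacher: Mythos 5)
Your proposal is correct and takes essentially the same approach as the paper: the paper omits the proof of Lemma \ref{lemma 6.9}, stating only that it follows from Gronwall's inequality and a few basic inequalities, and your argument---decoupling the $\widetilde{x}_1$-equation in $(\ref{dd})$ from the exogenous inputs $\bar{x}_0$, $u_0$, $\mathbb{E}\big[\bar{x}_i(t)|\mathbb{F}^0_t\big]$ already controlled by $(\ref{rr})$ and conditional Jensen, then running It\^{o}'s formula with the linear growth of $b_1$, BDG, and Gronwall---is precisely that standard estimate. Your identification of (H6.3) as the source of uniformity in $u_1$ is also exactly right, since it is the only hypothesis bounding the deviating control uniformly over $\mathcal{U}_1$.
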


By applying Gronwall's inequality and a few basic inequalities, we can prove Lemmas $\ref{lemma 6.7}$-$\ref{lemma 6.9}$. Similar to some of the
previous lemmas, the proof processes are relatively simple and are omitted here.

\begin{mylem}\label{lemma 6.10}
Let (H2.1)-(H2.2), (H6.1)-(H6.3), and the assumptions made earlier in this subsection all hold. Then,
for an arbitrarily given $u_0\in \mathcal {U}_0$, there exists a constant
$C_{u_0}>0$ such that for any fixed $N\ge 1, \forall u_1\in \mathcal {U}_1$ and $\forall i=2,\cdots, N$, we have the corresponding solutions of
$(\ref{cc})$ and $(\ref{dd})$ satisfies
\begin{equation}
\begin{aligned}
&\mathbb{E}\Big[\sup\limits_{0\le t\le T} \big|\widetilde{x}^N_0(t)-\bar{x}_0(t)\big|^2\Big]+\mathbb{E}\Big[\sup\limits_{0\le t\le T}
\big|\widetilde{x}^N_i(t)-\bar{x}_i(t)\big|^2\Big]\\
&\le C_{u_0}\cdot \bigg\{\frac{1}{N^2}+\mathbb{E}\int_0^T \bigg|\Big[\frac{1}{N-1}\sum\limits_{j=2}^N \bar{x}_j(t)\Big]
-\mathbb{E}\big[\bar{x}_i(t)|\mathbb{F}^0_t\big]\bigg|^2 dt\bigg\}.
\end{aligned}
\end{equation}
\end{mylem}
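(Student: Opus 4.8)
The plan is to write the difference processes for the leader, $\widetilde{x}^N_0-\bar{x}_0$, and for a generic unperturbed follower $i\ge 2$, $\widetilde{x}^N_i-\bar{x}_i$, directly by subtracting the dynamics $(\ref{dd})$ from $(\ref{cc})$. The key structural simplification I would exploit is that within each pair the initial data agree ($\xi_0$ and $\xi_i$) and the diffusion coefficients are identical ($\sigma_0\,dW_0$ for the leader, $\widetilde{\sigma}\,dW_i$ for follower $i$), so the stochastic integrals cancel and every difference is absolutely continuous, equal to a time integral of a drift difference. Using (H2.2) (boundedness of the first partial derivatives of $b_0,b_1$), I would bound each such drift difference pointwise by $C\big(|\widetilde{x}^N_i-\bar{x}_i|+|\widetilde{x}^N_0-\bar{x}_0|+|\widetilde{x}^{(N)}-\mathbb{E}[\bar{x}_i|\mathbb{F}^0_t]|\big)$. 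After Cauchy--Schwarz on the time integral and taking $\mathbb{E}\sup_{r\le t}$, this yields, for $\Phi(t):=\mathbb{E}[\sup_{r\le t}|\widetilde{x}^N_i-\bar{x}_i|^2]+\mathbb{E}[\sup_{r\le t}|\widetilde{x}^N_0-\bar{x}_0|^2]$, the inequality $\Phi(t)\le C\int_0^t\Phi(s)\,ds+C\int_0^t\mathbb{E}|\widetilde{x}^{(N)}(s)-\mathbb{E}[\bar{x}_i(s)|\mathbb{F}^0_s]|^2\,ds$.

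The crux is to control the empirical-average discrepancy $\widetilde{x}^{(N)}-\mathbb{E}[\bar{x}_i|\mathbb{F}^0_t]$. I would split $\widetilde{x}^{(N)}=\tfrac1N\widetilde{x}^N_1+\tfrac1N\sum_{j=2}^N\widetilde{x}^N_j$ and rearrange into four pieces: the single perturbed term $\tfrac1N\widetilde{x}^N_1$, a compensating term $-\tfrac1N\mathbb{E}[\bar{x}_i|\mathbb{F}^0_t]$, the fluctuation $\tfrac1N\sum_{j=2}^N(\widetilde{x}^N_j-\bar{x}_j)$, and the propagation-of-chaos remainder $\tfrac{N-1}{N}\big(\tfrac1{N-1}\sum_{j=2}^N\bar{x}_j-\mathbb{E}[\bar{x}_i|\mathbb{F}^0_t]\big)$. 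The first two pieces are $O(1/N)$ in $L^2$ and contribute a term of size $C_{u_0}/N^2$, using the uniform-in-$u_1$ state bound of Lemma $\ref{lemma 6.7}$ for $\mathbb{E}|\widetilde{x}^N_1|^2$ together with $(\ref{rr})$ and conditional Jensen for $\mathbb{E}|\mathbb{E}[\bar{x}_i|\mathbb{F}^0_t]|^2$; the fourth piece is precisely a constant multiple of the right-hand side quantity $P:=\mathbb{E}\int_0^T\big|\tfrac1{N-1}\sum_{j=2}^N\bar{x}_j-\mathbb{E}[\bar{x}_i|\mathbb{F}^0_t]\big|^2\,dt$.

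The hard part will be the fluctuation term $\tfrac1N\sum_{j=2}^N(\widetilde{x}^N_j-\bar{x}_j)$, since it couples the discrepancy of one generic follower to those of \emph{all} followers $2,\dots,N$, and a careless bound would leave an $N$-dependent factor that destroys the Gronwall closure. I would resolve this by the exchangeability of the followers $\{2,\dots,N\}$: because $\mathbb{E}|\widetilde{x}^N_j-\bar{x}_j|^2$ is the same for every $j\ge 2$, Cauchy--Schwarz gives $\mathbb{E}\big|\tfrac1N\sum_{j=2}^N(\widetilde{x}^N_j-\bar{x}_j)\big|^2\le B(s)$ with $B(s):=\mathbb{E}|\widetilde{x}^N_i(s)-\bar{x}_i(s)|^2\le\Phi(s)$, a quantity uniform in $N$. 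Collecting the four pieces then gives $\int_0^t\mathbb{E}|\widetilde{x}^{(N)}-\mathbb{E}[\bar{x}_i|\mathbb{F}^0]|^2\,ds\le C_{u_0}/N^2+C\int_0^t\Phi(s)\,ds+CP$, so that $\Phi(t)\le C\int_0^t\Phi(s)\,ds+C_{u_0}/N^2+CP$.

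Finally, I would apply Gronwall's inequality to obtain $\Phi(T)\le C_{u_0}\big(1/N^2+P\big)$, which is exactly the asserted estimate. Throughout, all appeals to boundedness of $\mathbb{E}|\widetilde{x}^N_i|^2$ and $\mathbb{E}|\bar{x}_i|^2$ would draw on Lemma $\ref{lemma 6.7}$, $(\ref{rr})$ and $(\ref{ss})$, and on (H6.3) to make the constant $C_{u_0}$ genuinely uniform in the perturbing control $u_1\in\mathcal{U}_1$; these are routine and I would not elaborate them.
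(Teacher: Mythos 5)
Your proposal is correct and follows essentially the same route as the paper: subtract the dynamics (the constant diffusions cancel), apply the Lipschitz bounds from (H2.2), decompose $\widetilde{x}^{(N)}-\mathbb{E}\big[\bar{x}_i|\mathbb{F}^0_t\big]$ into the perturbed follower's $O(1/N)$ contribution, the fluctuation $\frac{1}{N}\sum_{j=2}^N(\widetilde{x}^N_j-\bar{x}_j)$ controlled via exchangeability of the followers $2,\dots,N$, and the propagation-of-chaos remainder, then close with Gronwall's inequality. Your four-term exact identity is an algebraically equivalent repackaging of the paper's three-term split through $\frac{1}{N-1}\sum_{j=2}^N\widetilde{x}^N_j$, and your appeals to Lemma \ref{lemma 6.7}, $(\ref{rr})$ and conditional Jensen match the paper's uses of the same a priori bounds.
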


\begin{mylem}\label{lemma 6.11}
Let (H2.1)-(H2.2), (H6.1)-(H6.3), and the assumptions made earlier in this subsection all hold. Then,
for an arbitrarily given $u_0\in \mathcal {U}_0$, there exists a constant
$C_{u_0}>0$ such that for any fixed $N\ge 1, \forall u_1\in \mathcal {U}_1$ and $\forall i\ge 1$, we have the corresponding $\widetilde{x}^{(N)}$
which is derived from the solutions of $(\ref{cc})$ satisfies
\begin{equation}
\begin{aligned}
&\mathbb{E}\int_0^T \Big|\widetilde{x}^{(N)}(t)
-\mathbb{E}\big[\bar{x}_i(t)|\mathbb{F}^0_t\big]\Big|^2 dt\\
&\le C_{u_0}\cdot\bigg\{\frac{1}{N^2}+\mathbb{E}\int_0^T \bigg|\Big[\frac{1}{N-1}\sum\limits_{j=2}^N \bar{x}_j(t)\Big]
-\mathbb{E}\big[\bar{x}_i(t)|\mathbb{F}^0_t\big]\bigg|^2 dt\bigg\}.
\end{aligned}
\end{equation}
\end{mylem}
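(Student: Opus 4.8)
The plan is to control the left-hand side by a direct decomposition of the empirical average $\widetilde{x}^{(N)}=\frac{1}{N}\sum_{j=1}^N\widetilde{x}^N_j$ that singles out the perturbed particle $\widetilde{x}^N_1$, replaces the remaining $N-1$ particles by their decentralized counterparts $\bar{x}_j$ from (\ref{dd}), and then corrects the normalisation from $\frac{1}{N}$ to $\frac{1}{N-1}$. Concretely, I would start from the algebraic identity
\begin{equation*}
\begin{aligned}
\widetilde{x}^{(N)}(t)-\mathbb{E}\big[\bar{x}_i(t)|\mathbb{F}^0_t\big]
&=\frac{1}{N}\widetilde{x}^N_1(t)+\frac{1}{N}\sum_{j=2}^N\big(\widetilde{x}^N_j(t)-\bar{x}_j(t)\big)\\
&\quad+\frac{N-1}{N}\bigg(\frac{1}{N-1}\sum_{j=2}^N\bar{x}_j(t)-\mathbb{E}\big[\bar{x}_i(t)|\mathbb{F}^0_t\big]\bigg)
-\frac{1}{N}\mathbb{E}\big[\bar{x}_i(t)|\mathbb{F}^0_t\big],
\end{aligned}
\end{equation*}
which isolates four contributions. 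Here I use that, by exchangeability of the unperturbed followers in (\ref{dd}) and the mean-field structure, $\mathbb{E}[\bar{x}_j(t)|\mathbb{F}^0_t]$ is independent of the index $j\ge1$, so the reference term on the right-hand side of the claim is unambiguous.

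Applying $|a+b+c+d|^2\le4(|a|^2+|b|^2+|c|^2+|d|^2)$ and $\mathbb{E}\int_0^T\cdot\,dt$, I would estimate the four pieces separately. The first piece $\frac{1}{N}\widetilde{x}^N_1$ is of order $C_{u_0}/N^2$ by Lemma \ref{lemma 6.7}, which bounds $\mathbb{E}[\sup_t|\widetilde{x}^N_1(t)|^2]\le C_{u_0}$ (the case $i=1$ is included there). For the second piece, Cauchy--Schwarz gives $|\frac{1}{N}\sum_{j=2}^N(\cdot)|^2\le\frac{N-1}{N^2}\sum_{j=2}^N|\widetilde{x}^N_j-\bar{x}_j|^2$; summing the per-index bound of Lemma \ref{lemma 6.10} (identical for every $j$ by symmetry) over the $N-1$ indices and using $(N-1)^2/N^2\le1$ produces exactly a constant multiple of $\frac{1}{N^2}+\mathbb{E}\int_0^T|\frac{1}{N-1}\sum_{j=2}^N\bar{x}_j-\mathbb{E}[\bar{x}_i|\mathbb{F}^0_t]|^2\,dt$. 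The third piece is bounded by the same chaos quantity since $\frac{N-1}{N}\le1$, and the fourth is controlled by $\frac{1}{N^2}\mathbb{E}\int_0^T|\mathbb{E}[\bar{x}_i|\mathbb{F}^0_t]|^2\,dt\le\frac{1}{N^2}\mathbb{E}\int_0^T|\bar{x}_i(t)|^2\,dt\le C_{u_0}/N^2$, using the conditional Jensen inequality together with estimate (\ref{rr}). Collecting the four bounds yields the asserted estimate.

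Since every quantitative input (Lemmas \ref{lemma 6.7} and \ref{lemma 6.10} and estimate (\ref{rr})) is already available, I expect no genuine analytic difficulty; the argument is essentially bookkeeping. The one point that requires care is the mismatch between the $\frac{1}{N}$ weight in $\widetilde{x}^{(N)}$ and the $\frac{1}{N-1}$ normalisation on the right-hand side: this is precisely what forces the explicit $\frac{1}{N^2}$ remainder, and it is absorbed through the telescoping correction $\frac{N-1}{N}-1=-\frac{1}{N}$ appearing in the last two displayed terms and through the $\frac{1}{N}\widetilde{x}^N_1$ contribution of the perturbed follower. A secondary point is to apply Lemma \ref{lemma 6.10} with a single fixed reference index on its right-hand side, which is legitimate because exchangeability makes that quantity independent of the chosen follower.
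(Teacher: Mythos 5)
Your proof is correct and follows essentially the same route as the paper: the paper derives inequality (\ref{vv}) through an almost identical decomposition of $\widetilde{x}^{(N)}(t)-\mathbb{E}\big[\bar{x}_i(t)|\mathbb{F}^0_t\big]$ (isolating the perturbed particle $\widetilde{x}^N_1$ via Lemma \ref{lemma 6.7}, the bulk differences $\widetilde{x}^N_j-\bar{x}_j$ via the symmetry of $\big\{\widetilde{x}^N_j\big\}_{2\le j\le N}$, and the chaos term $\frac{1}{N-1}\sum_{j=2}^N\bar{x}_j-\mathbb{E}\big[\bar{x}_i|\mathbb{F}^0_t\big]$), and then obtains Lemma \ref{lemma 6.11} by substituting Lemma \ref{lemma 6.10} into (\ref{vv}), just as you substitute Lemma \ref{lemma 6.10} into your four-term decomposition. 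Your only cosmetic deviation is routing the $\frac{1}{N}$-versus-$\frac{1}{N-1}$ normalisation mismatch through the $-\frac{1}{N}\mathbb{E}\big[\bar{x}_i(t)|\mathbb{F}^0_t\big]$ term controlled by conditional Jensen and estimate (\ref{rr}), where the paper instead absorbs it into the $L^2$ bounds on the $\widetilde{x}^N_j$ from Lemma \ref{lemma 6.7}; both produce the same $\frac{1}{N^2}$ remainder.
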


Next, we prove Lemma $\ref{lemma 6.10}$ and Lemma $\ref{lemma 6.11}$ together.

\begin{proof}
From assumption (H2.2) and some basic inequalities, there is a constant $C>0$ such that  for $ \forall u_0\in \mathcal {U}_0, \forall N\ge 1,
\forall u_1\in \mathcal {U}_1$ and $\forall i=2,\cdots, N$, we have
\begin{equation}\label{tt}
\begin{aligned}
\mathbb{E}\Big[\sup\limits_{0\le t\le T} \big|\widetilde{x}^N_i(t)-\bar{x}_i(t)\big|^2\Big]& \le C\cdot \mathbb{E}\int_0^T
\Big[\big|\widetilde{x}^N_i(s)-\bar{x}_i(s)\big|^2+\big|\widetilde{x}^N_0(s)-\bar{x}_0(s)\big|^2\\
&\qquad + \big|\widetilde{x}^{(N)}(s)
-\mathbb{E}\big[\bar{x}_i(s)|\mathbb{F}^0_s\big]\big|^2\Big] ds,
\end{aligned}
\end{equation}
and
\begin{equation}\label{uu}
\begin{aligned}
\mathbb{E}\Big[\sup\limits_{0\le t\le T} \big|\widetilde{x}^N_0(t)-\bar{x}_0(t)\big|^2\Big]& \le C\cdot \mathbb{E}\int_0^T
\Big[\big|\widetilde{x}^N_0(s)-\bar{x}_0(s)\big|^2+\big|\widetilde{x}^{(N)}(s)
-\mathbb{E}\big[\bar{x}_i(s)|\mathbb{F}^0_s\big]\big|^2\Big] ds.
\end{aligned}
\end{equation}
There is also a constant $C>0$ such that $ \forall u_0\in \mathcal {U}_0, \forall N\ge 1, \forall u_1\in \mathcal {U}_1$ and $\forall i\ge 1$, we have
\begin{equation}
\begin{aligned}
&\mathbb{E}\int_0^T \big|\widetilde{x}^{(N)}(s)
-\mathbb{E}\big[\bar{x}_i(s)|\mathbb{F}^0_s\big]\big|^2 ds\\
& \le C\cdot \mathbb{E}\int_0^T\bigg\{\bigg|\widetilde{x}^{(N)}(s)
-\Big[\frac{1}{N-1}\sum\limits_{j=2}^N \widetilde{x}^N_j(s)\Big]\bigg|^2+\bigg|\Big[\frac{1}{N-1}\sum\limits_{j=2}^N \widetilde{x}^N_j(s)\Big]
-\Big[\frac{1}{N-1}\sum\limits_{j=2}^N \bar{x}_j(s)\Big]\bigg|^2\\
&\qquad +\bigg|\Big[\frac{1}{N-1}\sum\limits_{j=2}^N \bar{x}_j(s)\Big]-\mathbb{E}\big[\bar{x}_i(s)|\mathbb{F}^0_s\big]\bigg|^2\bigg\}ds\\
& \le C\cdot \mathbb{E}\int_0^T\bigg\{\frac{1}{N^2}\big|\widetilde{x}^N_1(s)\big|^2
+\frac{1}{N^2(N-1)}\Big[\sum\limits_{j=2}^N \big|\widetilde{x}^N_j(s)\big|^2\Big]+\frac{1}{N-1}\Big[\sum\limits_{j=2}^N \big|\widetilde{x}^N_j(s)
-\bar{x}_j(s)\big|^2\Big]\\
&\qquad +\bigg|\Big[\frac{1}{N-1}\sum\limits_{j=2}^N \bar{x}_j(s)\Big]-\mathbb{E}\big[\bar{x}_i(s)|\mathbb{F}^0_s\big]\bigg|^2\bigg\}ds.\\
\end{aligned}
\end{equation}
From Lemma $\ref{lemma 6.7}$ and the symmetry of $\big\{\widetilde{x}^N_j\big\}_{2\le j\le N}$, we know that for an arbitrarily given
$u_0\in \mathcal{U}_0$, there exists a constant $C_{u_0}>0$ such that for $\forall N\ge 1, \forall u_1\in \mathcal {U}_1$ and $\forall i\ge 1$,
there is
\begin{equation}\label{vv}
\begin{aligned}
&\mathbb{E}\int_0^T \big|\widetilde{x}^{(N)}(s)
-\mathbb{E}\big[\bar{x}_i(s)|\mathbb{F}^0_s\big]\big|^2 ds\\
& \le C_{u_0}\cdot \bigg\{\frac{1}{N^2}+\mathbb{E}\int_0^T\bigg\{\big|\widetilde{x}^N_i(s)-\bar{x}_i(s)\big|^2+\bigg|\Big[\frac{1}{N-1}
\sum\limits_{j=2}^N \bar{x}_j(s)\Big]-\mathbb{E}\big[\bar{x}_i(s)|\mathbb{F}^0_s\big]\bigg|^2\bigg\}ds\bigg\}.\\
\end{aligned}
\end{equation}
By combining $(\ref{tt})$, $(\ref{uu})$ and $(\ref{vv})$, we know that for an arbitrarily given $u_0\in \mathcal{U}_0$, there exists a constant
$C_{u_0}>0$ such that for $\forall N\ge 1, \forall u_1\in \mathcal {U}_1$ and $\forall i=2, \cdots, N$, we have
\begin{equation}\label{ww}
\begin{aligned}
&\mathbb{E}\Big[\sup\limits_{0\le t\le T} \big|\widetilde{x}^N_i(t)-\bar{x}_i(t)\big|^2\Big]+\mathbb{E}\Big[\sup\limits_{0\le t\le T}
\big|\widetilde{x}^N_0(t)-\bar{x}_0(t)\big|^2\Big]\\
& \le C_{u_0}\cdot \bigg\{\mathbb{E}\int_0^T\Big[\big|\widetilde{x}^N_i(s)-\bar{x}_i(s)\big|^2+\big|\widetilde{x}^N_0(s)-\bar{x}_0(s)\big|^2 \Big]ds
+\frac{1}{N^2}\\
&\qquad +\mathbb{E}\int_0^T\bigg|\Big[\frac{1}{N-1}\sum\limits_{j=2}^N \bar{x}_j(s)\Big]-\mathbb{E}\big[\bar{x}_i(s)|\mathbb{F}^0_s\big]\bigg|^2ds
\bigg\}\\
& \le C_{u_0}\cdot \int_0^T \Big\{\mathbb{E}\Big[\sup\limits_{0\le r\le s}\big|\widetilde{x}^N_i(r)-\bar{x}_i(r)\big|^2\Big]
+\mathbb{E}\Big[\sup\limits_{0\le r\le s}\big|\widetilde{x}^N_0(r)-\bar{x}_0(r)\big|^2 \Big]\Big\}ds\\
&\quad +C_{u_0}\cdot\bigg\{\frac{1}{N^2}+\mathbb{E}\int_0^T\bigg|\Big[\frac{1}{N-1}\sum\limits_{j=2}^N \bar{x}_j(s)\Big]
-\mathbb{E}\big[\bar{x}_i(s)|\mathbb{F}^0_s\big]\bigg|^2ds \bigg\}.\\
\end{aligned}
\end{equation}
We apply the Gronwall's inequality to $(\ref{ww})$ to get the conclusion of Lemma $\ref{lemma 6.10}$. Based on Lemma $\ref{lemma 6.10}$ and the
inequality $(\ref{vv})$, we can easily derive Lemma $\ref{lemma 6.11}$.

\end{proof}

\begin{mylem}\label{lemma 6.12}
Let (H2.1)-(H2.2), (H6.1)-(H6.3), and the assumptions made earlier in this subsection all hold. Then,
for an arbitrarily given $u_0\in \mathcal {U}_0$, there exists a constant
$C_{u_0}>0$ such that for any fixed $N\ge 1, \forall u_1\in \mathcal {U}_1$ and $\forall i\ge 1$, we have the corresponding solutions of $(\ref{cc})$
and $(\ref{dd})$ satisfies
\begin{equation}
\begin{aligned}
&\mathbb{E}\Big[\sup\limits_{0\le t\le T} \big|\widetilde{x}^N_1(t)-\widetilde{x}_1(t)\big|^2\Big]\\
&\le C_{u_0}\cdot\bigg\{\frac{1}{N^2}+\mathbb{E}\int_0^T \bigg|\Big[\frac{1}{N-1}\sum\limits_{j=2}^N \bar{x}_j(t)\Big]
-\mathbb{E}\big[\bar{x}_i(t)|\mathbb{F}^0_t\big]\bigg|^2 dt\bigg\}.
\end{aligned}
\end{equation}
\end{mylem}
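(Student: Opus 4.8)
The plan is to exploit the structural fact that $\widetilde{x}^N_1$ in $(\ref{cc})$ and $\widetilde{x}_1$ in $(\ref{dd})$ are driven by the \emph{same} Brownian motion $W_1$ with the same diffusion coefficient $\widetilde{\sigma}$ and the same initial datum $\xi_1$, so that their difference is governed purely by the discrepancy in the drift terms. First I would write
\[
\widetilde{x}^N_1(t)-\widetilde{x}_1(t)=\int_0^t\Big[b_1\big(s,\widetilde{x}^N_1,u_1,\widetilde{x}^N_0,u_0,\widetilde{x}^{(N)}\big)-b_1\big(s,\widetilde{x}_1,u_1,\bar{x}_0,u_0,\mathbb{E}\big[\bar{x}_i|\mathbb{F}^0_s\big]\big)\Big]ds,
\]
and invoke the Lipschitz continuity of $b_1$ in $(x_1,x_0,z)$, which follows from the boundedness of its first partial derivatives in (H2.2). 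Since the $u_1$ and $u_0$ arguments coincide in the two equations, the integrand is bounded, up to a constant $C$, by $|\widetilde{x}^N_1-\widetilde{x}_1|+|\widetilde{x}^N_0-\bar{x}_0|+|\widetilde{x}^{(N)}-\mathbb{E}[\bar{x}_i|\mathbb{F}^0_s]|$.

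Next I would apply the Cauchy--Schwarz inequality to the time integral and take the supremum over $t\in[0,T]$ followed by expectation, yielding
\[
\mathbb{E}\Big[\sup_{0\le t\le T}\big|\widetilde{x}^N_1(t)-\widetilde{x}_1(t)\big|^2\Big]\le C\,\mathbb{E}\int_0^T\Big[\big|\widetilde{x}^N_1-\widetilde{x}_1\big|^2+\big|\widetilde{x}^N_0-\bar{x}_0\big|^2+\big|\widetilde{x}^{(N)}-\mathbb{E}\big[\bar{x}_i|\mathbb{F}^0_s\big]\big|^2\Big]ds.
\]
The first term on the right is self-referential and can be written as $\int_0^T\mathbb{E}[\sup_{0\le r\le s}|\widetilde{x}^N_1(r)-\widetilde{x}_1(r)|^2]\,ds$, which is ready for Gronwall's inequality. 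For the second term I would apply Lemma $\ref{lemma 6.10}$ to bound $\mathbb{E}[\sup_{0\le t\le T}|\widetilde{x}^N_0-\bar{x}_0|^2]$, and for the third term I would apply Lemma $\ref{lemma 6.11}$ to bound $\mathbb{E}\int_0^T|\widetilde{x}^{(N)}-\mathbb{E}[\bar{x}_i|\mathbb{F}^0_s]|^2\,ds$; both estimates produce precisely the target right-hand side $C_{u_0}\big\{N^{-2}+\mathbb{E}\int_0^T|\tfrac{1}{N-1}\sum_{j=2}^N\bar{x}_j(t)-\mathbb{E}[\bar{x}_i|\mathbb{F}^0_t]|^2\,dt\big\}$.

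Finally, applying Gronwall's inequality to absorb the self-referential term delivers the stated estimate with a constant $C_{u_0}$ depending on $u_0$ only through the a priori bounds of Lemmas $\ref{lemma 6.7}$--$\ref{lemma 6.9}$, but uniform in $N$, $i$ and $u_1$. I do not anticipate a genuine obstacle here: the result is essentially a corollary of Lemmas $\ref{lemma 6.10}$ and $\ref{lemma 6.11}$ combined with the Lipschitz structure of $b_1$, so the only care required is the bookkeeping ensuring that the $\tfrac{1}{N-1}\sum_{j=2}^N$ averaging (rather than $\tfrac{1}{N}\sum_{j=1}^N$) is propagated correctly through the perturbed first follower's dynamics, which is exactly the form in which Lemmas $\ref{lemma 6.10}$ and $\ref{lemma 6.11}$ are already phrased.
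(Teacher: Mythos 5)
Your proposal is correct and follows exactly the route the paper indicates for Lemma \ref{lemma 6.12} (whose detailed proof it omits): since $\widetilde{x}^N_1$ and $\widetilde{x}_1$ share the same initial datum $\xi_1$, the same Brownian motion $W_1$ and the same constant diffusion $\widetilde{\sigma}$, the difference reduces to a drift estimate via the Lipschitz property of $b_1$ from (H2.2), after which the terms $|\widetilde{x}^N_0-\bar{x}_0|^2$ and $|\widetilde{x}^{(N)}-\mathbb{E}[\bar{x}_i|\mathbb{F}^0_s]|^2$ are controlled by Lemmas \ref{lemma 6.10} and \ref{lemma 6.11} respectively, and Gronwall's inequality absorbs the self-referential term. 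The only cosmetic point is that the Gronwall step should be set up with $\mathbb{E}\big[\sup_{0\le r\le t}|\widetilde{x}^N_1(r)-\widetilde{x}_1(r)|^2\big]$ bounded for every $t\in[0,T]$ (not only $t=T$), which your formula with $\sup_{0\le r\le s}$ inside the time integral already implicitly provides.
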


\begin{mylem}\label{ff}
Let (H2.1)-(H2.2), (H6.1)-(H6.3), and the assumptions made earlier in this subsection all hold. Then,
for an arbitrarily given $u_0\in \mathcal {U}_0$, there exists a constant
$C_{u_0}>0$ such that for any fixed $N\ge 1, \forall u_1\in \mathcal {U}_1$ and $\forall i\ge 1$, we have
\begin{equation}
\begin{aligned}
&\Big|J^N_1\big(u_1, \big({\overline{\mathcal{I}}}_j(u_{0})\big)_{2\le j\le N},u_0\big)-\overline{J}_{1}(u_1, u_{0})\Big|\\
&\le C_{u_0}\cdot\bigg\{\frac{1}{N^2}+\mathbb{E}\int_0^T \bigg|\Big[\frac{1}{N-1}\sum\limits_{j=2}^N \bar{x}_j(t)\Big]
-\mathbb{E}\big[\bar{x}_i(t)|\mathbb{F}^0_t\big]\bigg|^2 dt\bigg\}^{\frac{1}{2}}.
\end{aligned}
\end{equation}
\end{mylem}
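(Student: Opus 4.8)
The plan is to follow almost verbatim the proof of Lemma \ref{ee}, since the only structural change is that follower~1 now uses an arbitrary deviating control $u_1$ rather than ${\overline{\mathcal{I}}}_1(u_0)$, so that we compare the finite-population state $\big(\widetilde{x}^N_0, \widetilde{x}^N_1, \widetilde{x}^{(N)}\big)$ from $(\ref{cc})$ against the decentralized state $\big(\bar{x}_0, \widetilde{x}_1, \mathbb{E}[\bar{x}_i|\mathbb{F}^0_t]\big)$ from $(\ref{dd})$. First I would write the difference $J^N_1\big(u_1, ({\overline{\mathcal{I}}}_j(u_0))_{2\le j\le N}, u_0\big)-\overline{J}_1(u_1,u_0)$ as the expectation of $\int_0^T\big[g_1(t,\widetilde{x}^N_1,u_1,\widetilde{x}^N_0,u_0,\widetilde{x}^{(N)})-g_1(t,\widetilde{x}_1,u_1,\bar{x}_0,u_0,\mathbb{E}[\bar{x}_i|\mathbb{F}^0_t])\big]\,dt$ plus the terminal difference $G_1(\widetilde{x}^N_1(T))-G_1(\widetilde{x}_1(T))$. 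Since the control argument $u_1$ and the leader argument $u_0$ coincide in both evaluations, applying the local-Lipschitz growth estimate for $g_1$ in (H6.1) leaves only the three state discrepancies $|\widetilde{x}^N_1-\widetilde{x}_1|$, $|\widetilde{x}^N_0-\bar{x}_0|$ and $|\widetilde{x}^{(N)}-\mathbb{E}[\bar{x}_i|\mathbb{F}^0_t]|$ in the ``difference'' factor, multiplied by a ``growth'' factor collecting $1$ together with the magnitudes of all the state, control and mean-field arguments. I would then split by the Cauchy--Schwarz inequality (in both $dt$ and $\mathbb{E}$) into the product of the $L^2$-norm of the growth factor and the $L^2$-norm of the difference factor, and treat the terminal term analogously with the $G_1$ estimate in (H6.1).

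Next I would bound the growth factor uniformly. The quantities $\mathbb{E}\sup_t|\widetilde{x}^N_0|^2$, $\max_{1\le i\le N}\mathbb{E}\sup_t|\widetilde{x}^N_i|^2$ and $\mathbb{E}\sup_t|\widetilde{x}^{(N)}|^2$ are controlled, uniformly in $N$, by Lemmas \ref{lemma 6.7}--\ref{lemma 6.8}; the quantity $\mathbb{E}\sup_t|\widetilde{x}_1|^2$ by Lemma \ref{lemma 6.9}; the terms involving $\bar{x}_0$ and $\mathbb{E}[\bar{x}_i|\mathbb{F}^0_t]$ (the latter via Jensen's inequality) by the a priori estimate $(\ref{rr})$; the control norm $\mathbb{E}\int_0^T|u_0|^2\,dt$ by the definition of $\mathcal{U}_0$; and, crucially, $\mathbb{E}\int_0^T|u_1|^2\,dt$ by assumption (H6.3), which supplies a bound independent of the particular deviation $u_1\in\mathcal{U}_1$. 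Combining these, the $L^2$-norm of the growth factor is $\le C_{u_0}$, with $C_{u_0}$ depending only on $u_0$ and not on $N$, $i$ or $u_1$.

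For the difference factor I would invoke the three comparison estimates already established: Lemma \ref{lemma 6.12} for $\mathbb{E}\sup_t|\widetilde{x}^N_1-\widetilde{x}_1|^2$, Lemma \ref{lemma 6.10} for $\mathbb{E}\sup_t|\widetilde{x}^N_0-\bar{x}_0|^2$, and Lemma \ref{lemma 6.11} for $\mathbb{E}\int_0^T|\widetilde{x}^{(N)}-\mathbb{E}[\bar{x}_i|\mathbb{F}^0_t]|^2\,dt$. Each of these is bounded by $C_{u_0}\big\{\frac{1}{N^2}+\mathbb{E}\int_0^T|\frac{1}{N-1}\sum_{j=2}^N\bar{x}_j-\mathbb{E}[\bar{x}_i|\mathbb{F}^0_t]|^2\,dt\big\}$, so the $L^2$-norm of the difference factor is bounded by the square root of that quantity. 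Multiplying the two factors and using the growth-factor bound $C_{u_0}$ yields exactly the asserted inequality, the terminal $G_1$ contribution being absorbed into the same right-hand side via the sup-norm part of Lemma \ref{lemma 6.12}. The genuinely delicate work --- propagating the deviation of a single follower through the empirical average and closing a Gronwall loop --- has already been discharged in Lemmas \ref{lemma 6.10}--\ref{lemma 6.12}; here the only point requiring care is to keep every constant uniform in the arbitrary deviation $u_1$, which is precisely what assumption (H6.3) guarantees.
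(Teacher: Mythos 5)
Your proposal is correct and follows essentially the same route the paper intends: the paper omits the details but states that Lemma \ref{ff} is proved by applying Lemmas \ref{lemma 6.7}--\ref{lemma 6.12} and performing the same steps as in Lemma \ref{ee}, which is precisely your argument --- the (H6.1) growth--Lipschitz splitting with Cauchy--Schwarz (the $u_1$ and $u_0$ arguments cancelling in the difference factor), uniform second-moment bounds for the growth factor with (H6.3) supplying uniformity over the arbitrary deviation $u_1$, and the comparison estimates of Lemmas \ref{lemma 6.10}--\ref{lemma 6.12} for the difference factor, including the terminal $G_1$ term via the sup-norm bound. No gaps.
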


By applying Lemma $\ref{lemma 6.10}$-$\ref{lemma 6.11}$, Gronwall's inequality and some basic inequalities, we can prove Lemma $\ref{lemma 6.12}$.
By applying the results of Lemma $\ref{lemma 6.7}$-$\ref{lemma 6.12}$ and performing similar steps to Lemma $\ref{ee}$, we can prove Lemma $\ref{ff}$.
The specific proof process is omitted here.

\begin{mythm}\label{followers' approximate theorem}
Let (H2.1)-(H2.2), (H6.1)-(H6.3), and the assumptions made earlier in this subsection all hold. For an arbitrarily given strategy of the leader
$u_0\in \mathcal {U}_0$,
$\big({\overline{\mathcal{I}}}_i(u_{0})\big)_{1\le i\le N}$ constitutes an $\varepsilon$-Nash equilibrium for the N followers' Nash game, that is,
there exists a sequence $\big\{\varepsilon^{u_0}_1(N)\big\}_{N\ge 1}$ such that $\lim\limits_{N\longrightarrow +\infty} \varepsilon^{u_0}_1(N)=0$
and for $\forall N\ge 1, \forall i=1,2,\cdots,N$ and $\forall u_i\in \mathcal {U}_i$, we have
\begin{equation}
\begin{aligned}
&J^N_i\big(\big({\overline{\mathcal{I}}}_j(u_{0})\big)_{1\le j\le N},u_0\big)\le J^N_i\big(u_i, \big({\overline{\mathcal{I}}}_j(u_{0})\big)
_{1\le j\le N, j\ne i},u_0\big)+\varepsilon^{u_0}_1(N),
\end{aligned}
\end{equation}
where the sequence $\big\{\varepsilon^{u_0}_1(N)\big\}_{N\ge 1}$ is dependent on the choice of $u_0$.
\end{mythm}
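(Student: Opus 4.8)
The plan is to establish the $\varepsilon$-Nash inequality through a ``sandwich'' chain that passes from the finite-population cost to the limiting decentralized cost and back, exploiting the optimality of $\overline{\mathcal{I}}_1(u_0)$ in the limit problem. By the symmetric structure of the followers (identical state dynamics and identical cost functionals), it suffices to verify the inequality for the representative index $i=1$; the general case follows verbatim after relabelling.

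First I would fix $u_0\in\mathcal{U}_0$ and an arbitrary $u_1\in\mathcal{U}_1$, and assemble three estimates. (A) By Lemma \ref{ee} applied with $i=1$,
\begin{equation*}
J^N_1\big(\big(\overline{\mathcal{I}}_j(u_{0})\big)_{1\le j\le N},u_0\big)\le \overline{J}_1\big(\overline{\mathcal{I}}_1(u_0),u_0\big)+a(N),
\end{equation*}
where $a(N):=C_{u_0}\big[\mathbb{E}\int_0^T|\frac{1}{N}\sum_{j=1}^N\bar{x}_j(t)-\mathbb{E}[\bar{x}_1(t)|\mathbb{F}^0_t]|^2dt\big]^{1/2}$. (B) Since $\overline{\mathcal{I}}_1(u_0)$ is the optimal decentralized response, i.e.\ $\overline{\mathcal{I}}_1(u_0)\in\arg\min_{u_1\in\mathcal{U}_1}\overline{J}_1(u_1,u_0)$, we have $\overline{J}_1(\overline{\mathcal{I}}_1(u_0),u_0)\le\overline{J}_1(u_1,u_0)$. (C) By Lemma \ref{ff},
\begin{equation*}
\overline{J}_1(u_1,u_0)\le J^N_1\big(u_1,\big(\overline{\mathcal{I}}_j(u_{0})\big)_{2\le j\le N},u_0\big)+b(N),
\end{equation*}
with $b(N):=C_{u_0}\big\{\frac{1}{N^2}+\mathbb{E}\int_0^T|\frac{1}{N-1}\sum_{j=2}^N\bar{x}_j(t)-\mathbb{E}[\bar{x}_1(t)|\mathbb{F}^0_t]|^2dt\big\}^{1/2}$. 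Chaining (A)--(C) yields the desired inequality with $\varepsilon^{u_0}_1(N):=a(N)+b(N)$, which by construction depends on $u_0$ through $C_{u_0}$, as the statement requires.

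It then remains to show $\varepsilon^{u_0}_1(N)\to0$. The term $a(N)$ vanishes immediately by the propagation-of-chaos convergence (\ref{jj}), which gives $\mathbb{E}\int_0^T|\frac{1}{N}\sum_{j=1}^N\bar{x}_j-\mathbb{E}[\bar{x}_1|\mathbb{F}^0]|^2dt\to0$ uniformly in $u_0$, so $a(N)\to0$ for the fixed $u_0$. For $b(N)$ I would transfer (\ref{jj}) from the full average to the shifted average by writing
\begin{equation*}
\frac{1}{N-1}\sum_{j=2}^N\bar{x}_j=\frac{N}{N-1}\cdot\frac{1}{N}\sum_{j=1}^N\bar{x}_j-\frac{1}{N-1}\bar{x}_1,
\end{equation*}
and then using the uniform second-moment bound (\ref{rr}) on $\bar{x}_1$ together with $\frac{N}{N-1}\to1$; the shift contributes a term of order $O(1/N)$ while the rescaled full average still converges to $\mathbb{E}[\bar{x}_1|\mathbb{F}^0]$, so $b(N)\to0$ as well.

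The main obstacle is the passage in step (C) and the accompanying control of $b(N)$: the perturbed system (\ref{cc}) feeds follower $1$'s deviation $u_1$ into the empirical average $\widetilde{x}^{(N)}$ that every other follower sees, and one must show this single-agent perturbation is asymptotically negligible. This is precisely what Lemma \ref{ff} (built on Lemmas \ref{lemma 6.7}--\ref{lemma 6.12}) quantifies, reducing the gap to the $\frac{1}{N-1}\sum_{j=2}^N$ average over the \emph{unperturbed} followers; the remaining task, handled as above, is the bookkeeping that carries the exchangeable-sequence limit (\ref{jj}) over to this $(N-1)$-term average.
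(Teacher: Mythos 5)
Your proposal is correct and follows essentially the same route as the paper: the identical sandwich of Lemma \ref{ee}, the optimality of $\overline{\mathcal{I}}_1(u_0)$ in the decentralized problem, and Lemma \ref{ff}, with $\varepsilon^{u_0}_1(N)$ assembled from the two error terms and driven to zero by the propagation-of-chaos limit (\ref{jj}). Your explicit reduction of the $(N-1)$-term average to the full average via $\frac{1}{N-1}\sum_{j=2}^N\bar{x}_j=\frac{N}{N-1}\cdot\frac{1}{N}\sum_{j=1}^N\bar{x}_j-\frac{1}{N-1}\bar{x}_1$ together with the moment bound (\ref{rr}) is a small piece of bookkeeping the paper leaves implicit, and it is handled correctly.
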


\begin{proof}
Due to the symmetry of the followers, without loss of generality, we only prove the case when $i=1$. For $i=2,\cdots,N$, it can be proved similarly.
By Lemma $\ref{ee}$, Lemma $\ref{ff}$, and the fact that ${\overline{\mathcal{I}}}_1(u_{0})$ is the follower 1's decentralized optimal control, it
follows that for $\forall u_0\in \mathcal {U}_0$, there exists a constant $C_{u_0}>0$ such that for $\forall N\ge 1, \forall u_1\in \mathcal {U}_1$
and $\forall i\ge 1$, we have
\begin{equation}\label{nn}
\begin{aligned}
&J^N_1\big(\big({\overline{\mathcal{I}}}_j(u_{0})\big)_{1\le j\le N},u_0\big)\\
&\le \overline{J}_{1}({\overline{\mathcal{I}}}_1(u_{0}), u_{0})+C_{u_0}\cdot \bigg[\mathbb{E}\int_0^T \bigg|\Big[\frac{1}{N}\sum\limits_{j=1}^N
\bar{x}_j(t)\Big]
-\mathbb{E}\big[\bar{x}_i(t)|\mathbb{F}^0_t\big]\bigg|^2 dt\bigg]^{\frac{1}{2}}\\
&\le \overline{J}_{1}(u_1, u_{0})+C_{u_0}\cdot \bigg[\mathbb{E}\int_0^T \bigg|\Big[\frac{1}{N}\sum\limits_{j=1}^N \bar{x}_j(t)\Big]
-\mathbb{E}\big[\bar{x}_i(t)|\mathbb{F}^0_t\big]\bigg|^2 dt\bigg]^{\frac{1}{2}}\\
&\le J^N_1\big(u_1, \big({\overline{\mathcal{I}}}_j(u_{0})\big)_{2\le j\le N},u_0\big)+C_{u_0}\cdot \bigg[\mathbb{E}\int_0^T \bigg|
\Big[\frac{1}{N}\sum\limits_{j=1}^N \bar{x}_j(t)\Big]
-\mathbb{E}\big[\bar{x}_i(t)|\mathbb{F}^0_t\big]\bigg|^2 dt\bigg]^{\frac{1}{2}}\\
&\quad +C_{u_0}\cdot \bigg\{\frac{1}{N^2}+\mathbb{E}\int_0^T \bigg|\Big[\frac{1}{N-1}\sum\limits_{j=2}^N \bar{x}_j(t)\Big]
-\mathbb{E}\big[\bar{x}_i(t)|\mathbb{F}^0_t\big]\bigg|^2 dt\bigg\}^{\frac{1}{2}}\\
&\le J^N_1\big(u_1, \big({\overline{\mathcal{I}}}_j(u_{0})\big)_{2\le j\le N},u_0\big)+\varepsilon^{u_0}_1(N),
\end{aligned}
\end{equation}
and $\varepsilon^{u_0}_1(N)$ is defined as follows:
\begin{equation}\label{ll}
\begin{aligned}
\varepsilon^{u_0}_1(N)&:=C_{u_0}\cdot \bigg[\sup\limits_{u_0\in \mathcal {U}_0}\mathbb{E}\int_0^T \bigg|\Big[\frac{1}{N}\sum\limits_{j=1}^N
\bar{x}_j(t)\Big]
-\mathbb{E}\big[\bar{x}_i(t)|\mathbb{F}^0_t\big]\bigg|^2 dt\bigg]^{\frac{1}{2}}\\
&\quad +C_{u_0}\cdot \bigg\{\frac{1}{N^2}+\sup\limits_{u_0\in \mathcal {U}_0}\bigg[\mathbb{E}\int_0^T \bigg|\Big[\frac{1}{N-1}
\sum\limits_{j=2}^N \bar{x}_j(t)\Big]
-\mathbb{E}\big[\bar{x}_i(t)|\mathbb{F}^0_t\big]\bigg|^2 dt\bigg]\bigg\}^{\frac{1}{2}},
\end{aligned}
\end{equation}
where the positive constant $C_{u_0}$ is dependent on the choice of $u_0$ and independent of the discussion of taking follower 1 as a representative.

It then follows from $(\ref{jj})$ and the above discussion that for $\forall u_0\in \mathcal {U}_0$, there exists a sequence
$\big\{\varepsilon^{u_0}_1(N)\big\}_{N\ge 1}$ and $\lim\limits_{N\longrightarrow\infty} \varepsilon^{u_0}_1(N)=0$ such that for
$\forall N\ge 1, \forall u_1\in \mathcal {U}_1$ there are
\begin{equation}
\begin{aligned}
&J^N_1\big(\big({\overline{\mathcal{I}}}_j(u_{0})\big)_{1\le j\le N},u_0\big)\le J^N_1\big(u_1, \big({\overline{\mathcal{I}}}_j(u_{0})\big)
_{2\le j\le N},u_0\big)+\varepsilon^{u_0}_1(N).
\end{aligned}
\end{equation}

\end{proof}

Therefore, for an arbitrarily given strategy of the leader $u_0\in \mathcal {U}_0$, we also refer to $\big({\overline{\mathcal{I}}}_i(u_{0})\big)
_{1\le i\le N}$, determined by $(\ref{y})$, as the $\varepsilon$-optimal response of the N followers.

\subsection{The proof of the leader's approximate equilibrium}

\hspace{5mm}For an arbitrarily fixed $N$, when the leader applies the decentralized optimal control $u_0^{\dag}\in \mathcal{U}_0$ and $N$ followers
all apply
the corresponding $\varepsilon$-optimal responses $\big({\overline{\mathcal{I}}}_i(u^{\dag}_{0})\big)_{1\le i\le N}$ in the $1+N$-type leader-follower
game, the resulting controlled state process will be denoted by $\big(\bar{x}_0^{N, \dag}, (\bar{x}_i^{N, \dag})_{1\le i\le N}\big)$ and solve
\begin{equation}
\left\{
             \begin{array}{lr}
             d \bar{x}_0^{N, \dag}(t)=b_0\left(t,\bar{x}_0^{N, \dag}(t), u^{\dag}_0(t), \bar{x}^{(N),\dag}(t)\right) dt+\sigma_0 dW_0(t),\\
             d \bar{x}^{N, \dag}_i(t)=b_1\big(t, \bar{x}^{N, \dag}_i(t), {\overline{\mathcal{I}}}_i(u^{\dag}_{0})(t), \bar{x}_0^{N, \dag}(t), u^{\dag}
             _0(t), \bar{x}^{(N),\dag}(t)\big) dt+\widetilde{\sigma} dW_i(t),\quad 0\le t \le T, \\
             \bar{x}_0^{N, \dag}(0)=\xi_0,\ \bar{x}^{N, \dag}_i(0)=\xi_i,\ 1\le i\le N,
             \end{array}
\right.
\end{equation}
where $\bar{x}^{(N),\dag}(\cdot)=\frac{1}{N} \sum\limits_{j=1}^N \bar{x}^{N, \dag}_j(\cdot)$.

When the leader applies the decentralized optimal control $u^{\dag}_0\in \mathcal {U}_0$ and followers all apply the corresponding $\varepsilon$-optimal
responses $\big({\overline{\mathcal{I}}}_i(u^{\dag}_{0})\big)_{i\ge 1}$ in their decentralized optimal control problems, the resulting controlled state
processes will be denoted by $\big(\bar{x}^{\dag}_0, (\bar{x}^{\dag}_i)_{i\ge 1}\big)$ and solve
\begin{equation}
\left\{
             \begin{array}{lr}
             d {\bar{x}}^{\dag}_0(t)=b_0\left(t, {\bar{x}}^{\dag}_0(t), u^{\dag}_{0}(t), \mathbb{E}\big[{\bar{x}}^{\dag}_i(t)
             |{\mathbb{F}^0_t}\big]\right) dt+\sigma_0 dW_0(t),\\
             d\bar{x}^{\dag}_i(t)=b_1\big(t, \bar{x}^{\dag}_i(t), {\overline{\mathcal{I}}}_i(u^{\dag}_{0})(t), \bar{x}^{\dag}_0(t), u^{\dag}_0(t),
             \mathbb{E}\big[\bar{x}^{\dag}_i(t)|\mathbb{F}^0_t\big]\big) dt+\widetilde{\sigma} dW_i(t),\ 0\le t\le T, \\
             \bar{x}^{\dag}_0(0)=\xi_0,\ \bar{x}^{\dag}_i(0)=\xi_i,\ i\ge 1.
             \end{array}
\right.
\end{equation}

And for $\forall u_0\in \mathcal {U}_0$ and $\forall N\ge 1$, when $N$ followers all apply the corresponding $\varepsilon$-optimal responses
$\big({\overline{\mathcal{I}}}_i(u_{0})\big)_{1\le i\le N}$, the cost functional for the leader in the $1+N$-type leader-follower game is
\begin{equation}
\begin{aligned}
&J^N_0\big(u_0, \big({\overline{\mathcal{I}}}_i(u_{0})\big)_{1\le i\le N}\big)\\
&=\mathbb{E}\bigg\{\int_0^T g_0\Big(t, \bar{x}^N_0(t), u_0(t), \bar{x}^{(N)}(t)\Big) dt+G_0\left(\bar{x}^N_0(T)\right)\bigg\}.
\end{aligned}
\end{equation}

And for $\forall u_0\in \mathcal {U}_0$, when followers all apply the corresponding $\varepsilon$-optimal responses
${\overline{\mathcal{I}}}(u_{0}):=\big({\overline{\mathcal{I}}}_i(u_{0})\big)_{i\ge 1}$, the cost functional for the leader's decentralized optimal
control problem is
\begin{equation}
\begin{aligned}
&\overline{J}_{0}\big(u_{0}, {\overline{\mathcal{I}}}(u_{0})\big)\\
&=\mathbb{E}\bigg\{\int_0^T g_0\Big(t, \bar{x}_0(t), u_{0}(t), \mathbb{E}\big[\bar{x}_i(t)|\mathbb{F}^0_t\big]\Big) dt
+G_0\left(\bar{x}_0(T)\right)\bigg\}.
\end{aligned}
\end{equation}

To prove that the leader's decentralized optimal control $u^{\dag}_0\in \mathcal {U}_0$ is its approximate equilibrium, we need to further strengthen
assumptions (H6.3) as follows.

\noindent {\bf (H6.3')}\ {\it {There exists a constant $C>0$ such that for $\forall i\ge 1$, $\forall u_0\in \mathcal {U}_0$ and
$\forall u_i\in \mathcal {U}_i$, we have $\mathbb{E}\int_0^T \big|u_0(t)\big|^2 dt\le C,\ \mathbb{E}\int_0^T \big|u_i(t)\big|^2 dt\le C$.}
}

Under assumptions (H6.3'), similarly, it can be proved that there exists a constant $C>0$ such that for any
$u_0\in \mathcal {U}_0$ and any $i\ge 1$, the solution of the conditional mean-field FBSDE
$(\ref{z})$ $\big({\bar{x}}_0(\cdot), \bar{x}_i(\cdot), p_i(\cdot)\big)$ satisfies
\begin{equation*}
\begin{aligned}
\mathbb{E}\Big[\sup\limits_{0\le t\le T} \big|\bar{x}_0(t)\big|^2\Big]+\mathbb{E}\Big[\sup\limits_{0\le t\le T} \big|\bar{x}_i(t)\big|^2\Big]
+\mathbb{E}\Big[\sup\limits_{0\le t\le T} \big|p_i(t)\big|^2 \Big] \le C.
\end{aligned}
\end{equation*}

Let (H2.1)-(H2.2), (H6.1)-(H6.2), (H6.3') and the assumptions made earlier in this subsection all hold. Then we can replace the
positive constant $C_{u_0}$ in
Lemmas \ref{lemma 6.1}-\ref{lemma 6.2}, Lemmas \ref{ee}-\ref{ff}, with a sufficiently large finite positive constant $C$, where the constant $C$ does
not depend on the choice of
the $u_0$. It means that we can take a common positive constant $C>0$ such that the conclusions of Lemma \ref{lemma 6.1}-\ref{ff} holds for
$\forall u_0\in \mathcal {U}_0$. Then, a strengthened version of Theorem $\ref{followers' approximate theorem}$ can be obtained as follows.

\begin{mythm}\label{followers' strengthened approximate equilibrium}
Let (H2.1)-(H2.2), (H6.1)-(H6.2), (H6.3') and the assumptions made earlier in this subsection all hold.
Then there exists a sequence $\big\{\varepsilon_1(N)\big\}_{N\ge 1}$ such that
$\lim\limits_{N\longrightarrow +\infty} \varepsilon_1(N)=0$ and for $\forall N\ge 1, \forall u_0\in \mathcal {U}_0, \forall i=1,2,\cdots,N$ and
$\forall u_i\in \mathcal {U}_i$, we have
\begin{equation}
\begin{aligned}
&J^N_i\big(\big({\overline{\mathcal{I}}}_j(u_{0})\big)_{1\le j\le N},u_0\big)\le
J^N_i\big(u_i, \big({\overline{\mathcal{I}}}_j(u_{0})\big)_{1\le j\le N, j\ne i},u_0\big)+\varepsilon_1(N).
\end{aligned}
\end{equation}
\end{mythm}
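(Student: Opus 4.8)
The plan is to exploit the fact that Theorem \ref{followers' approximate theorem} has already been proved, and to recognize that the only gap between it and the present statement is the \emph{uniformity} of the error sequence in the leader's control $u_0$. The strengthened hypothesis (H6.3') differs from (H6.3) in exactly one respect: it now bounds $\mathbb{E}\int_0^T|u_0(t)|^2\,dt$ by a constant $C$ that is \emph{independent} of $u_0\in\mathcal{U}_0$, rather than by the $u_0$-dependent quantity $C_{u_0}$. Since this term was the sole origin of the $u_0$-dependence in every a priori estimate, my strategy is to trace that dependence through the chain of lemmas and show it collapses to a universal constant, after which the argument of Theorem \ref{followers' approximate theorem} can be re-run verbatim.

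Concretely, I would first redo the a priori bounds (\ref{rr}) and (\ref{ss}) for the solution $(\bar x_0,\bar x_i,p_i)$ of the conditional mean-field FBSDE (\ref{z}): the Gronwall/BDG argument producing them used $\mathbb{E}\int_0^T|u_0|^2\,dt$ as the only $u_0$-sensitive forcing term, so once it is replaced by the uniform bound from (H6.3'), the resulting constant no longer depends on $u_0$; the Lipschitz and linear-growth constants entering these estimates come solely from (H2.2) and (H6.1) and are already $u_0$-independent. Propagating this through Lemmas \ref{lemma 6.1} and \ref{lemma 6.2}, and through their perturbed-system analogues, I would conclude that the constant $C_{u_0}$ appearing in Lemmas \ref{ee} through \ref{ff} may be replaced by a single finite $C$ valid for all $u_0\in\mathcal{U}_0$.

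The decisive ingredient is the propagation-of-chaos estimate (\ref{jj}), which is stated with the supremum over $u_0\in\mathcal{U}_0$ already \emph{inside} the limit; hence the convergence of $\tfrac1N\sum_j\bar x_j$ to $\mathbb{E}[\bar x_i\mid\mathbb{F}^0_t]$ is uniform in $u_0$ from the outset. Combining this with the now-uniform constants, I would define
\[
\varepsilon_1(N):=C\Big[\sup_{u_0}\mathbb{E}\int_0^T\Big|\tfrac1N\textstyle\sum_{j=1}^N\bar x_j(t)-\mathbb{E}[\bar x_i(t)\mid\mathbb{F}^0_t]\Big|^2\,dt\Big]^{1/2}
+C\Big\{\tfrac1{N^2}+\sup_{u_0}\mathbb{E}\int_0^T\Big|\tfrac1{N-1}\textstyle\sum_{j=2}^N\bar x_j(t)-\mathbb{E}[\bar x_i(t)\mid\mathbb{F}^0_t]\Big|^2\,dt\Big\}^{1/2},
\]
that is, the right-hand side of (\ref{ll}) with $C_{u_0}$ replaced by the universal $C$, so that $\varepsilon_1(N)$ is independent of $u_0$ and $\lim_{N\to\infty}\varepsilon_1(N)=0$ by (\ref{jj}). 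Finally I would re-run the chain of inequalities (\ref{nn})—the optimality of $\overline{\mathcal{I}}_1(u_0)$ for the representative follower's decentralized problem together with Lemmas \ref{ee} and \ref{ff}—now with $C$ in place of $C_{u_0}$, to obtain the desired inequality with the uniform $\varepsilon_1(N)$.

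The main obstacle is not computational but bookkeeping in nature: one must verify that (H6.3') removes \emph{every} source of $u_0$-dependence. The one point requiring care is that the propagation-of-chaos bound (\ref{jj}) be uniform in $u_0$; but since it is stated that way, the residual work is merely to confirm that no hidden $u_0$-dependence survives in the growth/Lipschitz constants (controlled by (H2.2), (H6.1)) or in the moment bounds (now uniform by (H6.3')), after which uniformity propagates automatically through the linear Gronwall estimates.
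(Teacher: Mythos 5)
Your proposal is correct and follows essentially the same route as the paper: under (H6.3') the moment bounds $(\ref{rr})$, $(\ref{ss})$ and the constants in Lemmas $\ref{lemma 6.1}$--$\ref{ff}$ become uniform in $u_0$, the constant $C_{u_0}$ in the chain of inequalities $(\ref{nn})$ is replaced by a universal $C$, and $\varepsilon_1(N)$ is defined exactly as in $(\ref{oo})$, with convergence to zero supplied by the uniform propagation-of-chaos assumption $(\ref{jj})$. Your additional bookkeeping — tracing the sole source of $u_0$-dependence to the forcing term $\mathbb{E}\int_0^T|u_0(t)|^2\,dt$ in the Gronwall estimates — is exactly the verification the paper leaves implicit, so no gap remains.
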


\begin{proof}
Let (H2.1)-(H2.2), (H6.1)-(H6.2), (H6.3') and the assumptions made earlier in this subsection all hold. Then, we can take a
sufficiently large and $u_0$-independent
positive constant $C$ to replace $C_{u_0}$ in $(\ref{nn})$. Define the sequence $\big\{\varepsilon_1(N)\big\}_{N\ge 1}$ as follows:
\begin{equation}\label{oo}
\begin{aligned}
\varepsilon_1(N)&:=C\cdot \bigg[\sup\limits_{u_0\in \mathcal {U}_0}\mathbb{E}\int_0^T \bigg|\Big[\frac{1}{N}\sum\limits_{j=1}^N \bar{x}_j(t)\Big]
-\mathbb{E}\big[\bar{x}_i(t)|\mathbb{F}^0_t\big]\bigg|^2 dt\bigg]^{\frac{1}{2}}\\
&\quad +C\cdot \bigg\{\frac{1}{N^2}+\sup\limits_{u_0\in \mathcal {U}_0}\bigg[\mathbb{E}\int_0^T \bigg|\Big[\frac{1}{N-1}
\sum\limits_{j=2}^N \bar{x}_j(t)\Big]
-\mathbb{E}\big[\bar{x}_i(t)|\mathbb{F}^0_t\big]\bigg|^2 dt\bigg]\bigg\}^{\frac{1}{2}}.
\end{aligned}
\end{equation}
Then, the conclusion can then be proved by following similar steps as in Theorem $\ref{followers' approximate theorem}$.
\end{proof}

Under assumptions (H2.1)-(H2.2), (H6.1)-(H6.2), (H6.3') and the assumptions made earlier in this subsection, we can prove that the
leader's decentralized optimal control
$u^{\dag}_0\in \mathcal {U}_0$ is its approximate equilibrium.

\begin{mythm}\label{leader's approximate theorem}
Let (H2.1)-(H2.2), (H6.1)-(H6.2), (H6.3') and the assumptions made earlier in this subsection all hold.
Then there exists a sequence $\big\{\varepsilon_0(N)\big\}_{N\ge 1}$
such that $\lim\limits_{N\longrightarrow +\infty} \varepsilon_0(N)=0$ and for $\forall N\ge 1, \forall u_0\in \mathcal {U}_0$, we have
\begin{equation}
\begin{aligned}
&J^N_0\big(u^{\dag}_{0}, \big({\overline{\mathcal{I}}}_j(u^{\dag}_{0})\big)_{1\le j\le N}\big)\le J^N_0\big(u_0, \big({\overline{\mathcal{I}}}_j(u_{0})
\big)_{1\le j\le N}\big)+\varepsilon_0(N).
\end{aligned}
\end{equation}
\end{mythm}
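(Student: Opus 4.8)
The plan is to sandwich the leader's finite-population cost between the decentralized cost $\overline{J}_0$ and itself, using the decentralized optimality of $u^{\dag}_0$ as the pivot. Concretely, I would first establish a leader-counterpart of Lemma $\ref{ee}$: for every $u_0\in\mathcal{U}_0$ the centralized leader cost $J^N_0\big(u_0,(\overline{\mathcal{I}}_j(u_0))_{1\le j\le N}\big)$ stays uniformly close to the decentralized leader cost $\overline{J}_0\big(u_0,\overline{\mathcal{I}}(u_0)\big)$. The target estimate is
\begin{equation*}
\Big|J^N_0\big(u_0,(\overline{\mathcal{I}}_j(u_0))_{1\le j\le N}\big)-\overline{J}_0\big(u_0,\overline{\mathcal{I}}(u_0)\big)\Big|\le C\bigg[\mathbb{E}\int_0^T\Big|\tfrac{1}{N}\sum_{j=1}^N\bar{x}_j(t)-\mathbb{E}\big[\bar{x}_i(t)|\mathbb{F}^0_t\big]\Big|^2 dt\bigg]^{\frac{1}{2}},
\end{equation*}
with $C$ independent of $u_0$ under (H6.3$'$).

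To obtain this estimate I would subtract the two cost functionals, write out the integrand difference $g_0\big(t,\bar{x}^N_0,u_0,\bar{x}^{(N)}\big)-g_0\big(t,\bar{x}_0,u_0,\mathbb{E}[\bar{x}_i|\mathbb{F}^0_t]\big)$ together with the terminal difference $G_0\big(\bar{x}^N_0(T)\big)-G_0\big(\bar{x}_0(T)\big)$, bound them by the local-Lipschitz inequalities of (H6.1), and then apply Cauchy--Schwarz. The linear-growth factors are controlled in $L^2$ by the uniform moment bounds of Lemmas $\ref{lemma 6.1}$--$\ref{lemma 6.2}$ together with the uniform $L^2$ bound on $u_0$ from (H6.3$'$), while the difference factors $|\bar{x}^N_0-\bar{x}_0|$ and $|\bar{x}^{(N)}-\mathbb{E}[\bar{x}_i|\mathbb{F}^0_t]|$ are controlled by Lemmas $\ref{lemma 6.4}$--$\ref{lemma 6.5}$. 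This is exactly the computation carried out in Lemma $\ref{ee}$, now driven only by the leader's state $\bar{x}^N_0$ and the empirical average $\bar{x}^{(N)}$, so the same argument transfers verbatim; under (H6.3$'$) every $C_{u_0}$ that appears may be replaced by a single finite constant $C$ valid for all $u_0$, as already observed before Theorem $\ref{followers' strengthened approximate equilibrium}$.

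With this uniform estimate in hand, I would set
\begin{equation*}
\varepsilon_0(N):=2C\bigg[\sup_{u_0\in\mathcal{U}_0}\mathbb{E}\int_0^T\Big|\tfrac{1}{N}\sum_{j=1}^N\bar{x}_j(t)-\mathbb{E}\big[\bar{x}_i(t)|\mathbb{F}^0_t\big]\Big|^2 dt\bigg]^{\frac{1}{2}},
\end{equation*}
and run the three-term chain: applying the closeness estimate at $u^{\dag}_0$, then the decentralized optimality $\overline{J}_0\big(u^{\dag}_0,\overline{\mathcal{I}}(u^{\dag}_0)\big)\le\overline{J}_0\big(u_0,\overline{\mathcal{I}}(u_0)\big)$ established in Subsection $\ref{the iterative limit of the leader}$, and finally the closeness estimate at $u_0$, which yields $J^N_0\big(u^{\dag}_0,(\overline{\mathcal{I}}_j(u^{\dag}_0))_{1\le j\le N}\big)\le J^N_0\big(u_0,(\overline{\mathcal{I}}_j(u_0))_{1\le j\le N}\big)+\varepsilon_0(N)$. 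That $\varepsilon_0(N)\to0$ as $N\to+\infty$ then follows directly from the propagation-of-chaos convergence $(\ref{jj})$.

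The main obstacle is the uniformity of the closeness estimate over $u_0\in\mathcal{U}_0$. Unlike the followers' equilibrium, where a single agent perturbs its control while the rest of the population is essentially frozen, here changing the leader's control $u_0$ simultaneously re-solves all the followers' response maps $\overline{\mathcal{I}}_j(u_0)$ and hence shifts the entire state configuration; consequently the constant in the cost-closeness bound a priori depends on $u_0$, and the limit in $(\ref{jj})$ must hold after taking the supremum over $u_0$. This is precisely why the strengthened hypothesis (H6.3$'$) is imposed: it furnishes $u_0$-independent $L^2$ bounds on the controls, which propagate through the moment and difference estimates to render both the cost-closeness constant and the rate $\varepsilon_0(N)$ independent of $u_0$. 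Once this uniformity is secured, the sandwich argument itself is routine.
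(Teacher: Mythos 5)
Your proposal is correct and follows essentially the same route as the paper's proof: the same (H6.1)-plus-Cauchy--Schwarz closeness estimate between $J^N_0$ and $\overline{J}_0$ (the paper's inequalities (\ref{hh})--(\ref{ii})), the same replacement of $C_{u_0}$ by a uniform constant under (H6.3$'$), the same three-term sandwich pivoting on the decentralized optimality of $u^{\dag}_0$, and the same definition of $\varepsilon_0(N)$ via a supremum over $u_0$ with convergence from (\ref{jj}). The only difference is the cosmetic factor $2C$ versus the paper's single generic constant $C$.
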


\begin{proof}
By assumption (H6.1), it follows that for $\forall u_0\in \mathcal {U}_0, \forall N\ge 1$ and $\forall i\ge 1$ we have
\begin{equation}
\begin{aligned}
&\Big|J^N_0\big(u_0, \big({\overline{\mathcal{I}}}_j(u_{0})\big)_{1\le j\le N}\big)-\overline{J}_{0}\big(u_0,
{\overline{\mathcal{I}}}(u_{0})\big)\Big|\\
&=\bigg|\mathbb{E}\bigg\{\int_0^T g_0\big(t, \bar{x}^N_0(t), u_{0}(t), \bar{x}^{(N)}(t)\big)-g_0\Big(t, \bar{x}_0(t), u_{0}(t),
\mathbb{E}\big[\bar{x}_i(t)|\mathbb{F}^0_t\big]\Big) dt\\
&\qquad + G_0\left(\bar{x}^N_0(T)\right)-G_0\left(\bar{x}_0(T)\right)\bigg\}\bigg|\\
&\le C\cdot \mathbb{E}\int_0^T\Big[1+\big|\bar{x}^N_0(t)\big|+\big|\bar{x}_0(t)\big|+\big|u_0(t)\big|+\big|\bar{x}^{(N)}(t)\big|
+\big|\mathbb{E}\big[\bar{x}_i(t)|\mathbb{F}^0_t\big]\big|\Big]\\
&\qquad \times \Big[\big|\bar{x}^N_0(t)-\bar{x}_0(t)\big|+\big|\bar{x}^{(N)}(t)-\mathbb{E}\big[\bar{x}_i(t)|\mathbb{F}^0_t\big]\big|\Big] dt\\
&\quad +C\cdot \mathbb{E}\Big[\big(1+\big|\bar{x}^N_0(T)\big|+\big|\bar{x}_0(T)\big|\big)\cdot \big|\bar{x}^N_0(T)-\bar{x}_0(T)\big|\Big]\\
&\le C\cdot \bigg[\mathbb{E}\int_0^T\Big[1+\big|\bar{x}^N_0(t)\big|^2+\big|\bar{x}_0(t)\big|^2+\big|u_0(t)\big|^2+\big|\bar{x}^{(N)}(t)\big|^2
+\mathbb{E}\big[\big|\bar{x}_i(t)\big|^2|\mathbb{F}^0_t\big]\Big] dt\bigg]^{\frac{1}{2}}\\
&\qquad \times \bigg[\mathbb{E}\int_0^T\Big[\big|\bar{x}^N_0(t)-\bar{x}_0(t)\big|^2+\big|\bar{x}^{(N)}(t)-\mathbb{E}\big[\bar{x}_i(t)
|\mathbb{F}^0_t\big]\big|^2\Big]
dt\bigg]^{\frac{1}{2}}\\
&\quad +C\cdot \Big[\mathbb{E}\Big[1+\big|\bar{x}^N_0(T)\big|^2+\big|\bar{x}_0(T)\big|^2\Big]\Big]^{\frac{1}{2}}\cdot \Big[\mathbb{E}
\big|\bar{x}^N_0(T)-\bar{x}_0(T)\big|^2\Big]^{\frac{1}{2}}\\
\end{aligned}
\end{equation}

Under assumptions (H2.1)-(H2.2), (H6.1)-(H6.2), (H6.3') and the assumptions made earlier in this subsection, we can replace the
positive constant $C_{u_0}$ in
Lemmas \ref{lemma 6.1}-\ref{lemma 6.2} with a sufficiently large finite positive constant $C$, where the constant $C$ does not depend on the choice
of the $u_0$. It means that
we can take a common positive constant $C>0$ such that the conclusions of Lemma \ref{lemma 6.1}-\ref{lemma 6.5} holds for
$\forall u_0\in \mathcal {U}_0$.
Then, it follows that there exists a constant $C>0$ such that for $\forall N\ge 1, \forall u_0\in \mathcal {U}_0$ and $\forall i\ge 1$, we have
\begin{equation}\label{hh}
\begin{aligned}
&\Big|J^N_0\big(u_0, \big({\overline{\mathcal{I}}}_j(u_{0})\big)_{1\le j\le N}\big)-\overline{J}_{0}
\big(u_0, {\overline{\mathcal{I}}}(u_{0})\big)\Big|\\
&\le C\cdot \bigg[\mathbb{E}\int_0^T \bigg|\Big[\frac{1}{N}\sum\limits_{j=1}^N \bar{x}_j(t)\Big]
-\mathbb{E}\big[\bar{x}_i(t)|\mathbb{F}^0_t\big]\bigg|^2 dt\bigg]^{\frac{1}{2}}\\
\end{aligned}
\end{equation}
Therefore, when taking $u^{\dag}_0\in \mathcal {U}_0$, i.e., the leader's decentralized the optimal control, there exists a constant $C>0$ such that
for $\forall N\ge 1, \forall i\ge 1$, there is
\begin{equation}\label{ii}
\begin{aligned}
&\Big|J^N_0\big(u^{\dag}_0, \big({\overline{\mathcal{I}}}_j(u^{\dag}_{0})\big)_{1\le j\le N}\big)-\overline{J}_{0}\big(u^{\dag}_0,
{\overline{\mathcal{I}}}(u^{\dag}_{0})\big)\Big|\\
&\le C\cdot \bigg[\mathbb{E}\int_0^T \bigg|\Big[\frac{1}{N}\sum\limits_{j=1}^N \bar{x}^{\dag}_j(t)\Big]
-\mathbb{E}\big[\bar{x}^{\dag}_i(t)|\mathbb{F}^0_t\big]\bigg|^2 dt\bigg]^{\frac{1}{2}}\\
\end{aligned}
\end{equation}
According to $(\ref{hh})$ and $(\ref{ii})$, we know that there exists a constant $C>0$ such that for $\forall N\ge 1, \forall u_0\in \mathcal {U}_0$
and $\forall i\ge 1$, there is
\begin{equation}
\begin{aligned}
&J^N_0\big(u^{\dag}_0, \big({\overline{\mathcal{I}}}_j(u^{\dag}_{0})\big)_{1\le j\le N}\big)\\
&\le \overline{J}_{0}\big(u^{\dag}_0, {\overline{\mathcal{I}}}(u^{\dag}_{0})\big)+C\cdot \bigg[\mathbb{E}\int_0^T \bigg|\Big[\frac{1}{N}
\sum\limits_{j=1}^N \bar{x}^{\dag}_j(t)\Big]
-\mathbb{E}\big[\bar{x}^{\dag}_i(t)|\mathbb{F}^0_t\big]\bigg|^2 dt\bigg]^{\frac{1}{2}}\\
&\le \overline{J}_{0}\big(u_0, {\overline{\mathcal{I}}}(u_{0})\big)+C\cdot \bigg[\mathbb{E}\int_0^T \bigg|\Big[\frac{1}{N}\sum\limits_{j=1}^N
\bar{x}^{\dag}_j(t)\Big]
-\mathbb{E}\big[\bar{x}^{\dag}_i(t)|\mathbb{F}^0_t\big]\bigg|^2 dt\bigg]^{\frac{1}{2}}\\
&\le J^N_0\big(u_0, \big({\overline{\mathcal{I}}}_j(u_{0})\big)_{1\le j\le N}\big)+\varepsilon_0(N),\\
\end{aligned}
\end{equation}
where $\varepsilon_0(N)$ is defined as follows:
\begin{equation}\label{kk}
\begin{aligned}
\varepsilon_0(N):=C\cdot \bigg[\sup\limits_{u_0\in \mathcal {U}_0} \bigg[\mathbb{E}\int_0^T \bigg|\Big[\frac{1}{N}\sum\limits_{j=1}^N \bar{x}_j(t)\Big]
-\mathbb{E}\big[\bar{x}_i(t)|\mathbb{F}^0_t\big]\bigg|^2 dt\bigg]\bigg]^{\frac{1}{2}}.
\end{aligned}
\end{equation}
It then follows from $(\ref{jj})$ and the above discussion that there exists a sequence $\big\{\varepsilon_0(N)\big\}_{N\ge 1}$ and
$\lim\limits_{N\longrightarrow\infty} \varepsilon_0(N)=0$ such that for $\forall N\ge 1, \forall u_0\in \mathcal {U}_0,$ there are
\begin{equation}
\begin{aligned}
J^N_0\big(u^{\dag}_0, \big({\overline{\mathcal{I}}}_j(u^{\dag}_{0})\big)_{1\le j\le N}\big)\le J^N_0\big(u_0,
\big({\overline{\mathcal{I}}}_j(u_{0})\big)_{1\le j\le N}\big)+\varepsilon_0(N).
\end{aligned}
\end{equation}

\end{proof}

Let
\begin{equation}\label{mm}
\begin{aligned}
\varepsilon(N)&:=C\cdot \bigg[\sup\limits_{u_0\in \mathcal {U}_0}\mathbb{E}\int_0^T \bigg|\Big[\frac{1}{N}\sum\limits_{j=1}^N \bar{x}_j(t)\Big]
-\mathbb{E}\big[\bar{x}_i(t)|\mathbb{F}^0_t\big]\bigg|^2 dt\bigg]^{\frac{1}{2}}\\
&\quad +C\cdot \bigg\{\frac{1}{N^2}+\sup\limits_{u_0\in \mathcal {U}_0}\bigg[\mathbb{E}\int_0^T \bigg|\Big[\frac{1}{N-1}
\sum\limits_{j=2}^N \bar{x}_j(t)\Big]
-\mathbb{E}\big[\bar{x}_i(t)|\mathbb{F}^0_t\big]\bigg|^2 dt\bigg]\bigg\}^{\frac{1}{2}}.
\end{aligned}
\end{equation}
From $(\ref{oo})$ and $(\ref{kk})$, it follows that in equation $(\ref{mm})$ we can take a sufficiently large positive constant C such that
$\lim\limits_{N\longrightarrow\infty} \varepsilon(N)=0$ and for $\forall N\ge 1$, we have $\varepsilon(N)\ge\varepsilon_0(N)$,
$\varepsilon(N)\ge\varepsilon_1(N)$. Then, combining Theorems $\ref{followers' strengthened approximate equilibrium}$ and
$\ref{leader's approximate theorem}$ yields the following theorem.

\begin{mythm}
Let (H2.1)-(H2.2), (H6.1)-(H6.2), (H6.3') and the assumptions made earlier in this subsection all hold.
Then, the decentralized optimal controls of the leader
and N followers, i.e. $\big(u^{\dag}_{0}, \big({\overline{\mathcal{I}}}_i(u^{\dag}_{0})\big)_{1\le i\le N}\big)$, constitutes an
$\varepsilon$-Stackelberg equilibrium for the $1+N$-type leader-follower game, that is, there exists a sequence $\big\{\varepsilon(N)\big\}_{N\ge 1}$
such that $\lim\limits_{N\longrightarrow +\infty} \varepsilon(N)=0$ and\\
(i) for $\forall N\ge 1, \forall u_0\in \mathcal {U}_0, \forall i=1,2,\cdots,N$ and $\forall u_i\in \mathcal {U}_i$, we have
\begin{equation}
\begin{aligned}
&J^N_i\big(\big({\overline{\mathcal{I}}}_j(u_{0})\big)_{1\le j\le N},u_0\big)\le J^N_i\big(u_i,
\big({\overline{\mathcal{I}}}_j(u_{0})\big)_{1\le j\le N, j\ne i},u_0\big)+\varepsilon(N);
\end{aligned}
\end{equation}
(ii) for $\forall N\ge 1, \forall u_0\in \mathcal {U}_0$, we have
\begin{equation}
\begin{aligned}
&J^N_0\big(u^{\dag}_{0}, \big({\overline{\mathcal{I}}}_j(u^{\dag}_{0})\big)_{1\le j\le N}\big)\le J^N_0\big(u_0,
\big({\overline{\mathcal{I}}}_j(u_{0})\big)_{1\le j\le N}\big)+\varepsilon(N).
\end{aligned}
\end{equation}

\end{mythm}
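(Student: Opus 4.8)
The plan is to recognize that this final theorem is purely a consolidation of the two approximate-equilibrium results already established, namely Theorem \ref{followers' strengthened approximate equilibrium} (the followers' $u_0$-uniform $\varepsilon$-Nash property) and Theorem \ref{leader's approximate theorem} (the leader's approximate optimality). Since Definition \ref{the definition of approximate Stackelberg equilibrium} splits the $\varepsilon$-Stackelberg property into exactly two clauses---an $\varepsilon$-Nash condition among the followers for every announced leader control, and an $\varepsilon$-optimality condition for the leader against $\varepsilon$-optimal follower responses---my strategy is to match clause (i) to the followers' theorem and clause (ii) to the leader's theorem, and then unify the two error sequences $\varepsilon_1(N)$ and $\varepsilon_0(N)$ into a single sequence $\varepsilon(N)$ that dominates both.

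First I would introduce the unified error sequence $\varepsilon(N)$ exactly as in $(\ref{mm})$, built from the same propagation-of-chaos quantity $\sup_{u_0\in\mathcal{U}_0}\mathbb{E}\int_0^T\big|\frac{1}{N}\sum_{j}\bar{x}_j(t)-\mathbb{E}[\bar{x}_i(t)|\mathbb{F}^0_t]\big|^2\,dt$ that already controls both earlier bounds $(\ref{oo})$ and $(\ref{kk})$. By enlarging the common constant $C$---which, under the strengthened assumption (H6.3'), no longer depends on $u_0$---I can guarantee $\varepsilon(N)\ge\varepsilon_0(N)$ and $\varepsilon(N)\ge\varepsilon_1(N)$ for every $N$. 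Next I would invoke $(\ref{jj})$, the tailor-made propagation-of-chaos estimate on the exchangeable sequence, to conclude $\lim_{N\to\infty}\varepsilon(N)=0$, since each term in $(\ref{mm})$ is a constant multiple of that vanishing quantity (the $1/N^2$ term vanishing trivially). With this single $\varepsilon(N)$ in hand, clause (i) is immediate from Theorem \ref{followers' strengthened approximate equilibrium}, because the followers' $\varepsilon_1(N)$-Nash inequality holds uniformly in $u_0$ and $\varepsilon(N)\ge\varepsilon_1(N)$; clause (ii) is immediate from Theorem \ref{leader's approximate theorem}, because the leader's $\varepsilon_0(N)$-optimality inequality holds and $\varepsilon(N)\ge\varepsilon_0(N)$.

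The only genuinely delicate point---rather than a computational obstacle---is the interlocking structure of Definition \ref{the definition of approximate Stackelberg equilibrium}(ii), which compares the leader's cost under $\big(u_0^{\dag},\overline{\mathcal{I}}(u_0^{\dag})\big)$ against $\big(u_0,\overline{\mathcal{I}}(u_0)\big)$ where in both cases the follower profiles must themselves already be $\varepsilon$-optimal responses. This is precisely why the strengthened followers' result is needed rather than the $u_0$-dependent Theorem \ref{followers' approximate theorem}: the constant $C$, and hence $\varepsilon(N)$, must be independent of the leader's control so that $\big(\overline{\mathcal{I}}_i(u_0)\big)_{1\le i\le N}$ qualifies as an $\varepsilon(N)$-optimal response simultaneously for the optimal $u_0^{\dag}$ and for every competing $u_0\in\mathcal{U}_0$. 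Once this uniformity in $u_0$ is secured via (H6.3'), the two clauses assemble with no further analysis: the chained inequalities in $(\ref{ii})$, combined with the leader's decentralized optimality $\overline{J}_0\big(u_0^{\dag},\overline{\mathcal{I}}(u_0^{\dag})\big)\le\overline{J}_0\big(u_0,\overline{\mathcal{I}}(u_0)\big)$ and the cost-comparison estimate $(\ref{hh})$, close the argument.
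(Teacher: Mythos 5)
Your proposal is correct and follows essentially the same route as the paper: you define $\varepsilon(N)$ exactly as in $(\ref{mm})$ with a $u_0$-independent constant $C$ (made possible by (H6.3')), so that it dominates both $\varepsilon_1(N)$ from Theorem $\ref{followers' strengthened approximate equilibrium}$ and $\varepsilon_0(N)$ from Theorem $\ref{leader's approximate theorem}$, and you conclude $\varepsilon(N)\longrightarrow 0$ via $(\ref{jj})$, which is precisely how the paper assembles the two clauses. Your additional remark on why the $u_0$-uniform strengthened followers' result is needed, rather than the $u_0$-dependent Theorem $\ref{followers' approximate theorem}$, matches the paper's stated motivation for introducing (H6.3').
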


\section{Application to mean field Stackelberg problem in unicycle-type swarm robots}\label{application to robots}

\hspace{5mm}Robots have become increasingly common in daily life, and there have been studies in the literature on the related mean field models and control problems when the number of robots in swarm robots is very large (see \cite{EB2019}, \cite{M2013}).

Inspired by \cite{EB2019}, \cite{AM2014}, \cite{ST2015}, \cite{HS2016}, \cite{CCWXQF2021}, and their references, we attempted to apply the previous theoretical results in this paper to solve the following mean field Stackelberg game problem between a robot control center and unicycle-type swarm robots. The robot control center is the leader, and the $N$ unicycle-type robots in the unicycle-type swarm robots are the followers. For any given control $u_0$ of the robot control center, the random system of the $i^{th}$ unicycle-type robot as a follower satisfies the following state equation:

\begin{equation}\label{state equation of i unicycle-type robot}
\left\{
             \begin{array}{lr}
             d x_i(t)=v\cdot\big[\cos {\theta_i(t)}\big] dt,\\
             d y_i(t)=v\cdot\big[\sin {\theta_i(t)}\big] dt,\\
             d {\theta}_i(t)=\big[w_i(t)+u_i(t)+u_0(t)\big] dt,\\
             d w_i(t)=\sigma dB^w_i(t), \quad\quad\ 0\le t \le T, \\
             x_i(0)=0,\ y_i(0)=0,\ \theta_i(0)=0,\ w_i(0)=0.
             \end{array}
\right.
\end{equation}

In the state equation of the $i^{th}$ unicycle-type robot (\ref{state equation of i unicycle-type robot}), at time $t$, $\big(x_i(t), y_i(t)\big)$ is the coordinates of the $i^{th}$ unicycle-type robot in the coordinate system constructed with its initial position as the original point, $\theta_i(t)$ is its heading, $w_i(t)$ is its angular speed without any control being applied, and the constant $v$ is its ground speed. $B^w_i$ is a Brownian motion defined on $(\Omega,\mathcal{F},\mathbb{P})$, representing random noise disturbances in angular speed, with the constant $\sigma$ denoting the corresponding disturbance coefficient. $u_0(t)$ is the control applied at time $t$ by the robot control center, acting as the leader, to the angular speed of the $i^{th}$ unicycle-type robot. $u_i(t)$ is the control applied at time $t$ by the $i^{th}$ unicycle-type robot, acting as a follower, to its own angular speed.

First, for any given control $u_0$ of the robot control center, the $i^{th}$ unicycle-type robot, acting as a follower, needs to minimize the following cost functional:

\begin{equation}\label{cost functional of i unicycle-type robot}
\begin{aligned}
J^N_i=&\mathbb{E}\int_0^T \Big[c_1|x_i(t)-a|^2+c_1|y_i(t)-b|^2+d_1|u_i(t)|^2\\
&+e_1|x^{(N)}(t)-a|^2+e_1|y^{(N)}(t)-b|^2\Big] dt,\quad i=1,2,\cdots,N,
\end{aligned}
\end{equation}
where the positive constants $c_1, d_1, e_1$ are the corresponding weighting coefficients. The first, second, fourth, and fifth terms in (\ref{cost functional of i unicycle-type robot}) represent that the $i^{th}$ unicycle-type robot, acting as a follower, adjusts its angular speed so as to stay as close as possible to the point $(a, b)$ in its own coordinate system over the time interval $[0,T]$, while simultaneously ensuring that, over the time interval $[0,T]$, the average displacement of the unicycle-type swarm robots along the $x$-axis is as close as possible to $a$ meters and the average displacement along the $y$-axis is as close as possible to $b$ meters. Meanwhile, the third term in (\ref{cost functional of i unicycle-type robot}) indicates that the $i^{th}$ unicycle-type robot seeks to achieve the above objectives while expending as little control effort as possible.

After taking into account the optimal responses of the unicycle-type swarm robots, the robot control center, acting as the leader, needs to minimize the following cost functional:

\begin{equation}\label{cost functional of robot control center}
\begin{aligned}
J^N_0=&\mathbb{E}\int_0^T \Big[c_0|x^{(N)}(t)-a|^2+c_0|y^{(N)}(t)-b|^2+d_0|u_0(t)|^2\Big] dt,\\
\end{aligned}
\end{equation}
where $x^{(N)}(\cdot)=\frac{1}{N} \sum\limits_{j=1}^N x_j(\cdot), y^{(N)}(\cdot)=\frac{1}{N} \sum\limits_{j=1}^N y_j(\cdot)$, and the positive constants $c_0, d_0$ are the corresponding weighting coefficients.

Cost functional (\ref{cost functional of robot control center}) indicates that, after knowing the optimal responses of the unicycle-type swarm robots as followers, the robot control center, acting as the leader, further optimizes so that, over the time interval $[0,T]$, the average displacement of the unicycle-type swarm robots along the $x$-axis is as close as possible to $a$ meters and along the $y$-axis is as close as possible to $b$ meters.

In this practical example, the robot control center, acting as the leader, does not possess its own state $x_0$ or Brownian motion $W_0$. So $\mathbb{F}^0_t=\big\{\varnothing, \Omega\big\}, t\in[0, T]$. Consequently, the decentralized strategies adopted by the robot control center as the leader are deterministic. And for $\forall i=1,2,\cdots, \overline{\mathbb{F}}_t^i:=\mathbb{F}^i_t \vee \mathbb{F}^0_t=\mathbb{F}^i_t=\sigma\big(B^w_i(s), 0\le s\le t\big), t\in[0,T]$.
Let $\overline{\mathbb{F}}^i:=\big\{\overline{\mathbb{F}}^i_t\big\}_{0\le t\le T}$. This definition will be used in the subsequent discussion of this subsection.

Therefore, as the leader, the admissible decentralized strategy space of the robot control center is defined as follows:
\begin{equation*}
\begin{aligned}
\mathcal{U}_0&:=\Big\{u_{0} \Big|u_{0}: [0, T] \longrightarrow \mathbb{R} \textrm{ and } \int_0^T |u_0(t)|^2 dt <\infty \Big\},
\end{aligned}
\end{equation*}

As a follower, the admissible decentralized strategy space of the $i^{th}$ unicycle-type robot is defined as follows:
\begin{equation*}
\begin{aligned}
\mathcal{U}_i&:=\Big\{u_{i} \Big|u_{i}: \Omega \times[0, T]\times \mathcal{U}_0 \longrightarrow \mathbb{R}, u_{i}(\cdot, u_{0}) \textrm{ is }
\mathbb{F}^i\textrm{-adapted, and }\\
&\qquad\qquad\ \mathbb{E}\int_0^T |u_i(t, u_0)|^2 dt <\infty \textrm{ for any } u_{0}\in \mathbb{R} \Big\}.\\
\end{aligned}
\end{equation*}

\subsection{The decentralized optimal control of the unicycle-type swarm robots as followers}

First, for any given control $u_0$ of the robot control center acting as the leader, the state equation (\ref{state equation of i unicycle-type robot}) and the cost functional (\ref{cost functional of i unicycle-type robot}) of the $i^{th}$ unicycle-type robot, acting as a follower, satisfy assumptions (H2.1) and (H2.2) of the theoretical analysis in Section \ref{problem formulation}, for $i=1,2,\cdots$.

For any given leader's strategy $u_0$, suppose that the decentralized optimal controls of the unicycle-type robots as followers is $\overline{\mathcal{I}}(u_{0}):=\big\{{{\overline{\mathcal{I}}}}_i(u_{0})\big\}_{i=1}^{\infty}$. Then the decentralized optimal control adopted by the first unicycle-type robot, acting as a follower, is given by $\overline{\mathcal{I}}_1 (u_{0})$. According to Step 2 in Subsection \ref{section 2.4.1}, the realized state of the first follower robot is given by:
\begin{equation}\label{realized state equation of 1 unicycle-type robot}
\left\{
             \begin{array}{lr}
             d {\breve{x}}_1(t)=v\cdot\big[\cos {{\breve{\theta}}_1(t)}\big] dt,\\
             d {\breve{y}}_1(t)=v\cdot\big[\sin {{\breve{\theta}}_1(t)}\big] dt,\\
             d {{\breve{\theta}}}_1(t)=\big[{\breve{w}}_1(t)+\overline{\mathcal{I}}_1 (u_{0})(t)+u_0(t)\big] dt,\\
             d {\breve{w}}_1(t)=\sigma dB^w_1(t), \quad\quad\ 0\le t \le T, \\
             {\breve{x}}_1(0)=0,\ {\breve{y}}_1(0)=0,\ {\breve{\theta}}_1(0)=0,\ {\breve{w}}_1(0)=0.
             \end{array}
\right.
\end{equation}

In this practical example, $\mathbb{F}^0_t=\big\{\varnothing, \Omega\big\}, t\in[0, T]$. Therefore, if we choose the first follower robot as a representative, the corresponding realized state-average limit of follower robots is:
\begin{equation}
\begin{aligned}
\breve{z}(t)=\mathbb{E}\left[\left(\begin{array}{ccc}
\breve{x}_1(t)\\
\breve{y}_1(t)\\
\breve{\theta}_1(t)\\
\breve{w}_1(t)
\end{array}\right)\Bigg| \mathbb{F}^0_t\right]
=\mathbb{E}\left[\left(\begin{array}{ccc}
\breve{x}_1(t)\\
\breve{y}_1(t)\\
\breve{\theta}_1(t)\\
\breve{w}_1(t)
\end{array}\right)\right],\ \ t\in[0,T].
\end{aligned}
\end{equation}

For $i=2,3,4,\cdots$, based on Step 4 in Subsection \ref{section 2.4.1}, the decentralized optimal control problem for the $i^{th}$ follower robot can be formulated. The state equation of the decentralized optimal control problem for the $i^{th}$ follower robot is:
\begin{equation}\label{state equation of i unicycle-type robot decentralized optimal control problem}
\left\{
             \begin{array}{lr}
             d x_i(t)=v\cdot\big[\cos {\theta_i(t)}\big] dt,\\
             d y_i(t)=v\cdot\big[\sin {\theta_i(t)}\big] dt,\\
             d {\theta}_i(t)=\big[w_i(t)+u_i(t)+u_0(t)\big] dt,\\
             d w_i(t)=\sigma dB^w_i(t), \quad\quad\ 0\le t \le T, \\
             x_i(0)=0,\ y_i(0)=0,\ \theta_i(0)=0,\ w_i(0)=0.
             \end{array}
\right.
\end{equation}

The cost functional to be minimized in the decentralized optimal control problem of the $i^{th}$ follower robot is:
\begin{equation}\label{cost functional of i unicycle-type robot decentralized optimal control problem}
\begin{aligned}
\overline{J}_{i}=&\mathbb{E}\int_0^T \Big[c_1|x_i(t)-a|^2+c_1|y_i(t)-b|^2+d_1|u_i(t)|^2\\
&+e_1\big|\mathbb{E}\big[\breve{x}_1(t)\big]-a\big|^2+e_1\big|\mathbb{E}\big[\breve{y}_1(t)\big]-b\big|^2\Big] dt.
\end{aligned}
\end{equation}

Let $X_i:=\left(\begin{array}{ccc}
x_i\\
y_i\\
\theta_i\\
w_i
\end{array}\right)$ and $z:=\left(\begin{array}{ccc}
z_1\\
z_2\\
z_3\\
z_4
\end{array}\right)$, then the Hamiltonian function of the $i^{th}$ follower robot can be defined as follows:
\begin{equation}
\begin{aligned}
H_i\left(t, X_i, u_i, u_0, z, p_i\right):= & \left\langle p_i, \left(\begin{array}{ccc}
v\cdot\cos {\theta}_i\\
v\cdot\sin {\theta}_i\\
w_i+u_i+u_0\\
0
\end{array}\right)\right\rangle
+c_1|x_i-a|^2+c_1|y_i-b|^2+d_1|u_i|^2\\
&\ +e_1|z_1-a|^2+e_1|z_2-b|^2.
\end{aligned}
\end{equation}

According to the theoretical results in Subsection \ref{subsection 5.1} of this paper, there exists a pair of adapted processes $(p_i, q_i)\in L^2_{\overline{\mathbb{F}}^i}\left(\Omega; C(0, T; \mathbb{R}^4)\right)
\times L^2_{\overline{\mathbb{F}}^i}\left(0, T; \mathbb{R}^{4\times 4}\right)$ such that the decentralized optimal control $\overline{\mathcal{I}}_i (u_{0})$ of the $i^{th}$ follower robot satisfies the following:
\begin{equation}\label{ith follower robot's necessary condition}
\left\{
             \begin{array}{lr}
             d\left(\begin{array}{ccc}
{\breve{x}}_i(t)\\
{\breve{y}}_i(t)\\
{\breve{\theta}}_i(t)\\
{\breve{w}}_i(t)
\end{array}\right)=\left(\begin{array}{ccc}
v\cdot\cos {\breve{\theta}}_i(t)\\
v\cdot\sin {\breve{\theta}}_i(t)\\
{\breve{w}}_i(t)+\overline{\mathcal{I}}_i (u_{0})(t)+u_0(t)\\
0
\end{array}\right) dt+\left(\begin{array}{ccc}
0\\
0\\
0\\
\sigma
\end{array}\right) dB_i(t), \\
             -dp_i(t)=\left\{\left( \begin{array}{cccc}
0 & 0 & -v\cdot\sin {\breve{\theta}}_i(t) & 0\\
0 & 0 & v\cdot\cos {\breve{\theta}}_i(t) & 0\\
0 & 0 & 0 & 1\\
0 & 0 & 0 & 0\\
\end{array} \right)^{\top} p_i(t)+\left( \begin{array}{cccc}
2c_1\big({\breve{x}}_i(t)-a\big)\\
2c_1\big({\breve{y}}_i(t)-b\big)\\
0\\
0\\
\end{array} \right)\right\}dt-q_i(t) dB_i(t),\\
\left(\begin{array}{cccc}
{\breve{x}}_i(0)\\
{\breve{y}}_i(0)\\
{\breve{\theta}}_i(0)\\
{\breve{w}}_i(0)
\end{array}\right)=\left(\begin{array}{cccc}
0\\
0\\
0\\
0\\
\end{array}\right),\quad p_i(T)=0,\\
             \left\langle \bigg\langle p_i(t), \left(\begin{array}{cccc}
0\\
0\\
1\\
0\\
\end{array}\right) \bigg\rangle+2 d_1 {{\overline{\mathcal{I}}}}_i(u_{0})(t), u_i-{{\overline{\mathcal{I}}}}_i(u_{0})(t)\right\rangle\ge 0, \quad \forall u_i\in U_i,\ a.e.\ t\in[0,T],\ a.s.,
             \end{array}
\right.
\end{equation}
where $dB_i(t)=\left(\begin{array}{cccc}
0\\
0\\
0\\
dB^w_i(t)\\
\end{array}\right).$
Let
$p_i(\cdot):=\left(\begin{array}{cccc}
p_{i1}(\cdot)\\
p_{i2}(\cdot)\\
p_{i3}(\cdot)\\
p_{i4}(\cdot)\\
\end{array}\right), q_i(\cdot):=\left( \begin{array}{cccc}
q_i^{11}(\cdot) & q_i^{12}(\cdot) & q_i^{13}(\cdot) & q_i^{14}(\cdot)\\
q_i^{21}(\cdot) & q_i^{22}(\cdot) & q_i^{23}(\cdot) & q_i^{24}(\cdot)\\
q_i^{31}(\cdot) & q_i^{32}(\cdot) & q_i^{33}(\cdot) & q_i^{34}(\cdot)\\
q_i^{41}(\cdot) & q_i^{42}(\cdot) & q_i^{43}(\cdot) & q_i^{44}(\cdot)\\
\end{array} \right),$
for $i=1,2,\cdots$. Hence, by the last inequality in (\ref{ith follower robot's necessary condition}), we obtain that
\[p_{i3}(t)+2 d_1 {{\overline{\mathcal{I}}}}_i(u_{0})(t)=0, \quad a.e.\ t\in[0,T],\ a.s..\]
So
\begin{equation}\label{ith follower robot's decentralized optimal control}
\begin{aligned}
{{\overline{\mathcal{I}}}}_i(u_{0})(t)=-\frac{p_{i3}(t)}{2 d_1}, \quad a.e.\ t\in[0,T],\ a.s..
\end{aligned}
\end{equation}

\subsection{The decentralized optimal control problem of the robot control center as the leader}

\hspace{5mm}Taking into account that the follower robots adopt the decentralized control given in (\ref{ith follower robot's decentralized optimal control}), and taking follower robot 1 as a representative, we can derive the decentralized optimal control problem of the robot control center as the leader. The state equation of the decentralized optimal control problem for the robot control center is given by
\begin{equation}\label{state equation 1 of the decentralized optimal control problem for the robot control center}
\left\{
             \begin{array}{lr}
             d\left(\begin{array}{ccc}
{\breve{x}}_1(t)\\
{\breve{y}}_1(t)\\
{\breve{\theta}}_1(t)\\
{\breve{w}}_1(t)
\end{array}\right)=\left(\begin{array}{ccc}
v\cdot\cos {\breve{\theta}}_1(t)\\
v\cdot\sin {\breve{\theta}}_1(t)\\
{\breve{w}}_1(t)-\frac{p_{13}(t)}{2 d_1}+u_0(t)\\
0
\end{array}\right) dt+\left(\begin{array}{ccc}
0\\
0\\
0\\
\sigma
\end{array}\right) dB_1(t), \\
             -d\left(\begin{array}{ccc}
p_{11}(t)\\
p_{12}(t)\\
p_{13}(t)\\
\end{array}\right)=\left\{\left( \begin{array}{ccc}
0 \\
0 \\
-v\cdot[\sin {\breve{\theta}}_1(t)]p_{11}(t)+v\cdot[\cos {\breve{\theta}}_1(t)]p_{12}(t)\\
\end{array} \right)+\left( \begin{array}{ccc}
2c_1\big({\breve{x}}_1(t)-a\big)\\
2c_1\big({\breve{y}}_1(t)-b\big)\\
0\\
\end{array} \right)\right\}dt\\
\qquad\qquad\qquad\qquad-\left( \begin{array}{cccc}
q_1^{11}(\cdot) & q_1^{12}(\cdot) & q_1^{13}(\cdot) & q_1^{14}(\cdot)\\
q_1^{21}(\cdot) & q_1^{22}(\cdot) & q_1^{23}(\cdot) & q_1^{24}(\cdot)\\
q_1^{31}(\cdot) & q_1^{32}(\cdot) & q_1^{33}(\cdot) & q_1^{34}(\cdot)\\
\end{array} \right) \left(\begin{array}{cccc}
0\\
0\\
0\\
dB^w_1(t)\\
\end{array}\right),\\
\left(\begin{array}{cccc}
{\breve{x}}_1(0)\\
{\breve{y}}_1(0)\\
{\breve{\theta}}_1(0)\\
{\breve{w}}_1(0)
\end{array}\right)=\left(\begin{array}{cccc}
0\\
0\\
0\\
0\\
\end{array}\right),\quad \left(\begin{array}{ccc}
p_{11}(T)\\
p_{12}(T)\\
p_{13}(T)\\
\end{array}\right)=\left(\begin{array}{ccc}
0\\
0\\
0\\
\end{array}\right).\\
             \end{array}
\right.
\end{equation}

The cost functional of decentralized optimal control problem for the robot control center is
\begin{equation}\begin{aligned}
\overline{J}_0&=\mathbb{E}\int_0^T c_0\big|\mathbb{E}\big[\breve{x}_1(t)\big]-a\big|^2+c_0\big|\mathbb{E}\big[\breve{y}_1(t)\big]-b\big|^2
+d_0|u_0(t)|^2 dt.\\
\end{aligned}\end{equation}

\begin{Remark}
Since the state equation of the leader's decentralized optimal control problem contains a term involving ``$-v\cdot[\sin {\breve{\theta}}_1(t)]p_{11}(t)+v\cdot[\cos {\breve{\theta}}_1(t)]p_{12}(t)$", it does not necessarily satisfy assumption (H5.3) in Section \ref{section 5} of the preceding theoretical analysis.
\end{Remark}

Since the preceding analysis shows that $\mathbb{F}^0_t=\big\{\varnothing, \Omega\big\}, t\in[0, T]$, it follows that the robot control center, as the leader, adopts decentralized controls that are all deterministic. Therefore, in the following, we attempt to study the decentralized optimal control problem of the robot control center in the deterministic case $\sigma=0$.

In this case, the state equation of the decentralized optimal control problem for the robot control center reduces to
\begin{equation}\label{state equation 2 of the decentralized optimal control problem for the robot control center in deterministic case}
\left\{
             \begin{array}{lr}
             d\left(\begin{array}{ccc}
{\breve{x}}_1(t)\\
{\breve{y}}_1(t)\\
{\breve{\theta}}_1(t)\\
\end{array}\right)=\left(\begin{array}{ccc}
v\cdot\cos {\breve{\theta}}_1(t)\\
v\cdot\sin {\breve{\theta}}_1(t)\\
-\frac{p_{13}(t)}{2 d_1}+u_0(t)\\
\end{array}\right) dt, \\
             -d\left(\begin{array}{ccc}
p_{11}(t)\\
p_{12}(t)\\
p_{13}(t)\\
\end{array}\right)=\left( \begin{array}{ccc}
2c_1\big({\breve{x}}_1(t)-a\big) \\
2c_1\big({\breve{y}}_1(t)-b\big) \\
-v\cdot[\sin {\breve{\theta}}_1(t)]p_{11}(t)+v\cdot[\cos {\breve{\theta}}_1(t)]p_{12}(t)\\
\end{array} \right)dt,\\
\left(\begin{array}{ccc}
{\breve{x}}_1(0)\\
{\breve{y}}_1(0)\\
{\breve{\theta}}_1(0)\\
\end{array}\right)=\left(\begin{array}{ccc}
0\\
0\\
0\\
\end{array}\right),\quad \left(\begin{array}{ccc}
p_{11}(T)\\
p_{12}(T)\\
p_{13}(T)\\
\end{array}\right)=\left(\begin{array}{ccc}
0\\
0\\
0\\
\end{array}\right).\\
             \end{array}
\right.
\end{equation}

The cost functional of the decentralized optimal control problem for the robot control center reduces to
\begin{equation}\begin{aligned}
\overline{J}_0&=\int_0^T c_0\big|\breve{x}_1(t)-a\big|^2+c_0\big|\breve{y}_1(t)-b\big|^2
+d_0|u_0(t)|^2 dt.\\
\end{aligned}\end{equation}

Since for $\forall t \in[0,T]$, $\sin {\breve{\theta}}_1(t)\in[-1,1], \cos {\breve{\theta}}_1(t)\in[-1,1]$. Hence, for $\forall t \in[0,T]$, we have
\[\big|\breve{x}_1(t)\big|=\Big|\int_0^t v\cdot\cos {\breve{\theta}}_1(s) ds \Big|\le \int_0^T \big|v\cdot\cos {\breve{\theta}}_1(s)\big| ds\le vT.\]

Similarly, we can derive $\big|\breve{y}_1(t)\big|\le vT$. Then, for $\forall t \in[0,T]$, we obtain that
\[\big|p_{11}(t)\big|=\Big|\int_t^T 2c_1\big(\breve{x}_1(s)-a\big) ds \Big|\le 2c_1\cdot T\cdot\big(vT+a\big).\]

Similarly, we obtains $\big|p_{12}(t)\big|\le 2c_1\cdot T\cdot\big(vT+b\big)$. At this point, equation (\ref{state equation 2 of the decentralized optimal control problem for the robot control center in deterministic case}) satisfies assumption (H5.3) in Section \ref{section 5} of this paper, so the decentralized optimal control problem of the robot control center can be analyzed using the theoretical results developed earlier.

Let $\mathbb{X}_1:=\left(\begin{array}{ccc}
x_1\\
y_1\\
\theta_1\\
\end{array}\right)$ and
$\mathbb{P}_1:=\left(\begin{array}{ccc}
p_{11}\\
p_{12}\\
p_{13}\\
\end{array}\right)$. Then the Hamiltonian function of the decentralized optimal control problem for the robot control center is:
\begin{equation}
\begin{aligned}
H_0\left(\mathbb{X}_1, u_0, \mathbb{P}_1, K_1, \varphi_1\right):= & \left\langle K_1, \left(\begin{array}{ccc}
v\cdot\cos {\theta}_1\\
v\cdot\sin {\theta}_1\\
-\frac{p_{13}}{2d_1}+u_0\\
\end{array}\right)\right\rangle
-\left\langle \varphi_1, \left(\begin{array}{ccc}
2c_1\big(x_1-a\big)\\
2c_1\big(y_1-b\big)\\
-v\cdot[\sin {\theta}_1]p_{11}+v\cdot[\cos {\theta}_1]p_{12}\\
\end{array}\right)\right\rangle\\
& +c_0|x_1-a|^2+c_0|y_1-b|^2+d_0|u_0|^2,\\
\end{aligned}
\end{equation}
where $K_1\in \mathbb{R}^3, \varphi_1\in \mathbb{R}^3$. Hence, we can set $K_1:=\left(\begin{array}{ccc}
K_{11}\\
K_{12}\\
K_{13}\\
\end{array}\right),$ $\varphi_1:=\left(\begin{array}{ccc}
\varphi_{11}\\
\varphi_{12}\\
\varphi_{13}\\
\end{array}\right)$.
Then
\begin{equation}
\begin{aligned}
&H_0\left(\mathbb{X}_1, u_0, \mathbb{P}_1, K_1, \varphi_1\right)\\
&= K_{11}\cdot v\cdot\cos {\theta}_1+K_{12}\cdot v\cdot\sin {\theta}_1+K_{13}\cdot \left(-\frac{p_{13}}{2d_1}+u_0\right)\\
&\quad-\varphi_{11}\cdot 2c_1\big(x_1-a\big)-\varphi_{12}\cdot 2c_1\big(y_1-b\big)-\varphi_{13}\cdot \left[-v\cdot[\sin {\theta}_1]p_{11}+v\cdot[\cos {\theta}_1]p_{12}\right]\\
&\quad+c_0|x_1-a|^2+c_0|y_1-b|^2+d_0|u_0|^2.\\
\end{aligned}
\end{equation}

Let $u_{0}^{\dag}(\cdot)$ denote the decentralized optimal control of the robot control center, and introduce the following adjoint equation:
\begin{equation}\label{adjoint equation of the decentralized optimal control problem for the robot control center in deterministic case}
\left\{
             \begin{array}{lr}
             -d\left(\begin{array}{ccc}
K_{11}(t)\\
K_{12}(t)\\
K_{13}(t)\\
\end{array}\right)\\
=\left(\begin{array}{ccc}
-2c_1 \varphi_{11}(t)+2c_0 \big({\breve{x}}^{\dag}_1(t)-a\big)\\
-2c_1 \varphi_{12}(t)+2c_0 \big({\breve{y}}^{\dag}_1(t)-b\big)\\
\big[-vK_{11}(t)+v{\varphi}_{13}(t) p_{12}^{\dag}(t)\big]\sin {{\breve{\theta}}^{\dag}_1(t)}+\big[ v K_{12}(t)+v {\varphi}_{13}(t) p_{11}^{\dag}(t)\big]\cos {{\breve{\theta}}^{\dag}_1(t)}\\
\end{array}\right) dt, \\
             d\left(\begin{array}{ccc}
\varphi_{11}(t)\\
\varphi_{12}(t)\\
\varphi_{13}(t)\\
\end{array}\right)=\left( \begin{array}{ccc}
-v\cdot \varphi_{13}(t)\cdot \sin {{\breve{\theta}}^{\dag}_1(t)}\\
v\cdot \varphi_{13}(t)\cdot \cos {{\breve{\theta}}^{\dag}_1(t)} \\
\frac{K_{13}(t)}{2d_1}\\
\end{array} \right)dt\\
\left(\begin{array}{ccc}
K_{11}(T)\\
K_{12}(T)\\
K_{13}(T)\\
\end{array}\right)=\left(\begin{array}{ccc}
0\\
0\\
0\\
\end{array}\right),\quad \left(\begin{array}{ccc}
\varphi_{11}(0)\\
\varphi_{12}(0)\\
\varphi_{13}(0)\\
\end{array}\right)=\left(\begin{array}{ccc}
0\\
0\\
0\\
\end{array}\right),\\
             \end{array}
\right.
\end{equation}
where $\big({\breve{x}}^{\dag}_1, {\breve{y}}^{\dag}_1, {\breve{\theta}}^{\dag}_1, p_{11}^{\dag}, p_{12}^{\dag}\big)$ denotes the corresponding optimal state trajectory obtained by substituting the decentralized optimal control $u_{0}^{\dag}$ of the robot control center into equation $(\ref{state equation 2 of the decentralized optimal control problem for the robot control center in deterministic case})$. Then $u_{0}^{\dag}$ satisfies
\begin{equation}
\begin{aligned}
\big\langle K_{13}(t)+2d_0 u_{0}^{\dag}(t), u_0-u_{0}^{\dag}(t)\big\rangle\ge 0, \quad a.e.\ t\in[0,T],
\end{aligned}
\end{equation}
for $\forall u_0\in \mathbb{R}$. Therefore, we obtain
\begin{equation}
\begin{aligned}
u_{0}^{\dag}(t)=-\frac{K_{13}(t)}{2d_0}, \quad a.e.\ t\in[0,T].
\end{aligned}
\end{equation}

\newpage

\end{CJK}


\begin{thebibliography}{0}

\bibitem{A1985}D.~J. Aldous (1985). {\it Exchangeability and related topics.} In {\'E}cole d'{\'e}t{\'e} de Probabilit{\'e}s de
Saint-Flour XIII-1983. Lecture Notes in Math. vol 1117. Springer, Berlin, Heidelberg.

\bibitem{AM2014}R.~P. Anderson, D. Milutinovi\'{c} (2014). A stochastic optimal enhancement of feedback control for unicycle formations. {\it Distributed Autonomous Robotic Systems: The 11th International Symposium. Berlin, Heidelberg: Springer Berlin Heidelberg}, 261-274.

\bibitem{ACDL2022}A. Aurell, R. Carmona, G. Dayanikli, M. Lauri{\`e}re (2022). Optimal incentives to mitigate epidemics: A Stackelberg mean field game approach. {\it SIAM J. Control Optim.}, {\bf 60}(2), S294-S322.

\bibitem{BO1998}T. Ba\c{s}ar, G.~J. Olsder (1998). {\it Dynamic Noncooperative Game Theory, 2nd Edition.} SIAM, Philadelphia.

\bibitem{B2006}A. Bensoussan (2006). {\it Lectures on stochastic control.} Nonlinear Filtering and Stochastic Control: Proceedings of the 3rd 1981
Session of the Centro Internazionale Matematico Estivo (CIME), Held at Cortona, July 1¨C10, 1981. Berlin, Heidelberg: Springer Berlin Heidelberg, 1-62.

\bibitem{BCY2015}A. Bensoussan, M.~H. Chau, \& S.~C. Yam (2015). Mean field Stackelberg games: Aggregation of delayed instructions. {\it SIAM J.
Control Optim.}, {\bf 53}(4), 2237-2266.

\bibitem{BCY2016}A. Bensoussan, M.~H. Chau, \& S.~C. Yam (2016). Mean field games with a dominating player. {\it Appl. Math. Optim.}, {\bf 74},
91-128.

\bibitem{BCY2017}A. Bensoussan, M.~H. Chau, \& S.~C. Yam (2017). Linear-quadratic mean field Stackelberg games with state and control delays.
{\it SIAM J. Control Optim.}, {\bf 55}(4), 2748-2781.

\bibitem{BCS2015}A. Bensoussan, S.~K. Chen, \& S.~P. Sethi (2015). The maximum principle for global solutions of stochastic Stackelberg differential
games. {\it SIAM J. Control Optim.}, {\bf 53}(4), 1956-1981.

\bibitem{BCR2024}P. Bergault, P. Cardaliaguet, C. Rainer. (2024). Mean field games in a Stackelberg problem with an informed major player. {\it SIAM J. Control Optim.}, {\bf 62}(3), 1737-1765.

\bibitem{CD2013}R.~D. Carmona, F. Delarue (2013). Probabilistic analysis of mean-field games.
{\it SIAM J. Control Optim.}, {\bf 51}(4), 2705-2734.

\bibitem{CD2019-2}R.~D. Carmona, F. Delarue (2019). {\it Probabilistic Theory of Mean Field Games with Applications II: Mean Field Games with Common
Noise and Master Equations.} Springer Nature.

\bibitem{CZ2016}R.~D. Carmona, X.~N. Zhu (2016). A probabilistic approach to mean field games with major and minor players.
{\it Ann. Appl. Probab.}, {\bf 26}(3), 1535-1580.

\bibitem{CCWXQF2021}J.~X Chen, P. Chen, Q.~H. Wu, Y.~H. Xu, N. Qi, T. Fang (2021). A game-theoretic perspective on resource management for large-scale UAV communication networks.
{\it China Commun.}, {\bf 18}(1), 70-87.

\bibitem{CS2018}L. Chen, Y. Shen (2018). On a new paradigm of optimal reinsurance: A stochastic Stackelberg differential game between an insurer and
a reinsurer. {\it ASTIN Bulletin}, {\bf 48}(2), 905-960.

\bibitem{CDW2024}T. Chen, K. Du, \& Z. Wu (2024). Partially observed mean-field game and related mean-field forward-backward stochastic differential
equation. {\it J. Differential Equations}, {\bf 408}, 409-448.

\bibitem{D2023}M.~F. Djete (2023). Stackelberg mean field games: Convergence and existence results to the problem of Principal with multiple agents in competition. arXiv:2309.00640.

\bibitem{EB2019}K. Elamvazhuthi, S. Berman (2019). Mean-field models in swarm robotics: A survey. {\it Bioinspiration \& Biomimetics}, 15(1), 015001.

\bibitem{FHH2024}X.~W. Feng, Y. Hu, \& J.~H. Huang (2024). Linear-quadratic two-person differential game: Nash game versus Stackelberg game, local
information versus global information. {\it ESAIM Control Optim. Calc. Var.}, {\bf 30}(47), 41-pp.

\bibitem{FW2024}X.~W. Feng, L. Wang (2024). Stackelberg equilibrium with social optima in linear-quadratic-Gaussian mean-field system. {\it Math. Control Relat. Fields}, {\bf 14}(2), 769-799.

\bibitem{FH2020}G.~X. Fu, U. Horst (2020). Mean-field leader-follower games with terminal state constraint. {\it SIAM J. Control Optim.}, {\bf 58}(4), 2078-2113.

\bibitem{GHZ2022}X. Guo, A. Hu, J.~C. Zhang (2022). Optimization frameworks and sensitivity analysis of Stackelberg mean-field games. {\it arXiv:2210.04110}.

\bibitem{HS2016}R.~H. Hirpara, S.~N. Sharma (2016). A hybrid non-linear filter for an unmanned aerial vehicle (UAV) dynamics. {\it 2016 20th International Conference on System Theory, Control and Computing (ICSTCC). IEEE}, 800-805.

\bibitem{HP1995}Y. Hu, S.~G. Peng (1995). Solution of forward-backward stochastic differential equations. {\it Probab. Theory Related Fields},
{\bf 103}(2), 273-283.

\bibitem{HSW2021}J.~H. Huang, K.~H. Si, \& Z. Wu (2021). Linear-Quadratic Mixed Stackelberg-Nash Stochastic Differential Game with Major-Minor Agents.
{\it Appl. Math. Optim.}, {\bf 84}, 2445-2494.

\bibitem{H2010}M.~Y. Huang (2010). Large-population LQG games involving a major player: the Nash certainty equivalence principle.
{\it SIAM J. Control Optim.}, {\bf 48}(5), 3318-3353.

\bibitem{HCM2003}M.~Y. Huang, P.~E. Caines, \& R.~P. Malham\'e (2003). Individual and mass behaviour in large population stochastic wireless power
control problems: centralized and Nash equilibrium solutions. {\it In 42nd IEEE international conference on decision and control
(IEEE cat. No. 03CH37475)}, {\bf 1}, 98-103.

\bibitem{HMC2006}M.~Y. Huang, R.~P. Malham\'e, \& P.~E. Caines (2006). Large population stochastic dynamic games: closed-loop McKean-Vlasov systems
and the Nash certainty equivalence principle. {\it Commun. Inf. Syst.}, {\bf 6}(3), 221-252.

\bibitem{HY2023}M.~Y. Huang, X.~W. Yang (2023). Linear-quadratic mean field Stackelberg games: master equations and decentralized feedback strategies. {\it Les Cahiers du GERAD}, ISSN: 0711-2440.

\bibitem{HT2022}Z.~Y. Huang, S.~J. Tang (2022). Mean field games with common noises and conditional distribution dependent FBSDEs.
{\it Chinese Ann. Math. Ser. B}, {\bf 43}(4), 523-548.

\bibitem{LL2006-1}J.~M. Lasry, P.~L. Lions (2006). Jeux \`a champ moyen. I-Le cas stationnaire. {\it C. R. Math. Acad. Sci. Paris}, {\bf 343}(9),
619-625.

\bibitem{LL2006-2}J.~M. Lasry, P.~L. Lions (2006). Jeux \`a champ moyen. II-Horizon fini et contr\^ole optimal. {\it C. R. Math. Acad. Sci. Paris},
{\bf 343}(10), 679-684.

\bibitem{LL2007}J.~M. Lasry, P.~L. Lions (2007). Mean field games. {\it Jpn. J. Math.}, {\bf 2}(1), 229-260.

\bibitem{LY2018}N. Li, Z.~Y. Yu (2018). Forward-backward stochastic differential equations and linear-quadratic generalized Stackelberg
games. {\it SIAM J. Control Optim.}, {\bf 56}(6), 4148-4180.

\bibitem{LMFCM2021}Z.~P. Li, D. Marelli, M.~Y. Fu, Q.~Q. Cai, \& W. Meng (2021). Linear quadratic Gaussian Stackelberg game under asymmetric
information patterns. {\it Automatica}, {\bf 125}, 109406.

\bibitem{M2013}D. Milutinovi\'{c} (2013). Utilizing stochastic processes for computing distributions of large-size robot population optimal centralized control. {\it Distributed Autonomous Robotic Systems: The 10th International Symposium. Berlin, Heidelberg: Springer Berlin Heidelberg}, 359-372.

\bibitem{MB2018}J. Moon, \& T. Ba\c{s}ar (2018). Linear quadratic mean field Stackelberg differential games. {\it Automatica}, {\bf 97}, 200-213.

\bibitem{MIXZ2022}H. Mukaidani, S. Irie, H. Xu, W.~H. Zhuang. (2022). Robust incentive Stackelberg games with a large population for stochastic mean-field systems. {\it IEEE Control Syst. Lett.}, {\bf 6}, 1934-1939.

\bibitem{NWW2023}P.~P. Nie, G.~C. Wang, \& Y. Wang (2023). A kind of linear-quadratic Pareto cooperative differential game with partial
information. {\it Optimal Control Appl. Methods}, {\bf 44}(5), 2298-2315.

\bibitem{NC2013}M. Nourian, P.~E. Caines (2013). $\varepsilon$-Nash mean field game theory for nonlinear stochastic dynamical systems with
major and minor agents. {\it SIAM J. Control Optim.}, {\bf 51}(4), 3302-3331.

\bibitem{NCMH2012}M. Nourian, P.~E. Caines, R.~P. Malham{\'e} \& M.~Y. Huang (2012). Mean field LQG control in leader-follower stochastic
multi-agent systems: Likelihood ratio based adaptation. {\it IEEE Trans. Automat. Control}, {\bf 57}(11), 2801-2816.

\bibitem{PW1999}S.~G. Peng, Z. Wu (1999). Fully coupled forward-backward stochastic differential equations and applications to optimal
control. {\it SIAM J. Control Optim.}, {\bf 37}(3), 825-843.

\bibitem{RR1998}S.~T. Rachev, L. R{\"u}schendorf (1998). {\it Mass transportation problems. Vol. II: Applications.} Springer, New York.

\bibitem{SBB2025}S. Sanjari, S. Bose, T. Ba\c{s}ar (2025). Incentive designs for Stackelberg games with a large number of followers and their mean-field limits. {\it Dyn. Games Appl.}, {\bf 15}(1), 238-278.

\bibitem{ST2015}S.~K. Shah, H.~G. Tanner (2015). Control of stochastic unicycle-type robots. {\it 2015 IEEE International Conference on Robotics and Automation (ICRA). IEEE}, 389-394.

\bibitem{SWX2016}J.~T. Shi, G.~C. Wang,  \& J. Xiong (2016). Leader-follower stochastic differential game with asymmetric information and
applications. {\it Automatica}, {\bf 63}, 60-73.

\bibitem{SW2021}K.~H. Si, Z. Wu. (2021). Backward-forward linear-quadratic mean-field Stackelberg games. {\it Adv. Difference Equ.}, {\bf 73}, 1-23.

\bibitem{SC1973-a}M. Simaan, J.~B. Cruz Jr. (1973a). On the Stackelberg game strategy in non-zero games. {\it J. Optim. Theory Appl.},
{\bf 11}(5), 533--555.

\bibitem{SC1973-b}M. Simaan, J.~B. Cruz Jr. (1973b). Additional aspects of the Stackelberg strategy in nonzero-sum games.
{\it J. Optim. Theory Appl.}, {\bf 11}(6), 613-626.

\bibitem{St1934}H.~V. Stackelberg (1934). {\it Marktform und Gleichgewicht}, Springer, Vienna. (An English translation appeared in
{\it The Theory of the Market Economy}. Oxford University Press, 1952.)

\bibitem{SXXCZ2018}J.~T. Su, H.~T. Xu, N. Xin, G.~X. Cao, X.~W. Zhou (2018). Resource allocation in wireless powered IoT system: a mean field stackelberg game-based approach. {\it Sensors-basel}, {\bf 18}(10), 3173.

\bibitem{W2024}B.~C. Wang (2024). Leader-follower mean field LQ games: A direct method. {\it Asian J. Control}, {\bf 26}(2), 617-625.

\bibitem{WZ2014}B.~C. Wang, J.~F. Zhang (2014). Hierarchical mean field games for multiagent systems with tracking-type costs: Distributed $\varepsilon$-stackelberg equilibria. {\it IEEE Trans. Automat. Control}, {\bf 59}(8), 2241-2247.

\bibitem{W1998}Z. Wu (1998). Maximum principle for optimal control problem of fully coupled forward-backward stochastic systems.
{\it Systems Sci. Math. Sci.}, {\bf 11}(3), 249-259.

\bibitem{Y2002}J.~M. Yong (2002). A leader-follower stochastic linear quadratic differential games. {\it SIAM J. Control Optim.},
{\bf 41}(4), 1015-1041.


\end{thebibliography}
\end{document}